\documentclass[reqno]{amsart}

\synctex=1

\usepackage{amssymb,amsmath,amsthm,xcolor,enumerate,comment,enumitem,bm,longtable,hyperref,cleveref}
\usepackage[colorinlistoftodos]{todonotes}
\usepackage[normalem]{ulem}
\usepackage[utf8]{inputenc}

\usepackage{tikz-cd}
\usetikzlibrary{positioning}

\usepackage[margin=1in]{geometry}

\righthyphenmin=2

\tolerance=400

\allowdisplaybreaks

\newtheorem{thrm}{Theorem}[section]
\newtheorem{cor}[thrm]{Corollary}
\newtheorem{lem}[thrm]{Lemma}
\newtheorem{prop}[thrm]{Proposition}

\theoremstyle{definition}
\newtheorem{defn}[thrm]{Definition}
\newtheorem{exm}[thrm]{Example}

\newtheorem{rem}[thrm]{Remark}

\crefrangeformat{equation}{#3(#1)#4--#5(#2)#6}

\crefname{thrm}{Theorem}{Theorems}
\crefname{lem}{Lemma}{Lemmas}
\crefname{cor}{Corollary}{Corollaries}
\crefname{prop}{Proposition}{Propositions}
\crefname{defn}{Definition}{Definitions}
\crefname{exm}{Example}{Examples}
\crefname{rem}{Remark}{Remarks}
\crefname{section}{Section}{Sections}
\crefname{equation}{\unskip}{\unskip}
\crefname{enumi}{\unskip}{\unskip}
\crefname{subsection}{Subsection}{Subsections}

\makeatletter
\newcommand{\mylabel}[2]{#2\def\@currentlabel{#2}\label{#1}}
\makeatother

\renewcommand{\iff}{\Leftrightarrow}

\newcommand{\impl}{\Rightarrow}

\newcommand{\id}{\mathrm{id}}

\newcommand{\I}{\mathcal I}

\newcommand{\cG}[1]{\mathcal G{(#1)}}

\newcommand{\End}[1]{\operatorname{\mathrm{End}}{#1}}

\newcommand{\mend}{\operatorname{end}}

\DeclareMathOperator{\im}{im}

\newcommand{\m}{{}^{-1}}

\newcommand{\0}{\theta}
\newcommand{\ve}{\varepsilon}
\newcommand{\e}{\epsilon}

\newcommand{\af}{\alpha}
\newcommand{\bt}{\beta}
\newcommand{\lb}{\lambda}

\newcommand{\gm}{\gamma}
\newcommand{\f}{\varphi}
\newcommand{\s}{\sigma}

\newcommand{\dl}{\delta}
\newcommand{\z}{\zeta}

\newcommand{\B}{\mathcal B}

\newcommand{\tl}{\tilde}

\newcommand{\sst}{\subseteq}

\newcommand{\bd}{\mathbf{d}}
\newcommand{\br}{\mathbf{r}}

\newcommand{\irr}[1]{\operatorname{\mathrm{irr}}{(#1)}}
\begin{document}

\title[$H^3(S,A)$ and crossed module extensions of $A$ by $S$]{Inverse semigroup cohomology\\ and crossed module extensions\\ of semilattices of groups by inverse semigroups}
	\author{Mikhailo Dokuchaev}
	\address{Instituto de Matem\'atica e Estat\'istica, Universidade de S\~ao Paulo,  Rua do Mat\~ao, 1010, S\~ao Paulo, SP,  CEP: 05508--090, Brazil}
	\email{dokucha@gmail.com}
	\author{Mykola Khrypchenko}
	\address{Departamento de Matem\'atica, Universidade Federal de Santa Catarina, Campus Reitor Jo\~ao David Ferreira Lima, Florian\'opolis, SC,  CEP: 88040--900, Brazil}
	\email{nskhripchenko@gmail.com}
	\author{Mayumi Makuta}
	\address{Instituto de Matem\'atica e Estat\'istica, Universidade de S\~ao Paulo,  Rua do Mat\~ao, 1010, S\~ao Paulo, SP,  CEP: 05508--090, Brazil}
	\email{may.makuta@gmail.com} 
	\subjclass[2010]{ Primary: 20M18, 20M30; secondary: 20M50}
	\keywords{Crossed module, crossed module extension, cohomology, inverse semigroup, semilattice of groups, $F$-inverse monoid}
	\begin{abstract} 
		We define and study the notion of a crossed module over an inverse semigroup and the corresponding $4$-term exact sequences, called crossed module extensions. For a crossed module $A$ over an $F$-inverse monoid $T$, we show that equivalence classes of admissible crossed module extensions of $A$ by $T$ are in a one-to-one correspondence with the elements of the cohomology group $H^3_\le(T^1,A^1)$.
	\end{abstract}

	\maketitle
	
	\tableofcontents
	
	\section*{Introduction}  

The interpretations of low-dimensional group cohomology which deal with some types of short exact sequences go back to the early works by Schreier~\cite{Schreier26,Schreier25}. In the modern terminology, each $2$-cocycle $\s\in Z^2(G,A)$ defines the \textit{crossed product} extension $A*_\s G$ of $A$ by $G$, and each extension of a $G$-module $A$ by $G$ induces an element of $Z^2(G,A)$ as its \textit{factor set}. This leads to a one-to-one correspondence between $H^2(G,A)$ and the equivalence classes of extensions of $A$ by $G$ (see, for example,~\cite{Maclane,Brown}). There are numerous generalizations of this result in several directions. For instance, R\'edei~\cite{Redei52} introduced and studied the so-called Schreier extensions of monoids (which were later associated with certain monoid co\-ho\-mo\-lo\-gy~\cite{Patchkoria14} in~\cite{Patchkoria18}), Lausch~\cite{Lausch} provided a cohomological framework for the earlier results by Coudron~\cite{Coudron68} and D'Alarcao~\cite{DAlarcao69} on extensions of semilattices of groups by inverse semigroups, Dokuchaev and Khrypchenko~\cite{DK2,DK3} investigated a class of extensions of semilattices of groups by groups which can be described in terms of the second \textit{partial cohomology group}~\cite{DK}.

The third group cohomology $H^3(G,A)$ naturally appears as the set of \textit{obstructions} to extensions of an \textit{abstract kernel}. The classical result states that an abstract kernel admits an extension if and only if the corresponding obstruction is trivial~\cite{Maclane,Brown}. There exist analogs of this fact in the context of extensions of monoids~\cite{MFMPS20}, semilattices of groups by inverse semigroups~\cite{Lausch} and for a class of extensions of semilattices of groups by groups~\cite{DKM}. Another, maybe lesser-known, interpretation of $H^3(G,A)$ (which naturally generalizes to $H^n(G,A)$ for all $n\ge 3$) uses the notion of a \textit{crossed module} over a group. Crossed modules seem to be first introduced in the work by Whitehead~\cite{Whitehead49} on algebraic topology. They also appeared, together with the corresponding $4$-term exact sequences, in the article by Maclane~\cite{MacLane49}, although he did not use the term ``crossed module''. The one-to-one correspondence between the elements of $H^3(G,A)$ and the equivalence classes of $4$-term exact sequences was established by Gerstenhaber~\cite{Gerstenhaber66}. Finally, Huebschmann~\cite{Huebschmann80} and Holt~\cite{Holt79} independently generalized the result to an arbitrary $n\ge 3$. For more details we recommend Maclane's historical note appended to Holt's paper~\cite{Holt79}.

In this work we introduce the notion of a crossed module over an inverse semigroup. Our definition is inspired by the definition of a module over an inverse semigroup in the sense of Lausch~\cite{Lausch} and by the definition of a crossed module over a group in the sense of Whitehead~\cite{Whitehead49} and Maclane~\cite{MacLane49}. In \cref{sec-crossed-mod-ext} we associate, with any crossed $S$-module $A$, a $4$-term exact sequence of inverse semigroups $A \xrightarrow{i} N \xrightarrow{\beta} S \xrightarrow{\pi} T$, which we call a \textit{crossed module extension of $A$ by $T$}. We then show that any such extension induces a $T$-module structure on $A$, and equivalent extensions induce the same $T$-module structure on $A$. In \cref{sec-from-E(T_A)-to-H^3(T^1_A^1)} we construct a map from the set $\mathcal{E}(T,A)$ of equivalence classes of crossed module extensions of a $T$-module $A$ by $T$ to the group $H^3(T^1,A^1)$. We then introduce the so-called {\it admissible} crossed module extensions and show that the set $\mathcal{E}_\le(T,A)$ of equivalence classes of admissible crossed module extensions of $A$ by $T$ is mapped to $H^3_\le(T^1,A^1)$. \cref{sec:H3toE} is the main technical part of the paper. With any cocycle from $Z^3_\le(T^1,A^1)$ we associate a crossed module extension $A \xrightarrow{i} N \xrightarrow{\beta} S \xrightarrow{\pi} T$ of $A$ by $T$, which gives rise to a map from $H^3_\le(T^1,A^1)$ to $\mathcal{E}(T,A)$. Here $S$ is the \textit{$E$-unitary cover}~\cite{McAlisterReilly77} of $T$ through the free group $FG(T)$ and $N$ is a semilattice of groups which can be seen as a direct product of $A$ and $K=\pi\m(E(T))$ in the appropriate category. Our approach is based on that of Maclane~\cite{MacLane49}, but the technical details are much more complicated, since, unlike the group case, the semigroups $S$ and $K$ are not free, so we cannot define homomorphisms on them just sending the generators as we wish. Finally, in \cref{sec-H^3_le<->E_le} we show that $H^3_\le(T^1,A^1)$ is mapped bijectively onto $\mathcal{E}_\le(T,A)$, whenever $T$ is an \textit{$F$-inverse} monoid. This is proved in \cref{H^3_le(T_A)<->E(T_A)}, which is the main result of our work.

	\section{Preliminaries} \label{sec:Prelim}

An \textit{inverse semigroup} is a semigroup $S$ in which every element $s\in S$ admits a unique \textit{inverse}, denoted $s\m$, such that $ss\m s=s$ and $s\m ss\m=s\m$. The set of idempotents $E(S)$ of an inverse semigroup $S$ is a semilattice under the multiplication in $S$. If an inverse semigroup $S$ can be represented as a disjoint union of groups, then the identity elements of these groups are exactly the idempotents of $S$, and the inverses of the group elements coincide with the inverses in $S$. In this case we write $S=\bigsqcup_{e\in E(S)}S_e$, where $S_e=\{s\in S\mid ss\m=s\m s=e\}$ is a \textit{group component} of $S$. Inverse semigroups of this type are called \textit{semilattices of groups}, or sometimes \textit{Clifford semigroups}~\cite[II.2]{Petrich}. These are exactly the inverse semigroups whose idempotents are central~\cite[Theorem II.2.6]{Petrich}. In particular, any commutative inverse semigroup is a \textit{semilattice of abelian groups}.

A homomorphism between two inverse semigroups respects inverses, and a homomorphic image of an inverse semigroup is always an inverse semigroup. A special class of inverse semigroup homomorphisms are \textit{idem\-potent-separating homomorphisms}, i.e. the ones that are injective on idempotents. If $\pi: S \to S'$ is an idempotent-separating epimorphism, then the inverse image of any $e' \in E(S')$ is a subgroup $K_e$ of $S$, where $e$ is the unique idempotent of $S$ such that $\pi(e) = e'$. The collection $\{K_e\}_{e\in E(S)}$ of subgroups of $S$ determines the inverse subsemigroup $K = \bigsqcup_{e\in E(S)} K_e$ of $S$ and hence a semilattice of groups, which is called a \textit{(group) kernel normal system}. Conversely, given an inverse semigroup $S$, a collection $\mathcal K=\{K_e\}_{e\in E(S)}$ of disjoint subgroups of $S$ with identity elements $e\in E(S)$ is a kernel normal system of $S$ exactly when the union $K=\bigsqcup_{e\in E(S)}K_e$ is a subsemigroup of $S$ satisfying $sKs^{-1}\subseteq K$ for all $s\in S$ (see \cite[Theorem 7.54]{Clifford-Preston-2}). There is a characterization of the kernel normal systems of an arbitrary inverse semigroup in \cite[p. 60]{Clifford-Preston-2}.


	Let us now recall some definitions from \cite{DK2,Lausch}.
	\begin{defn}
		Let $A$ be a semilattice of groups. An endomorphism $\f : A \to A$ is called \textit{relatively invertible} if there exist $\bar{\f}\in \End A$ and $e_\f \in E(A)$ satisfying:
		\begin{enumerate}[label=(\roman*)]
			\item $\bar{\f} \circ \f (a) = e_\f a$ and $\f \circ \bar{\f}(a) = \f(e_\f) a$, for any $a \in A$;
			\item $e_\f$ is the identity of $\bar{\f}(A)$ and $\f(e_\f)$ is the identity of $\f(A)$.
		\end{enumerate}
		The set of relatively invertible endomorphisms of $A$ is denoted by $\mend A$.
	\end{defn}
	For a semilattice of groups $A$, we denote by $\I_{ui} (A)$ the inverse semigroup of isomorphisms between principal ideals of $A$. Then we have the following.
	\begin{prop}{\cite[Proposition 3.4]{DK2}}\label{prop:3.4}
		The set $\mend A$ forms an inverse subsemigroup of $\End A$ isomorphic to $\I_{ui} (A)$.
	\end{prop}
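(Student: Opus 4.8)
The plan is to construct an explicit map $\Phi\colon\mend A\to\I_{ui}(A)$, show it is a bijection, and then show it is a semigroup homomorphism; since $\I_{ui}(A)$ is an inverse semigroup, this will simultaneously give that $\mend A$ is an inverse subsemigroup of $\End A$ and that $\Phi$ is the desired isomorphism.

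First I would record the elementary properties of a relatively invertible $\f$. Since the identity element of the subsemigroup $\f(A)$ is unique whenever it exists, condition~(ii) forces $\f(e_\f)$ to be that identity, and a short computation with~(i) shows that the pair $(\bar\f,e_\f)$ is \emph{uniquely} determined by $\f$: if $(\bar\f',e_\f')$ is another such pair, then $\f(e_\f)=\f(e_\f')$ yields $e_\f=\bar\f\f(e_\f')=\bar\f\f(e_\f)=e_\f e_\f'$ and symmetrically $e_\f'=e_\f e_\f'$, so $e_\f=e_\f'$, and then $\bar\f$ is pinned down on $\f(A)$ by~(i) and elsewhere by $\bar\f(b)=\bar\f(e_{\bar\f}b)$. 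Along the way one gets $\f(A)=A\f(e_\f)$, $\bar\f(A)=Ae_\f$, the fact that $\f|_{Ae_\f}$ is an isomorphism of $Ae_\f$ onto $\f(A)$ with inverse $\bar\f|_{\f(A)}$, the identity $\f(a)=\f(e_\f a)$ for all $a\in A$, and $\bar\f\in\mend A$ with $e_{\bar\f}=\f(e_\f)$ and $\overline{\bar\f}=\f$. I then set
\[
\Phi(\f)=\f|_{Ae_\f}\colon Ae_\f\to A\f(e_\f),
\]
which is a well-defined element of $\I_{ui}(A)$ by the above.

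Next I would check that $\Phi$ is a bijection. Injectivity: from $\Phi(\f)=\Phi(\psi)$ we get $Ae_\f=Ae_\psi$, hence $e_\f=e_\psi$, and then $\f(a)=\f(e_\f a)=\psi(e_\psi a)=\psi(a)$ for all $a$. Surjectivity: given an isomorphism $\theta\colon Ae\to Af$ of principal ideals (necessarily $\theta(e)=f$), put $\f(a)=\theta(ea)$ and $\bar\f(b)=\theta\m(fb)$; using centrality of the idempotents $e,f$ one verifies directly that $\f\in\End A$, that~(i) and~(ii) hold with $e_\f=e$, and that $\Phi(\f)=\theta$.

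Finally I would prove that $\Phi$ is multiplicative, which first requires closure of $\mend A$ under composition. For $\f,\psi\in\mend A$ I expect $\f\circ\psi\in\mend A$ with $e_{\f\psi}=\bar\psi(e_\f)$ and $\overline{\f\circ\psi}=\bar\psi\circ\bar\f$: indeed $\bar\psi\bar\f\f\psi(a)=\bar\psi(e_\f)e_\psi a=\bar\psi(e_\f)a$ because $\bar\psi(e_\f)\le e_\psi$, while $\f\psi\bar\psi\bar\f(b)=\f(\psi(e_\psi))\f(e_\f)b=\f\psi(\bar\psi(e_\f))b$, and~(ii) for $\f\circ\psi$ follows from $\f(a)=\f(e_\f a)$ together with the analogous identity for $\psi$. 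Then, composing the partial maps $\Phi(\f)$ and $\Phi(\psi)$ inside $\I_{ui}(A)$, the domain of the composite is $\{a\in Ae_\psi:\psi(a)\in Ae_\f\}=A\bar\psi(e_\f)=Ae_{\f\psi}$ (using $A\psi(e_\psi)\cap Ae_\f=A(\psi(e_\psi)e_\f)$ and $\bar\psi(\psi(e_\psi)e_\f)=\bar\psi(e_\f)$) and on it the composite is $a\mapsto\f(\psi(a))$, so $\Phi(\f)\circ\Phi(\psi)=(\f\circ\psi)|_{Ae_{\f\psi}}=\Phi(\f\circ\psi)$. Hence $\Phi$ is a semigroup isomorphism; in particular $\mend A$ is a subsemigroup of $\End A$, and being isomorphic to the inverse semigroup $\I_{ui}(A)$ it is itself inverse, with $\f\m=\bar\f$ (note $\f\bar\f\f=\f$ and $\bar\f\f\bar\f=\bar\f$ by the identities of the first step). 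The main obstacle is precisely this last step: $\mend A$ carries composition of \emph{total} maps while $\I_{ui}(A)$ carries composition of \emph{partial} maps restricted to overlaps, so intertwining the two products demands both the closure computation (correctly guessing and verifying $e_{\f\psi}$ and $\overline{\f\circ\psi}$) and careful bookkeeping of the domain $A\bar\psi(e_\f)$ of the partial composite; the uniqueness of $(\bar\f,e_\f)$ and the identity $\f(a)=\f(e_\f a)$ are the small lemmas that make everything fit together.
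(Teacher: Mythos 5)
Your argument is correct. There is nothing in the paper to compare it with: the proposition is imported verbatim from \cite[Proposition 3.4]{DK2} and not proved here, and your route --- establishing uniqueness of $(\bar\f,e_\f)$, the identities $\f(A)=A\f(e_\f)$, $\bar\f(A)=Ae_\f$, $\f(a)=\f(e_\f a)$, sending $\f\mapsto\f|_{Ae_\f}\in\I_{ui}(A)$, and then matching the two products via $e_{\f\circ\psi}=\bar\psi(e_\f)$, $\overline{\f\circ\psi}=\bar\psi\circ\bar\f$ and the domain computation $A\bar\psi(e_\f)$ --- is the standard (and surely the intended) proof, with all the verifications checking out. The only blemish is the miswritten chain $e_\f=\bar\f\f(e_\f')=\bar\f\f(e_\f)=e_\f e_\f'$ in the uniqueness step, which should read $e_\f=\bar\f\f(e_\f)=\bar\f\f(e_\f')=e_\f e_\f'$; the intended argument is clearly right.
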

	
	\begin{defn}{\cite[Definition 3.8]{DK2}}\label{def:TwistMod}
		Let $S$ be an inverse semigroup. A \textit{twisted $S$-module} is a semilattice of groups $A$ together with a triple $\Lambda = (\alpha, \lambda, f)$, where $\alpha : E(S) \to E(A)$ is an isomorphism, $\lambda$ is a map $S \to \mend A$ and $f$ is a map $S^2 \to A$ (called a \textit{twisting}) such that $f(s,t) \in A_{\alpha(stt^{-1} s^{-1})}$ satisfying:
		\begin{enumerate}[label=(TM\arabic*),leftmargin=2cm]
			\item\label{TM1} $\lambda_e(a) = \alpha(e)a$, for all $e \in E(S)$ and $a \in A$;
			\item\label{TM2} $\lambda_s(\alpha(e)) = \alpha(ses^{-1})$, for all $s \in S$, $e \in E(S)$;
			\item\label{TM3} $\lambda_s \circ \lambda_t(a) = f(s,t)\lambda_{st}(a)f(s,t)^{-1}$, for all $s,t \in S$ and $a \in A$;
			\item\label{TM4} $f(se,e) = \alpha(ses^{-1})$, $f(e,es) = \alpha(ess^{-1})$, for all $s \in S$, $e \in E(S)$;
			\item\label{TM5} $\lambda_s(f(t,u)) f(s,tu) = f(s,t) f(st,u)$, for all $s,t,u \in S$.
		\end{enumerate}
	\end{defn}
	
	\begin{rem}
		In what follows we shall consider twisted $S$-modules with trivial twistings, i.e. those which satisfy $f(s,t) = \alpha(stt^{-1}s^{-1})$, for all $s,t \in S$. Then, as a consequence of \labelcref{TM3}, the map $\lambda$ is in fact a homomorphism. If, moreover, $A$ is commutative then $A$ is called an \textit{$S$-module} and the pair $(\alpha, \lambda)$ an \textit{$S$-module structure on $A$}\footnote{This notion of an $S$-module is in fact equivalent to that in the sense of Lausch~\cite[p. 274]{Lausch}.}.
	\end{rem}

	Given an $S$-module $A$ and $n\ge 1$, denote by $C^n(S^1,A^1)$ the abelian group of functions
	$$
	\left\{f:S^n\to A\mid f(s_1,\dots,s_n)\in A_{\alpha(s_1\dots s_ns\m_n\dots s\m_1)}\right\}
	$$
	under the coordinate-wise multiplication. The groups $C^n(S^1,A^1)$ form a cochain complex under the coboundary homomorphism $\delta^n:C^n(S^1,A^1)\to C^{n+1}(S^1,A^1)$ mapping $f\in C^n(S^1,A^1)$ to $\delta^nf\in C^{n+1}(S^1,A^1)$, where
	\begin{align*}
	(\delta^nf)(s_1,\dots,s_{n+1})&=\lambda_{s_1}(f(s_2,\dots,s_{n+1}))\notag\\
	&\quad\prod_{i=1}^nf(s_1,\dots,s_is_{i+1},\dots,s_{n+1})^{(-1)^i}\notag\\
	&\quad f(s_1,\dots,s_n)^{(-1)^{n+1}}.
	\end{align*}
	Denote $\ker\delta^n$ by $Z^n(S^1,A^1)$, $\im\delta^{n-1}$ by $B^n(S^1,A^1)$ and $Z^n(S^1,A^1)/B^n(S^1,A^1)$ by $H^n(S^1,A^1)$. The elements of $C^n(S^1,A^1)$, $Z^n(S^1,A^1)$ and $B^n(S^1,A^1)$ will be called {\it $n$-cochains}, {\it $n$-cocycles} and {\it $n$-coboundaries} of $S$ with values in $A$, respectively. The group $H^n(S^1,A^1)$ is isomorphic to the Lausch cohomology group $H^n(S,A)$ for all $n\ge 2$ as shown in~\cite[Proposition 2.13]{DK3}.
	
	An $n$-cochain $f\in C^n(S^1,A^1)$, $n\ge 1$, is said to be {\it order-preserving}, if
	\[s_1\le t_1,\dots,s_n\le t_n\impl f(s_1,\dots,s_n)\le f(t_1,\dots,t_n).\]
	Such $n$-cochains form a subgroup of $C^n(S^1,A^1)$, denoted by $C^n_\le(S^1,A^1)$. Since moreover $\dl^n\left(C^n_\le(S^1,A^1)\right)\sst C^{n+1}_\le(S^1,A^1)$, we obtain the cochain complex
	\[C^1_\le(S^1,A^1)\overset{\delta^1}{\to}\dots\overset{\delta^{n-1}}{\to}C^n_\le(S^1,A^1)\overset{\delta^n}{\to}\dots\]
	We add one more term $C^0_\le(S^1,A^1)$ on the left of this complex, whose definition we do not need in this paper, and naturally define the groups of \textit{order-preserving $n$-cocycles} $Z^n_\le(S^1,A^1)$, \textit{$n$-coboundaries} $B^n_\le(S^1,A^1)$ and \textit{$n$-cohomologies} $H^n_\le(S^1,A^1)$ of $S$ with values in $A$. This inverse semigroup cohomology is related to the partial group cohomology~\cite{DK} as proved in~\cite[Theorem 3.16]{DK3} and will also play a crucial role in our paper.
	
	\section{Crossed module extensions of \texorpdfstring{$A$}{A} by \texorpdfstring{$T$}{T}}\label{sec-crossed-mod-ext}
	In this section, we define the notion of a crossed module over an inverse semigroup and construct the related $4$-term sequence of inverse semigroups.

	\begin{defn}\label{def:modcruz}
		Let $S$ be an inverse semigroup. A \textit{crossed $S$-module} is a semilattice of groups $N$ with a triple $(\alpha, \lambda, \beta)$, where $\alpha$ is an isomorphism $E(S) \to E(N)$, $\lambda$ is a homomorphism $S \to \mend N$ and $\beta$ is an idempotent-separating homomorphism $N \to S$ such that $\beta|_{E(N)} = \alpha^{-1}$ and
		\begin{enumerate}[label=(CM\arabic*),leftmargin=2cm]
			\item\label{CM1} $\lambda_e(n) = \alpha(e)n$, for all $e \in E(S)$, $n \in N$;
			\item\label{CM2} $\lambda_s(\alpha(e)) = \alpha(ses^{-1})$, for all $s \in S$, $e \in E(S)$;
			\item\label{CM3} $\lambda_{\beta(n)}(n') = nn'n^{-1}$, for all $n,n' \in N$;
			\item\label{CM4} $\beta(\lambda_s(n)) = s \beta(n) s^{-1}$, for all $s \in S$, $n \in N$.
		\end{enumerate}
	\end{defn}
	
	\begin{rem}
		 Conditions \labelcref{CM1,CM2} mean that the pair $(\alpha,\lambda)$ defines a twisted $S$-module structure on $N$ with trivial twisting. Condition \labelcref{CM4} means that the homomorphism $\beta$ is equivariant with respect to the conjugation on $S$, and \labelcref{CM3} is sometimes called the \textit{Peiffer identity} in the literature on group cohomology (see \cite{Brown}).
	\end{rem}
	
	\begin{exm}
		Let $N$ be a semilattice of groups and $S = \mend N$. For $\e\in E(S)$ we define $\af(\e)=e_\e\in E(N)$ \cite[Corollary 3.5]{DK2}. We also set $\lambda=\id:S\to\mend N$ and $\beta:N\to S$ to be the map sending $n\in N$ to the conjugation $\bt_n$ by $n$. Then \labelcref{CM1,CM3} hold by definition. As for \labelcref{CM2}, we have:
			\begin{align*}
			 	(\f\e\bar\f)(n)&=\f(e_\e\bar\f(n))=\f(e_\e)\f(\bar\f(n))=\f(e_\e)\f(e_\f) n=\f(e_\e) n,
			\end{align*}
			so that $\af(\f\e\bar\f)=e_{\f\e\bar\f}=\f(e_\e)=\lb_\f(e_\e)=\lb_\f(\af(\e))$. Moreover,
			\begin{align*}
			(\varphi \beta_n \bar{\varphi})(n') & =  \varphi(n \bar{\varphi}(n')n^{-1}) =  \varphi(n) \varphi(\bar{\varphi}(n')) \varphi(n^{-1}) \\
			& =  \varphi(n) \varphi(e_\varphi)n' \varphi(n)^{-1}  =  \varphi(n) n' \varphi(n)^{-1}\\
			& = \beta_{\varphi(n)}(n') = \beta_{\lambda_\varphi(n)}(n'),
			\end{align*}
			proving \labelcref{CM4}.
	\end{exm}
	
	\begin{exm}
			Any $S$-module structure $(\af,\lb)$ on a semilattice of abelian groups $N$ (in the sense of \cite{Lausch}) is a crossed $S$-module with $\beta : N \to S$ defined by $\beta(n) = \alpha^{-1}(nn^{-1})$.
			
			Indeed, \labelcref{CM3} is
			\begin{align*}
				\lambda_{\beta(n)}(n') = \lambda_{\alpha^{-1}(nn^{-1})}(n') = \alpha(\alpha^{-1}(nn^{-1}))n' = nn^{-1}n' = nn'n^{-1},
			\end{align*}
			the latter equality being due to the commutativity of $N$. To prove \labelcref{CM4}, observe that $\bt(n)\in E(S)$ and $nn\m=\af(\bt(n))$. Then
			\begin{align*}
				\beta(\lambda_s(n))  =   \alpha^{-1}(\lambda_s(nn^{-1})) 
				 =  \alpha^{-1}(\lambda_s(\alpha(\bt(n)))) =  \alpha^{-1}(\alpha(s\bt(n)s^{-1})) =  s\bt(n)s^{-1}.
			\end{align*} 
	\end{exm}

\begin{exm}
	Any group kernel normal system  $N$ of an inverse semigroup $S$ is a crossed $S$-module with $S$ acting on $N$ by conjugation and $\beta$ being the inclusion.
\end{exm}

	\begin{lem}\label{i(A)-sst-C(N)}
		Given a crossed $S$-module $N$, the set\footnote{This is called the Kernel of $\beta$ in~\cite{Lawson}.}
		\begin{align}\label{A=bt^(-1)(E(S))}
			A = \beta^{-1}(E(S))
		\end{align}
		is a semilattice of groups contained in $C(N)$.
	\end{lem}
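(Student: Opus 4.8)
The plan is to verify first that $A = \beta^{-1}(E(S))$ is an inverse subsemigroup of $N$, then that it is a semilattice of groups, and finally that every element of $A$ is central in $N$. For the subsemigroup claim, note that $\beta$ is a homomorphism of inverse semigroups, so $\beta^{-1}(E(S))$ is closed under multiplication (since $E(S)$ is) and under taking inverses (since $\beta(n\m) = \beta(n)\m$ and $E(S)$ is closed under inverses). Hence $A$ is an inverse subsemigroup of $N$, and in particular it is itself an inverse semigroup. Moreover $E(N) \sst A$ because $\beta|_{E(N)} = \af\m$ maps $E(N)$ into $E(S)$; so $E(A) = E(N)$.

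Next I would show $A$ is a semilattice of groups, i.e.\ that its idempotents are central \emph{in $A$}. But $A$ is an inverse subsemigroup of the semilattice of groups $N$, whose idempotents $E(N)$ are central in all of $N$; restricting this centrality to $A$ gives that $E(A) = E(N)$ is central in $A$. By \cite[Theorem II.2.6]{Petrich} this is exactly the condition for $A$ to be a semilattice of groups, with group components $A_e = A \cap N_e$ for $e \in E(N)$.

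The substantive point is the inclusion $A \sst C(N)$. Let $a \in A$ and $n \in N$; I must show $an = na$. Set $e = aa\m = a\m a \in E(N)$, so that $\beta(a) = \af\m(e) \in E(S)$ by the hypothesis $\beta|_{E(N)} = \af\m$ together with $\beta(a) \in E(S)$ and idempotent-separation (the unique idempotent of $S$ over which $a$ lies is $\af\m(aa\m)$). Now apply the Peiffer identity \labelcref{CM3}: for every $n \in N$,
\[
ana\m = \lb_{\beta(a)}(n) = \lb_{\af\m(e)}(n) = \af(\af\m(e))\,n = en,
\]
where the third equality is \labelcref{CM1} applied to the idempotent $\af\m(e) \in E(S)$. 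Hence $ana\m = en$ for all $n \in N$. Multiplying on the right by $a$ and using $a\m a = e$ together with the fact that $e = aa\m$ is the identity of the group component $N_e$ containing $a$, I get $an = ana\m a = (en)a = n(ea) = na$ once I check $ea = a$ (immediate, since $e = aa\m$) and that $e$ commutes with $n$ (immediate, since $E(N)$ is central in $N$), so $ena = ena = nea\cdot$ — more carefully: $an = an a\m a = (ana\m)a = (en)a = e(na) = (na)e \cdot$ wait, better to write $an = (ana\m)a = en\cdot a$, and separately $na = n(a\m a)\cdot$ no; cleanest is: from $ana\m = en$ multiply on the right by $a$: $an = an(a\m a) = (ana\m)a = (en)a$; and since $e \in E(N) \sst C(N)$ and $ea = a$, $(en)a = (ne)a = n(ea) = na$. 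Therefore $an = na$, so $a \in C(N)$.

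The only mild subtlety — and the step I would flag as the one deserving care — is the identification $\beta(a) = \af\m(aa\m)$ for $a \in A$: it uses that $\beta$ is idempotent-separating, so the idempotents of $S$ are in bijection with those of $N$ via $\af\m$, and $\beta(a)$, being an idempotent of $S$ lying in the same $\mathcal H$-class structure as the unique idempotent $aa\m$ over it, must equal $\af\m(aa\m) = \af\m(a\m a)$. Everything else is a direct application of \labelcref{CM1} and \labelcref{CM3} together with centrality of $E(N)$ in the ambient semilattice of groups $N$.
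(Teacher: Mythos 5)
Your proposal is correct and takes essentially the same route as the paper: both hinge on the Peiffer identity \labelcref{CM3}, using $\beta(a)=\beta(a)\beta(a)^{-1}=\beta(aa^{-1})$ to turn conjugation by $a$ into multiplication by the central idempotent $e=aa^{-1}$, and then cancelling to get $an=(ana^{-1})a=(en)a=na$. The only cosmetic differences (and, despite the false starts in your write-up, the final computation is the paper's) are that you evaluate $\lambda_{\beta(a)}$ via \labelcref{CM1} after identifying $\beta(a)=\alpha^{-1}(e)$, where the paper applies \labelcref{CM3} to $e$ itself, and that you obtain the semilattice-of-groups structure of $A$ from centrality of its idempotents rather than from the kernel normal system attached to the idempotent-separating $\beta$.
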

	\begin{proof}
		Since $\beta$ is idempotent-separating, we have the group kernel normal system $A=\bigsqcup_{e \in E(N)}A_e$, where $A_e  = \{ a \in N \mid \beta(a) = \beta(e) \}$.
		
		For each $e \in E(N)$ and $a\in A_e$, using \labelcref{CM3}, we have
		\begin{align*}
			ne=ene^{-1}=\lambda_{\beta(e)}(n)=\lambda_{\beta(a)}(n)=ana^{-1},
		\end{align*}
		whence $na=nea = ana^{-1}a = an$, so that $A \subseteq C(N)$.
	\end{proof}
	
	Since $s \beta(N) s^{-1} \subseteq \beta(N)$ by \labelcref{CM4} and $\beta(N)$ is a subsemigroup of $S$, the collection $\mathcal{B} = \{\beta(N_e)\}_{e \in E(N)}$ is a group kernel normal system in $S$ by \cite[Theorem 7.54]{Clifford-Preston-2}. Let $\rho_\B$ be the induced congruence on $S$ and
	\begin{align}\label{T=S-over-rho_B}
		T = S/{\rho_\mathcal{B}}.
	\end{align}
	Then $\pi = \rho_\mathcal{B}^\natural : S \to T$ is an idempotent-separating epimorphism, such that $\pi^{-1}(E(T)) = \beta(N)$.
	
	This motivates the following.
	\begin{defn}\label{defn-crossed-mod-ext}
		Let $A$ be a semilattice of abelian groups and $T$ an inverse semigroup. A \textit{crossed module extension of $A$ by $T$} is a $4$-term sequence 
		\begin{equation}\label{eq:seq4terms}
		A \xrightarrow{i} N \xrightarrow{\beta} S \xrightarrow{\pi} T,
		\end{equation}
		 where 
		 \begin{enumerate}[label=(CME\arabic*),leftmargin=2cm]
		 	\item\label{CME1} $N$ is a crossed $S$-module and $\bt$ is the corresponding crossed module homomorphism;
		 	\item\label{CME2} $i$ is a monomorphism and $\pi$ is an idempotent-separating epimorphism;
		 	\item\label{CME3} $i(A) = \beta^{-1}(E(S))$ and $\beta(N) = \pi^{-1}(E(T))$.
		 \end{enumerate}
	\end{defn}
	Condition \labelcref{CME3} says that the sequence \labelcref{eq:seq4terms} is \textit{exact}.

	\begin{defn}\label{equiv-ext-defn}
		By the \textit{equivalence of crossed module extensions} of $A$ by $T$ we mean the smallest equivalence relation identifying $A \xrightarrow{i} N \xrightarrow{\beta} S \xrightarrow{\pi} T$ and $A \xrightarrow{i'} N' \xrightarrow{\beta'} S' \xrightarrow{\pi'} T$, such that there are homomorphisms (not necessarily isomorphisms!) $\varphi_1:N\to N'$ and $\varphi_2:S\to S'$ 
		\begin{enumerate}[label=(CMEE\arabic*),leftmargin=2cm]
			\item\label{CMEE1} making the diagram
			\begin{equation}\label{eq:equivseqs4}
				\begin{tikzcd}
					A \ar[r, "i"] \ar[d, equal] & N \ar[r, "\beta"] \ar[d, "\varphi_1"] & S \ar[r, "\pi"] \ar[d, "\varphi_2"] & T \ar[d, equal] \\
					A \ar[r, "i'"] & N' \ar[r, "\beta'"] & S' \ar[r, "\pi'"] & T
				\end{tikzcd}
			\end{equation} 
			commute;
			\item\label{CMEE2} satisfying for all $s \in S$ 
			\begin{align}\label{eq:actcomp}
				\varphi_1\circ\lambda_s = \lambda'_{\varphi_2(s)}\circ\varphi_1,
			\end{align}
			where $\lb:S\to\mend N$ and $\lb':S'\to\mend N'$ are the corresponding crossed module homomorphisms.

		\end{enumerate}
	\end{defn}

\begin{rem}
	Observe that \labelcref{CMEE1} implies
	\begin{align}\label{f_1-circ-af=af'-circ-f_2}
	\f_1\circ\af=\af'\circ\f_2\mbox{ on }E(S).
	\end{align} 
\end{rem}
\begin{proof}
	Indeed, $\bt'\circ\f_1|_{E(N)}=\f_2\circ\bt|_{E(N)}\iff\bt'|_{E(N')}\circ\f_1|_{E(N)}=\f_2\circ\bt|_{E(N)}\iff\af'\m\circ\f_1|_{E(N)}=\f_2\circ\af\m$, whence \labelcref{f_1-circ-af=af'-circ-f_2}.
\end{proof}

	\begin{lem}
		Let $A$ be a semilattice of abelian groups and $T$ an inverse semigroup. Then any crossed module extension \labelcref{eq:seq4terms} of $A$ by $T$ induces a $T$-module structure on $A$.
	\end{lem}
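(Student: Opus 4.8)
The plan is to construct the $T$-module structure $(\alpha_A,\lambda_A)$ on $A$ by pushing the crossed $S$-module data forward along $\pi$. Throughout I would identify $A$ with $i(A)=\beta^{-1}(E(S))\subseteq N$ via the monomorphism $i$; by \cref{i(A)-sst-C(N)} this is a central subsemilattice of groups of $N$, and since $\beta|_{E(N)}=\alpha^{-1}$ one has $E(N)\subseteq\beta^{-1}(E(S))$, so $E(A)=E(N)$. Since $\pi$ is an idempotent-separating epimorphism, $\pi|_{E(S)}\colon E(S)\to E(T)$ is an isomorphism, and I would set $\alpha_A:=\alpha\circ(\pi|_{E(S)})^{-1}\colon E(T)\to E(A)$, an isomorphism of semilattices.

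For the action, I would first note that each $\lambda_s$ ($s\in S$) maps $A$ into $A$: if $a\in A$ then $\beta(\lambda_s(a))=s\beta(a)s^{-1}\in E(S)$ by \labelcref{CM4}, so $\lambda_s(a)\in\beta^{-1}(E(S))=A$. Moreover $\lambda_s|_A$ is relatively invertible on $A$, with relative inverse $\lambda_{s^{-1}}|_A$ and associated idempotent $\alpha(s^{-1}s)$ — this is checked from \labelcref{CM1}, \labelcref{CM2} and the homomorphism property of $\lambda$ (for instance $\lambda_{s^{-1}}\circ\lambda_s(a)=\lambda_{s^{-1}s}(a)=\alpha(s^{-1}s)a$ by \labelcref{CM1}). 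Since $\lambda$ is a homomorphism $S\to\mend N$ and each $\lambda_s$ preserves $A$, the assignment $\tilde\lambda\colon S\to\mend A$, $s\mapsto\lambda_s|_A$, is a homomorphism.

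The crucial step is to show that $\tilde\lambda$ factors through $\pi$; granting this, $\lambda_A\colon T\to\mend A$ is well defined by $\lambda_A(\pi(s)):=\lambda_s|_A$. Suppose $\pi(s)=\pi(s')$. Then $\pi(ss^{-1})=\pi(s)\pi(s)^{-1}=\pi(s')\pi(s')^{-1}=\pi(s'(s')^{-1})$, and since $\pi$ is injective on idempotents $ss^{-1}=s'(s')^{-1}=:e$, and likewise $s^{-1}s=(s')^{-1}s'$; also $\pi(s(s')^{-1})\in E(T)$, so $s(s')^{-1}\in\pi^{-1}(E(T))=\beta(N)$, say $s(s')^{-1}=\beta(m)$ with $m\in N$. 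A short computation then yields $\beta(m)s'=s$ and $\beta(mm^{-1})=e$, whence $mm^{-1}=\alpha(e)$. For $a\in A$, using in turn that $\lambda$ is a homomorphism, the Peiffer identity \labelcref{CM3}, and the centrality of $A$ in $N$,
\[
\lambda_s(a)=\lambda_{\beta(m)}\bigl(\lambda_{s'}(a)\bigr)=m\,\lambda_{s'}(a)\,m^{-1}=\lambda_{s'}(a)\,mm^{-1}=\lambda_{s'}(a)\,\alpha(e).
\]
Finally, by \labelcref{CM4} the identity of the group component of $N$ containing $\lambda_{s'}(a)$ is $\alpha\bigl(s'\beta(a)(s')^{-1}\bigr)$, and $s'\beta(a)(s')^{-1}\le s'(s')^{-1}=e$ in $E(S)$, so $\alpha\bigl(s'\beta(a)(s')^{-1}\bigr)\le\alpha(e)$ and the factor $\alpha(e)$ is absorbed; hence $\lambda_s(a)=\lambda_{s'}(a)$, i.e.\ $\lambda_s|_A=\lambda_{s'}|_A$.

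It then remains to check that $(\alpha_A,\lambda_A)$ satisfies the axioms of a $T$-module. That $\lambda_A$ is a homomorphism is inherited from $\tilde\lambda$; conditions \labelcref{TM1} and \labelcref{TM2} for $(\alpha_A,\lambda_A)$ follow from \labelcref{CM1} and \labelcref{CM2} by evaluating on idempotent preimages under $\pi$: for $e'\in E(T)$ with idempotent preimage $s_0\in E(S)$ one gets $\lambda_A(e')(a)=\lambda_{s_0}(a)=\alpha(s_0)a=\alpha_A(e')a$, and for $t=\pi(s)$ one gets $\lambda_A(t)(\alpha_A(e'))=\lambda_s(\alpha(s_0))=\alpha(ss_0s^{-1})=\alpha_A(te't^{-1})$. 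Since $A$ is a semilattice of abelian groups and the twisting is trivial, $(\alpha_A,\lambda_A)$ is the desired $T$-module structure. The one place that takes real work is the factorization of $\tilde\lambda$ through $\pi$ — in particular the use of the Peiffer identity \labelcref{CM3} together with centrality and the order comparison that cancels the spurious idempotent $\alpha(e)$; the remaining verifications are routine manipulations with \labelcref{CM1,CM2,CM3,CM4}.
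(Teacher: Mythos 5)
Your proposal is correct and follows essentially the same route as the paper: define the idempotent map via $\alpha\circ(\pi|_{E(S)})^{-1}$, define the action of $t$ by restricting $\lambda_s$ (for any lift $s$ of $t$) to $i(A)$, prove independence of the lift using the Peiffer identity \labelcref{CM3} together with the centrality of $i(A)$ in $N$ and absorption of the resulting idempotent, and then verify the module axioms from \labelcref{CM1,CM2}. The only cosmetic differences are that you factor the relation $\pi(s)=\pi(s')$ as $s=\beta(m)s'$ rather than $s'=s\beta(n)$, and you additionally spell out that each $\lambda_s|_A$ is relatively invertible, which the paper leaves implicit.
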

	\begin{proof}
		Let $(\alpha, \lambda, \beta)$ be the involved crossed $S$-module structure on $N$. Given $t\in T$, define $\eta_t: A \to A$ by 
		\begin{align}\label{eta_t(a)=i^(-1)(lb_s(i(a)))}
			\eta_t = i\m\circ\lambda_{s}\circ i,
		\end{align}
		where $s\in S$ is such that $\pi(s) = t$. The right-hand side of \labelcref{eta_t(a)=i^(-1)(lb_s(i(a)))} makes sense in view of \cref{CM4,A=bt^(-1)(E(S))}. Moreover, the definition does not depend on the choice of $s \in S$ with $\pi(s) = t$. Indeed, if $\pi(s) = \pi(s')$, then $(s,s') \in \ker \pi$, so that $s' = s\beta(n)$ for some $n \in N_{\beta^{-1}(s^{-1}s)} = N_{\alpha(s^{-1}s)}$. It follows that
		\begin{align*}
		\lambda_{s'}(i(a)) & = \lambda_{s\beta(n)}(i(a)) = \lambda_{s}(\lambda_{\beta(n)}(i(a)))\\
		& = \lambda_{s}(ni(a)n^{-1}) & \text{(by \cref{CM3})}\\
		& = \lambda_{s}(nn^{-1}i(a)) &  \text{(by \cref{i(A)-sst-C(N)})}\\
		&= \lambda_{s}(\alpha(s^{-1}s))\lambda_{s}(i(a)) & \text{(as $n\in N_{\alpha(s^{-1}s)}$)}\\
		& = \alpha(ss^{-1})\lambda_{s}(i(a)) & \text{(by \cref{CM2})}\\
		&= \lambda_{s}(i(a)). & \text{(by \cite[Remark 3.9]{DK2})}
		\end{align*}
		
		We now define 
		\begin{align}\label{0=i^(-1).af.pi^(-1)}
			\0=i\m\circ\af\circ(\pi|_{E(S)})^{-1}.
		\end{align}
	
		It is an isomorphism $E(T) \to E(A)$, since $\af$ and $\pi$ are idempotent-separating. We are going to prove that the pair $(\0,\eta)$ determines a $T$-module structure on $A$. It is clear that $\eta$ is a homomorphism $T\to\End A$. Let $a \in A$, $e \in E(T)$ and $e' \in E(S)$ such that $\pi(e') = e$. Then
		\begin{align*}
			\eta_e(a) & =  i\m(\lambda_{e'}(i(a))) =  i\m(\alpha(e')i(a)) & \text{(by \labelcref{CM1})} \\
			& =  i\m(\alpha(\pi|_{E(S)}^{-1}(e)))a =  \theta(e)a.  & \text{(by \labelcref{0=i^(-1).af.pi^(-1)})}
		\end{align*}
		Now, given $e \in E(A)$, $t \in T$ and $s \in S$, such that $\pi(s) = t$, we have: 
		\begin{align*}
			\eta_t(\theta(e)) & =  i\m(\lambda_s(i(\theta(e))))  =  i\m(\lambda_s(\alpha(\pi|_{E(S)}^{-1}(e)))) & \text{(by \labelcref{0=i^(-1).af.pi^(-1)})}\\
			&  =  i\m(\alpha(s \pi|_{E(S)}^{-1}(e) s^{-1})) & \text{(by \labelcref{CM2})} \\
			& =  i\m(\alpha(\pi|_{E(S)}^{-1}(tet^{-1}))) \\
			& =  \theta(tet^{-1}).  & \text{(by \labelcref{0=i^(-1).af.pi^(-1)})}
		\end{align*}
		
	\end{proof}
	\begin{lem}
		Let $A$ be a semilattice of abelian groups and $T$ an inverse semigroup. Then equivalent crossed module extensions of $A$ by $T$ induce the same $T$-module structure on $A$.
	\end{lem}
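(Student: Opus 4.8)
The plan is to reduce to the relation that generates the equivalence and then to run two short diagram chases. Since ``inducing the same $T$-module structure on $A$'' is itself an equivalence relation on the crossed module extensions of $A$ by $T$, and the equivalence of \cref{equiv-ext-defn} is by definition the smallest equivalence relation containing the relation ``there exist $\f_1$ and $\f_2$ making \labelcref{eq:equivseqs4} commute and satisfying \labelcref{eq:actcomp}'', it suffices to prove the statement for a pair of extensions $A\xrightarrow{i}N\xrightarrow{\bt}S\xrightarrow{\pi}T$ and $A\xrightarrow{i'}N'\xrightarrow{\bt'}S'\xrightarrow{\pi'}T$ directly connected by such a pair $(\f_1,\f_2)$. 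I would write $(\0,\eta)$ and $(\0',\eta')$ for the $T$-module structures on $A$ produced by the previous lemma, recorded in \labelcref{eta_t(a)=i^(-1)(lb_s(i(a)))}, \labelcref{0=i^(-1).af.pi^(-1)} and their primed counterparts, and the goal is to show $\0=\0'$ and $\eta_t=\eta'_t$ for every $t\in T$.

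The first step I would take is to record two elementary compatibilities between the connecting homomorphisms and the idempotent-separating maps. From $i'=\f_1\circ i$ (part of \labelcref{CMEE1}) one gets $i'(A)=\f_1(i(A))$ and $(i')\m\circ\f_1\circ i=\id_A$, so that $(i')\m\circ\f_1$ coincides with $i\m$ on all of $i(A)$; and from $\pi=\pi'\circ\f_2$, together with the fact that $\pi$ and $\pi'$ are idempotent-separating, $\f_2$ sends $(\pi|_{E(S)})\m(e)$ to $(\pi'|_{E(S')})\m(e)$ for every $e\in E(T)$. Granting these, the equality $\0=\0'$ would follow by a two-line computation from the Remark that $\f_1\circ\af=\af'\circ\f_2$ on $E(S)$ (equation \labelcref{f_1-circ-af=af'-circ-f_2}) together with the observation that $\af$ maps $E(S)$ into $E(N)\sst i(A)$.

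For the actions, I would fix $t\in T$, choose $s\in S$ with $\pi(s)=t$, and note that $\pi'(\f_2(s))=t$ as well, so $\eta'_t=(i')\m\circ\lb'_{\f_2(s)}\circ i'$. Substituting $i'=\f_1\circ i$ and using \labelcref{eq:actcomp} in the form $\lb'_{\f_2(s)}\circ\f_1=\f_1\circ\lb_s$ rewrites this as $\eta'_t=(i')\m\circ\f_1\circ\lb_s\circ i$; applying the first compatibility above then replaces $(i')\m\circ\f_1$ by $i\m$ and yields $\eta'_t=i\m\circ\lb_s\circ i=\eta_t$. The one point that makes this last replacement legitimate is that $\lb_s$ carries $i(A)$ into itself, which is immediate from \labelcref{CM4} and the description $A=\bt\m(E(S))$ in \labelcref{A=bt^(-1)(E(S))}: if $\bt(n)\in E(S)$ then $\bt(\lb_s(n))=s\bt(n)s\m\in E(S)$.

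I do not expect a genuine obstacle: the whole argument is a diagram chase driven by \labelcref{CMEE1}, \labelcref{eq:actcomp}, the Remark \labelcref{f_1-circ-af=af'-circ-f_2} and the $\lb$-invariance of $A$. The single part requiring care is the bookkeeping of the partial inverses $i\m$ and $(i')\m$: whenever $(i')\m$ is applied one must verify that its argument genuinely lies in $i'(A)=\f_1(i(A))$, which is exactly where the inclusion $E(N)\sst i(A)$ and the invariance of $A$ under the maps $\lb_s$ are used.
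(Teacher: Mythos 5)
Your proposal is correct and follows essentially the same route as the paper: reduce to a pair of extensions directly linked by $(\f_1,\f_2)$, then chase the diagrams using $i'=\f_1\circ i$, \labelcref{eq:actcomp}, the identity $\f_1\circ\af=\af'\circ\f_2$ on $E(S)$, and the $\lb$-invariance of $i(A)$ (via \labelcref{CM4}) to get $\eta'_t=\eta_t$ and $\0'=\0$. The only cosmetic difference is that for $\0$ the paper composes with $\pi|_{E(S)}$ and cancels at the end, whereas you invoke the compatibility of $\f_2$ with $(\pi|_{E(S)})\m$ directly; this is the same computation.
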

	\begin{proof}
		It suffices to prove the statement for $A \xrightarrow{i} N \xrightarrow{\beta} S \xrightarrow{\pi} T$ and $A \xrightarrow{i'} N' \xrightarrow{\beta'} S' \xrightarrow{\pi'} T$, two crossed module extensions of $A$ by $T$ admitting a pair of maps $(\f_1,\f_2)$ satisfying \labelcref{CMEE1,CMEE2}. Let $(\alpha, \lambda, \beta)$, $(\alpha', \lambda', \beta')$ be the involved crossed module structures of $S$ and $S'$ on $N$ and $N'$, respectively. Denote by $(\0,\eta)$ and $(\0',\eta')$ the induced $T$-module structures on $A$. Given $t\in T$, there exists $s\in S$, such that $t=\pi(s)=\pi'(\f_2(s))$. Then
		\begin{align*}
			\eta'_t&=i'\m\circ\lb'_{\f_2(s)}\circ i' & \text{(by \labelcref{eta_t(a)=i^(-1)(lb_s(i(a)))})}\\
			&=i'\m\circ\lb'_{\f_2(s)}\circ\f_1\circ i & \text{(since $i'=\f_1\circ i$)}\\
			&=i'\m\circ\f_1\circ\lb_s\circ i & \text{(by \labelcref{eq:actcomp})}\\
			&=i'\m\circ\f_1\circ i\circ i\m\circ\lb_s\circ i & \text{(since $\lb_s(i(A))\sst i(A)$)}\\
			&=i'\m\circ i'\circ i\m\circ\lb_s\circ i & \text{(since $i'=\f_1\circ i$)}\\
			&=i\m\circ\lb_s\circ i=\eta_t. & \text{(by \labelcref{eta_t(a)=i^(-1)(lb_s(i(a)))})}
		\end{align*}
			Furthermore,
		\begin{align*}
		\0'\circ\pi|_{E(S)}&=\0'\circ\pi'\circ\f_2|_{E(S)} & \text{(since $\pi=\pi'\circ\f_2$)}\\
		&=i'\m\circ\af'\circ\f_2|_{E(S)} & \text{(by \labelcref{0=i^(-1).af.pi^(-1)})}\\
		&=i'\m\circ\f_1\circ\af & \text{(by \labelcref{f_1-circ-af=af'-circ-f_2})}\\
		&=i'\m\circ\f_1\circ i\circ i\m\circ\af & \text{(since $E(N)\sst i(A)$)}\\
		&=i\m\circ\af, & \text{(since $i'=\f_1\circ i$)}
		\end{align*}
		whence $\0'=i\m\circ\af\circ(\pi|_{E(S)})\m=i\m\circ\af\circ\pi\m|_{E(T)}=\0$ in view of \labelcref{0=i^(-1).af.pi^(-1)}.
	\end{proof}
	
	\begin{defn}
		Let $A$ be a $T$-module. By a \textit{crossed module extension of $A$ by $T$} we shall mean a sequence from \cref{defn-crossed-mod-ext} which induces the given $T$-module structure on $A$. We denote by $\mathcal{E}(T,A)$ the set of equivalence classes of crossed module extensions of a $T$-module $A$ by $T$.
	\end{defn}

	\section{From \texorpdfstring{$\mathcal{E}(T,A)$}{E(T,A)} to \texorpdfstring{$H^3(T^1,A^1)$}{H³(T¹,A¹)}}\label{sec-from-E(T_A)-to-H^3(T^1_A^1)}
	

	\begin{defn}
		Given a homomorphism of inverse semigroups $\f:S\to T$, a map $\rho : \f(T) \to S$ such that $\f \circ \rho = \id_{\f(T)}$ will be called a \textit{transversal} of $\f$. We say that $\rho$ \textit{respects idempotents} if $\rho(E(\f(T)))\sst E(S)$.
	\end{defn}
	\begin{rem}
		Observe that $E(\f(T))=\f(E(S))$, so one may always choose a transversal $\rho$ of $\f$ which respects idempotents. If, moreover, $\f$ is an idempotent-separating epimorphism and $\rho$ respects idempotents, then
		\begin{align}
		\rho(t)\rho(t)\m&=\rho(t)\rho(t\m)=\rho(tt\m), \mbox{ for any $t\in T$,}\label{rho(t)rho(t)-inv=rho(tt-inv)}\\
		\rho(t)\rho(e)\rho(t)\m&=\rho(tet\m), \mbox{ for any $t\in T$ and $e\in E(T)$,}\label{rho(t)rho(e)rho(t)-inv=rho(tet-inv)}\\
		\rho|_{E(T)}&=\f\m|_{E(T)}.\label{rho|_E(T)=pi-inv|_E(T)}
		\end{align}
	\end{rem}

	\begin{lem}\label{from-crossed-mod-ext-to-C^3}
		Let $T$ be an inverse semigroup and $A$ a $T$-module. Any crossed module extension of $A$ by $T$ determines an element $c\in C^3(T^1,A^1)$.
	\end{lem}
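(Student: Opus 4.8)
The plan is to adapt Maclane's construction of the obstruction cocycle~\cite{MacLane49}, the new ingredient being a careful bookkeeping of idempotents that is invisible in the group case.

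First I would fix a transversal $\rho:T\to S$ of $\pi$ respecting idempotents, which exists because $\pi$ is an idempotent-separating epimorphism; then $\rho$ satisfies \eqref{rho(t)rho(t)-inv=rho(tt-inv)}--\eqref{rho|_E(T)=pi-inv|_E(T)}. For $s,t\in T$ one has $\pi(\rho(s)\rho(t))=st=\pi(\rho(st))$, so $\rho(s)\rho(t)$ and $\rho(st)$ have equal domain idempotents (since $\pi$ is idempotent-separating) and hence $\rho(s)\rho(t)\rho(st)\m\rho(st)=\rho(s)\rho(t)$. Using this together with \eqref{rho(t)rho(t)-inv=rho(tt-inv)} and \eqref{rho(t)rho(e)rho(t)-inv=rho(tet-inv)}, a short computation shows that $\rho(s)\rho(t)\rho(st)\m$ has $\pi$-image the idempotent $stt\m s\m$, hence it lies in $\pi\m(E(T))=\bt(N)$, and that its range and domain idempotents both equal $\rho(stt\m s\m)$; thus it belongs to the group component $\bt\bigl(N_{\af(\rho(stt\m s\m))}\bigr)$ of the semilattice of groups $\bt(N)$. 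Choosing a $\bt$-preimage of it inside $N_{\af(\rho(stt\m s\m))}$ for every pair $(s,t)$, I obtain a map $f:T^2\to N$ with $f(s,t)\in N_{\af(\rho(stt\m s\m))}$ and $\rho(s)\rho(t)=\bt(f(s,t))\rho(st)$.

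The second step is to evaluate $w:=\rho(s)\rho(t)\rho(u)\in S$ in two ways. Bracketing on the left gives $w=\bt\bigl(f(s,t)f(st,u)\bigr)\rho(stu)$ and, since the domain idempotents of $\rho(st)$ and $\rho(s)\rho(t)$ coincide, $\bt\bigl(f(s,t)f(st,u)\bigr)=w\,\rho(stu)\m$. Bracketing on the right, and using \labelcref{CM4} in the form $\bt\bigl(\lb_{\rho(s)}(f(t,u))\bigr)=\rho(s)\bt(f(t,u))\rho(s)\m$ together with the identity $\bt(f(s,tu))=\rho(s)\rho(tu)\rho(stu)\m$, I get
\[
\bt\bigl(\lb_{\rho(s)}(f(t,u))\,f(s,tu)\bigr)=\rho(s)\bt(f(t,u))\,\rho(s)\m\rho(s)\,\rho(tu)\rho(stu)\m .
\]
The crux of the whole argument is to delete the idempotent $\rho(s)\m\rho(s)$ appearing in the middle. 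The point is that the range idempotent of $\rho(tu)\rho(stu)\m$ equals $\rho\bigl((s\m s)(tuu\m t\m)\bigr)$, which lies below $\rho(s)\m\rho(s)$ because $(s\m s)(tuu\m t\m)\le s\m s$ and $\rho|_{E(T)}=(\pi|_{E(S)})\m$ is an order-isomorphism; hence $\rho(s)\m\rho(s)\,\rho(tu)\rho(stu)\m=\rho(tu)\rho(stu)\m$, so that $\bt\bigl(\lb_{\rho(s)}(f(t,u))\,f(s,tu)\bigr)=\rho(s)\bt(f(t,u))\rho(tu)\rho(stu)\m=w\,\rho(stu)\m=\bt\bigl(f(s,t)f(st,u)\bigr)$.

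Finally, from the component indices of the values of $f$ and from \labelcref{CM2} (which forces $\lb_{\rho(s)}$ to map $N_{\af(\rho(tuu\m t\m))}$ into $N_{\af(\rho(stuu\m t\m s\m))}$), both $\lb_{\rho(s)}(f(t,u))\,f(s,tu)$ and $f(s,t)f(st,u)$ lie in $N_{\af(\rho(stuu\m t\m s\m))}$; as they moreover have the same image under $\bt$, their quotient $\lb_{\rho(s)}(f(t,u))\,f(s,tu)\,f(st,u)\m f(s,t)\m$ lies in $\bt\m(E(S))=i(A)$. I then define $c(s,t,u):=i\m\bigl(\lb_{\rho(s)}(f(t,u))\,f(s,tu)\,f(st,u)\m f(s,t)\m\bigr)$; comparing with \eqref{0=i^(-1).af.pi^(-1)} shows that $c(s,t,u)\in A_{\0(stuu\m t\m s\m)}$, which is precisely the range condition defining $C^3(T^1,A^1)$, so that $c\in C^3(T^1,A^1)$. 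The main obstacle is exactly the idempotent bookkeeping in the second and fourth steps: in the group case all the relevant products are honest equalities, whereas here one must repeatedly invoke the idempotent-separation of $\pi$ and the relations \eqref{rho(t)rho(t)-inv=rho(tt-inv)}--\eqref{rho|_E(T)=pi-inv|_E(T)} to guarantee that the constructed elements land in the correct group components.
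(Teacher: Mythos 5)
Your proposal is correct and follows essentially the same route as the paper: fix a transversal of $\pi$, form the factor set measuring the failure of $\rho$ to be multiplicative, compare the two bracketings of $\rho(s)\rho(t)\rho(u)$ using \labelcref{CM4}, and use exactness $\bt\m(E(S))=i(A)$ together with the group-component bookkeeping to conclude $c(s,t,u)\in A_{\0(\br(stu))}$. The only cosmetic differences are that you insist on an idempotent-respecting transversal and fold the paper's two maps $f$ and $F$ into a single $N$-valued factor set, neither of which changes the argument.
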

	\begin{proof}
		For a crossed module extension \labelcref{eq:seq4terms} of $A$ by $T$, we choose a transversal $\rho$ of $\pi$. Since $\pi(\rho(x)\rho(y)) = \pi(\rho(xy))$, there exists a function $f : T^2 \to \beta(N)$ such that $f(x,y) \in \beta(N)_{\rho(xy)\rho(xy)^{-1}}$ and
		 \begin{equation}\label{rho(x)rho(y)=f(x_y)rho(xy)}
		\rho(x)\rho(y) = f(x,y)\rho(xy).
		\end{equation} 
		For arbitrary $x,y,z\in T$ we shall calculate $\rho(x)\rho(y)\rho(z)$ in two different ways and use the associativity of $S$:
		\begin{align*}
			\rho(x)(\rho(y)\rho(z)) & =  \rho(x)f(y,z)\rho(yz) & \text{(by \labelcref{rho(x)rho(y)=f(x_y)rho(xy)})}\\
			& =  \rho(x)f(y,z)\rho(x)^{-1}\rho(x)\rho(yz) & \text{(since $E(S)\sst C(\bt(N))$)} \\
			& =  \rho(x)f(y,z)\rho(x)^{-1}f(x,yz)\rho(xyz) & \text{(by \labelcref{rho(x)rho(y)=f(x_y)rho(xy)})}
		\end{align*} 
			and
		\begin{align*}
			(\rho(x)\rho(y))\rho(z) & =  f(x,y)\rho(xy)\rho(z)& \text{(by \labelcref{rho(x)rho(y)=f(x_y)rho(xy)})}\\
			& =  f(x,y)f(xy,z)\rho(xyz), & \text{(by \labelcref{rho(x)rho(y)=f(x_y)rho(xy)})}
		\end{align*}
		whence
		\begin{equation}\label{eq:f2cocfake}
			\rho(x)f(y,z)\rho(x)\m f(x,yz) = f(x,y)f(xy,z).
		\end{equation}
		
		Since $f(x,y) \in \beta(N)_{\rho(xy)\rho(xy)\m}$, we can lift $f$ to a function $F : T^2 \to N$ by 
		\begin{equation}\label{bt(F(x_y))=f(x_y)}
		\beta(F(x,y)) = f(x,y),
		\end{equation} 
		where $F(x,y) \in N_{\af(\rho(xy)\rho(xy)\m)}$. Then the right-hand side of \labelcref{eq:f2cocfake} is $\beta(F(x,y)F(xy,z))$, and its left-hand side becomes
		\begin{align*}
			\rho(x)\beta(F(y,z))\rho(x)\m \beta(F(x,yz)) &= \beta(\lambda_{\rho(x)}(F(y,z))) \beta(F(x,yz)) & \text{(by \labelcref{CM4})}\\
			& =  \beta(\lambda_{\rho(x)}(F(y,z))F(x,yz)).
		\end{align*}
		It follows that $(\lambda_{\rho(x)}(F(y,z))F(x,yz),F(x,y)F(xy,z)) \in \ker \beta$, and hence there exists a function $c : T^3 \to A$ such that 
		\begin{equation}\label{lb_rho(x)(F(y_z))F(x_yz)=i(c(x_y_z))F(x_y)F(xy_z)}
		\lambda_{\rho(x)}(F(y,z)) F(x,yz) = i(c(x,y,z)) F(x,y) F(xy,z),
		\end{equation} 
		where $c(x,y,z) \in A_{i\m\circ\af(\rho(xyz)\rho(xyz)^{-1})}$. Since $\pi$ is idempotent-separating, we have $\rho(xyz)\rho(xyz)\m=\pi\m(xyzz\m y\m x\m)$. It follows from \labelcref{0=i^(-1).af.pi^(-1)} that $c(x,y,z) \in A_{\0(xyzz\m y\m x\m)}$, i.e. $c\in C^3(T^1,A^1)$.
	\end{proof}
	
	\begin{lem}\label{c-is-a-3-cocycle}
		The cochain $c\in C^3(T^1,A^1)$ from \cref{from-crossed-mod-ext-to-C^3} is a cocycle.
	\end{lem}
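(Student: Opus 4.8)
The plan is to show that $\delta^3 c = 0$ by verifying $(\delta^3 c)(x,y,z,w) = \0(\cdots)$ holds trivially, i.e. that the product of the six terms defining $\delta^3 c$ collapses to the identity of the relevant group component. The natural strategy, following Maclane's approach, is to compute the associator of the four elements $\rho(x)\rho(y)\rho(z)\rho(w)$ — equivalently, to expand $\rho(x)\rho(y)\rho(z)$ ``from the outside'' in two ways — and bootstrap the cocycle identity for $c$ out of the cocycle-like identity \labelcref{lb_rho(x)(F(y_z))F(x_yz)=i(c(x_y_z))F(x_y)F(xy_z)} for $F$ together with the module axioms. Concretely, I would apply $\lambda_{\rho(x)}$ to the defining relation \labelcref{lb_rho(x)(F(y_z))F(x_yz)=i(c(x_y_z))F(x_y)F(xy_z)} evaluated at $(y,z,w)$, and separately combine the relation at $(x,yz,w)$, $(x,y,z)$, $(xy,z,w)$, and $(x,y,zw)$, so that both sides eventually express the same element $\lambda_{\rho(x)}(\lambda_{\rho(y)}(F(z,w)))\cdot(\text{stuff})$; the discrepancy between the two computations will be precisely $i((\delta^3 c)(x,y,z,w))$ times a product that must be shown to cancel.

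The key steps, in order, are: first, use \labelcref{TM3} (here with trivial twisting, so $\lambda$ is a homomorphism on $S$) to write $\lambda_{\rho(x)}\circ\lambda_{\rho(y)} = \lambda_{\rho(x)\rho(y)} = \lambda_{f(x,y)\rho(xy)}$, and then use \labelcref{CM3} to convert $\lambda_{f(x,y)} = \lambda_{\beta(F(x,y))}$ into conjugation by $F(x,y)$ inside $N$. Second, apply $\lambda_{\rho(x)}$ to \labelcref{lb_rho(x)(F(y_z))F(x_yz)=i(c(x_y_z))F(x_y)F(xy_z)} at $(y,z,w)$; since $i(A)\sst C(N)$ by \cref{i(A)-sst-C(N)} and $\lambda_{\rho(x)}$ is an endomorphism, $\lambda_{\rho(x)}(i(c(y,z,w)))$ is again central, and I would track it as the first factor of $\delta^3 c$ (after identifying $\lambda_{\rho(x)}\circ i = i\circ\eta_x$ on $A$ via \labelcref{eta_t(a)=i^(-1)(lb_s(i(a)))}, which gives the ``$\eta_x(c(y,z,w))$'' term). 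Third, repeatedly substitute \labelcref{lb_rho(x)(F(y_z))F(x_yz)=i(c(x_y_z))F(x_y)F(xy_z)} to rewrite the four remaining products $F(x,yz)$, $F(xy,z)$, etc., peeling off the factors $c(xy,z,w)^{-1}$, $c(x,yz,w)$, $c(x,y,zw)^{-1}$, $c(x,y,z)$ with the correct signs; the bookkeeping here mirrors the associativity pentagon. Fourth, observe that all the purely-$F$ terms on the two sides coincide by construction (they are the two bracketings of $F(x,y)F(xy,z)F(xyz,w)$ produced by \labelcref{lb_rho(x)(F(y_z))F(x_yz)=i(c(x_y_z))F(x_y)F(xy_z)}), so applying $i\m = \beta$-restricted-inverse and cancelling yields $(\delta^3 c)(x,y,z,w) = \theta(xyzww\m z\m y\m x\m)$, the identity of $A_{\0(xyzww\m\cdots x\m)}$, i.e. $c\in Z^3(T^1,A^1)$.

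The main obstacle I expect is the centrality/commutation bookkeeping: because $N$ is only a semilattice of groups (not a group) and $\lambda_{\rho(x)}$ merely an element of $\mend N$ rather than an automorphism, one must be careful that each factor $i(c(\cdots))$ lands in the right group component $A_{\0(\cdots)}$ and that conjugations by the various $F(\cdot,\cdot)$ — which are honest group elements — commute past the central $i(A)$-factors without shifting idempotents. The trick, as in the proof of \cref{from-crossed-mod-ext-to-C^3}, is to do all the manipulation inside $N$ (and $S$), using $E(S)\sst C(\beta(N))$ and \labelcref{CM2,CM3,CM4} to move idempotents around, and only at the very end apply $i\m$; one should also invoke \cite[Remark 3.9]{DK2} (as in the previous lemma) to absorb the spurious idempotent factors $\lambda_{\rho(x)}(\alpha(\cdots))$ that appear. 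A secondary, purely notational, hurdle is keeping the five substitution instances of \labelcref{lb_rho(x)(F(y_z))F(x_yz)=i(c(x_y_z))F(x_y)F(xy_z)} organized; writing out the two bracketings of $\rho(x)\rho(y)\rho(z)\rho(w)$ explicitly as an aligned display before extracting $\delta^3 c$ keeps this manageable.
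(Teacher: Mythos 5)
Your proposal is correct and follows essentially the same route as the paper: both compute $\lambda_{\rho(x)}[\lambda_{\rho(y)}(F(z,w))F(y,zw)]F(x,yzw)$ in two ways, using the five instances of \labelcref{lb_rho(x)(F(y_z))F(x_yz)=i(c(x_y_z))F(x_y)F(xy_z)}, the homomorphism property of $\lambda$ together with \labelcref{CM3} to turn $\lambda_{\rho(x)\rho(y)}=\lambda_{\beta(F(x,y))\rho(xy)}$ into conjugation by $F(x,y)$, centrality of $i(A)$, cancellation of the common product $F(x,y)F(xy,z)F(xyz,w)$, and injectivity of $i$ to extract the $3$-cocycle identity. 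The only cosmetic slip is calling $i\m$ a ``$\beta$-restricted inverse''; what is actually used is just that $i$ is injective, as in the paper.
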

	\begin{proof}
		We shall express 
		\[
		\lambda_{\rho(x)}[\lambda_{\rho(y)}(F(z,w)) F(y,zw)] F(x,yzw)
		\] 
		in two ways. On the one hand
		\begin{align*}
			&\lambda_{\rho(x)}[\lambda_{\rho(y)}(F(z,w)) F(y,zw)] F(x,yzw) \\
			 &\quad= \lambda_{\rho(x)}[i(c(y,z,w))  F(y,z) F(yz,w)] F(x,yzw) & \text{(by \labelcref{lb_rho(x)(F(y_z))F(x_yz)=i(c(x_y_z))F(x_y)F(xy_z)})}\\
			 &\quad= \lambda_{\rho(x)}(i(c(y,z,w))) \underbrace{\lambda_{\rho(x)}(F(y,z))} \underbrace{\lambda_{\rho(x)}(F(yz,w)) F(x,yzw)} \\
			 &\quad= \lambda_{\rho(x)} (i(c(y,z,w))) i(c(x,y,z)) F(x,y) F(xy,z) F(x,yz)^{-1} & \text{(by \labelcref{lb_rho(x)(F(y_z))F(x_yz)=i(c(x_y_z))F(x_y)F(xy_z)})}\\
			 &\quad\quad\cdot i(c(x,yz,w)) F(x,yz) F(xyz,w) \\
			 &\quad= \lambda_{\rho(x)} (i(c(y,z,w))) i(c(x,y,z))i(c(x,yz,w)) & \text{(as $i(A)\sst C(N)$)}\\
			 &\quad\quad\cdot F(x,y) F(xy,z) F(x,yz)^{-1} F(x,yz) F(xyz,w) \\
			 &\quad= \lambda_{\rho(x)} (i(c(y,z,w))) i(c(x,y,z)) i(c(x,yz,w)) F(x,y) F(xy,z) F(xyz,w).
		\end{align*}
			On the other hand
			\begin{align*}
				& \lambda_{\rho(x)}[\lambda_{\rho(y)}(F(z,w)) F(y,zw)] F(x,yzw) \\
				 &\quad= \lambda_{\rho(x)}(\lambda_{\rho(y)}(F(z,w))) \underbrace{\lambda_{\rho(x)}(F(y,zw)) F(x,yzw)} \\
				&\quad= \lambda_{\rho(x)\rho(y)}(F(z,w)) i(c(x,y,zw)) F(x,y) F(xy,zw) & \text{(by \labelcref{lb_rho(x)(F(y_z))F(x_yz)=i(c(x_y_z))F(x_y)F(xy_z)})} \\
				&\quad= \lambda_{\beta(F(x,y))\rho(xy)}(F(z,w)) i(c(x,y,zw)) F(x,y) F(xy,zw) & \text{(by \labelcref{rho(x)rho(y)=f(x_y)rho(xy),bt(F(x_y))=f(x_y)})}\\
				&\quad= i(c(x,y,zw) \underbrace{\lambda_{\beta(F(x,y))\rho(xy)}(F(z,w))} F(x,y) F(xy,zw) & \text{(as $i(A)\sst C(N)$)}\\
				&\quad= i(c(x,y,zw) F(x,y) \lambda_{\rho(xy)}(F(z,w)) F(x,y)^{-1} F(x,y) F(xy,zw) & \text{(by \labelcref{CM3})}\\
				&\quad= i(c(x,y,zw) F(x,y) \underbrace{\lambda_{\rho(xy)}(F(z,w)) F(xy,zw)} \\
				&\quad= i(c(x,y,zw) F(x,y) i(c(xy,z,w)) F(xy,z) F(xyz,w) & \text{(by \labelcref{lb_rho(x)(F(y_z))F(x_yz)=i(c(x_y_z))F(x_y)F(xy_z)})} \\
				&\quad= i(c(x,y,zw)) i(c(xy,z,w)) F(x,y) F(xy,z) F(xyz,w). & \text{(as $i(A)\sst C(N)$)}
			\end{align*}	
			Cancelling the terms involving $F$, we have 
			\[
			\lambda_{\rho(x)}( i(c(y,z,w)) ) i(c(x,y,z)) i(c(x,yz,w)) = i(c(x,y,zw)) i(c(xy,z,w)).
			\] 
			In view of \labelcref{eta_t(a)=i^(-1)(lb_s(i(a)))} and the injectivity of $i$, the latter is equivalent to
			\begin{align*}
				\eta_x(c(y,z,w)) c(x,y,z) c(x,yz,w) = c(x,y,zw) c(xy,z,w),
			\end{align*}
			i.e. $c \in Z^3(T^1,A^1)$.
	\end{proof}
	
	\begin{lem}\label{another-choice-of-F}
		Another choice of a lifting $F$ in \cref{from-crossed-mod-ext-to-C^3} leads to a cocycle cohomologous to $c$.
	\end{lem}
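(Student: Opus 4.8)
The plan is to keep the transversal $\rho$ of $\pi$ and the function $f:T^2\to\bt(N)$ of \cref{from-crossed-mod-ext-to-C^3} (so $\rho(x)\rho(y)=f(x,y)\rho(xy)$) fixed, and to compare the cocycle $c$ coming from a lifting $F$ with the cocycle $c'$ coming from a second lifting $F'$ of $f$ via \labelcref{bt(F(x_y))=f(x_y)}. Since $\bt(F'(x,y))=\bt(F(x,y))=f(x,y)$ and both $F(x,y)$ and $F'(x,y)$ lie in the group component $N_{\af(\rho(xy)\rho(xy)\m)}$, the element $F'(x,y)F(x,y)\m$ lies in that same component and in $\ker\bt=i(A)$, so there is a unique $g(x,y)\in A$ with $F'(x,y)=i(g(x,y))F(x,y)$. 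A short bookkeeping of idempotents — using that $\pi$ is idempotent-separating, hence $\rho(xy)\rho(xy)\m=\pi\m(xyy\m x\m)$, together with \labelcref{0=i^(-1).af.pi^(-1)} — then shows $g(x,y)\in A_{\0(xyy\m x\m)}$, i.e. $g\in C^2(T^1,A^1)$.

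Next I would substitute $F'(x,y)=i(g(x,y))F(x,y)$ into the defining relation
\[
\lambda_{\rho(x)}(F'(y,z))F'(x,yz)=i(c'(x,y,z))F'(x,y)F'(xy,z)
\]
for $c'$ and simplify both sides. All factors lying in $i(A)$ may be pulled to the front since $i(A)\sst C(N)$ by \cref{i(A)-sst-C(N)}; the identity $\lambda_{\rho(x)}\circ i=i\circ\eta_x$ — which is \labelcref{eta_t(a)=i^(-1)(lb_s(i(a)))} with $s=\rho(x)$ — turns $\lambda_{\rho(x)}(i(g(y,z)))$ into $i(\eta_x(g(y,z)))$; and \labelcref{lb_rho(x)(F(y_z))F(x_yz)=i(c(x_y_z))F(x_y)F(xy_z)} replaces $\lambda_{\rho(x)}(F(y,z))F(x,yz)$ by $i(c(x,y,z))F(x,y)F(xy,z)$. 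This rewrites the left-hand side as $i\bigl(\eta_x(g(y,z))\,g(x,yz)\,c(x,y,z)\bigr)F(x,y)F(xy,z)$ and the right-hand side as $i\bigl(c'(x,y,z)\,g(x,y)\,g(xy,z)\bigr)F(x,y)F(xy,z)$. All the $i(-)$ factors, as well as $F(x,y)F(xy,z)$, lie in the single group component $N_{\af(\pi\m(xyzz\m y\m x\m))}$, so one may right-cancel $F(x,y)F(xy,z)$ and then use the injectivity of $i$ to obtain
\[
\eta_x(g(y,z))\,g(x,yz)\,c(x,y,z)=c'(x,y,z)\,g(x,y)\,g(xy,z)
\]
in the abelian group $A_{\0(xyzz\m y\m x\m)}$.

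Solving for $c'$ and comparing with the explicit formula for $\dl^2$ applied to $g$, with the module action $\eta$ in the role of $\lambda$, yields $c'(x,y,z)=c(x,y,z)\,(\dl^2 g)(x,y,z)$, so $c'$ and $c$ differ by the coboundary $\dl^2 g$ and hence are cohomologous, as required. I expect the only genuine obstacle to be the domain/idempotent bookkeeping: one must track the group components of $N$ in which the various products $i(g)F$, $FF$ and $\lambda_{\rho(x)}(F)F$ live in order both to conclude that $g\in C^2(T^1,A^1)$ and to justify cancelling the common $F$-factor; once the components are under control, the rest is a routine rearrangement using the centrality of $i(A)$ in $N$ and the commutativity of $A$.
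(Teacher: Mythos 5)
Your proposal is correct and follows essentially the same route as the paper: write $F'=i(u)F$ with $u\in C^2(T^1,A^1)$ (your $g$), substitute into the defining relation \labelcref{lb_rho(x)(F(y_z))F(x_yz)=i(c(x_y_z))F(x_y)F(xy_z)} for $c'$, pull the $i(A)$-factors to the front using $i(A)\sst C(N)$ and $\lambda_{\rho(x)}\circ i=i\circ\eta_x$, apply the relation for $c$, and cancel $F(x,y)F(xy,z)$ (all factors lying in one group component) to get $c'=c\cdot\delta^2 u$ by injectivity of $i$. The component bookkeeping you flag is exactly what the paper relies on as well, so there is nothing missing.
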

	\begin{proof}
			Let $F' : T^2 \to N$ be another function such that $\beta(F'(x,y)) = f(x,y)$ and $F'(x,y) \in N_{\af(\rho(xy)\rho(xy)\m)}$. Then $(F(x,y),F'(x,y))\in\ker\bt$, so there exists $u: T^2 \to A$ such that 
			\begin{align}\label{F'(xy)=i(u(xy))F(xy)}
				F'(x,y) = i(u(x,y))F(x,y)
			\end{align}
			and $u(x,y) \in A_{\0(xyy\m x\m)}$. By \labelcref{lb_rho(x)(F(y_z))F(x_yz)=i(c(x_y_z))F(x_y)F(xy_z),F'(xy)=i(u(xy))F(xy)} there exists $c':T^3\to A$ such that
			 \[
			 \lambda_{\rho(x)}(i(u(y,z))F(y,z)) i(u(x,yz))F(x,yz) = i(c'(x,y,z)u(x,y)) F(x,y) i(u(xy,z))F(xy,z).
			 \]
			Moreover, $c'\in Z^3(T^1,A^1)$ thanks to  \cref{c-is-a-3-cocycle,from-crossed-mod-ext-to-C^3}. Since $\lambda_{\rho(x)}$ is an endomorphism of $N$ and $i(A) \sst C(N)$, we obtain
			\begin{align*}
				&\lambda_{\rho(x)}( i(u(y,z)))i(u(xy,z))\m i(u(x,yz))i(u(x,y))\m\lambda_{\rho(x)}( F(y,z)) F(x,yz)\\
				&\quad= i(c'(x,y,z)) F(x,y) F(xy,z).
			\end{align*} 
			Using \labelcref{eta_t(a)=i^(-1)(lb_s(i(a)))}, we rewrite this as follows
			\begin{align*}
				i(\delta^2u(x,y,z)) \underbrace{\lambda_{\rho(x)}( F(y,z)) F(x,yz)} = i(c'(x,y,z)) F(x,y) F(xy,z).
			\end{align*}
			Now applying \labelcref{lb_rho(x)(F(y_z))F(x_yz)=i(c(x_y_z))F(x_y)F(xy_z)} to the bracketed factors we come to
			\[
			i(\delta^2u(x,y,z)) i(c(x,y,z)) F(x,y) F(xy,z) = i(c'(x,y,z)) F(x,y) F(xy,z).
			\] 
			Finally, cancelling $F(x,y) F(xy,z)$ and using the injectivity of $i$ we obtain
			\[
			\delta^2u(x,y,z) c(x,y,z) = c'(x,y,z),
			\]
			i.e. $c'$ is cohomologous to $c$.
	\end{proof}
	
	\begin{lem}\label{another-choice-of-rho}
		Another choice of a transversal $\rho$ of $\pi$ in \cref{from-crossed-mod-ext-to-C^3} leads to a cocycle cohomologous to $c$.
	\end{lem}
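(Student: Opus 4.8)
The plan is to modify the lifting in step with the transversal, so that a change of transversal becomes a coboundary. Keep the original data $\rho,f,F,c$ fixed and let $\rho'$ be a second transversal of $\pi$, with associated $f',F',c'$. Since $\pi(\rho'(t))=t=\pi(\rho(t))$ for every $t\in T$, the argument used in the proof that \labelcref{eta_t(a)=i^(-1)(lb_s(i(a)))} is independent of the chosen lift of $t$ furnishes a function $n\colon T\to N$ with $\rho'(t)=\rho(t)\beta(n(t))$ and $n(t)\in N_{\alpha(\rho(t)^{-1}\rho(t))}$. Because $\pi$, hence $\beta$, is idempotent-separating, $\rho'(t)\rho'(t)^{-1}=\rho(t)\rho(t)^{-1}$ and $\rho'(t)^{-1}\rho'(t)=\rho(t)^{-1}\rho(t)$ for all $t$; consequently $f'(x,y)$ lies in the same group component $\beta(N)_{\rho(xy)\rho(xy)^{-1}}$ as $f(x,y)$, and $F'(x,y)$ may be chosen in $N_{\alpha(\rho(xy)\rho(xy)^{-1})}$. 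By \cref{another-choice-of-F} the cohomology class attached to $\rho'$ does not depend on which lift $F'$ is used, so it is enough to produce \emph{one} convenient lift $F'$ whose cocycle is cohomologous to $c$.

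For this I would take
\[
F'(x,y)=\lambda_{\rho(x)}(n(x))\,F(x,y)\,\lambda_{\rho(xy)}(n(y)\,n(xy)^{-1}).
\]
A short computation, using \labelcref{CM2} and $\rho(x)^{-1}\rho(x)=\pi^{-1}(x^{-1}x)$, $\rho(xy)^{-1}\rho(xy)=\pi^{-1}(y^{-1}x^{-1}xy)$ to control group components, shows $F'(x,y)\in N_{\alpha(\rho(xy)\rho(xy)^{-1})}$; and applying \labelcref{CM4} together with $\beta(F(x,y))=f(x,y)$ one obtains $\beta(F'(x,y))=\rho'(x)\rho'(y)\rho'(xy)^{-1}=f'(x,y)$, so $F'$ is a legitimate lift of $f'$ in the prescribed component.

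It then remains to evaluate the $3$-cocycle $c'$ defined by $(\rho',F')$. I would substitute $\rho'(x)=\rho(x)\beta(n(x))$, the identity $\lambda_{\rho'(x)}=\lambda_{\rho(x)}\circ\lambda_{\beta(n(x))}$ with $\lambda_{\beta(n(x))}(\cdot)=n(x)(\cdot)n(x)^{-1}$ (by \labelcref{CM3}), and the formula above for $F'$, into the defining relation $\lambda_{\rho'(x)}(F'(y,z))\,F'(x,yz)=i(c'(x,y,z))\,F'(x,y)\,F'(xy,z)$; then use \labelcref{lb_rho(x)(F(y_z))F(x_yz)=i(c(x_y_z))F(x_y)F(xy_z)} for $(\rho,F,c)$ to cancel all the $F$-terms, and the centrality $i(A)\sst C(N)$ to collect the factors lying in $i(A)$. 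This should yield an identity $i(\delta^2 v(x,y,z))\,i(c(x,y,z))=i(c'(x,y,z))$ in $N$, where $v\in C^2(T^1,A^1)$ is the $2$-cochain assembled from $n(x),n(y),n(xy)$ (its values being the unique elements of $A$ recording the discrepancy between expressions with the same image under $\beta$; that $v$ is a cochain follows by checking these values land in the prescribed group components). Injectivity of $i$ then gives $c'=\delta^2 v\cdot c$, and together with the reduction of the first paragraph this proves the lemma.

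The main obstacle is exactly this last computation. Unlike in the group case one cannot freely permute or cancel factors: at each rearrangement in the non-free inverse semigroups $S$ and $N$ one must verify, using that $\pi$ and $\beta$ are idempotent-separating and that the idempotents of $S$ are central in $\beta(N)$, that every intermediate product stays in the intended group component and that the ``error'' between the two sides genuinely belongs to $i(A)$; only after this bookkeeping does the $2$-cochain $v$ emerge in the required form.
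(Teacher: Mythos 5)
Your setup is sound and, in fact, essentially reproduces the paper's: writing $\rho'(t)=\rho(t)\beta(n(t))$ with $n(t)\in N_{\alpha(\rho(t)\m\rho(t))}$ is legitimate (it is the same argument as for the well-definedness of $\eta_t$), your $F'(x,y)=\lambda_{\rho(x)}(n(x))F(x,y)\lambda_{\rho(xy)}(n(y)n(xy)\m)$ is indeed a lift of $f'$ in the right component (note that $\beta(\lambda_{\rho(x)}(n(x)))=\rho'(x)\rho(x)\m$, so your choice amounts to the paper's $V(x)$), and invoking \cref{another-choice-of-F} to reduce to one convenient lift of $f'$ is a correct reduction.

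The gap is that the decisive step — showing that the resulting $c'$ satisfies $c'=c\cdot\delta^2v$ — is only asserted, not carried out, and the mechanism you propose for it is not adequate as stated. The 2-cochain $v$ is never defined (saying it is ``assembled from $n(x),n(y),n(xy)$'' does not specify which two $\beta$-equal expressions it compares), and the claim that one can ``cancel all the $F$-terms and use $i(A)\sst C(N)$ to collect the factors'' fails because the extra factors $\lambda_{\rho(x)}(n(x))$ and $\lambda_{\rho(xy)}(n(y)n(xy)\m)$ lie in $N$, not in $i(A)$, and are not central: for instance, after expanding $\lambda_{\rho'(x)}(F'(y,z))F'(x,yz)$ the term $\lambda_{\rho(x)}(\lambda_{\rho(yz)}(n(z)n(yz)\m))$ sits between $\lambda_{\rho(x)}(F(y,z))$ and $F(x,yz)$, so \labelcref{lb_rho(x)(F(y_z))F(x_yz)=i(c(x_y_z))F(x_y)F(xy_z)} cannot be applied until this factor is conjugated away using \labelcref{CM3,CM4}, with the component bookkeeping that entails. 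That conjugation-and-collection argument (organized in the paper through the auxiliary relation \labelcref{eq:FV} and a page of manipulations) is precisely the content of the lemma, and you acknowledge in your final paragraph that you have not done it; until it is done, and $v$ is exhibited with values in the prescribed components $A_{\0(xyy\m x\m)}$, the proof is incomplete at its main step.
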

	\begin{proof}
		Let $\rho': T \to S$ be another transversal of $\pi$. Then $(\rho(x),\rho'(x)) \in \ker \pi$, and hence there exists a function $v : T \to \beta(N)$ such that 
		\begin{align}\label{rho'(x)=v(x)rho(x)}
			\rho'(x) = v(x)\rho(x)
		\end{align}
		and $v(x) \in \beta(N)_{\rho(x)\rho(x)\m}$. We now express the corresponding $f'(x,y)$ (see \labelcref{rho(x)rho(y)=f(x_y)rho(xy)}) in terms of $f$ and $v$:
		\begin{align*}
		\rho'(x)\rho'(y) & = v(x)\rho(x)v(y)\rho(y) \\
		& = v(x)\rho(x)(\rho(x)^{-1}\rho(x))v(y)\rho(y) \\
		& = v(x)\rho(x)v(y)(\rho(x)^{-1}\rho(x))\rho(y) & \text{(as $E(S)\sst C(\bt(N))$)}\\
		& = v(x)\rho(x)v(y)\rho(x)^{-1}f(x,y)\rho(xy) & \text{(by \labelcref{rho(x)rho(y)=f(x_y)rho(xy)})}\\
		& = v(x)\rho(x)v(y)\rho(x)^{-1}f(x,y)v(xy)^{-1}\rho'(xy), & \text{(by \labelcref{rho'(x)=v(x)rho(x)})}
		\end{align*}
		whence
		\begin{align*}
			f'(x,y)=v(x)\rho(x)v(y)\rho(x)^{-1}f(x,y)v(xy)^{-1},
		\end{align*}
		so that
		\begin{align}
		f'(x,y) v(xy) & = v(x) \rho(x) v(y) \rho(x)\m f(x,y) \notag\\
		& = \rho'(x) v(y) \rho'(x)\m v(x) f(x,y) \label{eq:r'v}.
		\end{align}
		Let $F' : T^2 \to N$ and $V : T \to N$ be functions satisfying 
		\begin{align}
			\beta(F'(x,y)) &= f'(x,y),\label{bt(F'(xy))-is-f'(xy)}\\
			\beta(V(x)) &= v(x),\label{bt(V(x))-is-v(x)}
		\end{align}
		where $F'(x,y)\in N_{\af(\rho(xy)\rho(xy)\m)}$ and $V(x)\in N_{\af(\rho(x)\rho(x)\m)}$. Then we may rewrite \labelcref{eq:r'v} as follows:
		\begin{align*}
			\beta(F'(x,y)) \beta(V(xy)) & =  \rho'(x) \beta(V(y)) \rho'(x)^{-1} \beta(V(x))\beta(F(x,y)) \\
			& = \beta(\lambda_{\rho'(x)}( V(y))) \beta(V(x)) \beta(F(x,y)). & \text{(by \labelcref{CM4})}
		\end{align*}
		Hence, $(F'(x,y)V(xy), \lambda_{\rho'(x)}( V(y)) V(x) F(x,y)) \in \ker \beta$, so there exists a function $w : T^2 \to A$ such that $w(x,y)\in A_{\0(xyy\m x\m)}$ and
		\begin{equation}\label{eq:FV}
		F'(x,y) V(xy) = i(w(x,y)) \lambda_{\rho'(x)}( V(y)) V(x) F(x,y).
		\end{equation}
		We calculate
		\begin{align*}
			&\lambda_{\rho'(x)}(F'(y,z)) \underbrace{F'(x,yz)   V(xyz)}\\
			&\quad = \lambda_{\rho'(x)}(F'(y,z)) i(w(x,yz)) \lambda_{\rho'(x)}( V(yz)) V(x) F(x,yz) & \text{(by \labelcref{eq:FV})}\\
			&\quad = i(w(x,yz)) \lambda_{\rho'(x)}( \underbrace{F'(y,z)   V(yz)})   V(x)   F(x,yz) & \text{(since $i(A)\sst C(N)$)}\\
			&\quad = i(w(x,yz)) \lambda_{\rho'(x)}( i(w(y,z)) \lambda_{\rho'(y)}( V(z))   V(y)   F(y,z))   V(x)   F(x,yz) & \text{(by \labelcref{eq:FV})}\\
			&\quad = i(w(x,yz)) \lambda_{\rho'(x)}( i(w(y,z)))\lambda_{\rho'(x)\rho'(y)}( V(z))\\   &\quad\quad\cdot\lambda_{\rho'(x)}( V(y)) \lambda_{\rho'(x)}( F(y,z))   V(x)   F(x,yz).
		\end{align*}
		Now,
		\begin{align*}
		\lambda_{\rho'(x)\rho'(y)}( V(z))&=\lambda_{f'(x,y)\rho'(xy)}( V(z)) & \text{(by the def. of $f'(x,y)$)}\\
		&=\lambda_{\bt(F'(x,y))}(\lambda_{\rho'(xy)}( V(z))) & \text{(by \labelcref{bt(F'(xy))-is-f'(xy)})}\\
		&=F'(x,y)\lambda_{\rho'(xy)}( V(z))F'(x,y)\m, & \text{(by \labelcref{CM3})}
		\end{align*}
		and
		\begin{align*}
			&\lambda_{\rho'(x)}( V(y)) \underbrace{\lambda_{\rho'(x)}( F(y,z))}   V(x)   F(x,yz)\\
			&\quad = \lambda_{\rho'(x)}( V(y)) \lb_{\beta(V(x))}(\lambda_{\rho(x)}( F(y,z)))   V(x)   F(x,yz) & \text{(by \labelcref{rho'(x)=v(x)rho(x),bt(V(x))-is-v(x)})} \\
			&\quad = \lambda_{\rho'(x)}( V(y))V(x)\lambda_{\rho(x)}( F(y,z))V(x)\m V(x)F(x,yz) & \text{(by \labelcref{CM3})} \\
			&\quad = \lambda_{\rho'(x)}( V(y))  V(x) \underbrace{ \lambda_{\rho(x)}( F(y,z)) F(x,yz)} & \text{(as $E(N)\sst C(N)$)}\\
			&\quad = \lambda_{\rho'(x)}( V(y))  V(x) i(c(x,y,z)) F(x,y) F(xy,z) & \text{(by \labelcref{lb_rho(x)(F(y_z))F(x_yz)=i(c(x_y_z))F(x_y)F(xy_z)})}\\
			&\quad = i(c(x,y,z)) \underbrace{\lambda_{\rho'(x)}( V(y))  V(x)  F(x,y)} F(xy,z) & \text{(as $i(A)\sst C(N)$)}\\
			&\quad = i(c(x,y,z)) i(w(x,y))^{-1} F'(x,y) V(xy)  F(xy,z), & \text{(by \labelcref{eq:FV})}
		\end{align*}
		so
		\begin{align*}
			&\lambda_{\rho'(x)}(F'(y,z)) F'(x,yz)   V(xyz)\\
			&\quad = i(w(x,yz)) \lambda_{\rho'(x)}( i(w(y,z)))F'(x,y)\lambda_{\rho'(xy)}( V(z))F'(x,y)\m\\   
			&\quad\quad \cdot i(c(x,y,z)) i(w(x,y))^{-1} F'(x,y) V(xy)  F(xy,z)\\
			&\quad = i(w(x,yz)) \lambda_{\rho'(x)}( i(w(y,z)))i(c(x,y,z)) i(w(x,y))^{-1}\\   
			&\quad\quad \cdot  F'(x,y)\lambda_{\rho'(xy)}( V(z))(F'(x,y)\m F'(x,y)) V(xy)  F(xy,z) & \text{(as $i(A)\sst C(N)$)}\\
			&\quad = i(w(x,yz)) \lambda_{\rho'(x)}( i(w(y,z)))i(c(x,y,z)) i(w(x,y))^{-1}\\   
			&\quad\quad \cdot  F'(x,y)\underbrace{\lambda_{\rho'(xy)}( V(z)) V(xy)  F(xy,z)} & \text{(as $E(N)\sst C(N)$)}\\
			&\quad = i(w(x,yz)) \lambda_{\rho'(x)}( i(w(y,z)))i(c(x,y,z)) i(w(x,y))^{-1} \\ 
			&\quad \quad F'(x,y) i(w(xy,z))^{-1} F'(xy,z) V(xyz) & \text{(by \cref{eq:FV})}\\
			&\quad = i(c(x,y,z)) \lambda_{\rho'(x)}( i(w(y,z)))i(w(xy,z))^{-1} i(w(x,yz)) i(w(x,y))^{-1}  \\ 
			&\quad \quad  F'(x,y)  F'(xy,z) V(xyz) & \text{(as $i(A)\sst C(N)$)}\\
			&\quad = i(c(x,y,z)) i(\delta^2w(x,y,z)) F'(x,y)  F'(xy,z) V(xyz).  & \text{(by \cref{eta_t(a)=i^(-1)(lb_s(i(a)))})}
		\end{align*}
		Cancelling the factor $V(xyz)$ and using \labelcref{lb_rho(x)(F(y_z))F(x_yz)=i(c(x_y_z))F(x_y)F(xy_z)} for the pair $(\rho', F')$, we have \[
		i(c'(x,y,z)) = i(c(x,y,z)) i(\delta^2w(x,y,z))
		\]
	for all $x,y,z\in T$. Thus, $c'=c\cdot \delta^2w$ in view of the injectivity of $i$.
\end{proof}

	\begin{prop}\label{prop:Egenc}
		There is a well-defined function from $\mathcal{E}(T,A)$ to $H^3(T^1,A^1)$.
	\end{prop}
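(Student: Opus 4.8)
The plan is to assemble \cref{from-crossed-mod-ext-to-C^3,c-is-a-3-cocycle,another-choice-of-F,another-choice-of-rho} and then settle the one remaining issue, invariance under the equivalence of \cref{equiv-ext-defn}. First I fix a crossed module extension $\mathcal X\colon A \xrightarrow{i} N \xrightarrow{\beta} S \xrightarrow{\pi} T$ of the $T$-module $A$ by $T$. By \cref{from-crossed-mod-ext-to-C^3,c-is-a-3-cocycle}, a choice of a transversal $\rho$ of $\pi$ together with a lifting $F$ of the associated $f$ yields a cocycle $c=c(\rho,F)\in Z^3(T^1,A^1)$, uniquely determined by $(\rho,F)$ since $i$ is injective. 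By \cref{another-choice-of-F}, replacing $F$ by any other admissible lifting for the same $\rho$ changes $c$ to a cohomologous cocycle; by \cref{another-choice-of-rho}, replacing $\rho$ by any other transversal (with a suitable lifting) again changes $c$ to a cohomologous cocycle. Chaining the two comparisons -- from $(\rho,F)$ to $(\rho',\tilde F)$ with the lifting $\tilde F$ produced by \cref{another-choice-of-rho}, and then from $(\rho',\tilde F)$ to $(\rho',F')$ by \cref{another-choice-of-F} -- I conclude that the class $[c]=[c_{\mathcal X}]\in H^3(T^1,A^1)$ depends only on $\mathcal X$. This already assigns a cohomology class to every crossed module extension of $A$ by $T$.

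The heart of the argument -- and the step I expect to require the most care -- is showing this assignment is constant on equivalence classes. Since the equivalence of \cref{equiv-ext-defn} is the smallest equivalence relation containing the generating relation ``$\mathcal X\to\mathcal X'$ via some pair $(\varphi_1,\varphi_2)$ satisfying \labelcref{CMEE1,CMEE2}'', and since ``equal cohomology class'' is itself an equivalence relation, it suffices to treat a single such pair. Here I would transport the auxiliary data along $(\varphi_1,\varphi_2)$: starting from a transversal $\rho$ of $\pi$ and a lifting $F$ for $\mathcal X$, set $\rho':=\varphi_2\circ\rho$ and $F':=\varphi_1\circ F$. From $\pi'\circ\varphi_2=\pi$ (right square of \labelcref{eq:equivseqs4}) one sees $\rho'$ is a transversal of $\pi'$; from $\beta'\circ\varphi_1=\varphi_2\circ\beta$ (middle square) one gets that $\beta'(F'(x,y))=\varphi_2(f(x,y))=:f'(x,y)$ is exactly the function attached to $\rho'$; and from the remark \labelcref{f_1-circ-af=af'-circ-f_2}, together with the fact that $\varphi_2$ is an inverse-semigroup homomorphism, one checks that $F'(x,y)$ lands in the correct group component $N'_{\af'(\rho'(xy)\rho'(xy)\m)}$ and that $f'$ takes values where the construction of \cref{from-crossed-mod-ext-to-C^3} requires. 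Thus $(\rho',F')$ is a legitimate choice of data for $\mathcal X'$; this book-keeping of group components is the only genuinely fiddly part.

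It then remains to push the defining identity \labelcref{lb_rho(x)(F(y_z))F(x_yz)=i(c(x_y_z))F(x_y)F(xy_z)} for $\mathcal X$ through $\varphi_1$. Using \labelcref{eq:actcomp} in the form $\varphi_1\circ\lambda_{\rho(x)}=\lambda'_{\rho'(x)}\circ\varphi_1$, the relation $\varphi_1\circ i=i'$ from the left square of \labelcref{eq:equivseqs4}, and the definitions of $\rho'$ and $F'$, the image of \labelcref{lb_rho(x)(F(y_z))F(x_yz)=i(c(x_y_z))F(x_y)F(xy_z)} is literally \labelcref{lb_rho(x)(F(y_z))F(x_yz)=i(c(x_y_z))F(x_y)F(xy_z)} for $\mathcal X'$ with data $(\rho',F')$ and with the same $c$ in the middle. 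By injectivity of $i'$, the cocycle attached to $\mathcal X'$ by $(\rho',F')$ equals $c$, hence $[c_{\mathcal X'}]=[c]=[c_{\mathcal X}]$, and the assignment descends to the desired well-defined function $\mathcal E(T,A)\to H^3(T^1,A^1)$.
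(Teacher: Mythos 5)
Your proposal is correct and follows essentially the same route as the paper: after invoking \cref{from-crossed-mod-ext-to-C^3,c-is-a-3-cocycle,another-choice-of-F,another-choice-of-rho}, the paper likewise reduces to a single pair $(\varphi_1,\varphi_2)$ and transports the data via $\rho'=\varphi_2\circ\rho$, $f'=\varphi_2\circ f$, $F'=\varphi_1\circ F$, then pushes \labelcref{lb_rho(x)(F(y_z))F(x_yz)=i(c(x_y_z))F(x_y)F(xy_z)} through $\varphi_1$ using \labelcref{eq:actcomp} and $i'=\varphi_1\circ i$ to get $c'=c$ on the nose. No gaps to report.
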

	\begin{proof}
		In view of \cref{from-crossed-mod-ext-to-C^3,c-is-a-3-cocycle,another-choice-of-F,another-choice-of-rho}, it remains to show that equivalent extensions \cref{eq:equivseqs4} of $A$ by $T$ induce cohomologous $3$-cocycles from $Z^3(T^1,A^1)$. 
		
		Let $\rho : T \to S$, $f : T^2 \to \beta(N)$, $F: T^2 \to N$ and $c : T^3 \to A$ be the maps corresponding to the top sequence of \cref{eq:equivseqs4} as in \cref{from-crossed-mod-ext-to-C^3}. We are going to choose the analogous maps for the bottom sequence of \cref{eq:equivseqs4} in a way that $c'=c$.  We first set $\rho'=\varphi_2 \circ \rho : T \to S'$. It is obviously a transversal of $\pi'$. Applying \cref{rho(x)rho(y)=f(x_y)rho(xy)} to $\rho$, we have
		 \begin{align*}
		\rho'(x)\rho'(y)  &=  \varphi_2(\rho(x)) \varphi_2(\rho(y))=  \varphi_2(f(x,y)\rho(xy))\\ 
		 &=  \varphi_2(f(x,y)) \varphi_2(\rho(xy)) =  \varphi_2(f(x,y)) \rho'(xy)
		\end{align*}
		and $\f_2(f(x,y)) \in \beta(N)_{\f_2(\rho(xy))\f_2(\rho(xy))^{-1}}=\beta(N)_{\rho'(xy)\rho'(xy)^{-1}}$, so $f': = \varphi_2\circ f$ is the function satisfying \cref{rho(x)rho(y)=f(x_y)rho(xy)} for $\rho'$. Since
		\begin{align*}
		\beta'(\varphi_1(F(x,y))) = \varphi_2(\beta(F(x,y))) = \varphi_2(f(x,y)) = f'(x,y),
		\end{align*}
		we may choose $F':=\f_1\circ F:T^2\to N'$ to be a lifting of $f'$. Then
		\begin{align*}
			i'(c(x,y,z))F'(x,y) F'(xy,z)&=\varphi_1(i(c(x,y,z))F(x,y)F(xy,z)) & \text{(as $i'=\f_1\circ i$)}\\
			&=\f_1(\lambda_{\rho(x)}(F(y,z)) F(x,yz)) & \text{(by \cref{lb_rho(x)(F(y_z))F(x_yz)=i(c(x_y_z))F(x_y)F(xy_z)})}\\
			&=\lambda_{\f_2(\rho(x))}(F(y,z)) F(x,yz) & \text{(by \cref{eq:actcomp})}\\
			&=\lambda_{\rho'(x)}(F(y,z)) F(x,yz),
		\end{align*}
		whence $c'(x,y,z)=c(x,y,z)$.
	\end{proof}
	
	\subsection{Admissible crossed module extensions and \texorpdfstring{$H^3_\le(T^1,A^1)$}{H³(T¹,A¹)}}
	
	\begin{defn}
		Given an inverse semigroup $S$, we introduce $\bd,\br:S\to E(S)$ defined by $\bd(s)=s\m s$ and $\br(s)=ss\m$, where $s\in S$.
	\end{defn}

	\begin{defn}
		Let $A$ be a $T$-module, with structure $(\theta, \eta)$, and $c\in C^n(T^1,A^1)$, where $n\ge 1$. We say that $c$ is \textit{normalized} if for all $x_1,\dots,x_n\in T$, $1\le i\le n$ and $e\in E(T)$:
		\begin{align}
			c(e,ex_1,x_2,\dots,x_{n-1})&=\0(\br(ex_1\dots x_{n-1})),\label{c(e_ex_1...x_(n-1))-triv}\\
			c(x_1,\dots,x_{i-2},x_{i-1}e,e,ex_{i+1},x_{i+2},\dots,x_n)&=\0(\br(x_1\dots x_{i-1}ex_{i+1}\dots x_n)),\label{c(x_1...x_(i-1)e_e_ex_(i+1)...x_n)-triv}\\
			c(x_1,\dots,x_{n-2},x_{n-1}e,e)&=\0(\br(x_1\dots x_{n-1}e)).\label{c(x_1...x_(n-1)e_x_n)-triv}
		\end{align}
		Moreover, $c\in C^n(T^1,A^1)$ is \textit{strongly normalized}, if for all $x_1,\dots,x_n\in T$, $1\le i\le n$ and $e\in E(T)$:
		\begin{align}\label{c(x_1...x_(i-1)_e_x_(i+1)...x_n)-triv}
		c(x_1,\dots,x_{i-1},e,x_{i+1},\dots,x_n)&=\0(\br(x_1\dots x_{i-1}ex_{i+1}\dots x_n)).
		\end{align}
	\end{defn}

\begin{prop}\label{c-is-normalized}
	Let $A$ be a $T$-module. Then any crossed module extension of $A$ by $T$ induces a normalized  $c\in Z^3(T^1,A^1)$.  
\end{prop}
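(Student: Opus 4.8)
The plan is to exploit the freedom in the construction of \cref{from-crossed-mod-ext-to-C^3}: I would choose the transversal $\rho$ of $\pi$ to respect idempotents (possible since $\pi$ is an idempotent-separating epimorphism) and then choose the lifting $F$ of $f$ to take idempotent values on the degenerate pairs, namely those of the form $(e,x)$ with $ex=x$ or $(x,e)$ with $xe=x$, where $e\in E(T)$. Since $\rho$ respects idempotents, $\rho|_{E(T)}$ is a semilattice homomorphism onto $E(S)$ and the relations \labelcref{rho(t)rho(t)-inv=rho(tt-inv),rho(t)rho(e)rho(t)-inv=rho(tet-inv),rho|_E(T)=pi-inv|_E(T)} hold. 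Feeding these into \cref{rho(x)rho(y)=f(x_y)rho(xy)}, I would first record that for $e\in E(T)$ and $x\in T$ one has $f(x,e)=\rho(\br(x))$ whenever $\bd(x)\le e$ (that is, $xe=x$) and $f(e,x)=\rho(\br(x))$ whenever $\br(x)\le e$ (that is, $ex=x$): in each case the extra factor $\rho(e)$ gets absorbed, so $f$ equals the identity idempotent of its group component, namely $\beta(N)_{\rho(xe)\rho(xe)\m}$ in the first case and $\beta(N)_{\rho(ex)\rho(ex)\m}$ in the second.

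Using this I would fix $F$ by putting $F(x,y):=\af(\rho(\br(x)))$ when $y\in E(T)$ and $\bd(x)\le y$, putting $F(x,y):=\af(\rho(\br(y)))$ when $x\in E(T)$ and $\br(y)\le x$, and letting $F(x,y)$ be an arbitrary element of $N_{\af(\rho(xy)\rho(xy)\m)}$ with $\beta(F(x,y))=f(x,y)$ on all remaining pairs. This is legitimate: by \labelcref{rho(t)rho(t)-inv=rho(tt-inv)} the prescribed idempotent lies in the correct group component of $N$, and since $\beta|_{E(N)}=\af\m$ its $\beta$-image equals the value of $f$ recorded above; moreover the two prescriptions overlap only on pairs $(e,e)$ with $e\in E(T)$, where both give $\af(\rho(e))$, so $F$ is well defined.

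Next I would verify the three identities defining a normalized $3$-cochain, which for $e\in E(T)$ and $x,y\in T$ read $c(e,ex,y)=\0(\br(exy))$, $c(xe,e,ey)=\0(\br(xey))$ and $c(x,ye,e)=\0(\br(xye))$. For each of these I would specialize the defining relation \labelcref{lb_rho(x)(F(y_z))F(x_yz)=i(c(x_y_z))F(x_y)F(xy_z)} to the corresponding triple. Every $F$-factor that then ends up adjacent to an idempotent of $S$ is one of the idempotent values just chosen, and $\lambda_{\rho(x)}$ of such an idempotent is again an idempotent of $N$ of the form $\af(\rho(\br(\cdots)))$: this is immediate from \labelcref{CM1} when $x\in E(T)$, and from \labelcref{CM2} followed by \labelcref{rho(t)rho(e)rho(t)-inv=rho(tet-inv)} in general. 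Using that idempotents are central in the Clifford semigroup $N$, that an element of a group component of $N$ is fixed by the identity of that component, and that a larger idempotent acts as that identity, all the idempotent factors absorb and both sides collapse to $i(c(\dots))\,n=n$ for the single remaining factor $n$; since $i(c(\dots))$ lies in the group component of $n$, it must be its identity idempotent $\af(\rho(\br(\cdots)))$. Because $\0=i\m\circ\af\circ(\pi|_{E(S)})\m$ and $\rho|_{E(T)}=(\pi|_{E(S)})\m$, this is exactly the claimed value of $c$. Finally, $c\in Z^3(T^1,A^1)$ by \cref{c-is-a-3-cocycle}, which applies to any admissible choice of $\rho$ and $F$.

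The only real difficulty is bookkeeping: in each of the three verifications one must keep track of which group component of $N$ each factor lives in, and of the order relations among the idempotents $\br(ex)$, $\br(exy)$, $\br(xe)$, $\br(xey)$, $\br(xye)$ that occur, so that every absorption step is justified; one must also not overlook the consistency check in the definition of $F$ on the degenerate pairs.
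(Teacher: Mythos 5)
Your proposal is correct and follows essentially the same route as the paper: choose a transversal $\rho$ of $\pi$ respecting idempotents, observe that $f$ takes the idempotent values $\rho(\br(\cdot))$ on the degenerate pairs, lift these to the idempotents $\af(\rho(\br(\cdot)))$ in $N$, and then specialize \labelcref{lb_rho(x)(F(y_z))F(x_yz)=i(c(x_y_z))F(x_y)F(xy_z)} to the triples $(e,ex,y)$, $(xe,e,ey)$, $(x,ye,e)$, absorbing idempotents via \labelcref{CM1,CM2}. The only difference is that you make the choice of $F$ on the degenerate pairs (and its consistency at $(e,e)$) explicit, a point the paper's proof treats as automatic since $\bt(F(e,ex))$ is idempotent; this is a harmless, slightly more careful presentation of the same argument.
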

\begin{proof}
	Given a crossed module extension \cref{eq:seq4terms} of $A$ by $T$, choose $\rho:T\to S$, $f:T^2\to\bt(N)$, $F:T^2\to N$ and $c\in Z^3(T^1,A^1)$ as in \cref{from-crossed-mod-ext-to-C^3}. We assume that $\rho$ respects idempotents. Then for all $x\in T$ and $e\in E(T)$:
	\begin{align*}
		\rho(e)\rho(e)\rho(x)=\rho(e)\rho(x)=f(e,x)\rho(ex) 
	\end{align*}
	by \cref{rho|_E(T)=pi-inv|_E(T),rho(x)rho(y)=f(x_y)rho(xy)}. On the other hand, applying \cref{rho(x)rho(y)=f(x_y)rho(xy)} twice and using the fact that $E(S)\sst C(\bt(N))$, we have
	\begin{align*}
	\rho(e)\rho(e)\rho(x)=\rho(e)f(e,x)\rho(ex)=f(e,x)\rho(e)\rho(ex)=f(e,x)f(e,ex)\rho(ex).
	\end{align*}
	Since $f(e,x),f(e,ex)\in\bt(N)_{\rho(ex)\rho(ex)\m}=\bt(N)_{\rho(exx\m)}$, we conclude that
	\begin{align}\label{f(e_ex)=rho(ex)rho(ex)-inv}
		f(e,ex)=\rho(exx\m).
	\end{align}
	Similarly,
	\begin{align}\label{f(xe_e)=rho(xe)rho(xe)-inv}
		f(xe,e)=\rho(xex\m).
	\end{align}
	Then \cref{f(e_ex)=rho(ex)rho(ex)-inv,f(xe_e)=rho(xe)rho(xe)-inv,bt(F(x_y))=f(x_y)} immediately imply
	\begin{align}
		F(e,ex)&=\af\circ\rho(exx\m),\label{F(e_ex)=af-circ-rho(exx-inv)}\\
		F(xe,e)&=\af\circ\rho(xex\m).\label{F(xe_e)=af-circ-rho(xex-inv)}
	\end{align}
	Let us now write \cref{lb_rho(x)(F(y_z))F(x_yz)=i(c(x_y_z))F(x_y)F(xy_z)} for the triple $(e,ex,y)\in T^3$:
	\begin{align}\label{c(e_ex_y)-in-terms-of-F}
		\lb_{\rho(e)}(F(ex,y)) F(e,exy) = i(c(e,ex,y)) F(e,ex) F(ex,y).
	\end{align}
	By \cref{CM1,F(e_ex)=af-circ-rho(exx-inv)} the left-hand side of \cref{c(e_ex_y)-in-terms-of-F} equals $F(ex,y)$, while the right-hand side of \cref{c(e_ex_y)-in-terms-of-F} is $i(c(e,ex,y)) F(ex,y)$. Since $F(ex,y),i(c(e,ex,y))\in N_{\af\circ\rho(exyy\m x\m)}$, we have $c(e,ex,y)=\0(exyy\m x\m)$. In a similar way one obtains $c(xe,e,ey)=\0(xeyy\m x\m)$ and $c(x,ye,e)=\0(xyey\m x\m)$ using \cref{lb_rho(x)(F(y_z))F(x_yz)=i(c(x_y_z))F(x_y)F(xy_z),F(e_ex)=af-circ-rho(exx-inv),F(xe_e)=af-circ-rho(xex-inv),CM2}. 
	
\end{proof}

\begin{defn}
	Given a homomorphism of inverse semigroups $\f:S\to T$ and a transversal $\rho : \f(S) \to S$ of $\f$, we say that $\rho$ is \textit{order-preserving} whenever for all $x,y\in \f(S)$:
	\begin{align*}
		x\le y\impl \rho(x)\le\rho(y).
	\end{align*}
\end{defn}

\begin{rem}\label{rho-ord-pres-resp-idemp}
	Let $\rho: \f(S) \to S$ be a transversal of $\f:S\to T$. The following statements are equivalent:
	\begin{enumerate}
		\item $\rho$ is order-preserving and respects idempotents;\label{rho-ord-pres}
		\item $\rho(ex)=\rho(e)\rho(x)$ for all $e \in E(\f(S))$ and $x \in \f(S)$;\label{rho(e)rho(x)=rho(ex)}
		\item $\rho(xe)=\rho(x)\rho(e)$ for all $e \in E(\f(S))$ and $x \in \f(S)$.\label{rho(x)rho(e)=rho(xe)}
	\end{enumerate}
\end{rem}
\begin{proof}
	Clearly, \cref{rho(e)rho(x)=rho(ex)} implies \cref{rho-ord-pres}. Conversely, if $\rho$ is order-preserving and respects idempotents, then $\rho(ex)\le\rho(x)$, so by \cref{rho(t)rho(t)-inv=rho(tt-inv),rho|_E(T)=pi-inv|_E(T)}
	\begin{align*}
		\rho(ex)=\rho(ex)\rho(ex)\m\rho(x)=\rho(exx\m)\rho(x)=\rho(e)\rho(xx\m)\rho(x)=\rho(e)\rho(x).
	\end{align*} 
	Equivalence of \cref{rho-ord-pres,rho(x)rho(e)=rho(xe)} is proved analogously. 
\end{proof}

	\begin{rem}\label{rho(x-inv)=rho(x)-inv}
		Let $\rho: \f(S) \to S$ be an order-preserving transversal of $\f:S\to T$ which respects idempotents. Then for any $x \in \f(S)$ we have $\rho(x\m)=\rho(x)\m$.
	\end{rem}
	\begin{proof}
		For, by \cref{rho(t)rho(t)-inv=rho(tt-inv),rho-ord-pres-resp-idemp} we have $\rho(x)\rho(x\m)\rho(x)=\rho(xx\m)\rho(x)=\rho(x)$ and, replacing $x$ by $x\m$, we obtain $\rho(x\m)\rho(x)\rho(x\m)=\rho(x\m)$.
	\end{proof}

\begin{rem}\label{rho(x)^ve-commutes-with-rho(e)}
	Let $\rho: \f(S) \to S$ be an order-preserving transversal of $\f:S\to T$ which respects idempotents. Then for any $e \in E(\f(S))$ and $x \in \f(S)$ and $\ve\in\{-1,1\}$ we have $\rho(x)^\ve \rho(e)=\rho(x^\ve ex^{-\ve})\rho(x)^\ve$.
\end{rem}
\begin{proof}
	Indeed, $\rho(x)^\ve=\rho(x^\ve)$ by \cref{rho(x-inv)=rho(x)-inv}, so it suffices to consider the case $\ve=1$. Then $\rho(x)^\ve \rho(e)=\rho(x)\rho(e)=\rho(xe)=\rho(xex\m x)=\rho(xex\m)\rho(x)=\rho(x^\ve ex^{-\ve})\rho(x)^\ve$.
\end{proof}

	\begin{defn}\label{admissible-ext-defn}
		A crossed module extension \cref{eq:seq4terms} of $A$ by $T$ whose homomorphisms $\bt$ and $\pi$ possess order-preserving transversals which respect idempotents will be called \textit{admissible}. The set of equivalence classes of admissible extensions will be denoted by $\mathcal{E}_\le(T,A)$. 
	\end{defn}

\begin{rem}\label{adm-ext-induces-str-norm-c}
	An admissible crossed module extension of $A$ by $T$ induces a strongly normalized $c\in Z^3(T^1,A^1)$. 
\end{rem}
\begin{proof}
	The proof is similar to the proof of \cref{c-is-normalized}, so we skip some details. We choose order-preserving transversals $\rho:T\to S$ and $\s:\bt(N)\to N$ of $\pi$ and $\bt$ which respect idempotents. Let $f(x,y)$ be defined by \cref{rho(x)rho(y)=f(x_y)rho(xy)} and $F(x,y)=\s(f(x,y))$, so that \cref{bt(F(x_y))=f(x_y)} holds. Using \cref{rho(x)rho(y)=f(x_y)rho(xy),rho-ord-pres-resp-idemp} we obtain $\rho(ex)=\rho(e)\rho(x)=f(e,x)\rho(ex)$, whence $f(e,x)=\rho(exx\m)$. It follows that $F(e,x)=\af\circ\rho(exx\m)$. Now, calculating $\rho(e)\rho(x)\rho(y)$ in two different ways, we have 
	\begin{align*}
		f(e,x)f(ex,y)\rho(exy)=f(x,y)f(e,xy)\rho(exy),
	\end{align*}
	so that $f(ex,y)=\rho(e)f(x,y)$. Hence, \cref{lb_rho(x)(F(y_z))F(x_yz)=i(c(x_y_z))F(x_y)F(xy_z)} written for the triple $(e,x,y)\in T^3$ gives
	\begin{align*}
	\af(\rho(e))F(x,y) \af(\rho(exyy\m x\m)) = i(c(e,x,y)) \af(\rho(exx\m)) \af(\rho(e))F(x,y),
	\end{align*}
	yielding $c(e,x,y)=\0(exyy\m x\m)$. The equalities $c(x,e,y)=\0(xeyy\m x\m)$ and $c(x,y,e)=\0(xyey\m x\m)$ are proved in a similar way.
\end{proof}
	It turns out that strongly normalized cocycles are order-preserving. We prove this in the following lemmas.
	\begin{lem}\label{c-ord-pres-iff}
		Let $n\ge 1$ and $c\in C^n(T^1,A^1)$. Then $c\in C^n_\le(T^1,A^1)$ if and only if for all $x_1,\dots,x_n\in T$, $1\le i\le n$ and $e\in E(T)$ one has
		\begin{align}\label{c(x_1...ex_i...x_n)}
		c(x_1,\dots,x_{i-1},ex_i,x_{i+1},\dots,x_n) = \0(\br(x_1\dots x_{i-1}e))c(x_1,\dots,x_n).
		\end{align}
	\end{lem}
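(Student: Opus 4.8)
The plan is to prove the two implications of the equivalence separately; essentially all of the content lies in keeping track of which group component of the semilattice of groups $A$ each expression belongs to. I will use freely the natural partial order on an inverse semigroup: $s\le t$ iff $s=\br(s)t$ iff $s=ft$ for some $f\in E(T)$; in particular $ex_i\le x_i$ for every $e\in E(T)$. I will also use that in a semilattice of groups an element $a\in A_{e'}$ satisfies $a\le b$ with $b\in A_{f'}$ exactly when $a=e'b$ (so necessarily $e'\le f'$), and that $\0$, being an isomorphism of semilattices, is multiplicative on idempotents. The one small computation I carry out beforehand is the identity
\[
\br\bigl(x_1\cdots x_{i-1}\,(ex_i)\,x_{i+1}\cdots x_n\bigr)=\br(x_1\cdots x_{i-1}e)\,\br(x_1\cdots x_n),
\]
valid for $x_1,\dots,x_n\in T$, $1\le i\le n$, $e\in E(T)$, where an empty prefix or suffix is read as $1\in T^1$. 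Writing $p=x_1\cdots x_{i-1}$ and $q=x_{i+1}\cdots x_n$, this reduces, via $pep\m p=pe$ and $ege=eg$ for commuting idempotents $e,g$ (applied with $g=\br(x_iq)$), to a short manipulation of products of commuting idempotents. It shows at once that both sides of \labelcref{c(x_1...ex_i...x_n)} lie in the component $A_{\0(\br(x_1\cdots x_{i-1}(ex_i)\cdots x_n))}$, and that $\0(\br(x_1\cdots x_{i-1}e))$ is an idempotent of $A$ below $\0(\br(x_1\cdots x_n))$.

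For the direction ``order-preserving $\impl$ \labelcref{c(x_1...ex_i...x_n)}'' I fix $x_1,\dots,x_n\in T$, $1\le i\le n$ and $e\in E(T)$, observe that $(x_1,\dots,ex_i,\dots,x_n)\le(x_1,\dots,x_i,\dots,x_n)$ coordinatewise, and invoke order-preservation to get $c(x_1,\dots,ex_i,\dots,x_n)\le c(x_1,\dots,x_n)$. Translating this inequality in $A$ gives $c(x_1,\dots,ex_i,\dots,x_n)=\0\bigl(\br(x_1\cdots x_{i-1}(ex_i)\cdots x_n)\bigr)\,c(x_1,\dots,x_n)$; substituting the factorization of this idempotent from the first paragraph and absorbing $\0(\br(x_1\cdots x_n))$, which acts as the identity on $c(x_1,\dots,x_n)$, yields precisely \labelcref{c(x_1...ex_i...x_n)}.

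For the converse I assume \labelcref{c(x_1...ex_i...x_n)} in full and take $s_j\le t_j$ in $T$, $1\le j\le n$; with $e_j=\br(s_j)$ we have $s_j=e_jt_j$. The idea is to move from $(t_1,\dots,t_n)$ to $(s_1,\dots,s_n)$ one coordinate at a time: for $0\le k\le n$ set $u^{(k)}=(e_1t_1,\dots,e_kt_k,t_{k+1},\dots,t_n)$, so $u^{(0)}=(t_1,\dots,t_n)$ and $u^{(n)}=(s_1,\dots,s_n)$. Applying \labelcref{c(x_1...ex_i...x_n)} with $i=k$ and $e=e_k$ to the tuple $u^{(k-1)}$ (whose $k$-th entry is $t_k$) gives $c(u^{(k)})=\0(g_k)\,c(u^{(k-1)})$ with $g_k=\br(e_1t_1\cdots e_{k-1}t_{k-1}\,e_k)\in E(T^1)$. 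Composing these equalities for $k=1,\dots,n$ yields $c(s_1,\dots,s_n)=\0(g)\,c(t_1,\dots,t_n)$ with $g=g_n\cdots g_1\in E(T^1)$, whence $c(s_1,\dots,s_n)\le c(t_1,\dots,t_n)$ because $\0(g)\in E(A^1)$; thus $c\in C^n_\le(T^1,A^1)$.

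The genuinely fiddly step — and the one I expect to demand the most care — is the identity in the first paragraph: verifying that the idempotent $\0(\br(x_1\cdots x_{i-1}e))$ appearing on the right of \labelcref{c(x_1...ex_i...x_n)} is exactly the discrepancy between the components of $c(x_1,\dots,ex_i,\dots,x_n)$ and $c(x_1,\dots,x_n)$, together with the check that everything degenerates correctly when $i=1$ (empty prefix) or $i=n$ (empty suffix). Once this bookkeeping is settled, both implications are immediate.
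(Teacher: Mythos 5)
Your proof is correct and follows essentially the same route as the paper: the nontrivial direction is exactly the paper's argument (order-preservation gives $c(x_1,\dots,ex_i,\dots,x_n)\le c(x_1,\dots,x_n)$, which one rewrites as multiplication by the range idempotent and then simplifies via the idempotent identity $\br(se)\br(st)=\br(set)$, the same computation the paper records as $\br(set)\br(st)=\br(se)\br(st)$). The converse, which the paper dismisses as obvious, you spell out by the natural coordinate-by-coordinate reduction $s_j=e_jt_j$; this is just the expected filling-in, not a different method.
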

	\begin{proof}
		The ``if'' part is obvious. For the ``only if'' part take $c\in C^n_\le(T^1,A^1)$. Then 
		$$
		c(x_1,\dots,x_{i-1},ex_i,x_{i+1},\dots,x_n)\le c(x_1,\dots,x_n),
		$$
		so that
		\begin{align*}
		c(x_1,\dots,x_{i-1},ex_i,x_{i+1},\dots,x_n)&=\br(c(x_1,\dots,x_{i-1},ex_i,x_{i+1},\dots,x_n)) c(x_1,\dots,x_n)\\
		&=\0(\br(x_1\dots x_{i-1}ex_i\dots x_n))c(x_1,\dots,x_n)\\
		&=\0(\br(x_1\dots x_{i-1}e))c(x_1,\dots,x_n),
		\end{align*}
		the latter equality being explained by $c(x_1,\dots,x_n)\in A_{\0(\br(x_1\dots x_n))}$ and
		\begin{align*}
		\br(set)\br(st)=se(tt\m) e (s\m s)(tt\m) s\m=se(s\m s)(tt\m) s\m=\br(se)\br(st).
		\end{align*}
	\end{proof}
	
	\begin{rem}
		The right-hand side analogue of \cref{c(x_1...ex_i...x_n)} is the following:
		\begin{align}\label{c(x_1...x_ie...x_n)}
		c(x_1,\dots,x_{i-1},x_ie,x_{i+1},\dots,x_n) = \0(\br(x_1\dots x_ie))c(x_1,\dots,x_n).
		\end{align}
		In particular,
		\begin{align}\label{c(x_1...x_ie...x_n)-and-}
		c(x_1,\dots,x_{i-1},x_ie,x_{i+1},\dots,x_n) = c(x_1,\dots,x_i,ex_{i+1},x_{i+2},\dots,x_n).
		\end{align}
	\end{rem}

	\begin{lem}\label{lem:E-cond}
		Let $n\ge 1$ and $c\in Z^n(T^1,A^1)$. Then $c$ is strongly normalized if and only if $c$ is normalized and $c\in Z^n_\le(T^1,A^1)$.
	\end{lem}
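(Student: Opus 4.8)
The plan is to prove the two implications separately, reducing in each direction the order\nobreakdash-preserving condition to the criterion of \cref{c-ord-pres-iff}. For the ``if'' part, suppose $c$ is normalized and $c\in Z^n_\le(T^1,A^1)$; then $c$ satisfies \labelcref{c(x_1...ex_i...x_n)} and its right-hand analogue \labelcref{c(x_1...x_ie...x_n)} by \cref{c-ord-pres-iff} and the remark following it. Fix $i$ with $1<i<n$ and evaluate $c(x_1,\dots,x_{i-2},x_{i-1}e,e,ex_{i+1},x_{i+2},\dots,x_n)$ in two ways: on the one hand it equals $\0(\br(x_1\dots x_{i-1}ex_{i+1}\dots x_n))$ by \labelcref{c(x_1...x_(i-1)e_e_ex_(i+1)...x_n)-triv}; on the other hand, starting from $c(x_1,\dots,x_{i-1},e,x_{i+1},\dots,x_n)$ and applying \labelcref{c(x_1...x_ie...x_n)} in slot $i-1$ and then \labelcref{c(x_1...ex_i...x_n)} in slot $i+1$, it equals $\0(\br(x_1\dots x_{i-1}e))\,c(x_1,\dots,x_{i-1},e,x_{i+1},\dots,x_n)$, the two idempotent prefactors coalescing. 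Since $x_1\dots x_{i-1}ex_{i+1}\dots x_n\le x_1\dots x_{i-1}e$, the prefactor $\0(\br(x_1\dots x_{i-1}e))$ fixes $c(x_1,\dots,x_{i-1},e,x_{i+1},\dots,x_n)$, so comparing the two values yields \labelcref{c(x_1...x_(i-1)_e_x_(i+1)...x_n)-triv}. The extreme cases $i=1$ and $i=n$ run the same way, using \labelcref{c(e_ex_1...x_(n-1))-triv} and \labelcref{c(x_1...x_(n-1)e_x_n)-triv} respectively.

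For the ``only if'' part, suppose $c$ is strongly normalized. That $c$ is normalized is immediate, since each of \labelcref{c(e_ex_1...x_(n-1))-triv}, \labelcref{c(x_1...x_(i-1)e_e_ex_(i+1)...x_n)-triv}, \labelcref{c(x_1...x_(n-1)e_x_n)-triv} is an instance of \labelcref{c(x_1...x_(i-1)_e_x_(i+1)...x_n)-triv} with an idempotent in the distinguished slot. To obtain $c\in Z^n_\le(T^1,A^1)$ it suffices, by \cref{c-ord-pres-iff}, to prove \labelcref{c(x_1...ex_i...x_n)}, and for this I would feed the cocycle identity $\delta^nc=0$ well-chosen arguments. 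Evaluated at $(e,x_1,\dots,x_n)$: every merge term with $j\ge2$, and the final term, retains $e$ as its first argument and is therefore trivial by strong normalization; what is left are the $\lambda$-term $\0(e)\,c(x_1,\dots,x_n)$ and the $j=1$ merge term $c(ex_1,x_2,\dots,x_n)^{-1}$, so that in the ambient group component of $A$ we get $c(ex_1,x_2,\dots,x_n)=\0(e)\,c(x_1,\dots,x_n)$, which is \labelcref{c(x_1...ex_i...x_n)} for $i=1$. Evaluated at $(x_1,\dots,x_{k-1},e,x_k,\dots,x_n)$ for $2\le k\le n$: all terms except the merge terms at positions $k-1$ and $k$ still have $e$ standing alone in a $c$-argument and are hence trivial, and cancelling them yields $c(x_1,\dots,x_{k-1},ex_k,x_{k+1},\dots,x_n)=c(x_1,\dots,x_{k-2},x_{k-1}e,x_k,x_{k+1},\dots,x_n)$.

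It remains to bootstrap these local relations to \labelcref{c(x_1...ex_i...x_n)} for all $i$. The key elementary fact is that $e$ commutes with $\bd(x)$, whence $xe=\br(xe)\,x$ in $T$; iterating the last displayed equality together with this rewriting slides the idempotent leftward through the slots, transforming $c(x_1,\dots,x_{i-1},ex_i,\dots,x_n)$, for any $i\ge2$, into $c(e'x_1,x_2,\dots,x_n)$ for some $e'\in E(T)$, and the $i=1$ relation then turns this into $\0(e')\,c(x_1,\dots,x_n)$. Comparing the group components of $A$ on the two sides forces $e'\,\br(x_1\dots x_n)=\br(x_1\dots x_{i-1}ex_i\dots x_n)=\br(x_1\dots x_{i-1}e)\,\br(x_1\dots x_n)$ (the last equality by the identity $\br(set)=\br(se)\br(st)$ used in the proof of \cref{c-ord-pres-iff}), so $\0(e')\,c(x_1,\dots,x_n)=\0(\br(x_1\dots x_{i-1}e))\,c(x_1,\dots,x_n)$; this is \labelcref{c(x_1...ex_i...x_n)}. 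Hence $c\in C^n_\le(T^1,A^1)$ by \cref{c-ord-pres-iff}, and therefore $c\in Z^n_\le(T^1,A^1)$.

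I expect the ``only if'' direction to be the main obstacle, precisely in this passage from the cocycle identity to \labelcref{c(x_1...ex_i...x_n)}: inserting an idempotent into one slot of $\delta^nc=0$ only delivers the local relations $c(ex_1,x_2,\dots,x_n)=\0(e)\,c(x_1,\dots,x_n)$ and $c(\dots,ex_k,\dots)=c(\dots,x_{k-1}e,\dots)$, not \labelcref{c(x_1...ex_i...x_n)} itself, and bridging the gap needs the observation that idempotents can be transported toward the left across argument slots via $xe=\br(xe)\,x$, together with careful bookkeeping of the group components of $A$, which is what lets one avoid ever computing the transported idempotent $e'$ explicitly. For $n=1$ the statement is trivial, all three notions reducing to $c(e)=\0(e)$.
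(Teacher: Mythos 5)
Your proof is correct and follows essentially the same route as the paper: the ``if'' direction combines the normalization identities with \cref{c-ord-pres-iff} and its right-hand analogue, and the ``only if'' direction evaluates the cocycle identity at $(e,x_1,\dots,x_n)$ and at $(x_1,\dots,x_{k-1},e,x_k,\dots,x_n)$, exactly the tuples used in the paper. Your ``sliding the idempotent leftward'' via $xe=\br(xe)x$ is just the paper's induction on $i$ unrolled, with the component-comparison trick standing in for tracking the transported idempotent explicitly.
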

	\begin{proof}
		The ``if'' part easily follows from \cref{c(e_ex_1...x_(n-1))-triv,c(x_1...x_(i-1)e_e_ex_(i+1)...x_n)-triv,c(x_1...x_(n-1)e_x_n)-triv,c(x_1...ex_i...x_n),c(x_1...x_ie...x_n)}.
		
		The ``only if'' part. Clearly, a strongly normalized $n$-cocycle is normalized. Let us prove that it is order-preserving. Write the $n$-cocycle identity for $c$ and the $(n+1)$-tuple $(e,x_1,\dots,x_n)$:
		\begin{align*}
			&\0(e)c(x_1,\dots,x_n)c(ex_1,x_2,\dots,x_n)\m \\
			&\quad\cdot\prod_{i=1}^{n-1} c(e,x_1,\dots,x_ix_{i+1},\dots,x_n)^{(-1)^{i-1}}\\
			&\quad\cdot c(e,x_1,\dots,x_{n-1})^{(-1)^{n-1}}=\0(\br(ex_1\dots x_n)).
		\end{align*}
		Then \cref{c(x_1...x_(i-1)_e_x_(i+1)...x_n)-triv} implies \cref{c(x_1...ex_i...x_n)} for $i=1$. By induction on $i$ one proves \cref{c(x_1...ex_i...x_n)} for all $1\le i\le n$, where on the $i$-th step the $(n+1)$-tuple $(x_1,\dots,x_{i-1},e,x_i,\dots,x_n)$ is used. The result now follows from \cref{c-ord-pres-iff}.
	\end{proof}
	
	\cref{prop:Egenc,adm-ext-induces-str-norm-c,lem:E-cond} imply the next.
	
	\begin{cor}\label{E_le(T_A)->H3_le(T^1_A^1)}
		There is a well-defined function from $\mathcal{E}_\le(T,A)$ to $H^3_\le(T^1,A^1)$.
	\end{cor}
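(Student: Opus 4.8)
The plan is to refine the function $\Phi\colon\mathcal{E}(T,A)\to H^3(T^1,A^1)$ of \cref{prop:Egenc} by restricting it to $\mathcal{E}_\le(T,A)$ and corestricting it to $H^3_\le(T^1,A^1)$. Starting from a class in $\mathcal{E}_\le(T,A)$, choose an admissible representative \labelcref{eq:seq4terms}; by \cref{admissible-ext-defn} fix an order-preserving transversal $\rho\colon T\to S$ of $\pi$ and an order-preserving transversal $\s\colon\bt(N)\to N$ of $\bt$, both respecting idempotents, and run the construction of \cref{from-crossed-mod-ext-to-C^3} with $F=\s\circ f$. By \cref{adm-ext-induces-str-norm-c} the resulting $c\in Z^3(T^1,A^1)$ is strongly normalized, so by \cref{lem:E-cond} we have $c\in Z^3_\le(T^1,A^1)$ and $[c]$ is a genuine element of $H^3_\le(T^1,A^1)$. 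We propose to send the class of the extension to $[c]$.

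What must be checked is that this $[c]\in H^3_\le(T^1,A^1)$ is independent of all the choices, i.e. that the independence arguments of \cref{another-choice-of-F,another-choice-of-rho,prop:Egenc} can be carried out so that the correcting $2$-cochains land in $C^2_\le(T^1,A^1)$. The key observation is that order-preservation propagates through the whole construction once the transversals are order-preserving: since multiplication and inversion are monotone in an inverse semigroup, $f(x,y)=\rho(x)\rho(y)\rho(xy)\m$ is order-preserving, hence so is $F=\s\circ f$; and since every homomorphism of inverse semigroups (in particular $i$) is monotone, since $i\m$ is monotone, and since endomorphisms of semilattices of groups and the action $\lb$ are monotone — using that $\lb$ is a homomorphism together with \labelcref{CM1} to reduce $\lb_s$ to $\lb_{s'}$ when $s\le s'$ — the comparison cochains $u$ of \cref{another-choice-of-F} (defined by $i(u(x,y))=F'(x,y)F(x,y)\m$ with $F'$ any order-preserving lift of $f$) and $w$ of \cref{another-choice-of-rho} (built from an order-preserving $\rho'$, $v(x)=\rho'(x)\rho(x)\m$, $V=\s'\circ v$) turn out to be order-preserving. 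Their coboundaries therefore lie in $B^3_\le(T^1,A^1)$, leaving $[c]$ unchanged in $H^3_\le(T^1,A^1)$. For invariance under an equivalence $(\varphi_1,\varphi_2)$ of two admissible extensions, the computation of \cref{prop:Egenc} applies unchanged: $\rho'=\varphi_2\circ\rho$ is again an order-preserving transversal of $\pi'$ respecting idempotents — because $\varphi_2$, being an inverse-semigroup homomorphism, is automatically monotone and idempotent-preserving — and $F'=\varphi_1\circ F$ is an order-preserving lift, so that $c'=c$ on the nose.

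The main obstacle is the second paragraph: one has to verify that \emph{every} piece of auxiliary data in the independence-of-choices arguments — the twistings $f,f'$, their lifts $F,F'$, the comparison maps $v,V$, and finally the cochains $u,w$ — remains order-preserving, so that the ambiguity of $c$ is killed inside $H^3_\le(T^1,A^1)$ and not merely inside $H^3(T^1,A^1)$. Once the monotonicity facts above are in place this is routine, and the corestriction of $\Phi|_{\mathcal{E}_\le(T,A)}$ to $H^3_\le(T^1,A^1)$ is the function asserted by the corollary.
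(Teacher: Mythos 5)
Your proposal is correct and follows essentially the same route as the paper: restrict the map of \cref{prop:Egenc} to admissible extensions, use \cref{adm-ext-induces-str-norm-c} and \cref{lem:E-cond} to see that order-preserving transversals yield a strongly normalized (hence order-preserving) cocycle, and observe that in the independence arguments of \cref{another-choice-of-F,another-choice-of-rho} (and in the equivalence computation of \cref{prop:Egenc}, where $\varphi_2\circ\rho$ and $\varphi_1\circ F$ remain order-preserving) the correcting $2$-cochains $u$ and $w$ are order-preserving, so the class lands in $H^3_\le(T^1,A^1)$. Your write-up simply spells out the monotonicity checks that the paper's proof leaves to the reader.
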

	\begin{proof}
		Analyzing the proofs of \cref{another-choice-of-F,another-choice-of-rho}, we see that whenever we choose order-preserving transversals of $\bt$ and $\pi$, the corresponding $2$-cochains $u$ and $w$ will also be order-preserving.
	\end{proof}

	\section{From \texorpdfstring{$H^3_\le(T^1,A^1)$}{H³<(T¹,A¹)} to \texorpdfstring{$\mathcal{E}(T,A)$}{E(T,A)}}\label{sec:H3toE}
	The goal of this section is to construct a map from $H^3_\le(T^1,A^1)$ to $\mathcal{E}(T,A)$ which induces a bijective correspondence between $H^3_\le(T^1,A^1)$ and $\mathcal{E}_\le(T,A)$, when $T$ is $F$-inverse.
	
	\subsection{Normalized order-preserving \texorpdfstring{$3$}{3}-cocycles}
	
	\cref{lem:E-cond} shows that each strongly normalized $n$-cocycle is order-preserving. Our aim now is to prove that each order-preserving $3$-cocycle is cohomologous to a (strongly) normalized one.

	\begin{lem}\label{c(e_t_e)=trivial}
		Let $c\in Z^3_\le(T^1,A^1)$. Then for any $t\in T$ and $e,f\in E(T)$ one has
		\begin{align}\label{c(e_t_f)=0(etft^(-1))}
		c(e,t,f)=\0(etft^{-1}).
		\end{align}
	\end{lem}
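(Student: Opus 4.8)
The plan is to exploit the $3$-cocycle identity for well-chosen $4$-tuples together with the hypothesis that $c$ is order-preserving, via the characterization in \cref{c-ord-pres-iff}. First I would observe that since $c\in Z^3_\le(T^1,A^1)$, \cref{c-ord-pres-iff} applies, so $c$ satisfies both \cref{c(x_1...ex_i...x_n)} and its right-hand analogue \cref{c(x_1...x_ie...x_n)}; these let me absorb idempotent factors at any position of an argument of $c$ at the cost of a $\0(\br(\cdots))$-term. The idea is then to write the cocycle identity
\[
\eta_{x_1}(c(x_2,x_3,x_4))\,c(x_1,x_2,x_3)\,c(x_1,x_2x_3,x_4) = c(x_1,x_2,x_3x_4)\,c(x_1x_2,x_3,x_4)
\]
for the $4$-tuple $(e,e,t,f)\in T^4$ and simplify using \cref{c(x_1...ex_i...x_n),c(x_1...x_ie...x_n)} to relate $c(e,t,f)$ to values of $c$ that are forced to be "trivial" (i.e. of the form $\0(\br(\cdots))$) by the order-preserving property applied to the idempotent arguments.

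Concretely, I expect the following steps. Step one: plug $(e,e,t,f)$ into the cocycle identity. The term $\eta_e(c(e,t,f))$ on the left becomes $\0(e)c(e,t,f)$ by \labelcref{TM1}-type behaviour (here $\eta_e$ acts as multiplication by $\0(e)$), and since $c(e,t,f)\in A_{\0(\br(etf t\m))}$ with $\br(etf t\m)\le e$, this is just $c(e,t,f)$ again. Step two: the factor $c(e,e,t)$ appearing on the left is, by \cref{c(x_1...ex_i...x_n)} applied with the idempotent $e$ in the first slot (rewriting $c(e,e,t)=c(e\cdot e,t,\ldots)$ appropriately) or directly by the right-hand analogue \cref{c(x_1...x_ie...x_n)}, reducible: $c(e,e,t)=c(e,e,t)$ with both first arguments idempotent forces it to equal $\0(\br(et))$ once one uses that $c(e,f',t)$ with $f'\le$ something collapses — more carefully, I would use \cref{c(x_1...x_ie...x_n)-and-} to shift the idempotent and \cref{c(x_1...x_ie...x_n)} to pull it out, obtaining $c(e,e,t)=\0(\br(et))\,c(e,t,t\m t)$ or similar, and then iterate. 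Step three: similarly simplify $c(e,et,f)=c(e\cdot(et),f,\cdot)$ and $c(e,e,tf)$, $c(ee,t,f)=c(e,t,f)$ using that $ee=e$. After substitution the identity should degenerate to an equation of the form $c(e,t,f)=c(e,t,f)\cdot(\text{product of trivial terms})$ multiplied against $\0(etf t\m)$, from which \cref{c(e_t_f)=0(etft^(-1))} follows by cancellation in the abelian group $A_{\0(etf t\m)}$ and the fact that $c(e,t,f)$ already lives in that component.

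The main obstacle I anticipate is bookkeeping: each application of \cref{c(x_1...ex_i...x_n),c(x_1...x_ie...x_n)} introduces a $\0(\br(\cdots))$ factor and shifts idempotents between adjacent arguments via \cref{c(x_1...x_ie...x_n)-and-}, so one must track carefully in which group component $A_{\0(e')}$ each term lies and verify that all the ambient idempotents $e'$ coincide (or are comparable, so that multiplication reduces them correctly) before cancelling. A clean way to organize this is to first prove the special cases $c(e,t,f)$ with $t$ an idempotent, or to note that $\0$ is a semilattice homomorphism on $E(T)$ and that $\br(etf t\m)=etf t\m$ is the "target" idempotent throughout, so every auxiliary trivial term, once multiplied into the component $A_{\0(etf t\m)}$, becomes $\0(etf t\m)$, the identity of that group; this makes the final cancellation transparent. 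A possible alternative route, if the $(e,e,t,f)$ tuple proves awkward, is to instead combine the two tuples $(e,t,f,f)$ and $(e,t,t\m t,f)$, using \cref{c(x_1...x_(i-1)_e_x_(i+1)...x_n)-triv}-style normalization is \emph{not} available here (we are not assuming $c$ normalized), so one must get everything purely from order-preservation — which is exactly why \cref{c-ord-pres-iff} is the right tool.
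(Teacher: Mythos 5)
Your overall strategy -- write the $3$-cocycle identity for a quadruple containing a doubled idempotent and simplify with \cref{c-ord-pres-iff} -- is exactly the paper's approach (the paper uses the quadruple $(e,t,f,f)$ rather than your $(e,e,t,f)$; both choices can be made to work). However, your Step two contains a genuine error. You claim that the terms $c(e,e,t)$ (and likewise $c(e,e,tf)$) can be forced to equal the trivial element $\0(\br(et))$ by order-preservation alone, e.g.\ via an identity of the shape $c(e,e,t)=\0(\br(et))\,c(e,t,t\m t)$. Neither assertion is available: triviality of $c(e,e,t)$ is precisely the normalization condition \cref{c(e_e_t)=0(ett^(-1))}, which by \cref{c-normalized-iff-c(t_e_e)=c(e_e_t)=triv,c.delta^2d-normalized,d(e_et)=c(e_e_t)^(-1)} is \emph{not} automatic for order-preserving cocycles (already in the group case, where order-preservation is vacuous, one has coboundaries with $c(1,1,t)\ne\0(1)$ while $c(1,t,1)=\0(1)$, so your proposed identity is false in general). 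Nor does it follow from \cref{c(x_1...ex_i...x_n),c(x_1...x_ie...x_n),c(x_1...x_ie...x_n)-and-}: those lemmas strip an idempotent \emph{factor} off an argument, but when the whole argument is the idempotent $e$ there is nothing left to strip, and $(e,e,t)$ and $(e,t,t\m t)$ are not comparable coordinatewise. You even note at the end that normalization ``is not available here,'' which is inconsistent with Step two.

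The quadruple $(e,e,t,f)$ can still be made to work, but by a different mechanism: the two problematic terms occur as the pair $c(e,e,tf)\m c(e,e,t)$, and \cref{c(x_1...x_ie...x_n)} (with the idempotent $f$ attached to the third argument) gives $c(e,e,tf)=\0(\br(etf))\,c(e,e,t)$, so the pair collapses to $\0(etft\m)$ without either factor being trivial. Combining this with $\eta_e(c(e,t,f))=c(e,t,f)$, $c(ee,t,f)=c(e,t,f)$ and $c(e,et,f)=\0(e)c(e,t,f)=c(e,t,f)$ (by \cref{c(x_1...ex_i...x_n)}), the left side of the cocycle identity reduces to a single net copy of $c(e,t,f)$ while the right side is $\0(\br(etf))=\0(etft\m)$, which is the claim. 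Note also that your description of the endgame -- an equation of the form ``$c(e,t,f)=c(e,t,f)\cdot(\text{trivial terms})$'' followed by cancellation -- would be vacuous and proves nothing; the identity must be made to read $c(e,t,f)=\0(etft\m)$ directly, as above or as in the paper's computation with $(e,t,f,f)$, where $\eta_e(c(t,f,f))$ cancels against $c(et,f,f)\m$ and $c(e,tf,f)$ against $c(e,t,f^2)\m$ via \cref{c(x_1...ex_i...x_n),c(x_1...x_ie...x_n)-and-}.
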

	\begin{proof}
		Indeed, applying the $3$-cocycle identity to the quadruple $(e,t,f,f)$ and using \labelcref{c(x_1...x_ie...x_n),c(x_1...ex_i...x_n)}, we have
		\begin{align*}
		\0(etft^{-1})&=\0(e)c(t,f,f)c(et,f,f)^{-1} c(e,tf,f) c(e,t,f^2)^{-1} c(e,t,f)\\
		&=\0(etft^{-1})c(e,t,f)=c(e,t,f).
		\end{align*}
	\end{proof}
	
	\begin{lem}\label{c-normalized-iff-c(t_e_e)=c(e_e_t)=triv}
		Let $c\in Z^3_\le(T^1,A^1)$. Then $c$ is normalized if and only if
		\begin{align}
		c(t,e,e)&=\0(tet^{-1}),\label{c(t_e_e)=0(tet^(-1))}\\
		c(e,e,t)&=\0(ett^{-1})\label{c(e_e_t)=0(ett^(-1))}
		\end{align}
		for all $t\in T$ and $e\in E(T)$.
	\end{lem}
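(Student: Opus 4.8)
The plan is to establish the two implications separately, in both directions relying on the $3$-cocycle identity recorded at the end of the proof of \cref{c-is-a-3-cocycle}: for $x,y,z,w\in T$,
\[
\eta_x(c(y,z,w))\,c(x,y,z)\,c(x,yz,w)=c(x,y,zw)\,c(xy,z,w),
\]
together with the $T$-module relations $\eta_e(a)=\0(e)a$ and $\eta_t(\0(e))=\0(tet\m)$, and the order-preservation of $c$ in the form of \cref{c-ord-pres-iff} and \labelcref{c(x_1...ex_i...x_n),c(x_1...x_ie...x_n),c(x_1...x_ie...x_n)-and-}. Throughout, I will use freely that $A$ is abelian, so that factors lying in $A$ may be reordered at will, and that $c(u,v,w)$ lies in the group component $A_{\0(\br(uvw))}$, so that any idempotent $\0(g)$ with $g\ge\br(uvw)$ is absorbed by $c(u,v,w)$.

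The implication from normalization of $c$ to \labelcref{c(t_e_e)=0(tet^(-1)),c(e_e_t)=0(ett^(-1))} is immediate: putting $x_2=e$ in \labelcref{c(x_1...x_(n-1)e_x_n)-triv} (for $n=3$) gives $c(t,e,e)=\0(\br(te))=\0(tet\m)$, and putting $x_1=e$ in \labelcref{c(e_ex_1...x_(n-1))-triv} (for $n=3$) gives $c(e,e,t)=\0(\br(et))=\0(ett\m)$.

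For the converse, assuming \labelcref{c(t_e_e)=0(tet^(-1)),c(e_e_t)=0(ett^(-1))}, I will prove the stronger statement that $c$ is \emph{strongly} normalized; this suffices, since strong normalization at once yields the identities \labelcref{c(e_ex_1...x_(n-1))-triv,c(x_1...x_(i-1)e_e_ex_(i+1)...x_n)-triv,c(x_1...x_(n-1)e_x_n)-triv} defining a normalized cochain (for instance $c(e,ex,y)=\0(\br(e\cdot ex\cdot y))=\0(\br(exy))$, and likewise for the other two). Strong normalization is the assertion that $c(x,y,e)=\0(\br(xye))$, $c(x,e,y)=\0(\br(xey))$ and $c(e,x,y)=\0(\br(exy))$ for all $x,y\in T$, $e\in E(T)$, and I plan to derive each of the three from a single specialization of the cocycle identity in which three of its four $c$-factors collapse onto the target value. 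For the first one takes $z=w=e$, rewrites $c(x,ye,e)$ by \labelcref{c(x_1...x_ie...x_n)}, substitutes \labelcref{c(t_e_e)=0(tet^(-1))} for $c(y,e,e)$ and $c(xy,e,e)$, and uses $\eta_x(\0(yey\m))=\0(\br(xye))$. For the second one takes $y=z=e$, rewrites $c(x,e,ew)$ and $c(xe,e,w)$ by \labelcref{c(x_1...ex_i...x_n),c(x_1...x_ie...x_n)-and-}, and substitutes \labelcref{c(e_e_t)=0(ett^(-1))} for $c(e,e,w)$ and \labelcref{c(t_e_e)=0(tet^(-1))} for $c(x,e,e)$. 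For the third one takes $x=y=e$, rewrites $c(e,ez,w)$ by \labelcref{c(x_1...ex_i...x_n)}, uses $\eta_e(a)=\0(e)a$, and substitutes \labelcref{c(e_e_t)=0(ett^(-1))} for $c(e,e,z)$ and $c(e,e,zw)$. In each case, once the accumulated idempotents are absorbed, the cocycle identity degenerates to an equation $a^2=a$ inside a group, forcing $a$ to be its identity, i.e.\ the prescribed value $\0(\br(\cdot))$.

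The genuinely delicate part of the converse is the idempotent bookkeeping: every specialization leaves the surviving factor $c(\cdot)$ multiplied by several idempotents coming from $\0(E(T))$ (produced by $\eta$, by \labelcref{c(x_1...ex_i...x_n),c(x_1...x_ie...x_n)}, and by \labelcref{c(t_e_e)=0(tet^(-1)),c(e_e_t)=0(ett^(-1))}), and one has to check that each of them dominates the identity of the group component to which $c(\cdot)$ belongs, so that it disappears. This reduces to elementary manipulations with idempotents of $T$, such as $sfs\m\le ss\m$ and $\br(sft)\le\br(sf)$ for $f\in E(T)$, and $\br(xe)=xex\m$, so I expect no conceptual obstacle beyond careful tracking of these inclusions (bearing in mind that $T$, unlike $A$, is noncommutative). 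As a sanity check, the whole argument specializes, when $T$ is a group, to the classical fact that a $3$-cocycle $c$ with $c(g,1,1)=c(1,1,g)=1$ is automatically normalized.
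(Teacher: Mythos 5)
Your proposal is correct and follows essentially the same route as the paper: the ``only if'' direction is immediate from the definition, and the converse is obtained by specializing the $3$-cocycle identity to the quadruples $(x,y,e,e)$, $(e,e,x,y)$ and $(x,e,e,y)$, using \labelcref{c(x_1...ex_i...x_n),c(x_1...x_ie...x_n)} and the hypotheses \labelcref{c(t_e_e)=0(tet^(-1)),c(e_e_t)=0(ett^(-1))} to collapse all but one factor, exactly as in the paper's proof (which in fact derives the strong normalization identities, as you do). The only quibble is cosmetic: after the idempotents are absorbed, the identity reduces directly to $c(\cdot)=\0(\br(\cdot))$ (a product like $cc\m c$ equated to an idempotent), not to an equation of the form $a^2=a$, but this does not affect the argument.
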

	\begin{proof}
		The ``only if'' part is trivial. Let us prove the ``if'' part. Applying the $3$-cocycle identity to the quadruple $(x,y,e,e)$ and using \cref{c(x_1...x_ie...x_n),c(t_e_e)=0(tet^(-1))}, we obtain
		\begin{align*}
		\0(xyey^{-1} x^{-1})&=\eta_x(c(y,e,e))c(xy,e,e)^{-1} c(x,ye,e) c(x,y,e^2)^{-1} c(x,y,e)\\
		&=\eta_x(\0(yey^{-1}))\0(xyey^{-1} x^{-1}) c(x,y,e)=c(x,y,e).
		\end{align*}
		Similarly, using \cref{c(x_1...ex_i...x_n),c(e_e_t)=0(ett^(-1))} and applying the $3$-cocycle identity to $(e,e,x,y)$, we have
		\begin{align*}
		\0(exyy^{-1} x^{-1})&=\eta_e(c(e,x,y))c(e^2,x,y)^{-1} c(e,ex,y) c(e,e,xy)^{-1} c(e,e,x)\\
		&=\0(e) c(e,x,y) \0(exyy^{-1} x^{-1}) \0(exx^{-1})=c(e,x,y).
		\end{align*}
		Finally, \cref{c(e_e_t)=0(ett^(-1)),c(t_e_e)=0(tet^(-1)),c(x_1...x_ie...x_n),c(x_1...ex_i...x_n)} and the $3$-cocycle identity applied to $(x,e,e,y)$ give 
		\begin{align*}
		\0(xeyy^{-1} x^{-1})&=\eta_x(c(e,e,y))c(xe,e,y)^{-1} c(x,e^2,y) c(x,e,ey)^{-1} c(x,e,e)\\
		&=\eta_x(\0(eyy^{-1}))c(x,e,y)\0(xex^{-1})=c(x,e,y).
		\end{align*}
	\end{proof}
	
	\begin{lem}
		Let $c\in Z^3_\le(T^1,A^1)$. Define
		\begin{align}\label{d(x_y)=c(xx^(-1)_x_y)^(-1)c(x_y_y^(-1)y)}
		d(x,y)=c(xx^{-1},x,y)^{-1} c(x,y,y^{-1}y).
		\end{align}
		Then $d\in C^2_\le(T^1,A^1)$ and for all $t\in T$ and $e\in E(T)$
		\begin{align}
		d(e,et)&=c(e,e,t)^{-1},\label{d(e_et)=c(e_e_t)^(-1)}\\
		d(te,e)&=c(t,e,e).\label{d(te_e)=c(t_e_e)}
		\end{align}
	\end{lem}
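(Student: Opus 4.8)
The plan is to verify the three assertions in turn, each of which reduces — after bookkeeping of idempotents — to the identities of \cref{c-ord-pres-iff} (and its right-hand analogue) together with \cref{c(e_t_e)=trivial}. First I would check $d\in C^2_\le(T^1,A^1)$. For the membership condition, note $c(xx\m,x,y)\in A_{\0(xx\m\cdot x\cdot y\cdot y\m\cdot x\m\cdot xx\m)}=A_{\0(xyy\m x\m)}$ and $c(x,y,y\m y)\in A_{\0(x\cdot y\cdot y\m y\cdot y\m y\cdot y\m\cdot x\m)}=A_{\0(xyy\m x\m)}$, so $d(x,y)$, being a product of elements of the group component $A_{\0(\br(xy))}$, lies in $A_{\0(\br(xy))}$, which is exactly membership in $C^2(T^1,A^1)$. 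For order-preservation, if $x_1\le y_1$ and $x_2\le y_2$, then $x_1x_1\m\le y_1y_1\m$ and $x_2\m x_2\le y_2\m y_2$ (the maps $\br,\bd$ are monotone), so applying the order-preservation of $c$ to the coordinatewise inequalities $(x_1x_1\m,x_1,x_2)\le(y_1y_1\m,y_1,y_2)$ and $(x_1,x_2,x_2\m x_2)\le(y_1,y_2,y_2\m y_2)$, and using that the natural partial order on the inverse semigroup $A$ is compatible with inversion and multiplication, gives $d(x_1,x_2)\le d(y_1,y_2)$.

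Next I would prove \cref{d(e_et)=c(e_e_t)^(-1)}. Substituting $x=e$, $y=et$ in \cref{d(x_y)=c(xx^(-1)_x_y)^(-1)c(x_y_y^(-1)y)} and using $ee\m=e$ and $(et)\m(et)=t\m et$ gives $d(e,et)=c(e,e,et)\m\,c(e,et,t\m et)$. Writing $et=e\cdot t$ and applying \cref{c(x_1...ex_i...x_n)} in the third slot yields $c(e,e,et)=\0(\br(e))\,c(e,e,t)=c(e,e,t)$, since $c(e,e,t)\in A_{\0(ett\m)}$ and $\0(e)\0(ett\m)=\0(ett\m)$; applying \cref{c(x_1...ex_i...x_n)} in the second slot and then \cref{c(e_t_e)=trivial} yields $c(e,et,t\m et)=\0(e)\,c(e,t,t\m et)=\0(ett\m)$. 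Hence $d(e,et)=c(e,e,t)\m\cdot\0(ett\m)=c(e,e,t)\m$, because $\0(ett\m)$ is the identity of the group component containing $c(e,e,t)\m$.

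Finally, for \cref{d(te_e)=c(t_e_e)}, substituting $x=te$, $y=e$ in \cref{d(x_y)=c(xx^(-1)_x_y)^(-1)c(x_y_y^(-1)y)} and using $(te)(te)\m=tet\m$ and $e\m e=e$ gives $d(te,e)=c(tet\m,te,e)\m\,c(te,e,e)$. By \cref{c(e_t_e)=trivial} (with the idempotents $tet\m$ and $e$), $c(tet\m,te,e)=\0\big(tet\m\cdot te\cdot e\cdot(te)\m\big)=\0(tet\m)$, and by the right-hand analogue \cref{c(x_1...x_ie...x_n)} of \cref{c-ord-pres-iff} applied in the first slot, $c(te,e,e)=\0(\br(te))\,c(t,e,e)=\0(tet\m)\,c(t,e,e)=c(t,e,e)$, since $c(t,e,e)\in A_{\0(tet\m)}$. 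Therefore $d(te,e)=\0(tet\m)\m\cdot c(t,e,e)=\0(tet\m)\,c(t,e,e)=c(t,e,e)$. The computations are routine; the only mild subtleties are confirming that $d$ is genuinely order-preserving (which rests on the monotonicity of $\br,\bd$ and on the compatibility of the natural partial order on $A$ with the group operations) and keeping straight which of \cref{c(x_1...ex_i...x_n)} and \cref{c(x_1...x_ie...x_n)} is the right tool for each idempotent factor and on which side.
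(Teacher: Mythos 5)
Your proof is correct and follows essentially the same route as the paper's: the same substitutions $x=e,\,y=et$ and $x=te,\,y=e$, reduced via \cref{c(x_1...ex_i...x_n)}, \cref{c(x_1...x_ie...x_n)} and \cref{c(e_t_e)=trivial}, with the order-preservation of $d$ inherited from that of $c$. You merely spell out the component bookkeeping and the monotonicity argument that the paper leaves implicit.
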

	\begin{proof}
		Indeed, $d$ is order-preserving, since $c$ is. Moreover, using \labelcref{c(x_1...x_ie...x_n),c(x_1...ex_i...x_n),c(e_t_f)=0(etft^(-1))} we have
		\begin{align*}
		d(e,et)&=c(e,e,et)^{-1} c(e,et,t^{-1}et)=\0(e)c(e,e,t)^{-1}\0(ett^{-1})=c(e,e,t)^{-1},\\
		d(te,e)&=c(tet^{-1},te,e)^{-1} c(te,e,e)=\0(tet^{-1})c(t,e,e)=c(t,e,e).
		\end{align*}
	\end{proof}
	
	\begin{lem}\label{c.delta^2d-normalized}
		Let $c\in Z^3_\le(T^1,A^1)$ and $d\in C^2_\le(T^1,A^1)$ be defined by means of \cref{d(x_y)=c(xx^(-1)_x_y)^(-1)c(x_y_y^(-1)y)}. Then $\tl c:=c\cdot \delta^2 d$ is normalized.
	\end{lem}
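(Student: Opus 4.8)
The plan is to compute $\tl c = c\cdot\delta^2 d$ directly on the tuples that appear in the normalization conditions, and reduce everything to \cref{c-normalized-iff-c(t_e_e)=c(e_e_t)=triv}. That is, since $\tl c$ is again a cocycle in $Z^3_\le(T^1,A^1)$ (being a product of a cocycle and a coboundary, and $d$ is order-preserving by the previous lemma so $\delta^2 d$ is order-preserving), it suffices to verify only the two identities \cref{c(t_e_e)=0(tet^(-1)),c(e_e_t)=0(ett^(-1))}, i.e.
\[
\tl c(t,e,e)=\0(tet^{-1}),\qquad \tl c(e,e,t)=\0(ett^{-1})
\]
for all $t\in T$, $e\in E(T)$. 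So the body of the proof is just these two computations.

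First I would write out $\delta^2 d(x,y,z)=\eta_x(d(y,z))\,d(xy,z)^{-1}d(x,yz)\,d(x,y)^{-1}$ and specialize to $(t,e,e)$ and to $(e,e,t)$. For $(t,e,e)$ one gets
\[
\tl c(t,e,e)=c(t,e,e)\cdot\eta_t(d(e,e))\,d(te,e)^{-1}d(t,e)\,d(t,e)^{-1}
= c(t,e,e)\cdot\eta_t(d(e,e))\,d(te,e)^{-1},
\]
and now I invoke \cref{d(te_e)=c(t_e_e)} to get $d(te,e)^{-1}=c(t,e,e)^{-1}$, and I compute $d(e,e)$ — either $d(e,e)=c(e,e,e)^{-1}c(e,e,e)=\0(\br(e))=\0(e)$ directly from the definition \cref{d(x_y)=c(xx^(-1)_x_y)^(-1)c(x_y_y^(-1)y)} together with \cref{c(e_t_f)=0(etft^(-1))}, so that $\eta_t(d(e,e))=\eta_t(\0(e))=\0(tet^{-1})$ by the $T$-module axioms. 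Hence $\tl c(t,e,e)=c(t,e,e)\cdot\0(tet^{-1})\cdot c(t,e,e)^{-1}=\0(tet^{-1})$, using commutativity of $A$ and the fact that both $c(t,e,e)$ and $\0(tet^{-1})$ lie in the abelian group $A_{\0(\br(tet^{-1}))}$ (so the conjugation is trivial). The computation for $(e,e,t)$ is symmetric: $\tl c(e,e,t)=c(e,e,t)\cdot\eta_e(d(e,t))\,d(e,t)^{-1}d(e,t)\,d(e,e)^{-1}$, which after simplification and using \cref{d(e_et)=c(e_e_t)^(-1)} (applied with $e$ in place of $t$, giving $d(e,e)=c(e,e,e)^{-1}$) and $\eta_e(d(e,t))=\0(e)d(e,t)$ collapses to $\0(ett^{-1})$.

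The only subtlety — and the step I'd be most careful about — is keeping track of which group component each factor lives in and why the various products can be rearranged: all the relevant values ($c$ on the tuples in question, $d$ on those tuples, the $\0(\cdot)$'s) sit in the same abelian group $A_{\0(e')}$ for the appropriate idempotent $e'=\br(\cdots)$, so multiplication is commutative and idempotent-absorbing there, but one must check the idempotents match before cancelling. I would also double-check the $\eta_t(d(e,e))$ evaluation against \cref{TM1}-type axioms (the $T$-module version): $d(e,e)\in A_{\0(e)}=A_{\0(\br(e))}$ and $\eta_t$ sends $A_{\0(e)}$ to $A_{\0(\br(tet^{-1}))}$ with $\eta_t(\0(e))=\0(tet^{-1})$, so $\eta_t(d(e,e))=\0(tet^{-1})$ exactly when $d(e,e)=\0(e)$, which is what the direct computation gives. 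Everything else is bookkeeping with the identities \cref{c(x_1...ex_i...x_n),c(x_1...x_ie...x_n)} already established for order-preserving cocycles.
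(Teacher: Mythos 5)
Your proposal follows the paper's proof essentially verbatim: evaluate $\delta^2 d$ on the tuples $(t,e,e)$ and $(e,e,t)$ using \cref{d(te_e)=c(t_e_e),d(e_et)=c(e_e_t)^(-1)} together with $d(e,e)=\0(e)$, and then conclude via \cref{c-normalized-iff-c(t_e_e)=c(e_e_t)=triv}; you also correctly observe that this last lemma applies because $\tl c\in Z^3_\le(T^1,A^1)$. Your computation of $\tl c(t,e,e)$ is correct. In the $(e,e,t)$ case, however, you mis-specialized the coboundary: the third factor of $(\delta^2 d)(e,e,t)$ is $d(e,et)$, not $d(e,t)$, and as written your product only simplifies to $c(e,e,t)\,d(e,t)$, while the facts you invoke ($d(e,e)=c(e,e,e)^{-1}$ and $\eta_e(d(e,t))=\0(e)d(e,t)$) do not by themselves cancel the remaining $c(e,e,t)$. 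The repair is one line: either keep the factor $d(e,et)$ and apply \cref{d(e_et)=c(e_e_t)^(-1)} to it directly, as the paper does, or note that
\[
d(e,t)=c(e,e,t)^{-1}c(e,t,t^{-1}t)=c(e,e,t)^{-1}
\]
by \cref{d(x_y)=c(xx^(-1)_x_y)^(-1)c(x_y_y^(-1)y)} and \cref{c(e_t_f)=0(etft^(-1))} (equivalently, $d(e,et)=d(e,t)$ since $d$ is order-preserving and both values lie in $A_{\0(ett^{-1})}$). With that addition your argument coincides with the paper's.
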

	\begin{proof}
		Let $t\in T$ and $e\in E(T)$. Using \cref{d(x_y)=c(xx^(-1)_x_y)^(-1)c(x_y_y^(-1)y),d(e_et)=c(e_e_t)^(-1),d(te_e)=c(t_e_e)} we shall calculate
		\begin{align*}
		(\delta^2 d)(t,e,e)&=\eta_t(d(e,e))d(te,e)^{-1} d(t,e^2)d(t,e)^{-1}\\
		&=\eta_t(\0(e))c(t,e,e)^{-1}\0(tet\m)=c(t,e,e)^{-1},
		\end{align*}
		and
		\begin{align*}
		(\delta^2 d)(e,e,t)&=\eta_e(d(e,t))d(e^2,t)^{-1} d(e,et)d(e,e)^{-1}\\
		&=\0(ett^{-1})c(e,e,t)^{-1} \0(e)=c(e,e,t)^{-1}.
		\end{align*}
		It follows that $\tl c(e,e,t)$ and $\tl c(t,e,e)$ are trivial, so $\tl c$ is normalized by \cref{c-normalized-iff-c(t_e_e)=c(e_e_t)=triv}.
	\end{proof}
	
	\begin{lem}
		Let $c=\delta^2 d$, where $d\in C^2_\le(T^1,A^1)$. If $c$ is normalized, then for all $t\in T$ and $e\in E(T)$
		\begin{align}
		d(e,t)d(e,e)^{-1}&=\0(ett^{-1}),\label{d(e_t)d(e_e)-inv-triv}\\
		\eta_t(d(e,e))d(t,e)^{-1}&=\0(tet^{-1}).\label{eta_t(d(e_e))d(t_e)-inv-triv}
		\end{align}
	\end{lem}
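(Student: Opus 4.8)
Since $d\in C^2_\le(T^1,A^1)$, the coboundary $c=\delta^2 d$ lies in $B^3_\le(T^1,A^1)\sst Z^3_\le(T^1,A^1)$, and it is normalized by hypothesis. The plan is to evaluate $c=\delta^2 d$ at the two triples $(e,e,t)$ and $(t,e,e)$, simplify via the coboundary formula, and compare with the value forced on a normalized cocycle at these triples.

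First I would record that a normalized $c$ satisfies $c(e,e,t)=\0(ett\m)$ and $c(t,e,e)=\0(tet\m)$: the first follows from \cref{c(e_ex_1...x_(n-1))-triv} with $x_1=e$, $x_2=t$, and the second from \cref{c(x_1...x_(n-1)e_x_n)-triv} with $x_1=t$, $x_2=e$, since $\br(et)=ett\m$ and $\br(te)=tet\m$ (alternatively invoke \cref{c-normalized-iff-c(t_e_e)=c(e_e_t)=triv}). Expanding the definition of $\delta^2$ then gives
\[
c(e,e,t)=\eta_e(d(e,t))\,d(e,t)\m\,d(e,et)\,d(e,e)\m,\qquad c(t,e,e)=\eta_t(d(e,e))\,d(te,e)\m\,d(t,e)\,d(t,e)\m .
\]
In the first expression $\eta_e$ acts on $A$ by multiplication by $\0(e)$, and $d(e,t)\in A_{\0(ett\m)}$ with $\0(ett\m)\le\0(e)$, so $\eta_e(d(e,t))=d(e,t)$; hence the first two factors reduce to the identity of $A_{\0(ett\m)}$, which is absorbed, leaving $c(e,e,t)=d(e,et)\,d(e,e)\m$. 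In the second expression $d(t,e)\,d(t,e)\m$ is the identity of $A_{\0(tet\m)}$ and is absorbed by $d(te,e)\m$ (which lies in the same component), leaving $c(t,e,e)=\eta_t(d(e,e))\,d(te,e)\m$.

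The only step needing care is to replace $d(e,et)$ by $d(e,t)$ and $d(te,e)$ by $d(t,e)$. Here I would use that $d$ is order-preserving: from $et\le t$ and $te\le t$ we get $d(e,et)\le d(e,t)$ and $d(te,e)\le d(t,e)$, while a direct index computation shows $d(e,et),d(e,t)\in A_{\0(ett\m)}$ and $d(te,e),d(t,e)\in A_{\0(tet\m)}$. Since two comparable elements lying in the same group component of a semilattice of groups must coincide, $d(e,et)=d(e,t)$ and $d(te,e)=d(t,e)$. Substituting this into the two identities above and using $c(e,e,t)=\0(ett\m)$, $c(t,e,e)=\0(tet\m)$ yields $d(e,t)\,d(e,e)\m=\0(ett\m)$ and $\eta_t(d(e,e))\,d(t,e)\m=\0(tet\m)$, i.e.\ \cref{d(e_t)d(e_e)-inv-triv} and \cref{eta_t(d(e_e))d(t_e)-inv-triv}. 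I expect no genuine obstacle: the argument is bookkeeping with group components, the ``same component'' observation being the one mildly non-obvious ingredient.
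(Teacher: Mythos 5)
Your proof is correct and follows essentially the same route as the paper: evaluate $\delta^2 d$ at $(e,e,t)$ and $(t,e,e)$, use the normalized values $c(e,e,t)=\0(ett\m)$, $c(t,e,e)=\0(tet\m)$, and exploit order-preservation of $d$ to identify $d(e,et)$ with $d(e,t)$ and $d(te,e)$ with $d(t,e)$. Your ``comparable elements in the same group component coincide'' argument is exactly the mechanism behind \cref{c-ord-pres-iff}, which is what the paper cites at that step, so the two proofs differ only cosmetically.
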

	\begin{proof}
		Indeed, using \cref{c(x_1...ex_i...x_n),c(x_1...x_(i-1)_e_x_(i+1)...x_n)-triv} we have
		\begin{align*}
		\0(ett^{-1})&=c(e,e,t)=(\delta^2 d)(e,e,t)\\
		&=\0(e)d(e,t)d(e^2,t)^{-1} d(e,et) d(e,e)^{-1}\\
		&=d(e,t)d(e,e)^{-1}
		\end{align*}
		and similarly by \cref{c(x_1...x_ie...x_n),c(x_1...x_(i-1)_e_x_(i+1)...x_n)-triv}
		\begin{align*}
		\0(tet^{-1})&=c(t,e,e)=(\delta^2 d)(t,e,e)\\
		&=\eta_t(d(e,e))d(te,e)^{-1} d(t,e^2) d(t,e)^{-1}\\
		&=\eta_t(d(e,e))d(t,e)^{-1}.
		\end{align*}
	\end{proof}
	
	\begin{lem}\label{delta^2d-normalized}
		Let $c\in B^3_\le(T^1,A^1)$ be normalized. Then there is a strongly normalized $\tl d\in C^2_\le(T^1,A^1)$ such that $c=\delta^2\tl d$.
	\end{lem}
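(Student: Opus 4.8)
The plan is to start from a normalized $c = \delta^2 d$ with $d \in C^2_\le(T^1, A^1)$ and modify $d$ by a suitable $1$-cochain so that the result becomes strongly normalized while still being a potential for $c$. Since adding a coboundary $\delta^1 g$ to $d$ changes $\delta^2 d$ by $\delta^2(\delta^1 g) = 0$, we have complete freedom to replace $d$ by $d \cdot \delta^1 g$ for any $g \in C^1_\le(T^1, A^1)$, and the task is to choose $g$ so that the values of $d \cdot \delta^1 g$ on triples containing an idempotent become trivial in the sense of \labelcref{c(x_1...x_(i-1)_e_x_(i+1)...x_n)-triv}, i.e. $\tl d(e, t) = \0(\br(et))$ and $\tl d(t, e) = \0(\br(te))$ for all $t \in T$, $e \in E(T)$.

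First I would record what the hypotheses give us about the ``boundary'' values of $d$. The previous lemma already supplies \labelcref{d(e_t)d(e_e)-inv-triv,eta_t(d(e_e))d(t_e)-inv-triv}, which express $d(e,t)$ and $d(t,e)$ in terms of the single ``diagonal'' function $e \mapsto d(e,e)$; so the obstruction to strong normalization is entirely concentrated in $d(e,e)$. Concretely, define $g: T \to A$ by $g(t) = d(\br(t), \br(t))$ (or a closely related expression; one must check $g(t) \in A_{\0(\br(t))}$, which follows since $d(e,e) \in A_{\0(\br(e \cdot e))} = A_{\0(e)}$ and $e = \br(t)$ when $t$ ranges suitably — more carefully one sets $g(t) = d(tt\m, tt\m)$ and verifies the degree condition). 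I would then compute $\delta^1 g(t_1, t_2) = \eta_{t_1}(g(t_2)) g(t_1 t_2)\m g(t_1)$ and check, using \labelcref{d(e_t)d(e_e)-inv-triv,eta_t(d(e_e))d(t_e)-inv-triv} together with the normalization of $c$ and the identities for $\br$, that $\tl d := d \cdot \delta^1 g$ satisfies $\tl d(e, t) = \0(\br(et))$ and $\tl d(t, e) = \0(\br(te))$. One also needs $g$ to be order-preserving so that $\tl d \in C^2_\le(T^1, A^1)$; this follows from order-preservation of $d$ and the fact that $t \le t'$ forces $\br(t) \le \br(t')$.

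The main obstacle I anticipate is bookkeeping rather than conceptual: verifying that the single correction by $\delta^1 g$ simultaneously kills \emph{both} families $\tl d(e,t)$ and $\tl d(t,e)$, and does so consistently on the ``overlap'' (triples of the form $d(e, f)$ with two idempotents, and the compatibility with the already-known value $d(e,e)$ up to $\0$). The key input making this work is precisely that \labelcref{d(e_t)d(e_e)-inv-triv,eta_t(d(e_e))d(t_e)-inv-triv} tie $d(e,t)$ and $d(t,e)$ to the \emph{same} diagonal data $d(e,e)$, so correcting by the diagonal removes both; one must also use $\eta_{\br(t)}(g(t)) = \eta_{\br(t)}(d(tt\m,tt\m)) = \0(tt\m)\,d(tt\m,tt\m)$ via \labelcref{TM1}-type identities to simplify $\delta^1 g$ on the relevant tuples. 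Finally, having produced a strongly normalized $\tl d \in C^2_\le(T^1, A^1)$ with $\delta^2 \tl d = \delta^2 d = c$, the proof is complete; I would close by remarking that $\tl d$ being strongly normalized and order-preserving is exactly what the subsequent construction of the crossed module extension from a cocycle will require.
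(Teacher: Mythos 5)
Your overall strategy---replace $d$ by $d\cdot\delta^1 g$ for a $1$-cochain $g$ built from the diagonal $e\mapsto d(e,e)$, and use \labelcref{d(e_t)d(e_e)-inv-triv,eta_t(d(e_e))d(t_e)-inv-triv} to see that this kills both families $\tl d(e,t)$ and $\tl d(t,e)$---is exactly the paper's (the paper takes $u(t)=d(t,t\m)\m$, whose value at an idempotent is $d(e,e)\m$). However, your concrete choice $g(t)=d(tt\m,tt\m)$ has the inversion missing, and the verification you defer would fail. Indeed, since $d$ is order-preserving, $g(t)g(et)\m=d(\br(t),\br(t))\,d(\br(et),\br(et))\m=\0(\br(et))$, so $(\delta^1 g)(e,t)=\0(e)g(t)g(et)\m g(e)=\0(\br(et))\,d(e,e)$; combined with \labelcref{d(e_t)d(e_e)-inv-triv}, which gives $d(e,t)=\0(\br(et))\,d(e,e)$, you get $\tl d(e,t)=\0(\br(et))\,d(e,e)^2$, and similarly $\tl d(t,e)=\0(\br(te))\,\eta_t(d(e,e))^2$. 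These equal the required idempotents $\0(\br(et))$, $\0(\br(te))$ only if $d(e,e)^2=\0(e)$ for all $e$, which is not guaranteed: your coboundary adds a second copy of the diagonal instead of cancelling it.

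The fix is immediate and brings you back to the paper's argument: take $g(t)=d(tt\m,tt\m)\m$ (or the paper's $u(t)=d(t,t\m)\m$; the two differ only at non-idempotent arguments, and those contributions cancel by order-preservation in the computations above, so either works). Then $(\delta^1 g)(e,t)=\0(\br(et))\,d(e,e)\m$ and $(\delta^1 g)(t,e)=\0(\br(te))\,\eta_t(d(e,e))\m$, and \labelcref{d(e_t)d(e_e)-inv-triv,eta_t(d(e_e))d(t_e)-inv-triv} yield $\tl d(e,t)=\0(\br(et))$ and $\tl d(t,e)=\0(\br(te))$, i.e. $\tl d$ satisfies \labelcref{c(x_1...x_(i-1)_e_x_(i+1)...x_n)-triv} and is strongly normalized. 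Your remaining points---the degree condition $g(t)\in A_{\0(\br(t))}$, order-preservation of $g$ (hence of $\tl d$), and $\delta^2\tl d=\delta^2 d=c$---are correct as stated and complete the proof.
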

	\begin{proof}
		If $c=\delta^2 d$ for some $d\in C^2_\le(T^1,A^1)$, then we define $\tl d = d\cdot \delta^1 u$, where $u(t)=d(t,t^{-1})^{-1}$. We immediately have $u\in C^1_\le(T^1,A^1)$ and $\delta^2\tl d=\delta^2 d=c$. It remains to show that $\tl d$ is strongly normalized. Indeed, using \cref{c(x_1...ex_i...x_n),c(x_1...x_ie...x_n),d(e_t)d(e_e)-inv-triv,eta_t(d(e_e))d(t_e)-inv-triv} we have
		\begin{align*}
		\tl d(e,t)&=d(e,t)(\delta^1 u)(e,t)=d(e,t)\0(e)u(t)u(et)^{-1} u(e)=d(e,t)d(e,e)^{-1}=\0(ett^{-1})\\
		\tl d(t,e)&=d(t,e)(\delta^1 u)(t,e)=d(t,e)\eta_t(u(e))u(te)^{-1} u(t)=d(t,e)\eta_t(d(e,e)^{-1})=\0(tet^{-1}).
		\end{align*} 
	\end{proof}
	
	Summarizing the results of \cref{c.delta^2d-normalized,delta^2d-normalized} we have the following.
	\begin{prop}\label{3-cocycle-cohom-to-normalized}
		Every order-preserving $3$-cocycle is cohomologous to a normalized order-preserving $3$-cocycle. Every normalized order-preserving $3$-coboundary is the coboundary of a strongly normalized order-preserving $2$-cochain.
	\end{prop}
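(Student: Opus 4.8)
The plan is to obtain the statement by assembling the two preceding lemmas, \cref{c.delta^2d-normalized,delta^2d-normalized}, which between them already do all the real work; the only thing left to arrange is the bookkeeping that order-preservation survives the manipulations.

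For the first assertion, I would start from an arbitrary $c\in Z^3_\le(T^1,A^1)$ and form the $2$-cochain $d\in C^2_\le(T^1,A^1)$ defined in \labelcref{d(x_y)=c(xx^(-1)_x_y)^(-1)c(x_y_y^(-1)y)}, i.e. $d(x,y)=c(xx\m,x,y)\m c(x,y,y\m y)$; it is order-preserving since $c$ is, so this input is legitimate. Then \cref{c.delta^2d-normalized} gives that $\tl c:=c\cdot\delta^2 d$ is normalized, and $\tl c$ is cohomologous to $c$ by construction. It remains to observe that $\tl c$ is still order-preserving: from $d\in C^2_\le(T^1,A^1)$ we get $\delta^2 d\in C^3_\le(T^1,A^1)$ by the property $\delta^n\bigl(C^n_\le(T^1,A^1)\bigr)\sst C^{n+1}_\le(T^1,A^1)$ recorded in \cref{sec:Prelim}, and the coordinate-wise product of two order-preserving cochains is again order-preserving because the natural partial order on a semilattice of groups is compatible with multiplication. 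Hence $\tl c\in Z^3_\le(T^1,A^1)$ is a normalized order-preserving cocycle cohomologous to $c$.

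For the second assertion, I would take a normalized $c\in B^3_\le(T^1,A^1)$; by the very definition of $B^3_\le(T^1,A^1)=\im\bigl(\delta^2\colon C^2_\le(T^1,A^1)\to C^3_\le(T^1,A^1)\bigr)$ we may fix $d\in C^2_\le(T^1,A^1)$ with $c=\delta^2 d$, and then \cref{delta^2d-normalized}, applied to this very $d$, supplies a strongly normalized $\tl d\in C^2_\le(T^1,A^1)$ with $\delta^2\tl d=c$. That is precisely the claim. I do not expect any serious obstacle at this step: the delicate computations — checking that the correction $\delta^2 d$ kills the ``bad'' values $c(e,e,t)$ and $c(t,e,e)$ (via \cref{c-normalized-iff-c(t_e_e)=c(e_e_t)=triv}), and that the further twist by $\delta^1 u$ with $u(t)=d(t,t\m)\m$ turns a normalized $2$-cochain into a strongly normalized one — are carried out inside \cref{c.delta^2d-normalized,delta^2d-normalized}, so here one only needs to cite them together with the routine stability of $C^n_\le$ under $\delta^n$ and products.
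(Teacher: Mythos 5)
Your proposal is correct and follows exactly the paper's route: the proposition is obtained by combining \cref{c.delta^2d-normalized} (for the first assertion) and \cref{delta^2d-normalized} (for the second), with the surviving order-preservation guaranteed by the facts, recorded in \cref{sec:Prelim}, that $C^n_\le$ is a subgroup and $\delta^n(C^n_\le)\sst C^{n+1}_\le$. No gaps.
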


	\subsection{The \texorpdfstring{$E$}{E}-unitary cover through a free group}
	Recall from~\cite{McAlisterReilly77} that an \textit{$E$-unitary cover} of an inverse semigroup $T$ through a group $G$ is an $E$-unitary inverse semigroup $S$, such that
	\begin{enumerate}
		\item $\cG S\cong G$;
		\item there is an idempotent-separating epimorphism $\pi:S\to T$.
	\end{enumerate}
	Each inverse semigroup admits an $E$-unitary cover. We are going to use a specific $E$-unitary cover whose construction is given in~\cite[\S 5.1]{McAlisterReilly77}.
	
	Given a set $X$, we denote by $F(X)$ (resp. $F(X)^1$, $FG(X)$) the free semigroup (resp. monoid, group) over $X$. The length of a word $w\in F(X)$ will be denoted by $l(w)$.
	
	Let $T$ be an inverse semigroup. Consider $T$ as an alphabet $\{[t]\mid t\in T\}$. Let $T\m$ be the alphabet $\{[t]\m\mid t\in T\}$. The map $T\sqcup T\m\to T$, $[t]\mapsto t$, $[t]\m\mapsto t\m$, extends to a (unique) epimorphism of semigroups $\f:F(T\sqcup T\m)\to T$, such that $\f(w\m)=\f(w)\m$, where $w\mapsto w\m$ is the natural involution on $F(T\sqcup T\m)$ sending $[t_1]^{\ve_1}\dots[t_n]^{\ve_n}$ to $[t_n]^{-\ve_n}\dots[t_1]^{-\ve_1}$. Similarly, there is a (unique) epimorphism of semigroups $\psi:F(T\sqcup T\m)\to FG(T)$, such that $\psi([t])=[t]$ and $\psi([t]\m)=[t]\m$. It also satisfies $\psi(w\m)=\psi(w)\m$. Define $\Phi:T\to 2^{FG(T)}$ by
	\begin{align*}
		\Phi(t)=\psi(\f\m(t)).
	\end{align*}
	Then by \cite[Prop. 3.2 and 5.1]{McAlisterReilly77} the semigroup $S:=\Pi(T,FG(T),\Phi)$ is an $E$-unitary cover of $T$ through $FG(T)$, where
	\begin{align*}
		\Pi(T,FG(T),\Phi)=\{(t,w)\in T\times FG(T)\mid w\in\Phi(t)\}
	\end{align*}
	with the coordinatewise multiplication, and the epimorphism $\pi:S\to T$ is given by $\pi(t,w)=t$.
	
	We would like to rewrite the condition $w\in\Phi(t)$ in a more convenient form. Let us say that $w\in F(T\sqcup T\m)$ is \textit{irreducible}, if it has no subwords of the form $uu\m$, where $u\in F(T\sqcup T\m)$. Denote by $\e$ the identity element of $FG(T)$. Since each $w\in FG(T)\setminus\{\e\}$ admits a unique representation as a non-empty irreducible word $\irr w$ over $T\sqcup T\m$, there is a well-defined map $\mathrm{irr}:FG(T)\setminus\{\e\}\to F(T\sqcup T\m)$. Observe that $\irr{w\m}=\irr w\m$. Moreover, by definition
	\begin{align}
		\psi(\irr u)&=u \mbox{ for all $u\in FG(T)\setminus\{\e\}$},\label{psi(irr(u))=u}\\
		\irr{\psi(v)}&=v\mbox{ if and only if $v$ is irreducible},\label{irr(psi(v))=v}\\
		l(\irr{\psi(v)})&\le l(v) \mbox{ for all $v$, and the equality holds only for irreducible $v$}.\label{l(irr(psi(v)))<=l(v)}
	\end{align}
	We now define $\nu:FG(T)\to T^1$, where $T^1=T\sqcup\{1\}$, by 
	\begin{align}\label{nu(w)=f(irr(w))}
		\nu(w)=
		\begin{cases}
			\f(\irr w), & w\ne\e,\\
			1,          & w=\e.
		\end{cases}
	\end{align}
	
	\begin{rem}\label{nu(u)nu(v)<=nu(uv)}
		We have $\nu(u)\nu(v)\le\nu(uv)$ for all $u,v\in FG(T)$, and the equality holds whenever $\e\in\{u,v,uv\}$ or $\irr u\irr v$ is irreducible.
	\end{rem}
	\begin{proof}
		The case $\e\in\{u,v,uv\}$ is obvious, so we assume $\e\not\in\{u,v,uv\}$. Suppose that $\irr u\irr v$ is not irreducible. Let $c$ be the maximal suffix of $\irr u$ such that $c\m$ is a prefix of $\irr v$. Then $\irr u=ac$, $\irr v=c\m b$ and $\irr{uv}=ab$ for some $a,b,c\in F(T\sqcup T\m)$, so
		\begin{align*}
			\nu(u)\nu(v)=\f(ac)\f(c\m b)=\f(a)\f(c)\f(c)\m\f(b)\le\f(a)\f(b)=\f(ab)=\nu(uv).
		\end{align*}
		If $\irr u\irr v$ is irreducible, then $\irr{uv}=\irr u\irr v$, so applying the homomorphism $\f$, we obtain the desired equality.
	\end{proof}
	
	\begin{lem}
		Let $T$ be an inverse semigroup, $t\in T$ and $w\in FG(T)$. Then $w\in\Phi(t)$ if and only if $t\le\nu(w)$.
	\end{lem}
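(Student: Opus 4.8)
The plan is to first unravel the definition $\Phi(t)=\psi(\f\m(t))$: the condition $w\in\Phi(t)$ says exactly that there is a word $v\in F(T\sqcup T\m)$ with $\f(v)=t$ and $\psi(v)=w$. Both implications are then handled by producing, respectively analysing, such a witness $v$. Two elementary remarks will be used repeatedly: for each $s\in T$ the one-letter words $[s]$ and $[s]\m$ are irreducible, so that $\nu([s])=\f(\irr{[s]})=s$ and $\nu([s]\m)=s\m$ by \cref{nu(w)=f(irr(w))}; and $\psi(\irr w)=w$ for $w\ne\e$ by \cref{psi(irr(u))=u}.

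For the ``only if'' direction I would take a witness and write it as $v=[t_1]^{\ve_1}\dots[t_n]^{\ve_n}$, so that $t=\f(v)=t_1^{\ve_1}\dots t_n^{\ve_n}=\nu([t_1]^{\ve_1})\dots\nu([t_n]^{\ve_n})$ and $w=\psi(v)=[t_1]^{\ve_1}\dots[t_n]^{\ve_n}$ in $FG(T)$. An $n$-fold application of \cref{nu(u)nu(v)<=nu(uv)}, using that the natural partial order is compatible with multiplication, then gives $t\le\nu([t_1]^{\ve_1}\dots[t_n]^{\ve_n})=\nu(w)$. This is essentially the entire content of this implication; the submultiplicativity of $\nu$ does all the work.

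For the ``if'' direction, assuming $t\le\nu(w)$, I would split into two cases. If $w\ne\e$, then $\nu(w)=\f(\irr w)\in T$ and, $T$ being an order ideal of $T^1$, the relation $t\le\f(\irr w)$ already holds in $T$, so $t=tt\m\f(\irr w)$; setting $v=[t][t]\m\cdot\irr w$ gives $\f(v)=tt\m\f(\irr w)=t$ and $\psi(v)=\psi([t][t]\m)\,\psi(\irr w)=\e\,w=w$, whence $w\in\Phi(t)$. If $w=\e$, then $t\le\nu(\e)=1$ in $T^1$ forces $t\in E(T)$ (an element below an idempotent is an idempotent), and $v=[t][t]\m$ works, since then $tt\m=t$ and $\psi(v)=\e=w$.

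I do not expect a genuine obstacle here: once the reformulation via $\f$ and $\psi$ is in place, the argument is short. The only points deserving attention are the degenerate case $w=\e$ (where $\nu(w)=1$ lies in $T^1\setminus T$) and the routine observation that $T$ is an order ideal of $T^1$, so that comparisons made in $T^1$ descend to $T$; the one non-trivial input is \cref{nu(u)nu(v)<=nu(uv)}.
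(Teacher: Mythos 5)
Your proof is correct, and its ``if'' half is essentially the paper's: both take $v=[t][t]\m\irr w$ (or $v=[t][t]\m$ when $w=\e$) and check $\f(v)=tt\m\nu(w)=t$, $\psi(v)=w$ using \cref{psi(irr(u))=u}. Where you genuinely diverge is the ``only if'' half. The paper argues by induction on the length of a witness $u\in F(T\sqcup T\m)$ with $\f(u)=t$, $\psi(u)=w$: if $u$ is not irreducible it deletes a subword $vv\m$, using $\f(u')\f(v)\f(v)\m\f(u'')\le\f(u'u'')$, and it handles $w=\e$ separately by citing \cite[Proposition 5.1]{McAlisterReilly77}. You instead read the witness letter by letter, note $\nu([s])=s$ and $\nu([s]\m)=s\m$, and apply \cref{nu(u)nu(v)<=nu(uv)} iteratively (together with the compatibility of the natural partial order with multiplication in $T^1$) to get $t=\nu([t_1]^{\ve_1})\cdots\nu([t_n]^{\ve_n})\le\nu(\psi(v))=\nu(w)$. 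This buys uniformity -- the case $w=\e$ needs no separate treatment and no appeal to the McAlister--Reilly result -- and it avoids the induction, at the modest cost of relying on \cref{nu(u)nu(v)<=nu(uv)} (whose proof in the paper contains the same cancellation idea the paper's induction uses directly) and on the standard order-compatibility fact, which you correctly flag. Both arguments are sound; yours is slightly shorter and more self-contained given the preceding remark.
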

	\begin{proof}
		Consider first the case $w=\e$. Let $\e\in\Phi(t)$. Then $t\in E(T)$ by \cite[Proposition 5.1]{McAlisterReilly77}, and thus $t\le 1=\nu(\e)$. Conversely, if $t\le \nu(\e)=1$, then $t\in E(T)$, i.e. $tt\m=t$, so $\f([t][t]\m)=t$, whence $\e=\psi([t][t]\m)\in\Phi(t)$.

		Now let $w \ne\e$. By definition $w\in\Phi(t)$ means that $w=\psi(u)$, where $\f(u)=t$. If $u$ is irreducible, then $\irr w=\irr{\psi(u)}=u$ by \cref{irr(psi(v))=v}. Therefore, $\nu(w)=\f(\irr w)=\f(u)=t$ by \cref{nu(w)=f(irr(w))}. If $u$ is not irreducible, then $u=u' vv\m u''$ for some $u',u''\in F(T\sqcup T\m)^1$ and $v\in F(T\sqcup T\m)$. Hence, $t=\f(u)=\f(u')\f(v)\f(v)\m\f(u'')\le \f(u'u'')$, where $\psi(u'u'')=\psi(u)=w$ and $l(u'u'')<l(u)$. The proof of $t\le\nu(w)$ now follows by induction on $l(u)$, where $\psi(u)=w$.
		
		Conversely, let $t\le\nu(w)$. Then $t=tt\m\nu(w)=\f([t][t]\m\irr w)$ by \cref{nu(w)=f(irr(w))} and 
		\begin{align*}
			\psi([t][t]\m\irr w)=\psi(\irr w)=w
		\end{align*}
		by \cref{psi(irr(u))=u}. Thus, $w\in\Phi(t)$.
	\end{proof}

	\begin{cor}
		We have $S=\{(t,w)\in T\times FG(T)\mid t\le\nu(w)\}$ and $E(S)=\{(e,\e)\mid e\in E(T)\}=E(T)\times\{\e\}$.
	\end{cor}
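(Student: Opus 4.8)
The plan is to read off both equalities directly from the construction $S=\Pi(T,FG(T),\Phi)$ and from the lemma just established. For the first one, I would recall that by definition $\Pi(T,FG(T),\Phi)=\{(t,w)\in T\times FG(T)\mid w\in\Phi(t)\}$, and then substitute the equivalence $w\in\Phi(t)\iff t\le\nu(w)$ provided by the preceding lemma. This gives $S=\{(t,w)\in T\times FG(T)\mid t\le\nu(w)\}$ with nothing further to check.

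For the description of $E(S)$, the key point is that the multiplication on $S$ is coordinatewise, so $(t,w)\in S$ is idempotent exactly when $t\in E(T)$ and $w$ is idempotent in the group $FG(T)$. Since a group has no idempotent other than its identity, this forces $w=\e$, giving the inclusion $E(S)\sst\{(e,\e)\mid e\in E(T)\}$. For the reverse inclusion I would check that $(e,\e)\in S$ for every $e\in E(T)$: by the first equality this amounts to $e\le\nu(\e)$, and since $\nu(\e)=1$ by the definition of $\nu$, it reduces to the observation that every idempotent of $T$ lies below $1$ in $T^1$ (this was, in fact, already verified inside the proof of the preceding lemma, where $\e\in\Phi(t)\iff t\in E(T)$ was shown). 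Combining the two inclusions yields $E(S)=\{(e,\e)\mid e\in E(T)\}=E(T)\times\{\e\}$.

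There is no real obstacle here: the statement is an immediate corollary of the previous lemma and the definitions, the only minor inputs being that the idempotent semilattice of a group is trivial and the evaluation $\nu(\e)=1$.
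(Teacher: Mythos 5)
Your proof is correct and follows the same (essentially immediate) route the paper intends: substitute the equivalence $w\in\Phi(t)\iff t\le\nu(w)$ into the definition of $\Pi(T,FG(T),\Phi)$, and identify idempotents coordinatewise, using $\nu(\e)=1$ and the fact that $e\le 1$ exactly for idempotents $e$. Nothing is missing.
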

	
	\begin{rem}
		Each $(t,w)\in S$ decomposes as 
		\begin{align}
			(t,w)=(tt\m,\e)(\nu(w),w).
		\end{align}
		In particular, if $T$ is a group, then $(t,w)=(\nu(w),w)$ and thus $S\cong FG(T)$.
	\end{rem}
	
	\subsection{From \texorpdfstring{$c\in Z^3_\le(T^1,A^1)$}{c in Z³<(T¹,A¹)} to a crossed module extension of \texorpdfstring{$A$}{A} by \texorpdfstring{$T$}{T}}\label{c->ext-sec}
	Let $T$ be an inverse semigroup, $S$ its $E$-unitary cover through $FG(T)$ and $\pi:S\to T$ the corresponding covering epimorphism. We put $K=\pi\m(E(T))$. Since $\pi$ is idempotent-separating, $K$ is a semilattice of groups $\bigsqcup_{e\in E(T)}K_e$, where
	\begin{align*}
		K_e=\{(e,w)\in E(T)\times FG(T)\mid e\le\nu(w)\}.
	\end{align*}
	Observe that
	\begin{align*}
		e\le\nu(w)\iff e\nu(w)=e\iff \nu(w)e=e.
	\end{align*}
	Given a $T$-module structure $(\0,\eta)$ on a semilattice of abelian groups $A$, we set $N:=\bigsqcup_{e\in E(T)}(A_e\times K_e)$ with coordinatewise multiplication. Then $N$ is a semilattice of groups\footnote{In fact, $N$ is a direct product of $A$ and $K$ in the category of $E(T)$-semilattices of groups.}. To simplify the notation, we write the elements of $N$ as triples $(a,e,w)$, so that
	\begin{align}\label{N=triples-(a_e_w)}
		N=\{(a,e,w)\in A\times E(T)\times FG(T)\mid a\in A_e\mbox{ and }e\le\nu(w)\}.
	\end{align}
	
	We also define $i:A\to N$, $\af:E(S)\to E(N)$ and $\bt:N\to S$ as follows:
	\begin{align}
		i(a)&=(a,\0\m(aa\m),\e),\label{i(a)=(a_0(aa-inv)_1)}\\
		\af(e,\e)&=(\0(e),e,\e),\label{af(e_1)=(0(e)_e_1)}\\
		\bt(a,e,w)&=(e,w).\label{bt(a_e_w)=(e_w)}
	\end{align}
	
	\begin{prop}\label{properties-of-i-af-bt}
		The following holds.
		\begin{enumerate}
			\item The map $i$ is a monomorphism $A\to N$ such that $i(A)=\bt\m(E(S))$.\label{i-mono-A->N}
			\item The map $\af$ is an isomorphism $E(S)\to E(N)$.\label{af-iso-E(S)->E(N)} 
			\item The map $\bt$ is an idempotent-separating homomorphism $N\to S$, such that $\bt(N)=\pi\m(E(T))$ and $\bt|_{E(N)}=\af\m$.\label{bt-idemp-sep-hom}
		\end{enumerate}
	\end{prop}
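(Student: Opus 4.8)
The plan is to prove all three assertions by directly unwinding the explicit descriptions of $S$, $E(S)$, $K$ and $N$ recorded above; nothing beyond \cref{nu(u)nu(v)<=nu(uv)} is needed, and the only real point of care is keeping track of the various identifications of idempotent semilattices. I would begin by recording two preliminary observations used throughout. By \cref{N=triples-(a_e_w)}, a triple $(a,e,w)$ lies in $N$ iff $aa\m=\0(e)$ and $e\le\nu(w)$; since $\nu(\e)=1$ is the identity of $T^1$, the second condition is automatic whenever $w=\e$. In particular $(e,\e)$ is the identity of the group $K_e\sst S$, so $(\0(e),e,\e)$ is the identity of the group component $A_e\times K_e$ of $N$; hence $E(N)=\{(\0(e),e,\e)\mid e\in E(T)\}$, while $E(S)=E(T)\times\{\e\}$ as already noted.

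For \cref{i-mono-A->N}: the map $i$ of \cref{i(a)=(a_0(aa-inv)_1)} is well defined because $\0\m(aa\m)\le 1=\nu(\e)$; it is a homomorphism since $A$ is commutative and $\0\m$ is a homomorphism of semilattices, so $(ab)(ab)\m=aa\m bb\m$ yields $i(ab)=i(a)i(b)$; and it is injective because $a$ is the first coordinate of $i(a)$. For the exactness clause, $\bt(a,e,w)\in E(S)$ iff $w=\e$ by \cref{bt(a_e_w)=(e_w)}, so $\bt\m(E(S))=\{(a,e,\e)\in N\}$; for such a triple $aa\m=\0(e)$ forces $e=\0\m(aa\m)$, hence $(a,e,\e)=i(a)$, and therefore $\bt\m(E(S))=i(A)$.

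For \cref{af-iso-E(S)->E(N)}: the preliminary observation shows that $\af$ of \cref{af(e_1)=(0(e)_e_1)} is a bijection $E(S)\to E(N)$, and it is a semilattice homomorphism because $\0$ is one. For \cref{bt-idemp-sep-hom}: $\bt$ of \cref{bt(a_e_w)=(e_w)} is well defined precisely because the constraint $e\le\nu(w)$ is built into $N$; it is a homomorphism since $\bt$ merely forgets the $A$-coordinate and multiplication in both $N$ and $S$ is coordinatewise; it separates idempotents because $\bt(\0(e),e,\e)=(e,\e)$ recovers $e$; its image is $\bt(N)=\{(e,w)\mid e\in E(T),\ e\le\nu(w)\}=\bigsqcup_{e\in E(T)}K_e=\pi\m(E(T))$, surjectivity onto each $K_e$ being witnessed by $(\0(e),e,w)\in N$; and finally $\bt(\0(e),e,\e)=(e,\e)=\af\m(\0(e),e,\e)$, i.e. $\bt|_{E(N)}=\af\m$.

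Every step above is routine, so there is no serious obstacle; the only genuine care needed is bookkeeping, namely consistently tracking the isomorphisms $\0:E(T)\to E(A)$, $\pi|_{E(S)}:E(S)\to E(T)$ and $\af:E(S)\to E(N)$, and checking at each point that the triples written down actually satisfy the two defining constraints of $N$. Since in every case that arises here the $FG(T)$-coordinate is $\e$, the inequality $e\le\nu(w)$ is trivial there, and the closure of $N$ under multiplication, used implicitly in asserting that $\bt$ is a homomorphism, is exactly the inequality $\nu(w)\nu(w')\le\nu(ww')$ of \cref{nu(u)nu(v)<=nu(uv)}.
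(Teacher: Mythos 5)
Your proof is correct and follows essentially the same route as the paper: a direct verification of all three items by unwinding the explicit definitions \labelcref{i(a)=(a_0(aa-inv)_1),af(e_1)=(0(e)_e_1),bt(a_e_w)=(e_w)} together with the descriptions of $E(S)$, $K$ and $N$. The extra bookkeeping you include (identifying $E(N)$, checking well-definedness of $i$ and closure of $N$) is implicit in the paper's shorter argument, so there is nothing to correct.
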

	\begin{proof}
		It is obvious that $i$ is well-defined and injective. Since $(ab)(ab)\m=(aa\m)(bb\m)$ for all $a,b\in A$, we have $i(ab)=i(a)i(b)$. Now, $i(A)=\{(a,e,w)\in N\mid w=\e\}=\bt\m(E(S))$. So, \cref{i-mono-A->N} is proved.
			
		Since $\0$ is an isomorphism $E(T)\to E(A)$, we have \cref{af-iso-E(S)->E(N)}.
		 
		The multiplication on $N$ is coordinatewise, so $\bt$ is a homomorphism. By definition $\bt(N)=K=\pi\m(E(T))$. The idempotents of $N$ are of the form $(\0(e),e,\e)=\af(e,\e)$, where $e\in E(T)$. Hence $\bt$ is idempotent-separating and $\bt|_{E(N)}=\af\m$, proving \cref{bt-idemp-sep-hom}.
	\end{proof}
	
	Let now $c\in Z^3_\le(T^1,A^1)$ and assume that $c$ is normalized, then $c$ is strongly normalized by \cref{lem:E-cond}. We are going to use $c$ to construct a homomorphism $\lb:S\to\mend N$. We first define, for any $t\in T$, the auxiliary map $\xi_t:F(T\sqcup T\m)\to A$ as follows. 
	
	If $w=[x]\m u$ for some $x\in T$ and $u\in F(T\sqcup T\m)^1$, then
	\begin{align}\label{xi_t([x]-inv.u)}
	\xi_t(w):=c(t,x\m,x)\m\xi_t([x\m]u).
	\end{align}
	The rest of the definition will be given by induction on $l(w)$.
	
	\textit{Base of induction.} If $w=[x]$ for some $x\in T$, then
	\begin{align}\label{xi_t([x])}
		\xi_t(w)&:=\0(\br(tx)).
	\end{align}
	
	\textit{Inductive step.} $l(w)>1$ and $w=[x]u$ for some $x\in T$ and $u\in F(T\sqcup T\m)$. 
	
	\textbf{Case 1.} If $w=[x][y]v$ for some $x,y\in T$ and $v\in F(T\sqcup T\m)^1$, then 
	\begin{align}\label{xi_t([x][y]v)}
	\xi_t(w):=c(t,x,y)\xi_t([xy]v).		
	\end{align}
	Since $l([xy]v)<l(w)$, we may use the inductive step.  
	
	\textbf{Case 2.} If $w=[x][y]\m v$ for some $x,y\in T$ and $v\in F(T\sqcup T\m)^1$, then 
	\begin{align}\label{xi_t([x][y]-inv.v)}
	\xi_t(w):=c(t,xy\m,y)\m\xi_t([xy\m]v).
	\end{align}
	Since $l([xy\m]v)<l(w)$, we may use the inductive step.

	 \begin{lem}\label{xi_t(w)-belongs-to-A_r(tf(w))}
	 	For any $t\in T$ and $w\in F(T\sqcup T\m)$ we have $\xi_t(w)\in A_{\br(t\f(w))}$.
	 \end{lem}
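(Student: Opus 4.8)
The plan is to argue by induction on the length $l(w)$, mirroring the recursive structure of the definition of $\xi_t$. First I would observe that it suffices to establish the claim for words beginning with a positive letter, i.e. $w=[x]u$ with $x\in T$ and $u\in F(T\sqcup T\m)^1$: indeed, if $w=[x]\m u$, then \labelcref{xi_t([x]-inv.u)} gives $\xi_t(w)=c(t,x\m,x)\m\,\xi_t([x\m]u)$, where $[x\m]u$ has the same length as $w$ but starts with a positive letter and $\f([x\m]u)=\f(w)$; since $c\in C^3(T^1,A^1)$, the factor $c(t,x\m,x)$ lies in $A_{\br(tx\m x)}=A_{\br(tx\m)}$, and granting the positive-leading case, $\xi_t([x\m]u)\in A_{\br(t\f(w))}$, so, as $\br(t\f(w))\le\br(tx\m)$ by the general inequality $\br(ab)=a\br(b)a\m\le aa\m=\br(a)$, the product falls into $A_{\br(t\f(w))}$. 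Here I use that in the semilattice of groups $A$ one has $A_eA_f\sst A_{ef}$ with $ef=f$ whenever $f\le e$, and that $\0\colon E(T)\to E(A)$ is a semilattice isomorphism, hence order-preserving and sending each $e$ to the identity $\0(e)$ of $A_e$.

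The remaining task is to prove the claim for $w=[x]u$ by induction on $l(w)$. In the base case $l(w)=1$ we have $w=[x]$ and $\xi_t(w)=\0(\br(tx))$ by \labelcref{xi_t([x])}, which is the identity of $A_{\br(tx)}=A_{\br(t\f(w))}$. In the inductive step $l(w)>1$, $w=[x]u$ with $u$ nonempty, there are two subcases. In Case~1, $w=[x][y]v$ and $\xi_t(w)=c(t,x,y)\,\xi_t([xy]v)$ by \labelcref{xi_t([x][y]v)}; here $c(t,x,y)\in A_{\br(txy)}$, the word $[xy]v$ has length $l(w)-1$ and $\f([xy]v)=\f(w)$, so the inductive hypothesis gives $\xi_t([xy]v)\in A_{\br(txy\f(v))}$, and since $\br(txy\f(v))\le\br(txy)$ the product lies in $A_{\br(t\f(w))}$. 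In Case~2, $w=[x][y]\m v$ and $\xi_t(w)=c(t,xy\m,y)\m\,\xi_t([xy\m]v)$ by \labelcref{xi_t([x][y]-inv.v)}; now $c(t,xy\m,y)\in A_{\br(txy\m y)}=A_{\br(txy\m)}$, the word $[xy\m]v$ has length $l(w)-1$ and $\f([xy\m]v)=\f(w)$, so $\xi_t([xy\m]v)\in A_{\br(txy\m\f(v))}$ by the inductive hypothesis, and $\br(txy\m\f(v))\le\br(txy\m)$ again puts the product in $A_{\br(t\f(w))}$. In both subcases the recursive call is to a strictly shorter word that still begins with a positive letter, so the induction is well-founded.

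I expect no real obstacle here: the argument is essentially bookkeeping of idempotents in the semilattice $E(A)$. The only thing that needs care is that in each of the four situations (stripping a leading negative letter, the base case, Case~1 and Case~2) one reads off correctly, from $c\in C^3(T^1,A^1)$, the component containing the $c$-factor, and then checks that this idempotent dominates the component containing the recursively obtained $\xi_t$-factor; this relies on the elementary reductions $\br(tx\m x)=\br(tx\m)$ and $\br(txy\m y)=\br(txy\m)$, on $\br(ab)\le\br(a)$, and on $\f$ being a homomorphism with $\f([s])=s$ and $\f([s]\m)=s\m$, so that $\f$ of the contracted word always equals $\f(w)$. Note that neither the cocycle identity nor normalization nor order-preservation of $c$ is needed for this statement: only $c\in C^3(T^1,A^1)$ and the isomorphism $\0\colon E(T)\to E(A)$ from the $T$-module structure on $A$ enter the proof.
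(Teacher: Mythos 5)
Your proof is correct and follows essentially the same route as the paper's: first strip a leading negative letter using \labelcref{xi_t([x]-inv.u)} to reduce to positive-leading words, then induct on $l(w)$ with the same two-case split, tracking the component of the $c$-factor and using $\br(t\f(w))\le\br(tx\m)$ (resp. $\br(txy)$, $\br(txy\m)$) to place the product in $A_{\br(t\f(w))}$. Your closing observation that neither the cocycle identity nor commutativity of $A$ is used also matches \cref{rem-didnt-use-3coc-commut}.
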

	 \begin{proof}
	 	It is enough to prove that
        \begin{align}\label{r(xi_t(w))=r(tf(w))}
	 	\br(\xi_t(w))=\theta(\br(t\f(w))).
	 	\end{align}
	 	
	 	Suppose first that $w=[x]\m w'$. Then $\xi_t(w)=c(t,x\m,x)\m\xi_t([x\m]w')$ by \cref{xi_t([x]-inv.u)}. Observe that $c(t,x\m,x)\in A_{\br(tx\m)}$ and $\f(w)=x\m\f(w')=\f([x\m]w')$, so $\br(t\f(w))\le \br(tx\m)$. The equality \cref{r(xi_t(w))=r(tf(w))} reduces to $\br(\xi_t([x\m]w'))=\theta(\br(t\f([x\m]w')))$. Thus, it suffices to consider the case $w=[x]w'$, which will be proved by induction on $l(w)$.
	 	
	 	The equality \cref{r(xi_t(w))=r(tf(w))} is clear, when $w=[x]$ by \cref{xi_t([x])}. So let $l(w)>1$.
	 	 
	 	\textbf{Case 1.} $w=[x][y]w''$. Then $\xi_t(w)=c(t,x,y)\xi_t([xy]w'')$ by \cref{xi_t([x][y]v)}. Since $c(t,x,y)\in A_{\br(txy)}$, $\f(w)=\f([xy]w'')$ and $\br(t\f(w))\le \br(txy)$, it is enough to prove \cref{r(xi_t(w))=r(tf(w))} for $[xy]w''$. In view of $l([xy]w'')<l(w)$, we may apply the inductive step to $[xy]w''$.  
	 	
	 	\textbf{Case 2.} $w=[x][y]\m w''$. This case is similar to the previous one.
	 \end{proof}

 	 \begin{lem}\label{xi_t-mult-reduction-to-u=[x]}
 	 	Let $t\in T$  and $u\in F(T\sqcup T\m)$ and $v\in F(T\sqcup T\m)^1$. Then 
 	 	\begin{align}\label{xi_t(uv)=xi_t(u)xi_t([f(u)]v)}
 	 		\xi_t(uv)=\xi_t(u)\xi_t([\f(u)]v).
 	 	\end{align}
 	 \end{lem}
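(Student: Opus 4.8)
The plan is to prove $\xi_t(uv)=\xi_t(u)\xi_t([\f(u)]v)$ by induction on $l(u)$, but carried out in two stages: the clause \cref{xi_t([x]-inv.u)} of the definition of $\xi_t$ rewrites a word $[x]\m u$ with leading negative letter as $c(t,x\m,x)\m\xi_t([x\m]u)$ \emph{without} shortening it, so a single induction on $l(u)$ stalls there. I would therefore first establish the identity for every $u\in F(T\sqcup T\m)$ beginning with a positive letter $[x]$, $x\in T$, by induction on $l(u)$ (for such $u$ every recursive clause strictly shortens the word), and then deduce the case of $u$ beginning with a negative letter $[x]\m$ from \cref{xi_t([x]-inv.u)}.

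In the base case of the first stage, $u=[x]$, the words $uv$ and $[\f(u)]v$ are both equal to $[x]v$, so the claim becomes $\xi_t([x])\xi_t([x]v)=\xi_t([x]v)$. By \cref{xi_t([x])} we have $\xi_t([x])=\0(\br(tx))\in E(A)$, and by \cref{xi_t(w)-belongs-to-A_r(tf(w))} the element $\xi_t([x]v)$ lies in $A_{\br(t\f([x]v))}$ with $\br(t\f([x]v))\le\br(tx)$, since $t\f([x]v)\in txT^1$; hence $\0(\br(tx))$ is the identity of a component of $A$ dominating the one containing $\xi_t([x]v)$, and multiplication by it is trivial. For the inductive step write $u=[x]u'$ with $u'$ nonempty, so that $u'=[y]v_0$ or $u'=[y]\m v_0$ for some $y\in T$ and $v_0\in F(T\sqcup T\m)^1$. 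In the former case \cref{xi_t([x][y]v)} gives $\xi_t(uv)=c(t,x,y)\xi_t([xy]v_0v)$; since $[xy]v_0$ begins with a positive letter, has length $l(u)-1$, and satisfies $\f([xy]v_0)=\f(u)$, the induction hypothesis yields $\xi_t([xy]v_0v)=\xi_t([xy]v_0)\xi_t([\f(u)]v)$, and then associativity in $A$ together with \cref{xi_t([x][y]v)} used in reverse gives $\xi_t(uv)=\bigl(c(t,x,y)\xi_t([xy]v_0)\bigr)\xi_t([\f(u)]v)=\xi_t(u)\xi_t([\f(u)]v)$. The latter case is identical with \cref{xi_t([x][y]-inv.v)} in place of \cref{xi_t([x][y]v)}.

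For the second stage let $u=[x]\m u''$ with $x\in T$ and $u''\in F(T\sqcup T\m)^1$. Applying \cref{xi_t([x]-inv.u)} twice, together with the first stage applied to the positive-leading word $[x\m]u''$ (which satisfies $\f([x\m]u'')=\f(u)$), one gets
\[
\xi_t(uv)=c(t,x\m,x)\m\xi_t([x\m]u''v)=c(t,x\m,x)\m\xi_t([x\m]u'')\,\xi_t([\f(u)]v)=\xi_t(u)\,\xi_t([\f(u)]v),
\]
using associativity in $A$. I expect the main difficulty to be organizational rather than computational: one must resist collapsing the two stages into one induction on $l(u)$ (the word $[x\m]u''$ used in the second stage has the same length as $u$, so it can only be handled once the first stage is available for all lengths), and one must keep careful track, via \cref{xi_t(w)-belongs-to-A_r(tf(w))}, of the group components of $A$ in which the various values of $\xi_t$ and of $c$ lie, and of the domination relations among the idempotents of $T$ (such as $\br(tx\f(v))\le\br(tx)$), while observing that the reductions $[x][y]\mapsto[xy]$ and $[x][y]\m\mapsto[xy\m]$ leave the image under $\f$ unchanged. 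Once this bookkeeping is set up, the computations are routine.
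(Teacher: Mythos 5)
Your proposal is correct and follows essentially the same route as the paper: reduce the case of a leading negative letter to the positive-leading word $[x\m]u''$ via \cref{xi_t([x]-inv.u)}, handle positive-leading $u$ by induction on $l(u)$ with base case $u=[x]$ settled by the component bookkeeping of \cref{xi_t(w)-belongs-to-A_r(tf(w)),xi_t([x])}, and use \cref{xi_t([x][y]v),xi_t([x][y]-inv.v)} in the inductive step. The only difference is cosmetic ordering — the paper performs the negative-letter reduction before the induction (and disposes of empty $v$ separately), whereas you run the induction first and then the reduction — which does not change the argument.
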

 	 \begin{proof}
 	 	When $v$ is empty, \cref{xi_t(uv)=xi_t(u)xi_t([f(u)]v)} is trivial in view of \cref{xi_t(w)-belongs-to-A_r(tf(w)),xi_t([x])}.  We hence fix a non-empty $v$ and prove the statement for all $u\in F(T\sqcup T\m)$. If $u=[x]\m u'$, then $\xi_t(u)=c(t,x\m,x)\m\xi_t([x\m]u')$ and $\xi_t(uv)=c(t,x\m,x)\m\xi_t([x\m]u'v)$ by \cref{xi_t([x]-inv.u)}, and moreover $\f([x\m]u')=\f(u)$, so it suffices to consider the case $u=[x]u'$, which will be proved by induction on $l(u)$.  
 	 	
 	 	If $u=[x]$, then $\xi_t(u)=\0(\br(tx))$ thanks to \cref{xi_t([x])}. Since $\xi_t([\f(u)]v)\in A_{\br(tx\f(v))}$ by \cref{xi_t(w)-belongs-to-A_r(tf(w))} and $\br(tx\f(v))\le \br(tx)$, \cref{xi_t(uv)=xi_t(u)xi_t([f(u)]v)} becomes trivial.
 	 	
 	 	Let $l(u)>1$. We have the following cases.
 	 	
 	 	\textbf{Case 1.} $u=[x][y]u''$. Then $\xi_t(u)=c(t,x,y)\xi_t([xy]u'')$ and $\xi_t(uv)=c(t,x,y)\xi_t([xy]u''v)$ by \cref{xi_t([x][y]v)}. Since $\f(u)=\f([xy]u'')$ and $l([xy]u'')<l(u)$, we may apply the induction hypothesis to $[xy]u''$.  
 	 	
 	 	\textbf{Case 2.} $u=[x][y]\m u''$. This case is similar to the previous one.
 	 \end{proof}
  
  	\begin{lem}\label{xi_t-mult-case-u=[x]}
  		Let $t,x\in T$, $v\in F(T\sqcup T\m)$ and $e\in E(T)$ such that $e\le x$. Then
  		\begin{align}\label{xi_t([x]v)=xi_t([x])xi_t(v)}
  		\0(\br(te))\xi_t([x] v)=\0(\br(te))\xi_t(v).
  		\end{align}
  	\end{lem}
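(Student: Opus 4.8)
The plan is to induct on $l(v)$, using throughout that $c$ is strongly normalized (by \cref{lem:E-cond}, since it is normalized), so that \cref{c(x_1...x_(i-1)_e_x_(i+1)...x_n)-triv} and the derived \cref{c(x_1...ex_i...x_n),c(x_1...x_ie...x_n)} are available; that $e\le x$ forces $xe=x\m e=ex=e$; that $\xi_t(w)\in A_{\0(\br(t\f(w)))}$ by \cref{xi_t(w)-belongs-to-A_r(tf(w))}; and the elementary identity $\br(te)\br(ts)=\br(tes)$ for $t,s\in T$ and $e\in E(T)$ (a variant of the computation closing the proof of \cref{c-ord-pres-iff}).

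For the base case $l(v)=1$ I would split into $v=[y]$ and $v=[y]\m$. If $v=[y]$, \cref{xi_t([x][y]v),xi_t([x])} give $\xi_t([x]v)=c(t,x,y)\0(\br(txy))$; multiplying by the idempotent $\0(\br(te))$ and using \cref{c(x_1...ex_i...x_n)}, $ex=e$, strong normalization ($c(t,e,y)=\0(\br(tey))$) and the identity for $\br$, one gets $\0(\br(te))\xi_t([x]v)=\0(\br(tey))=\0(\br(te))\0(\br(ty))=\0(\br(te))\xi_t(v)$. If $v=[y]\m$, \cref{xi_t([x][y]-inv.v),xi_t([x])} give $\xi_t([x]v)=c(t,xy\m,y)\m\0(\br(txy\m))$ and \cref{xi_t([x]-inv.u),xi_t([x])} give $\xi_t(v)=c(t,y\m,y)\m\0(\br(ty\m))$; since $e\cdot xy\m=ey\m=e\cdot y\m$, \cref{c(x_1...ex_i...x_n)} shows that after multiplication by $\0(\br(te))$ both prefactors become $c(t,ey\m,y)\m$, and a short $\br$-computation shows the remaining idempotents also agree, so both sides equal $c(t,ey\m,y)\m$.

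For the inductive step $l(v)>1$ I would peel off the last letter: write $v=u'w$ with $u'\in F(T\sqcup T\m)$ non-empty and $w$ a single letter, and set $s=\f(u')\in T$ and $e_0=ess\m\in E(T)$. By \cref{xi_t(uv)=xi_t(u)xi_t([f(u)]v)} one has $\xi_t([x]v)=\xi_t([x]u')\,\xi_t([xs]w)$ and $\xi_t(v)=\xi_t(u')\,\xi_t([s]w)$. Multiplying by $\0(\br(te))$ and applying the induction hypothesis to $\xi_t([x]u')$ (to the pair $e\le x$ and the shorter word $u'$) gives $\0(\br(te))\xi_t([x]u')=\0(\br(te))\xi_t(u')$, a factor lying in the component $A_{\0(\br(tes))}=A_{\0(\br(te_0))}$; hence, using that $ab=a(fb)$ whenever $a$ lies in the component with identity $f$,
\begin{align*}
\0(\br(te))\xi_t([x]v)&=\bigl[\0(\br(te))\xi_t(u')\bigr]\bigl[\0(\br(te_0))\xi_t([xs]w)\bigr],\\
\0(\br(te))\xi_t(v)&=\bigl[\0(\br(te))\xi_t(u')\bigr]\bigl[\0(\br(te_0))\xi_t([s]w)\bigr].
\end{align*}
A short computation with $x\m e=xe=e$ and the commutativity of idempotents gives $e_0\le s$ and $e_0\le xs$; since $l(w)=1<l(v)$, the induction hypothesis applied to the pairs $e_0\le xs$ and $e_0\le s$ then yields $\0(\br(te_0))\xi_t([xs]w)=\0(\br(te_0))\xi_t(w)=\0(\br(te_0))\xi_t([s]w)$, and the two displays agree.

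The reduction to the last letter via \cref{xi_t-mult-reduction-to-u=[x]} is what makes the inductive step uniform, sparing us a split according to whether $v$ begins with a positive or an inverse letter (only the base case is done by cases). The one real difficulty is the inverse-semigroup bookkeeping: keeping track, via $\br(te)\br(ts)=\br(tes)$, of which group component of $A$ each factor occupies, and verifying the inequality $e_0=e\f(u')\f(u')\m\le x\f(u')$, the step that makes the induction close, which rests essentially on $e\le x\impl xe=x\m e=e$.
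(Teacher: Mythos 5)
Your base case is fine, and peeling off the last letter via \cref{xi_t-mult-reduction-to-u=[x]} is legitimate (that lemma is proved beforehand and independently of the present one). The inductive step, however, contains a genuine gap: the inequalities $e_0\le s$ and $e_0\le xs$ that you claim follow from ``a short computation'' are false in general. For an idempotent, $e_0\le s$ means $e_0=e_0s$, i.e.\ $e\f(u')\f(u')\m=e\f(u')$, which forces $e\f(u')$ to be an idempotent; but the hypothesis $e\le x$ says nothing about $s=\f(u')$, which is the image of an arbitrary word. Already for $T$ a nontrivial group ($e=x=1$, $s\ne 1$), or for $T$ a symmetric inverse monoid with $e=x$ the identity and $s$ a non-idempotent, both inequalities fail. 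Consequently the middle term of your chain is also wrong: in general $\0(\br(te_0))\xi_t([s]w)\ne\0(\br(te_0))\xi_t(w)$. Indeed, take $w=[y]$; by \cref{xi_t([x][y]v),xi_t([x]),c(x_1...ex_i...x_n)} the left-hand side contains the factor $\0(\br(te_0))c(t,s,y)=c(t,es,y)$, a generic cocycle value, whereas the right-hand side is an idempotent of $A$. So the induction hypothesis cannot be applied to the pairs $(e_0,xs)$ and $(e_0,s)$, and your induction does not close.

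What you actually need at that point is the \emph{replacement} identity $\0(\br(te))\xi_t([xs]w)=\0(\br(te))\xi_t([s]w)$ for $e\le x$, in which the extra factor $x$ is absorbed from the bracketed letter rather than the whole letter being deleted; this is not an instance of the statement you are inducting on, so it is not available as an induction hypothesis. This is exactly where the paper's proof diverges from yours: it peels off the \emph{first} letter of $v$, reduces to the auxiliary identity \cref{xi_t([xy]v')=xi_t([y]v')}, and proves that identity by a separate induction on the remaining word (a standalone, more general version is \cref{xi_t([x]w)-equals-xi_t([y-inv]w)}, proved later by the same technique). To repair your argument you would have to formulate and prove such a replacement lemma (or strengthen the statement you induct on), at which point you essentially reproduce the paper's proof.
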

  	\begin{proof}
  		
  		Consider first the case $v=[y]v'$. Then by \cref{xi_t([x][y]v),c(x_1...x_(i-1)_e_x_(i+1)...x_n)-triv,c(x_1...ex_i...x_n)}
  		\begin{align*}
  		\0(\br(te))\xi_t([x] v)&=\0(\br(te))c(t,x,y)\xi_t([xy]v')=c(t,ex,y)\xi_t([xy]v')\\
  		&=c(t,e,y)\xi_t([xy]v')=\0(\br(tey))\xi_t([xy]v').
  		\end{align*}
  		
    Now, since $\xi_t([xy]v') \in A_{\br(txy\varphi(v'))}$ and using the fact that $e \leq x$:
  		\begin{align*}
  		    \theta(\br(tey))\theta(\br(txy\varphi(v'))) & =  \theta(te \cdot yy^{-1} \cdot t^{-1}t \cdot xy\varphi(v')\varphi(v')^{-1}y^{-1}x^{-1}t^{-1}) \\
  		    & = \theta(teyy^{-1}xy\varphi(v')\varphi(v')^{-1}y^{-1}x^{-1}t^{-1}) \\
  		    & = \theta(tyy^{-1}exy\varphi(v')\varphi(v')^{-1}y^{-1}x^{-1} t^{-1}) \\
  		    & = \theta(t \cdot yy^{-1} \cdot ey\varphi(v')\varphi(v')^{-1}y^{-1}x^{-1} t^{-1}) \\
  		    & = \theta(tey\varphi(v')\varphi(v')^{-1}y^{-1}x^{-1} t^{-1}) \\
  		    & = \theta(te \cdot xy\varphi(v')\varphi(v')^{-1}y^{-1}x^{-1} \cdot t^{-1} t \cdot t^{-1}) \\
  		    & = \theta(tet^{-1} t xy \varphi(v')\varphi(v')^{-1}y^{-1}x^{-1} t^{-1}) \\
  		    & = \theta(tet^{-1}) \theta(txy\varphi(v')\varphi(v')^{-1}y^{-1} x^{-1} t^{-1}) \\
  		    & = \theta(\br(te)) \theta(\br(txy\varphi(v'))).
  		    \end{align*}
  		So we need to prove
  		\begin{align}\label{xi_t([xy]v')=xi_t([y]v')}
  		\0(\br(te))\xi_t([xy]v')=\0(\br(te))\xi_t([y]v').
  		\end{align}
  		The proof will be by induction on $l(v')$. If $v'$ is empty, then the result holds by \cref{xi_t([x])} and $e\le x$. Otherwise, we have the following cases.
  		
  		\textbf{Case 1.} $v'=[z]v''$. Then by \cref{xi_t([x][y]v),c(x_1...ex_i...x_n)}, $e\le x$ and the induction hypothesis applied to $v''$
  		\begin{align*}
  			\0(\br(te))\xi_t([xy]v')&=\0(\br(te))\xi_t([xy][z]v'')=\0(\br(te))c(t,xy,z)\xi_t([xyz]v'')\\
  			&=c(t,exy,z)\xi_t([xyz]v'')=c(t,ey,z)\xi_t([xyz]v'')\\
  			&=\0(\br(te))c(t,y,z)\xi_t([xyz]v'')=\0(\br(te))c(t,y,z)\xi_t([yz]v'')\\
  			&=\0(\br(te))\xi_t([y][z]v'')=\0(\br(te))\xi_t([y]v').
  		\end{align*}
  		
  		\textbf{Case 2.} $v'=[z]\m v''$. Then by \cref{xi_t([x][y]-inv.v),c(x_1...ex_i...x_n)}, $e\le x$ and the induction hypothesis applied to $v''$
  		\begin{align*}
  		\0(\br(te))\xi_t([xy]v')&=\0(\br(te))\xi_t([xy][z]\m v'')=\0(\br(te))c(t,xyz\m,z)\m\xi_t([xyz\m]v'')\\
  		&=c(t,exyz\m,z)\m\xi_t([xyz\m]v'')=c(t,eyz\m,z)\m\xi_t([xyz\m]v'')\\
  		&=\0(\br(te))c(t,yz\m,z)\m\xi_t([xyz\m]v'')\\
  		&=\0(\br(te))c(t,yz\m,z)\m\xi_t([yz\m]v'')\\
  		&=\0(\br(te))\xi_t([y][z]\m v'')=\0(\br(te))\xi_t([y]v').
  		\end{align*}
  		
  		Finally, if $v=[y]\m v'$, then by \cref{xi_t([x][y]-inv.v),c(x_1...ex_i...x_n),xi_t([xy]v')=xi_t([y]v'),xi_t([x]-inv.u)}
  		\begin{align*}
  		\0(\br(te))\xi_t([x] v)&=\0(\br(te))c(t,xy\m,y)\m\xi_t([xy\m]v')=c(t,exy\m,y)\m\xi_t([xy\m]v')\\
  		&=c(t,ey\m,y)\m\xi_t([xy\m]v')=\0(\br(te))c(t,y\m,y)\m\xi_t([xy\m]v')\\
  		&=\0(\br(te))c(t,y\m,y)\m\xi_t([y\m]v')=\0(\br(te))\xi_t([y]\m v')=\0(\br(te))\xi_t(v).
  		\end{align*}
  	\end{proof}
  
 
 \begin{cor}\label{xi_t(e[x])}
 	Let $t\in T$, $v\in F(T\sqcup T\m)$ and $e\in E(T)$. Then
 	\begin{align}\label{xi_t([e]v)=0(r(te))xi_t(v)}
 	\xi_t([e] v)=\0(\br(te))\xi_t(v).
 	\end{align}
 \end{cor}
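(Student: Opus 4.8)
The plan is to obtain \cref{xi_t([e]v)=0(r(te))xi_t(v)} as an immediate consequence of \cref{xi_t-mult-case-u=[x]}, specialized to the case where the element ``$x$'' appearing there is the idempotent $e$ itself. Since $e\le e$ always holds in $E(T)$, \cref{xi_t-mult-case-u=[x]} applies verbatim and yields $\0(\br(te))\xi_t([e]v)=\0(\br(te))\xi_t(v)$. Thus the whole task reduces to showing that the idempotent $\0(\br(te))$ may be dropped on the left, i.e. that $\0(\br(te))\xi_t([e]v)=\xi_t([e]v)$.

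To see this I would argue as follows. Because $\f([e]v)=e\f(v)$, \cref{xi_t(w)-belongs-to-A_r(tf(w))} places $\xi_t([e]v)$ in the group component $A_{\br(te\f(v))}$. In any inverse semigroup one has $\br(ab)\le\br(a)$, since $ab(ab)\m\cdot aa\m=abb\m\cdot a\m aa\m=abb\m a\m=\br(ab)$; with $a=te$ and $b=\f(v)$ this gives $\br(te\f(v))\le\br(te)$. As $\0$ is an isomorphism of semilattices it is order-preserving, so $\0(\br(te\f(v)))\le\0(\br(te))$ in $E(A)$; and since the idempotents of the semilattice of groups $A$ are central and $A_{\br(te\f(v))}$ has identity $\0(\br(te\f(v)))$, multiplication by $\0(\br(te))$ acts as the identity on $A_{\br(te\f(v))}$. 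In particular it fixes $\xi_t([e]v)$.

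Putting the two steps together gives $\xi_t([e]v)=\0(\br(te))\xi_t([e]v)=\0(\br(te))\xi_t(v)$, which is exactly \cref{xi_t([e]v)=0(r(te))xi_t(v)}. I do not expect any real obstacle here: the content is entirely in \cref{xi_t-mult-case-u=[x]}, and the only point requiring a moment's care is the semilattice-of-groups bookkeeping that lets the redundant idempotent $\0(\br(te))$ be absorbed, which is handled by the range information from \cref{xi_t(w)-belongs-to-A_r(tf(w))}.
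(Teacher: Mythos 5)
Your proof is correct and follows the paper's own argument exactly: absorb the idempotent $\0(\br(te))$ into $\xi_t([e]v)$ using the range information from \cref{xi_t(w)-belongs-to-A_r(tf(w))} together with $\br(te\f(v))\le\br(te)$, and then apply \cref{xi_t-mult-case-u=[x]} with $x=e$. No issues.
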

\begin{proof}
	Indeed, $\xi_t([e] v)\in A_{\br(te\f(v))}$ by \cref{xi_t(w)-belongs-to-A_r(tf(w))} and $\br(te\f(v))\le\br(te)$, so $\xi_t([e] v)=\0(\br(te))\xi_t([e] v)$, the latter being $\0(\br(te))\xi_t(v)$ thanks to \cref{xi_t-mult-case-u=[x]}.
\end{proof}

 	\begin{lem}\label{xi_t([x]w)xi_t(w-inv)-reduction}
 		Let $t,x\in T$, $w\in F(T\sqcup T\m)$ and $e\in E(T)$ such that $e\le x\f(w)$. Then
 		\begin{align}\label{xi_t([x]w)xi_t(w-inv)-triv}
 			\0(\br(te))\xi_t([x]w)\xi_t(w\m)=\0(\br(te)).
 		\end{align}
 	\end{lem}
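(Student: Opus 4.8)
The plan is to first peel off the order factor $\0(\br(te))$, reducing the statement to an equality of elements of $A$, and then to prove that equality by induction on $l(w)$.

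For the reduction, write $[x]ww\m=([x]w)(w\m)$ and use \cref{xi_t-mult-reduction-to-u=[x]} (together with $\f([x]w)=x\f(w)$) to get $\xi_t([x]ww\m)=\xi_t([x]w)\,\xi_t([x\f(w)]w\m)$. Since $e\le x\f(w)$, \cref{xi_t-mult-case-u=[x]} gives $\0(\br(te))\xi_t([x\f(w)]w\m)=\0(\br(te))\xi_t(w\m)$, so multiplying through by $\0(\br(te))$ and using that $A$ is commutative one obtains $\0(\br(te))\xi_t([x]ww\m)=\0(\br(te))\xi_t([x]w)\xi_t(w\m)$. Thus it suffices to prove $\0(\br(te))\xi_t([x]ww\m)=\0(\br(te))$.

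The core is the following claim, proved by induction on $l(w)$: for all $t,x\in T$ and $w\in F(T\sqcup T\m)$ the element $\xi_t([x]ww\m)$ is idempotent, equivalently (by \cref{xi_t(w)-belongs-to-A_r(tf(w))}) $\xi_t([x]ww\m)=\0(\br(t\f([x]ww\m)))=\0(\br(tx\f(w)))$, the last equality because $\f([x]ww\m)=x\f(w)\f(w)\m$ and $\br(s\f(w)\f(w)\m)=\br(s\f(w))$ for every $s$. When $l(w)=1$, say $w=[y]^{\ve}$, I would evaluate $\xi_t([x][y]^{\ve}[y]^{-\ve})$ directly via \cref{xi_t([x][y]v),xi_t([x][y]-inv.v),xi_t([x])}: the two surviving $c$-values are mutually inverse up to the right-translation rule \cref{c(x_1...x_ie...x_n)} (available since $c$ is strongly normalized, hence order-preserving), and once they cancel one is left with an idempotent of $E(A)$ after an identity of the form $\br(txy)=\br(txyy\m)$ and its mirror. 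When $l(w)\ge 2$, write $w=[y]^{\ve}w'$ with $1\le l(w')<l(w)$ and put $g'=\f(w')\f(w')\m$, so $[x]ww\m=[x][y]^{\ve}(w'w'^{-1})[y]^{-\ve}$; then (i) strip off $[x][y]^{\ve}$ by \cref{xi_t([x][y]v)} or \cref{xi_t([x][y]-inv.v)}, producing a factor $c(t,x,y)^{\pm1}$ (for $\ve=-1$ it is $c(t,xy\m,y)^{-1}$); (ii) split off the trailing letter $[y]^{-\ve}$ by \cref{xi_t-mult-reduction-to-u=[x]}, so that the interior term $\xi_t([x y^{\ve}]w'w'^{-1})$ is idempotent, equal to $\0(\br(txy^{\ve}\f(w')))$ by the induction hypothesis, while the tail term $\xi_t([xy^{\ve}g'][y]^{-\ve})$ is computed by \cref{xi_t([x][y]v),xi_t([x][y]-inv.v),xi_t([x])}; (iii) rewrite the $c$-value occurring in the tail — whose second entry differs from that of the out-of-front factor by an idempotent — via \cref{c(x_1...x_ie...x_n)}, so the two $c$-values cancel. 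What survives is a product of idempotents of $E(T)$ transported to $E(A)$ by $\0$, which collapses to $\0(\br(tx\f(w)))$, using repeatedly that $\br(se)=ses\m$ for $e\in E(T)$, that $\br$ is monotone for the natural partial order, and that conjugation and one-sided multiplication preserve the order among idempotents.

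Granting the claim, $\0(\br(te))\xi_t([x]ww\m)=\0(\br(te))\0(\br(tx\f(w)))=\0(\br(te))$, the last step because $e\le x\f(w)$ forces $e\le(x\f(w))(x\f(w))\m$ and hence $\br(te)\le\br(tx\f(w))$. The main obstacle is step (iii) together with the final collapse in the inductive case: one has to keep exact track of which idempotents of $E(T)$ appear and verify the needed equalities and inequalities among the various $\br(\cdot)$ directly from the inverse-semigroup identities, and the cases $\ve=1$ and $\ve=-1$ must be checked separately, since \cref{xi_t([x][y]v)} and \cref{xi_t([x][y]-inv.v)} contribute $c$-values of opposite sign and with somewhat different arguments.
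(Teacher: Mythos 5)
Your argument is correct, but it runs along a somewhat different track than the paper's. The paper proves the truncated identity \labelcref{xi_t([x]w)xi_t(w-inv)-triv} directly by induction on $l(w)$: it peels the first letter of $w$ and, via \cref{xi_t-mult-reduction-to-u=[x]}, the matching last letter of $w\m$, cancels the two resulting $c$-values using the hypothesis $e\le x\f(w)$ together with the translation rules \labelcref{c(x_1...ex_i...x_n),c(x_1...x_ie...x_n)}, and applies the induction hypothesis (with $x$ replaced by $xy^{\pm1}$) to the middle term, keeping the cutoff $\0(\br(te))$ at every stage. You instead prove the stronger, cutoff-free claim that $\xi_t([x]ww\m)=\0(\br(tx\f(w)))$, where the cancellation is driven by the idempotents $yg'y\m$ resp.\ $g'=\f(w')\f(w')\m$ produced by $ww\m$ itself rather than by $e$, and you then recover the lemma by the single reduction $\xi_t([x]ww\m)=\xi_t([x]w)\xi_t([x\f(w)]w\m)$ (\cref{xi_t-mult-reduction-to-u=[x]}) combined with \cref{xi_t-mult-case-u=[x]}, which is the only place the hypothesis $e\le x\f(w)$ is used; the final absorption $\br(te)\le\br(tx\f(w))$ is also handled correctly. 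I checked the idempotent bookkeeping you deferred (e.g.\ $\br(txy^{\ve}g'y^{-\ve}\cdots)=\br(txy^{\ve}\f(w'))$ and the $\ve=-1$ base case, which needs no translation at all), and it goes through, so your intermediate claim is genuinely true and arguably a cleaner invariant. The inductive engine is the same as the paper's, so neither route is shorter; what yours buys is the isolation of $e$ to one step and a sharper statement, while the paper's version stays in the exact form reused verbatim for $\tau$ in \cref{properties-of-tau} and, per \cref{rem-didnt-use-3coc-commut}, deliberately avoids commutativity of $A$ — your reduction step invokes commutativity, which is harmless for the present statement (and could be avoided anyway, since idempotents are central in a semilattice of groups), but would need that remark if one wanted to transplant your proof to the nonabelian setting.
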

 \begin{proof}
 	The proof will be by induction on $l(w)$. If $w=[y]$, then by \cref{xi_t([x][y]v),xi_t([x]),xi_t([x]-inv.u)}
 	\begin{align*}
 	\0(\br(te))\xi_t([x]w)\xi_t(w\m)=\0(\br(te))c(t,x,y)c(t,y\m,y)\m.
 	\end{align*}
 	But thanks to \labelcref{c(x_1...ex_i...x_n),c(x_1...x_ie...x_n)-and-} and $e\le xy$
 	\begin{align*}
 		\0(\br(te))c(t,y\m,y)=c(t,ey\m,y)=c(t,exyy\m,y)=\0(\br(te))c(t,x,y),
 	\end{align*}
 	whence \cref{xi_t([x]w)xi_t(w-inv)-triv}. If $w=[y]\m$, then by \cref{xi_t([x]),xi_t([x]-inv.u),c(x_1...ex_i...x_n),c(x_1...x_(i-1)_e_x_(i+1)...x_n)-triv} and $e\le xy\m$
 	\begin{align*}
 	\0(\br(te))\xi_t([x]w)\xi_t(w\m)&=\0(\br(te))c(t,xy\m,y)\m \0(\br(ty))=c(t,exy\m,y)\m \0(\br(ty))\\
 	&=c(t,e,y)\m \0(\br(ty))=\0(\br(tey))=\0(\br(teyy\m))\\
 	&=\0(\br(texy\m yy\m))=\0(\br(texy\m))=\0(\br(te)).
 	\end{align*}
 	
 	Let $l(w)>1$. 
 	
 	\textbf{Case 1.} $w=[y]w'$, where $w'\in F(T\sqcup T\m)$. Then by \cref{xi_t([x][y]v),xi_t(uv)=xi_t(u)xi_t([f(u)]v)}
 	\begin{align*}
 		\0(\br(te))\xi_t([x]w)\xi_t(w\m)&=\0(\br(te))c(t,x,y)\xi_t([xy]w')\xi_t(w'\m[y]\m)\\
 		&=\0(\br(te))c(t,x,y)\xi_t([xy]w')\xi_t(w'\m)\xi_t([\f(w'\m)][y]\m)\\
 		&=\0(\br(te))c(t,x,y)\xi_t([xy]w')\xi_t(w'\m)c(t,\f(w')\m y\m,y)\m.
 	\end{align*}
 	Observe that by \cref{c(x_1...ex_i...x_n),c(x_1...x_ie...x_n)} and $e\le xy\f(w')$
 	\begin{align*}
 		\0(\br(te))c(t,\f(w')\m y\m,y)&=c(t,e\f(w')\m y\m,y)=c(t,exy\f(w')\f(w')\m y\m,y)\\
		&=\0(\br(te))c(t,x,y).
 	\end{align*}
 	Moreover, since $\f([xy]w')=xy\f(w')=x\f(w)\ge e$ and $l(w')<l(w)$, by induction hypothesis we have $\0(\br(te))\xi_t([xy]w')\xi_t(w'\m)=\0(\br(te))$. It follows that
 	\begin{align*}
 		\0(\br(te))\xi_t([x]w)\xi_t(w\m)&=\0(\br(te))\0(\br(txy))=\0(\br(txyy\m x\m e))\\
 		&=\0(\br(txyy\m x\m exy\f(w')))=\0(\br(texy\f(w')))=\0(\br(te)).
 	\end{align*}

 	\textbf{Case 2.} $w=[y]\m w'$, where $w'\in F(T\sqcup T\m)$. Then by \cref{xi_t([x]),xi_t(uv)=xi_t(u)xi_t([f(u)]v),xi_t([x][y]-inv.v),xi_t([x][y]v)}
 	\begin{align*}
 	\0(\br(te))\xi_t([x]w)\xi_t(w\m)&=\0(\br(te))c(t,xy\m,y)\m\xi_t([xy\m]w')\xi_t(w'\m[y])\\
 	&=\0(\br(te))c(t,xy\m,y)\m\xi_t([xy\m]w')\xi_t(w'\m)\xi_t([\f(w'\m)][y])\\
 	&=\0(\br(te))c(t,xy\m,y)\m\xi_t([xy\m]w')\xi_t(w'\m) c(t,\f(w')\m,y).
 	\end{align*}
 	Thanks to \cref{c(x_1...ex_i...x_n),c(x_1...x_ie...x_n)} and $e\le xy\m\f(w')$
 	\begin{align*}
 		\0(\br(te))c(t,\f(w')\m,y)&=c(t,e\f(w')\m,y)=c(t,exy\m\f(w')\f(w')\m,y)\\
 		&=\0(\br(te))c(t,xy\m,y).
 	\end{align*}
 	By induction hypothesis $\0(\br(te))\xi_t([xy\m]w')\xi_t(w'\m)=\0(\br(te))$, whence the result follows as in the previous case.
 \end{proof}

\begin{cor}\label{xi_t(w)xi_t(w-inv)-reduction}
	Let $t\in T$, $w\in F(T\sqcup T\m)$ and $e\in E(T)$ such that $e\le \f(w)$. Then
	\begin{align}\label{xi_t(w)xi_t(w-inv)-triv}
	\0(\br(te))\xi_t(w)\xi_t(w\m)=\0(\br(te)).
	\end{align}
\end{cor}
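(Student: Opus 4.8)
The plan is to obtain \cref{xi_t(w)xi_t(w-inv)-reduction} from \cref{xi_t([x]w)xi_t(w-inv)-reduction} by one substitution, instead of running a fresh induction on $l(w)$. A naive route does exist: splitting on the first letter of $w$, say $w=[x]u$, one would rewrite $\xi_t(w\m)=\xi_t(u\m[x]\m)=\xi_t(u\m)\xi_t([\f(u)\m][x]\m)$ via \cref{xi_t-mult-reduction-to-u=[x]}, apply \cref{xi_t([x]w)xi_t(w-inv)-reduction} to the pair $([x]u,u\m)$, and then dispose of the residual factor $\0(\br(te))\xi_t([\f(u)\m][x]\m)$ using \cref{xi_t([x][y]-inv.v),xi_t([x])} and strong normalization, the case $w=[x]\m u$ being parallel after \cref{xi_t([x]-inv.u)}. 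This works but is tedious, mainly because of the bookkeeping of group components.

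Instead I would prepend the idempotent letter $[e]$ to $w$. By \cref{xi_t(e[x])} we have $\xi_t([e]w)=\0(\br(te))\xi_t(w)$. Moreover $e\le\f(w)$ forces $e\f(w)=e$, so \cref{xi_t([x]w)xi_t(w-inv)-reduction} applies with the element $x$ of that lemma specialized to the idempotent $e$ (its hypothesis becomes $e\le e\f(w)=e$), giving $\0(\br(te))\xi_t([e]w)\xi_t(w\m)=\0(\br(te))$. Substituting the first identity and using $\0(\br(te))^2=\0(\br(te))$, the proof collapses to
\[
\0(\br(te))=\0(\br(te))\xi_t([e]w)\xi_t(w\m)=\0(\br(te))\,\0(\br(te))\xi_t(w)\,\xi_t(w\m)=\0(\br(te))\xi_t(w)\xi_t(w\m).
\]

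With this route there is essentially no obstacle. The one point to keep in mind is that $F(T\sqcup T\m)$ is the free semigroup, so $w$, and hence $[e]w$, is nonempty — which is exactly what \cref{xi_t(e[x]),xi_t([x]w)xi_t(w-inv)-reduction} require of their word arguments.
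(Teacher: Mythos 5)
Your proof is correct, but it is genuinely different from the paper's. The paper proves this corollary by yet another induction on $l(w)$: base cases $w=[x]$ and $w=[x]\m$, then cases $w=[x]w'$ and $w=[x]\m w'$, combining \cref{xi_t([x]w)xi_t(w-inv)-reduction} with the multiplicativity lemma \cref{xi_t-mult-reduction-to-u=[x]} and several of the normalization identities \cref{c(x_1...ex_i...x_n),c(x_1...x_ie...x_n),c(x_1...x_(i-1)_e_x_(i+1)...x_n)-triv} for $c$. You instead prepend the idempotent letter $[e]$: since $e\le\f(w)$ gives $e\f(w)=e$, the hypothesis $e\le x\f(w)$ of \cref{xi_t([x]w)xi_t(w-inv)-reduction} is satisfied with $x=e$, and \cref{xi_t(e[x])} identifies $\xi_t([e]w)$ with $\0(\br(te))\xi_t(w)$; idempotency of $\0(\br(te))$ then finishes in one line. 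Both ingredients are established before this corollary, so there is no circularity, and your argument uses only absorption by the idempotent $\0(\br(te))$ (no rearrangement of non-idempotent factors and no direct appeal to the $3$-cocycle identity), so it stays within the constraints of \cref{rem-didnt-use-3coc-commut} and transfers verbatim to the analogous item of \cref{properties-of-tau} for $\tau$ with values in a not necessarily abelian semilattice of groups. What your route buys is the elimination of an entire induction and of the case analysis on the leading letter; what the paper's route has is uniformity with the surrounding lemmas, all of which are proved by the same inductive template.
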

\begin{proof}
	If $w=[x]$, then by \cref{xi_t([x]-inv.u),xi_t([x]),c(x_1...ex_i...x_n)} and $e\le x$
	\begin{align*}
		\0(\br(te))\xi_t(w)\xi_t(w\m)&=\0(\br(te))\0(\br(tx))c(t,x\m,x)\m=c(t,ex\m,x)\m\\
		&=c(t,e,x)\m=\0(\br(tex))=\0(\br(te)).
	\end{align*}
	The case $w=[x]\m$ is analogous.
	
	Let $l(w)>1$. 
	
	\textbf{Case 1.} $w=[x]w'$. Then by \cref{xi_t([x]w)xi_t(w-inv)-reduction,xi_t-mult-reduction-to-u=[x],xi_t([x][y]-inv.v),c(x_1...ex_i...x_n),c(x_1...x_(i-1)_e_x_(i+1)...x_n)-triv} and $e\le\f(w)$
	\begin{align*}
		\0(\br(te))\xi_t(w)\xi_t(w\m)&=\0(\br(te))\xi_t([x]w')\xi_t(w'\m[x]\m)\\
		&=\0(\br(te))\xi_t([x]w')\xi_t(w'\m)\xi_t([\f(w')\m][x]\m)\\
		&=\0(\br(te))c(t,\f(w')\m x\m,x)\m=c(t,e\f(w)\m,x)\m\\
		&=c(t,e,x)\m=\0(\br(tex))=\0(\br(txx\m e))\\
		&=\0(\br(txx\m e x\f(w')))=\0(\br(te x\f(w')))=\0(\br(te)).
	\end{align*}
	
	\textbf{Case 2.} $w=[x]\m w'$. Then by \cref{xi_t([x]w)xi_t(w-inv)-reduction,xi_t-mult-reduction-to-u=[x],xi_t([x][y]v),c(x_1...ex_i...x_n),c(x_1...x_(i-1)_e_x_(i+1)...x_n)-triv,xi_t([x]-inv.u),xi_t([x]),c(x_1...x_ie...x_n)} and $e\le\f(w)$
	\begin{align*}
		\0(\br(te))\xi_t(w)\xi_t(w\m)&=\0(\br(te))\xi_t([x]\m w')\xi_t(w'\m[x])\\
		&=\0(\br(te))c(t,x\m,x)\m\xi_t([x\m] w')\xi_t(w'\m)\xi_t([\f(w')\m][x])\\
		&=\0(\br(te))c(t,x\m,x)\m c(t,\f(w')\m,x)\\
		&=c(t,x\m,x)\m c(t,e\f(w')\m,x)\\
		&=c(t,x\m,x)\m c(t,ex\m\f(w')\f(w')\m,x)\\
		&=c(t,x\m,x)\m c(t,x\m,x)\0(\br(tex\m\f(w')))\\
		&=\0(\br(tx\m))\0(\br(tex\m\f(w')))=\0(\br(tex\m\f(w')))=\0(\br(te)).
	\end{align*}
\end{proof}

	\begin{lem}\label{xi_t([x]w)-equals-xi_t([y-inv]w)}
		Let $t,x,y\in T$, $w\in F(T\sqcup T\m)^1$ and $e\in E(T)$ such that $e\le xy$. Then
		\begin{align}\label{xi_t([x]w)=xi_t([y-inv]w)}
		\0(\br(te))\xi_t([x]w)=\0(\br(te))\xi_t([y\m]w).
		\end{align}
	\end{lem}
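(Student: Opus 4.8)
The plan is to sidestep the induction on $l(w)$ used in the preceding lemmas and instead reduce both sides of \cref{xi_t([x]w)=xi_t([y-inv]w)} to one and the same word. The key algebraic observation is that the hypothesis $e\le xy$ forces $ex=ey\m$ in $T$; once this is known, $[ex]$ and $[ey\m]$ are literally the same letter of $T\sqcup T\m$, and the two sides of \cref{xi_t([x]w)=xi_t([y-inv]w)} coincide after a single normalization step.

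First I would record the standard consequences of $e\le xy$ for an idempotent $e$: namely $e\le xx\m$, $e\le y\m y$ and $exy=e$. Prepending the letter $[e]$ and applying \cref{xi_t([e]v)=0(r(te))xi_t(v)} (read backwards) followed by the defining clause \cref{xi_t([x][y]v)} of $\xi_t$, one gets
\[
\0(\br(te))\xi_t([x]w)=\xi_t([e][x]w)=c(t,e,x)\,\xi_t([ex]w).
\]
Since $c$ is strongly normalized (\cref{lem:E-cond}), $c(t,e,x)=\0(\br(tex))$, and $\br(tex)=\br(te)$ because $e\le xx\m$; hence $\0(\br(te))\xi_t([x]w)=\0(\br(te))\xi_t([ex]w)$. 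The same computation with the (positive) letter $[y\m]$ in place of $[x]$, using $e\le y\m y$ to see that $\br(tey\m)=\br(te)$, yields $\0(\br(te))\xi_t([y\m]w)=\0(\br(te))\xi_t([ey\m]w)$.

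It then remains to prove $ex=ey\m$, which is the heart of the argument. From $exy=e$ one gets $y\m x\m e=e$ by taking inverses, so $ey\m\cdot(x\m ex)=e\,(y\m x\m e)\,x=ex$; as $x\m ex$ is idempotent (here $e\le xx\m$ is used), this shows $ex\le ey\m$. Likewise $ex\cdot(yey\m)=(exy)\,e\,y\m=ey\m$ with $yey\m$ idempotent (here $e\le y\m y$ is used), so $ey\m\le ex$; by antisymmetry of the natural partial order, $ex=ey\m$. Therefore $\xi_t([ex]w)=\xi_t([ey\m]w)$, and comparing this with the two reductions above proves \cref{xi_t([x]w)=xi_t([y-inv]w)}.

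I expect the identity $ex=ey\m$ to be the only genuinely nontrivial point; the rest is bookkeeping with the recursion defining $\xi_t$ and with the strong normalization of $c$. One should also check the edge case where $w$ is the empty word, but the argument above applies verbatim there, since \cref{xi_t([e]v)=0(r(te))xi_t(v)} and \cref{xi_t([x][y]v)} only require $[x]w$ and $[y\m]w$ to be nonempty, which they are.
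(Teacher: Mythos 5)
Your proof is correct, and it takes a genuinely different route from the paper's. The paper proves the strengthened identity \cref{xi_t([xz]w)=xi_t([y-inv.z]w)}, with an auxiliary parameter $z\in T^1$, by induction on $l(w)$, peeling letters off $w$ and invoking the order-preserving consequence \labelcref{c(x_1...ex_i...x_n)} at every step. You avoid the induction entirely: prepending $[e]$ via \cref{xi_t([e]v)=0(r(te))xi_t(v)}, applying the recursion \labelcref{xi_t([x][y]v)} once, and using strong normalization \labelcref{c(x_1...x_(i-1)_e_x_(i+1)...x_n)-triv} (available here, since $c$ is assumed normalized in $Z^3_\le(T^1,A^1)$, hence strongly normalized by \cref{lem:E-cond}) reduces the two sides to $\0(\br(te))\xi_t([ex]w)$ and $\0(\br(te))\xi_t([ey\m]w)$, which coincide letter-for-letter because $e\le xy$ forces $ex=ey\m$. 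Your two-sided argument for $ex=ey\m$ is valid (it is essentially the same computation as the paper's base case $ey\m z=exz$); a small remark: $x\m ex$ and $yey\m$ are idempotent for \emph{any} idempotent $e$, so the parenthetical appeals to $e\le xx\m$ and $e\le y\m y$ there are unnecessary, though those inequalities are genuinely needed to see $\br(tex)=\br(te)=\br(tey\m)$. Your argument only uses results established before the lemma (no circularity), and, like the paper's, it uses neither the $3$-cocycle identity nor the commutativity of $A$, so it remains compatible with \cref{rem-didnt-use-3coc-commut} and transfers to $\tau_d$ in \cref{properties-of-tau}, where the analogue of \cref{xi_t(e[x])} follows from items (1) and (3) and the needed normalization of $d$ is assumed. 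What your approach buys is brevity and the elimination of both the induction and the auxiliary $z$; the paper's inductive proof is more pedestrian but manipulates only the recursion itself, deferring the semigroup identity to its base case — the underlying algebraic fact is the same in both.
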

\begin{proof}
	We will prove by induction on $l(w)$ the more general equality
	\begin{align}\label{xi_t([xz]w)=xi_t([y-inv.z]w)}
	\0(\br(te))\xi_t([xz]w)=\0(\br(te))\xi_t([y\m z]w),
	\end{align}
	where $z\in T^1$.
	
	If $w$ is empty, then $\0(\br(te))\xi_t([xz]w)=\0(\br(texz))$ and $\0(\br(te))\xi_t([y\m z]w)=\0(\br(tey\m z))$ by \cref{xi_t([x])}. But $ey\m z=exyy\m z=exyy\m x\m xz=exz$ in view of $e\le xy$.
	
	Now let $l(w)>0$. 
	
	\textbf{Case 1.} $w=[s]w'$ for some $w'\in F(T\sqcup T\m)^1$. Then by \labelcref{xi_t([x][y]v)} 
	\begin{align*}
	\0(\br(te))\xi_t([xz]w)&=\0(\br(te))c(t,xz,s)\xi_t([xzs]w'),\\
	\0(\br(te))\xi_t([y\m z]w)&=\0(\br(te))c(t,y\m z,s)\xi_t([y\m zs]w').
	\end{align*}
	But by 
	\labelcref{c(x_1...ex_i...x_n)} and $e\le xy$
	\begin{align*}
	\0(\br(te))c(t,y\m z,s)&=c(t,ey\m z,s)=c(t,exyy\m z,s)\\
	&=c(t,exyy\m x\m xz,s)=\0(\br(te))c(t,xz,s).
	\end{align*}
	The result now follows from the induction hypothesis applied to $w'$. 
	
	\textbf{Case 2.} $w=[s]\m w'$, then by \labelcref{xi_t([x][y]-inv.v)}
	\begin{align*}
	\0(\br(te))\xi_t([xz]w)&=\0(\br(te))c(t,xzs\m,s)\m\xi_t([xzs\m]w'),\\
	\0(\br(te))\xi_t([y\m z]w)&=\0(\br(te))c(t,y\m zs\m,s)\m\xi_t([y\m zs\m]w').
	\end{align*}
	But by \labelcref{c(x_1...ex_i...x_n)} and $e\le xy$
	\begin{align*}
	\0(\br(te))c(t,y\m zs\m,s)&=c(t,ey\m zs\m,s)=c(t,exyy\m zs\m,s)\\
	&=c(t,exyy\m x\m xzs\m,s)=\0(\br(te))c(t,xzs\m,s).
	\end{align*}
	It remains to use the induction hypothesis.
\end{proof}

\begin{rem}\label{rem-didnt-use-3coc-commut} Observe that in the proofs of \cref{xi_t(w)-belongs-to-A_r(tf(w)),xi_t-mult-reduction-to-u=[x],xi_t-mult-case-u=[x],xi_t([x]w)xi_t(w-inv)-reduction,xi_t(w)xi_t(w-inv)-reduction,xi_t([x]w)-equals-xi_t([y-inv]w)} we did not use the $3$-cocycle identity for $c$ and the commutativity of $A$.
\end{rem}

	We now introduce $\z_t:FG(T)\to A$, where $t\in T$, as follows:
	\begin{align}\label{z_t(w)=xi_t(irr(w))}
	\z_t(w)=
	\begin{cases}
	\xi_t(\irr w), & w\ne\e,\\
	\0(\br(t)),    & w=\e.
	\end{cases}
	\end{align}

	Finally, for any $t\in T$ and $(a,e,w)\in N$ define
	\begin{align}\label{gm_t(a_e_w)=(z_t(w)eta_t(a)_tet-inv_[t]w[t]-inv)}
		\gm_t(a,e,w)=(\z_t(w)\eta_t(a),tet\m,[t]w[t]\m).
	\end{align}

	\begin{lem}
		For any $t\in T$ and $(a,e,w)\in N$ we have $\gm_t(a,e,w)\in N$. 
	\end{lem}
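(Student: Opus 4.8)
The statement to prove is that $\gm_t(a,e,w)=(\z_t(w)\eta_t(a),\,tet\m,\,[t]w[t]\m)$ belongs to $N$. By the description \cref{N=triples-(a_e_w)} of $N$, this amounts to two checks: that the first coordinate $\z_t(w)\eta_t(a)$ lies in the component $A_{tet\m}$, and that $tet\m\le\nu([t]w[t]\m)$. Throughout I would use that $(a,e,w)\in N$ means $a\in A_e$, i.e. $aa\m=\0(e)$, and $e\le\nu(w)$, together with the fact that the natural partial order on $T^1$ is compatible with multiplication on both sides (so that, in particular, conjugation by $t$ and the operations $\bd,\br$ are order-preserving).

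\emph{The order condition.} I would apply \cref{nu(u)nu(v)<=nu(uv)} twice: first to the pair $([t],\,w[t]\m)$ and then to the pair $(w,\,[t]\m)$. Since $\irr{[t]}=[t]$ and $\irr{[t]\m}=[t]\m$ are irreducible, $\nu([t])=\f([t])=t$ and $\nu([t]\m)=\f([t]\m)=t\m$, so
\[
\nu([t]w[t]\m)\ \ge\ \nu([t])\,\nu(w[t]\m)\ \ge\ \nu([t])\,\nu(w)\,\nu([t]\m)\ =\ t\,\nu(w)\,t\m .
\]
Now $e\le\nu(w)$ gives $tet\m\le t\nu(w)t\m\le\nu([t]w[t]\m)$, as desired. (When $w=\e$ this degenerates to $tet\m\le tt\m\le 1=\nu(\e)$, which is still valid.)

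\emph{The grading condition.} By \cref{xi_t(w)-belongs-to-A_r(tf(w))} and the definitions \cref{z_t(w)=xi_t(irr(w)),nu(w)=f(irr(w))} one has $\z_t(w)=\xi_t(\irr w)\in A_{\br(t\f(\irr w))}=A_{\br(t\nu(w))}$ for $w\ne\e$, while $\z_t(\e)=\0(\br(t))\in A_{\br(t)}=A_{\br(t\nu(\e))}$; so in all cases $\z_t(w)\in A_{\br(t\nu(w))}$. On the other side, from $aa\m=\0(e)$ we get $\eta_t(a)\eta_t(a)\m=\eta_t(aa\m)=\eta_t(\0(e))=\0(tet\m)$ by \labelcref{TM2} (recall $\eta_t\in\mend A\subseteq\End A$ preserves inverses), i.e. $\eta_t(a)\in A_{tet\m}$. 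Finally, applying $\br$ to $e\le\nu(w)$ yields $e\le\br(\nu(w))=\nu(w)\nu(w)\m$, hence $tet\m\le t\nu(w)\nu(w)\m t\m=\br(t\nu(w))$; therefore the product $\z_t(w)\eta_t(a)$ lies in $A_{\br(t\nu(w))\cdot tet\m}=A_{tet\m}$. This finishes the verification.

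\emph{Main obstacle.} There is really no obstacle here: the argument is purely order-theoretic bookkeeping resting on \cref{nu(u)nu(v)<=nu(uv),xi_t(w)-belongs-to-A_r(tf(w))} and on the behaviour of the natural partial order. The only point requiring mild attention is the degenerate case $w=\e$ in the inequalities above. Consistently with \cref{rem-didnt-use-3coc-commut}, this statement uses neither the $3$-cocycle identity for $c$ nor the commutativity of $A$.
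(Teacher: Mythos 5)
Your proposal is correct and follows essentially the same route as the paper: the containment $tet\m\le t\nu(w)t\m\le\nu([t]w[t]\m)$ via \cref{nu(u)nu(v)<=nu(uv)}, and the grading of the first coordinate via \cref{xi_t(w)-belongs-to-A_r(tf(w))} together with $\eta_t(a)\in A_{tet\m}$ and $e\le\nu(w)$. The only (harmless) difference is that you treat the case $w=\e$ uniformly and spell out a few order-theoretic steps the paper leaves implicit.
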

	\begin{proof}
		Observe that in view of \cref{nu(u)nu(v)<=nu(uv),nu(w)=f(irr(w)),N=triples-(a_e_w)}
		\begin{align*}
		tet\m\le t\nu(w)t\m=\nu([t])\nu(w)\nu([t]\m)\le\nu ([t]w[t]\m),
		\end{align*}
		so $(tet\m,[t]w[t]\m)\in K_{tet\m}$. It remains to show that $\z_t(w)\eta_t(a)\in A_{tet\m}$. This is obvious, when $w=\e$, because $\eta_t(a)\in A_{tet\m}$. Otherwise, $\z_t(w)=\xi_t(\irr w)\in A_{t\f(\irr w)\f(\irr w)^{-1}t\m}=A_{t\nu(w)\nu(w)^{-1}t\m}$ by \cref{z_t(w)=xi_t(irr(w)),xi_t(w)-belongs-to-A_r(tf(w)),nu(w)=f(irr(w))}. Since, $e\le\nu(w)$ and $\eta_t(a)\in A_{tet\m}$, the product $\z_t(w)\eta_t(a)$ lies in $A_{tet\m}$. 
	\end{proof}

	\begin{prop}\label{gm_t-endo}
		For any $t\in T$ we have $\gm_t\in\End N$.
	\end{prop}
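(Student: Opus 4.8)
The plan is to use the coordinatewise structure of $N$ and reduce the whole statement to a multiplicativity property of the maps $\z_t$.

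First I would fix $(a,e,w),(a',e',w')\in N$ and compare the three coordinates of $\gm_t\bigl((a,e,w)(a',e',w')\bigr)$ and $\gm_t(a,e,w)\,\gm_t(a',e',w')$ separately, using the formula \cref{gm_t(a_e_w)=(z_t(w)eta_t(a)_tet-inv_[t]w[t]-inv)}. The second coordinate is immediate: $(tet\m)(te't\m)=t(t\m t)ee't\m=tee't\m$ because idempotents of $T$ commute. The third coordinate is immediate since $FG(T)$ is a group: $([t]w[t]\m)([t]w'[t]\m)=[t]ww'[t]\m$. For the first coordinate I must show $\z_t(w)\eta_t(a)\z_t(w')\eta_t(a')=\z_t(ww')\eta_t(aa')$; by commutativity of $A$ and $\eta_t\in\End A$ this becomes $\z_t(w)\z_t(w')\eta_t(aa')=\z_t(ww')\eta_t(aa')$. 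Since $aa'\in A_{ee'}$, the factor $\eta_t(aa')$ lies in $A_{tee't\m}=A_{\br(t\cdot ee')}$, so multiplying either side by the idempotent $\0(\br(t\cdot ee'))$ changes nothing; writing $g:=ee'\in E(T)$ (which satisfies $g\le\nu(w)$ and $g\le\nu(w')$, hence $g\le\nu(ww')$ by \cref{nu(u)nu(v)<=nu(uv)}), it therefore suffices to prove
\[
\0(\br(tg))\,\z_t(w)\,\z_t(w')=\0(\br(tg))\,\z_t(ww').
\]

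Next I would dispose of the trivial cases $w=\e$ or $w'=\e$, where the displayed identity follows at once from \cref{z_t(w)=xi_t(irr(w))} and $\br(tg)\le\br(t)$. Assuming $w,w'\ne\e$, I let $c$ be the maximal suffix of $\irr w$ whose inverse $c\m$ is a prefix of $\irr{w'}$, so $\irr w=ac$, $\irr{w'}=c\m b$, $\irr{ww'}=ab$ with $ab$ irreducible and $a,b,c\in F(T\sqcup T\m)^1$. Using \cref{xi_t-mult-reduction-to-u=[x]} I split $\z_t(w)=\xi_t(a)\xi_t([\f(a)]c)$, $\z_t(w')=\xi_t(c\m)\xi_t([\f(c)\m]b)$ and $\z_t(ww')=\xi_t(a)\xi_t([\f(a)]b)$. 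Then two facts finish it: the middle factors collapse, since $g\le\nu(w)=\f(a)\f(c)$ makes \cref{xi_t([x]w)xi_t(w-inv)-reduction} give $\0(\br(tg))\xi_t([\f(a)]c)\xi_t(c\m)=\0(\br(tg))$; and the prefix label of $\xi_t([\,\cdot\,]b)$ may be switched, since $\f(a)\f(c)=\nu(w)\ge g$ makes \cref{xi_t([x]w)-equals-xi_t([y-inv]w)} give $\0(\br(tg))\xi_t([\f(a)]b)=\0(\br(tg))\xi_t([\f(c)\m]b)$. Reordering the (commuting) factors of $A$, all carrying $\0(\br(tg))$, then yields
\[
\0(\br(tg))\,\z_t(w)\,\z_t(w')=\0(\br(tg))\,\xi_t(a)\,\xi_t([\f(c)\m]b)=\0(\br(tg))\,\xi_t(a)\,\xi_t([\f(a)]b)=\0(\br(tg))\,\z_t(ww'),
\]
as required. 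The degenerate cases $c=\e$ (no cancellation: use \cref{xi_t-mult-case-u=[x]} directly), $a=\e$ or $b=\e$ or $ab=\e$ (i.e. $w'=w\m$: use \cref{xi_t(w)xi_t(w-inv)-reduction}, \cref{xi_t-mult-case-u=[x]}, \cref{xi_t(e[x])}) run the same way, using that $g\le\f(c)$ forces $g\le\f(c)\m$ since $g$ is idempotent.

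I expect the main obstacle to be the bookkeeping of group components of $A$: every equation invoked holds only up to a factor $\0(\br(t\,\cdot\,))$, so at each step I must keep checking that the relevant $g\le\nu(\cdot)$ (equivalently $\br(tg)\le\br(t\,\cdot\,)$) hypotheses hold and that the $A$-elements being multiplied sit in compatible components, so that the reorderings are legitimate. The individual lemma invocations are short, and — as recorded in \cref{rem-didnt-use-3coc-commut} — the cocycle identity for $c$ is not needed here, only the commutativity of $A$; the genuinely fiddly part is organizing the empty-prefix/empty-suffix case split without repeating essentially the same computation several times.
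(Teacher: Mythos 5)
Your proposal is correct and follows essentially the same route as the paper's proof: the same reduction of $\gm_t(mn)=\gm_t(m)\gm_t(n)$ to the first-coordinate identity $\0(\br(tg))\z_t(w)\z_t(w')=\0(\br(tg))\z_t(ww')$, followed by the same case analysis on the cancellation between $\irr w$ and $\irr{w'}$ using \cref{xi_t-mult-reduction-to-u=[x],xi_t-mult-case-u=[x],xi_t([x]w)xi_t(w-inv)-reduction,xi_t(w)xi_t(w-inv)-reduction,xi_t([x]w)-equals-xi_t([y-inv]w)}. The only differences are cosmetic (you split off the empty-$b$ case, which the paper absorbs into its Case 2, and you make explicit the observation that $g\le x$ forces $g\le x\m$ for idempotent $g$).
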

	\begin{proof}
		Let $(a,e,u),(b,f,v)\in N$. Then $\gm_t(ab,ef,uv)=\gm_t(a,e,u)\gm_t(b,f,v)$ if and only if
		\begin{align}\label{z_t(uv)=z_t(u)z_t(v)}
		\0(\br(tef))\z_t(uv)=\0(\br(tef))\z_t(u)\z_t(v).
		\end{align}
		Indeed, $\eta_t(a)\eta_t(b)=\eta_t(ab)\in A_{\br(tef)}$, the conjugation by $t$ is an endomorphism of $E(T)$ and the conjugation by $[t]$ is an automorphism of $FG(T)$.
		
		Observe that \cref{z_t(uv)=z_t(u)z_t(v)} is trivial when $\e\in\{u,v\}$. So, we assume that $\e\not\in\{u,v\}$. Notice also that $e\le\nu(u)=\f(\irr u)$.
		
		\textbf{Case 1.} $\irr u\irr v$ is irreducible. Then $\irr{uv}=\irr u\irr v$. By \cref{xi_t-mult-reduction-to-u=[x],xi_t-mult-case-u=[x],z_t(w)=xi_t(irr(w))} we have
		\begin{align*}
		\0(\br(tef))\z_t(uv)&=\0(\br(tef))\xi_t(\irr{uv})=\0(\br(tef))\xi_t(\irr u\irr v)\\
		&=\0(\br(tef))\xi_t(\irr u)\xi_t([\f(\irr u)]\irr v)\\
		&=\0(\br(tef))\xi_t(\irr u)\xi_t(\irr v)=\0(\br(tef))\z_t(u)\z_t(v).
		\end{align*}
		
		\textbf{Case 2.} $\irr u=u'w$ and $\irr v=w\m v'$ for some $u'\in F(T\sqcup T\m)$ and $v'\in F(T\sqcup T\m)^1$, where $w$ is the maximal suffix of $\irr u$ such that $w\m$ is a prefix of $\irr v$. Then $\irr{uv}=u'v'$. In view \cref{xi_t([x]w)xi_t(w-inv)-reduction,xi_t-mult-reduction-to-u=[x]} we have
		\begin{align*}
		\0(\br(tef))\z_t(u)\z_t(v)&=\0(\br(tef))\xi_t(u'w)\xi_t(w\m v')\\
		&=\0(\br(tef))\xi_t(u')\xi_t([\f(u')]w)\xi_t(w\m)\xi_t([\f(w)\m] v')\\
		&=\0(\br(tef))\xi_t(u')\xi_t([\f(w)\m] v'),\\
		\0(\br(tef))\xi_t(u'v')&=\0(\br(tef))\xi_t(u')\xi_t([\f(u')]v').
		\end{align*}
		But $\0(\br(tef))\xi_t([\f(u')]v')=\0(\br(tef))\xi_t([\f(w)\m] v')$ thanks to \cref{xi_t([x]w)-equals-xi_t([y-inv]w)}, as $e\le\f(\irr u)=\f(u')\f(w)$.
		
		\textbf{Case 3.} $\irr u=w$ and $\irr v=w\m v'$ for some $w,v'\in F(T\sqcup T\m)$. Then $\irr{uv}=v'$. Thanks to \cref{xi_t-mult-reduction-to-u=[x],xi_t(w)xi_t(w-inv)-reduction,xi_t-mult-case-u=[x]} we have
		\begin{align*}
		\0(\br(tef))\z_t(u)\z_t(v)&=\0(\br(tef))\xi_t(w)\xi_t(w\m v')\\
		&=\0(\br(tef))\xi_t(w)\xi_t(w\m)\xi_t([\f(w)\m] v')\\
		&=\0(\br(tef))\xi_t([\f(w)\m] v')=\0(\br(tef))\xi_t(v')\\
		&=\0(\br(tef))\xi_t(\irr{uv})=\0(\br(tef))\z_t(uv).
		\end{align*}
		
		\textbf{Case 4.} $\irr u=w$ and $\irr v=w\m$ for some $w\in F(T\sqcup T\m)$. Then $uv=\e$, so $\0(\br(tef))\z_t(uv)=\0(\br(tef))\0(\br(t))=\0(\br(tef))$ by \cref{z_t(w)=xi_t(irr(w))}. Now, using \cref{xi_t(w)xi_t(w-inv)-reduction} we have
		\begin{align*}
		\0(\br(tef))\z_t(u)\z_t(v)=\0(\br(tef))\xi_t(w)\xi_t(w\m)=\0(\br(tef)).
		\end{align*}
	\end{proof}

For any $t\in T$ and $(a,e,w)\in N$ we similarly define
\begin{align}\label{gm-inv_t(a_e_w)=}
\gm\m_t(a,e,w)=(\z_{t\m}(w)\eta_{t\m}(a),t\m et,[t]\m w[t]).
\end{align}

\begin{rem}\label{gm-inv_t-endo}
	For any $t\in T$ we have $\gm\m_t\in\End N$. 
\end{rem}
\begin{proof}
	The proof reduces to $\0(\br(t\m ef))\z_{t\m}(uv)=\0(\br(t\m ef))\z_{t\m}(u)\z_{t\m}(v)$, which is \cref{z_t(uv)=z_t(u)z_t(v)} constituting a part of the proof of \cref{gm_t-endo}. 
\end{proof}

\begin{rem}\label{gm_t(af(e_e))=af(r(te)_e)}
	For any $t\in T$ and $(e,\e)\in E(S)$ we have $\gm_t(\af(e,\e))=\af(\br(te),\e)$ and $\gm\m_t(\af(e,\e))=\af(\bd(et),\e)$.
\end{rem}
\begin{proof}
	Indeed, by \cref{af(e_1)=(0(e)_e_1),gm_t(a_e_w)=(z_t(w)eta_t(a)_tet-inv_[t]w[t]-inv),z_t(w)=xi_t(irr(w))} we have
	\begin{align*}
		\gm_t(\af(e,\e))&=\gm_t(\0(e),e,\e)=(\z_t(\e)\eta_t(\0(e)),tet\m,[t]\e[t]\m)\\
		&=(\0(tet\m),tet\m,\e)=\af(tet\m,\e).
	\end{align*} 
\end{proof}

We will see that $\gm\m_t$ is indeed the inverse of $\gm_t$ in $\mend N$, but we need some more general results that will be used in what follows.

\begin{lem}\label{gm_t_1.gm_t_2=gm_t_1t_2}
	Let $t,u\in T$, $\ve\in\{-1,1\}$, $w\in F(T\sqcup T\m)$ and $e\in E(T)$ such that $e\le\f(w)$. Then
	\begin{align}\label{xi_t(u^ewu^-e)eta_t(xi_u(w))=xi_tu(w)}
		\0(\br(tu^\ve e))\xi_t([u]^\ve w[u]^{-\ve})\eta_t(\xi_{u^\ve}(w))=\0(\br(tu^\ve e))\xi_{tu^\ve}(w).
	\end{align}
\end{lem}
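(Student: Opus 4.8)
The plan is to fix $t,u$ and $\ve\in\{-1,1\}$, write $s=u^\ve$ (so that $[u]^\ve$ is a single letter of $F(T\sqcup T\m)$ with $\f([u]^\ve)=s$ and $\f([u]^{-\ve})=s\m$), and introduce the auxiliary function $G\colon F(T\sqcup T\m)\to A$, $G(w)=\xi_t([u]^\ve w[u]^{-\ve})\,\eta_t(\xi_s(w))$. Then \cref{xi_t(u^ewu^-e)eta_t(xi_u(w))=xi_tu(w)} asserts exactly that $\0(\br(tse))\,G(w)=\0(\br(tse))\,\xi_{ts}(w)$ whenever $e\le\f(w)$, and the strategy is to show that $G$ obeys the same recursion as $\xi_{ts}$ — that is, \cref{xi_t([x]),xi_t([x][y]v),xi_t([x][y]-inv.v),xi_t([x]-inv.u)} with $t$ replaced by $ts$ — modulo multiplication by $\0(\br(tse'))$ with $e'$ below the $\f$-image of the word in question. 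Since $\xi_{ts}$ is defined by precisely that recursion, the claim then follows by the same recursion scheme used to define $\xi$ (first remove a leading letter from $T\m$ via \cref{xi_t([x]-inv.u)}, then induct on $l(w)$). Two standing facts are used throughout: $\eta_t(\0(g))=\0(tgt\m)$ for $g\in E(T)$, part of the $T$-module structure $(\0,\eta)$; and \cref{xi_t(w)-belongs-to-A_r(tf(w))}, which records the group component of $A$ to which each $\xi$-value belongs and hence licenses the use of the order-preservation relations \cref{c(x_1...ex_i...x_n),c(x_1...x_ie...x_n),c(x_1...x_ie...x_n)-and-} for $c$ once a factor $\0(\br(tse'))$ is present (recall that $c$ is strongly normalized by \cref{lem:E-cond}). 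As $A$ is a commutative inverse semigroup, all factors below commute.

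Consider the recursion step coming from \cref{xi_t([x][y]v)}, so $w=[x][y]v$. Expanding $\xi_s([x][y]v)=c(s,x,y)\,\xi_s([xy]v)$ and expanding $\xi_t([u]^\ve[x][y]v[u]^{-\ve})$ by peeling $[u]^\ve$ off together with $[x]$ and then $[y]$ (via two uses of \cref{xi_t([x][y]v)}, preceded when $\ve=-1$ by one use of \cref{xi_t([x]-inv.u)}, which throws off a factor $c(t,u\m,u)\m$), one obtains, after using commutativity of $A$,
\[
G([x][y]v)=c(t,s,x)\,c(t,sx,y)\,\eta_t\big(c(s,x,y)\big)\,c(t,s,xy)\m\,G([xy]v)\qquad(\ve=1),
\]
and the same with an extra factor $c(t,u\m,u)\m c(t,u\m,u)=\0(\br(tu\m u))$ on the right when $\ve=-1$. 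The $3$-cocycle identity for $c$ applied to $(t,s,x,y)$ collapses the four $c$-factors to $c(ts,x,y)$, while the extra idempotent is swallowed by the controlling idempotent since $\br(tu^\ve e')\le\br(tu\m u)$; hence $\0(\br(tse'))G([x][y]v)=\0(\br(tse'))c(ts,x,y)G([xy]v)$, and the inductive hypothesis applies to $[xy]v$, whose $\f$-value equals that of $w$. The steps coming from \cref{xi_t([x][y]-inv.v)} and \cref{xi_t([x]-inv.u)} are treated identically, applying the $3$-cocycle identity to $(t,s,xy\m,y)$ and $(t,s,x\m,x)$ respectively and using strong normalization of $c$ to trivialize the stray term $c(t,s,x\m x)$ (whose third argument is idempotent). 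For the base case $w=[x]$: by \cref{xi_t([x])} one has $\xi_s([x])=\0(\br(sx))$, $\xi_{ts}([x])=\0(\br(tsx))$ and $\eta_t(\0(\br(sx)))=\0(\br(tsx))$, while $\xi_t([u]^\ve[x][u]^{-\ve})$ unwinds, via \cref{xi_t([x][y]v),xi_t([x][y]-inv.v),xi_t([x]-inv.u),xi_t([x])}, into a product of at most three values of $c$ times $\0(\br(tsxs\m))$; multiplying by $\0(\br(tse'))$ with $e'\le x$ and invoking the $3$-cocycle identity together with \cref{c(x_1...ex_i...x_n),c(x_1...x_ie...x_n)} and strong normalization reduces the whole expression to $\0(\br(tse'))$.

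The delicate part — where essentially all the work lies — is the reconciliation of arguments. Unwinding $\xi_t$ on a word carrying the leading conjugating letter $[u]^\ve$ produces $c$-values at triples such as $(t,s,xs\m)$, $(t,sxs\m,s)$, $(t,sx,y)$, whose entries differ from those in the cleanest instance of the $3$-cocycle identity by idempotent factors (for instance $sx(y\m y)$ versus $sx$, or $sxs\m s$ versus $sx$); removing these discrepancies requires repeated careful appeals to \cref{c(x_1...ex_i...x_n),c(x_1...x_ie...x_n),c(x_1...x_ie...x_n)-and-} and to strong normalization, always tracking the group component of $A$ containing each factor so that inverse pairs — in particular the correction $c(t,u\m,u)\m$ arising when $\ve=-1$ — genuinely cancel, up to an idempotent absorbed by $\0(\br(tse'))$. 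In contrast with \cref{xi_t(w)-belongs-to-A_r(tf(w)),xi_t-mult-reduction-to-u=[x],xi_t-mult-case-u=[x],xi_t([x]w)xi_t(w-inv)-reduction,xi_t(w)xi_t(w-inv)-reduction,xi_t([x]w)-equals-xi_t([y-inv]w)} (cf.\ \cref{rem-didnt-use-3coc-commut}), here both the $3$-cocycle identity for $c$ and the commutativity of $A$ are essential; in effect this is the first point in the construction at which the cocycle condition is invoked.
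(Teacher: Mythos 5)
Your proposal is correct and follows essentially the same route as the paper: reduce the case of a leading letter from $T\m$ via \cref{xi_t([x]-inv.u)} (with the stray term $c(t,u^\ve,x\m x)$ killed by strong normalization), then induct on $l(w)$, expanding both sides by the defining recursions \cref{xi_t([x]),xi_t([x][y]v),xi_t([x][y]-inv.v)} and collapsing the four resulting $c$-factors with the $3$-cocycle identity at $(t,u^\ve,\cdot,\cdot)$, absorbing the leftover idempotents into the controlling factor $\0(\br(tu^\ve e))$ via \cref{xi_t(w)-belongs-to-A_r(tf(w))}. The repackaging as ``$G(w)=\xi_t([u]^\ve w[u]^{-\ve})\eta_t(\xi_{u^\ve}(w))$ satisfies the same recursion as $\xi_{tu^\ve}$ up to absorbed idempotents'' is only a cosmetic difference from the paper's argument.
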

\begin{proof}
	Consider first the case $\ve=-1$.
	
	Assume that $w=[x]\m w'$ for some $w'\in F(T\sqcup T\m)^1$. Then by \cref{xi_t([x]-inv.u),xi_t([x][y]-inv.v),xi_t([x][y]v)}
	\begin{align*}
		\xi_t([u]^\ve w[u]^{-\ve})\eta_t(\xi_{u^\ve}(w))&=c(t,u\m,u)\m c(t,u\m x\m,x)\m\eta_t(c(u\m,x\m,x))\m\\
		&\quad\cdot\xi_t([u\m x\m]w'[u])\eta_t(\xi_{u\m}([x\m]w'))\\
		&=c(t,u\m,u)\m c(t,u\m x\m,x)\m\eta_t(c(u\m,x\m,x))\m c(t,u\m,x\m)\m\\
		&\quad\cdot\xi_t([u\m][x\m]w'[u])\eta_t(\xi_{t\m}([x\m]w'))\\
		&=c(t,u\m x\m,x)\m\eta_t(c(u\m,x\m,x))\m c(t,u\m,x\m)\m\\
		&\quad\cdot\xi_t([u]^\ve[x\m]w'[u]^{-\ve})\eta_t(\xi_{u^\ve}([x\m]w')),\\
		\xi_{tu^\ve}(w)&=c(tu\m,x\m,x)\m\xi_{tu^\ve}([x\m]w'). 
	\end{align*}
	Using the $3$-cocycle identity for $c$ with the quadruple $(t,u\m,x\m,x)$ and \cref{c(x_1...x_(i-1)_e_x_(i+1)...x_n)-triv}, we obtain
	\begin{align*}
		\eta_t(c(u\m,x\m,x))c(tu\m,x\m,x)\m c(t,u\m x\m,x) c(t,u\m,x\m)=\0(\br(tu\m x\m x)).
	\end{align*}
	Since $\eta_t(\xi_{u^\ve}([x\m]w')),\xi_{tu^\ve}([x\m]w')\in A_{\br(tu\m x\m\f(w'))}$ by \cref{xi_t(w)-belongs-to-A_r(tf(w))} and $\br(x\m\f(w'))\le x\m x$, we conclude that \cref{xi_t(u^ewu^-e)eta_t(xi_u(w))=xi_tu(w)} is equivalent to
	\begin{align*}
	\0(\br(tu^\ve e))\xi_t([u]^\ve [x\m]w'[u]^{-\ve})\eta_t(\xi_{u^\ve}([x\m]w'))=\0(\br(tu^\ve e))\xi_{tu^\ve}([x\m]w').
	\end{align*}
	Observe that $\f([x]\m w')=\f([x\m]w')$. Thus, it is enough to consider $w$ of the form $[x]w'$.
	
	We proceed by induction on $l(w)$. If $w=[x]$, then by \cref{xi_t([x]-inv.u),xi_t([x][y]v),xi_t([x]),c(x_1...ex_i...x_n),c(x_1...x_(i-1)_e_x_(i+1)...x_n)-triv} and $e\le x$
	\begin{align*}
	\0(\br(tu^\ve e))\xi_t([u]^\ve w[u]^{-\ve})\eta_t(\xi_{u^\ve}(w))&=\0(\br(tu\m e))c(t,u\m,u)\m c(t,u\m, x)c(t,u\m x, u)\\
	&=c(t,u\m,u)\m c(t,u\m, ex)c(t,u\m xe, u)\\
	&=c(t,u\m,u)\m c(t,u\m, e)c(t,u\m e, u)=\0(\br(tu^\ve e)).\\
	\0(\br(tu^\ve e))\xi_{tu^\ve}(w)&=\0(\br(tu^\ve e))\0(\br(tu^\ve x))=\0(\br(tu^\ve e)).
	\end{align*}

	Let $l(w)>1$.
	
	\textbf{Case 1.} $w=[x][y]w'$ for some $w'\in F(T\sqcup T\m)^1$. Then by \cref{xi_t([x]-inv.u),xi_t([x][y]v)}
	\begin{align*}
	\xi_t([u]^\ve w[u]^{-\ve})\eta_t(\xi_{u^\ve}(w))&=c(t,u\m,u)\m c(t,u\m,x)c(t,u\m x,y)\eta_t(c(u\m,x,y))\\
	&\quad\cdot\xi_t([u\m xy]w'[u])\eta_t(\xi_{u\m}([xy]w'))\\
	&=c(t,u\m,u)\m c(t,u\m,x)c(t,u\m x,y)\eta_t(c(u\m,x,y))c(t,u\m,xy)\m\\
	&\quad\cdot\xi_t([u\m][xy]w'[u])\eta_t(\xi_{u\m}([xy]w'))\\
	&=c(t,u\m,x)c(t,u\m x,y)\eta_t(c(u\m,x,y))c(t,u\m,xy)\m\\
	&\quad\cdot\xi_t([u]^\ve[xy]w'[u]^{-\ve})\eta_t(\xi_{u^\ve}([xy]w')),\\
	\xi_{tu^\ve}(w)&=c(tu\m,x,y)\xi_{tu^\ve}([xy]w').
	\end{align*}
	Using the $3$-cocycle identity for $c$ with the quadruple $(t,u\m,x,y)$, we obtain
	\begin{align*}
	\eta_t(c(u\m,x,y))c(tu\m,x,y)\m c(t,u\m x,y)c(t,u\m,xy)\m c(t,u\m,x)=\0(\br(tu^\ve xy)).
	\end{align*}
	Now, $\eta_t(\xi_{u^\ve}([xy]w')),\xi_{tu^\ve}([xy]w')\in A_{\br(tu^\ve xy\f(w'))}$ and $\br(xy\f(w'))\le\br(xy)$, so \cref{xi_t(u^ewu^-e)eta_t(xi_u(w))=xi_tu(w)} is equivalent to
	\begin{align*}
	\0(\br(tu^\ve e))\xi_t([u]^\ve [xy]w'[u]^{-\ve})\eta_t(\xi_{u^\ve}([xy]w'))=\0(\br(tu^\ve e))\xi_{tu^\ve}([xy]w').
	\end{align*}
	Since $\f([xy]w')=\f(w)\ge e$ and $l([xy]w')<l(w)$, we may use the induction hypothesis.
	
	\textbf{Case 2.} $w=[x][y]\m w'$ for some $w'\in F(T\sqcup T\m)^1$. Then by \cref{xi_t([x]-inv.u),xi_t([x][y]-inv.v),xi_t([x][y]v)}
	\begin{align*}
	\xi_t([u]^\ve w[u]^{-\ve})\eta_t(\xi_{u^\ve}(w))&=c(t,u\m,u)\m c(t,u\m,x)c(t,u\m xy\m,y)\m\eta_t(c(u\m,xy\m,y))\m\\
	&\quad\cdot\xi_t([u\m xy\m]w'[u])\eta_t(\xi_{u\m}([xy\m]w'))\\
	&=c(t,u\m,u)\m c(t,u\m,x)c(t,u\m xy\m,y)\m\eta_t(c(u\m,xy\m,y))\m c(t,u\m,xy\m)\m\\
	&\quad\cdot\xi_t([u\m][xy\m]w'[u])\eta_t(\xi_{u\m}([xy\m]w'))\\
	&=c(t,u\m,x)c(t,u\m xy\m,y)\m\eta_t(c(u\m,xy\m,y))\m c(t,u\m,xy\m)\m\\
	&\quad\cdot\xi_t([u]^\ve[xy\m]w'[u]^{-\ve})\eta_t(\xi_{u^\ve}([xy\m]w')),\\
	\xi_{tu^\ve}(w)&=c(tu\m,xy\m,y)\m \xi_{tu^\ve}([xy\m]w').
	\end{align*}
	Using the $3$-cocycle identity for $c$ with the quadruple $(t,u\m,xy\m,y)$ and \cref{c(x_1...x_ie...x_n)}, we obtain
	\begin{align*}
	\eta_t(c(u\m,xy\m,y))c(tu\m,xy\m,y)\m c(t,u\m xy\m,y)c(t,u\m,x)\m c(t,u\m,xy\m)=\0(\br(tu^\ve xy\m)).
	\end{align*}
	Now, $\eta_t(\xi_{u^\ve}([xy\m]w')),\xi_{tu^\ve}([xy\m]w')\in A_{\br(tu^\ve xy\m\f(w'))}$ and $\br(xy\m\f(w'))\le\br(xy\m)$, so \cref{xi_t(u^ewu^-e)eta_t(xi_u(w))=xi_tu(w)} is equivalent to
	\begin{align*}
	\0(\br(tu^\ve e))\xi_t([u]^\ve [xy\m]w'[u]^{-\ve})\eta_t(\xi_{u^\ve}([xy\m]w'))=\0(\br(tu^\ve e))\xi_{tu^\ve}([xy\m]w').
	\end{align*}
	Since $\f([xy\m]w')=\f(w)\ge e$ and $l([xy\m]w')<l(w)$, we may use the induction hypothesis.
	
    The case $\ve=1$ is similar.
\end{proof}

The analog of \cref{gm_t_1.gm_t_2=gm_t_1t_2} holds for $\z$. 

\begin{lem}\label{gm^ve_t-circ-gm^dl_u}
	Let $t,u\in T$, $\ve\in\{-1,1\}$ and $w\in FG(T)$, such that $e\le\nu(w)$. Then
	\begin{align}\label{z_t^ve(u^dlwu^-dl)eta_t^ve(xi_u^dl(w))=xi_t^veu^dl(w)}
	\0(\br(t u^\ve e))\z_{t}([u]^\ve w[u]^{-\ve})\eta_{t}(\z_{u^\ve}(w))=\0(\br(t u^\ve e))\z_{t u^\ve}(w).
	\end{align}
\end{lem}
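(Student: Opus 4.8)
The plan is to separate the case $w=\e$, which is a direct computation, and to reduce the case $w\ne\e$ to \cref{gm_t_1.gm_t_2=gm_t_1t_2} applied to the reduced word $\irr w$.

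\emph{The case $w=\e$.} Since $[u]^\ve\e[u]^{-\ve}=\e$ in $FG(T)$, the definition \labelcref{z_t(w)=xi_t(irr(w))} turns the left-hand side into $\0(\br(tu^\ve e))\,\0(\br(t))\,\eta_t(\0(\br(u^\ve)))$. By \labelcref{TM2} one has $\eta_t(\0(\br(u^\ve)))=\0(t\,\br(u^\ve)\,t\m)=\0(\br(tu^\ve))$, so the left-hand side equals $\0(\br(tu^\ve e)\,\br(t)\,\br(tu^\ve))$ and the right-hand side equals $\0(\br(tu^\ve e)\,\br(tu^\ve))$; both collapse to $\0(\br(tu^\ve e))$ because $\br(sr)\le\br(s)$ holds for all $s,r\in T$, whence $\br(tu^\ve e)\le\br(tu^\ve)\le\br(t)$.

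\emph{The case $w\ne\e$.} Put $W=\irr w$; then $\f(W)=\nu(w)\ge e$, and by \labelcref{z_t(w)=xi_t(irr(w))} one has $\z_{u^\ve}(w)=\xi_{u^\ve}(W)$ and $\z_{tu^\ve}(w)=\xi_{tu^\ve}(W)$. Since $e\le\f(W)$, applying \cref{gm_t_1.gm_t_2=gm_t_1t_2} to the word $W$ gives
\[\0(\br(tu^\ve e))\,\xi_t([u]^\ve W[u]^{-\ve})\,\eta_t(\xi_{u^\ve}(W))=\0(\br(tu^\ve e))\,\xi_{tu^\ve}(W)=\0(\br(tu^\ve e))\,\z_{tu^\ve}(w),\]
so the assertion reduces to the reduction-insensitivity
\[\0(\br(tu^\ve e))\,\xi_t([u]^\ve W[u]^{-\ve})=\0(\br(tu^\ve e))\,\xi_t(\irr{[u]^\ve w[u]^{-\ve}})=\0(\br(tu^\ve e))\,\z_t([u]^\ve w[u]^{-\ve}).\]
Because $W$ is reduced, the word $[u]^\ve W[u]^{-\ve}$ differs from its reduction $\irr{[u]^\ve w[u]^{-\ve}}$ only at the two junctions: a matched pair $[u]^\ve[u]^{-\ve}$ is deleted at the front exactly when $W$ begins with $[u]^{-\ve}$, and one at the back exactly when $W$ ends with $[u]^\ve$, with no further cancellation. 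If neither happens the two words coincide. In each remaining configuration I would peel off the boundary letters using \cref{xi_t-mult-reduction-to-u=[x]}, evaluate the one- and three-letter pieces that get split off by \labelcref{xi_t([x][y]v),xi_t([x][y]-inv.v),xi_t([x])} and \cref{xi_t(e[x])}, and then collapse the cocycle values that appear: a pair $c(t,g,u)$, $c(t,g\,\br(u),u)$ differs only by the idempotent $\0(\br(tg\,\br(u)))$ by the order-preserving identity \labelcref{c(x_1...x_ie...x_n)}, while a value $c(t,-,u)$ with an idempotent of $T$ in the middle slot is itself an idempotent of $E(A)$ by the strong normalization of $c$. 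The outcome is that $\xi_t([u]^\ve W[u]^{-\ve})$ is obtained from $\xi_t(\irr{[u]^\ve w[u]^{-\ve}})$ by left multiplication by an idempotent of $E(A)$ which, thanks to $e\le\f(W)$ and the monotonicity of multiplication on $T$ for the natural partial order, dominates $\0(\br(tu^\ve e))$ and is therefore absorbed; this gives the reduction-insensitivity and hence the lemma.

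\emph{Main obstacle.} The delicate point is precisely the boundary bookkeeping of the previous paragraph. In contrast with Maclane's group-theoretic argument, $\xi_t$ depends on the actual word and not merely on its image in $FG(T)$, so cancelling a matched pair genuinely produces cocycle values $c(t,-,-)$; the observations that make them disappear are that, at the junctions created by the conjugation, the middle argument of each such value has the form (element of $T$)$\cdot$(idempotent) — so \labelcref{c(x_1...ex_i...x_n),c(x_1...x_ie...x_n)} let two of them cancel up to an idempotent and strong normalization kills those with an idempotent in a slot — and that every residual idempotent dominates $\0(\br(tu^\ve e))$. Carrying this out carefully across the (few) cancellation patterns is the only technical difficulty; once it is done, the lemma follows, as above, from \cref{gm_t_1.gm_t_2=gm_t_1t_2}.
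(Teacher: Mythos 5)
Your proposal is correct and follows essentially the same route as the paper: the case $w=\e$ is the same direct computation, and for $w\ne\e$ you reduce to \cref{gm_t_1.gm_t_2=gm_t_1t_2} applied to $\irr w$ and then account for the discrepancy between $[u]^\ve\irr w[u]^{-\ve}$ and $\irr{[u]^\ve w[u]^{-\ve}}$ by analyzing the (at most two) junction cancellations, absorbing the residual idempotents into $\0(\br(tu^\ve e))$. The junction bookkeeping you outline — via \cref{xi_t-mult-reduction-to-u=[x]}, the defining recursions, \cref{xi_t(e[x])}, the order-preserving identities and strong normalization — is exactly what the paper's Cases 2--6 carry out in detail (including the degenerate cases $\irr w=[u]^{\pm\ve}$), so no new idea is missing.
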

\begin{proof}
	We first consider the case $w=\e$. Then by \cref{z_t(w)=xi_t(irr(w))}
	\begin{align*}
		\z_{t}([u]^\ve w[u]^{-\ve})\eta_{t}(\z_{u^\ve}(w))&=\z_{t}([u]^\ve \e[u]^{-\ve})\eta_{t}(\z_{u^\ve}(\e))=\z_{t}(\e)\eta_{t}(\z_{u^\ve}(\e))\\
		&=\0(\br(t))\eta_{t}(\0(\br(u^\ve)))=\0(\br(t))\0(\br(t u^\ve))\\
		&=\0(\br(t u^\ve))=\z_{t u^\ve}(\e).
	\end{align*}
	
	Now let $w\ne\e$.
	
	\textbf{Case 1.} $[u]^\ve \irr w[u]^{-\ve}$ is irreducible. Then $\irr{[u]^\ve w[u]^{-\ve}}=[u]^\ve \irr w[u]^{-\ve}$. We calculate by \cref{z_t(w)=xi_t(irr(w)),xi_t(u^ewu^-e)eta_t(xi_u(w))=xi_tu(w)} and $e\le\nu(w)=\f(\irr w)$
	\begin{align}
		\0(\br(t u^\ve e))\z_{t}([u]^\ve w[u]^{-\ve})\eta_{t}(\z_{u^\ve}(w))
		&=\0(\br(t u^\ve e))\xi_{t}([u]^\ve \irr w[u]^{-\ve})\eta_{t}(\xi_{u^\ve}(\irr w))\notag\\
		&=\0(\br(t u^\ve e))\xi_{t u^\ve}(\irr w)\notag\\
		&=\0(\br(t u^\ve e))\z_{t u^\ve}(w).\label{z_t^ve(u^dlwu^-dl)eta_t^ve(xi_u^dl(w))-irr-w}
	\end{align}
	
	\textbf{Case 2.} $\irr w=[u]^{-\ve}w'$, where $w'$ is not empty and the last letter of $w'$ is different from $[u]^\ve$. Then $\irr{[u]^\ve w[u]^{-\ve}}=w'[u]^{-\ve}$ and
	\begin{align*}
		\xi_{t}([u]^\ve \irr w[u]^{-\ve})=\xi_{t}([u]^\ve[u]^{-\ve}w'[u]^{-\ve}).
	\end{align*}
	If $\ve=1$, then by \cref{c(x_1...x_(i-1)_e_x_(i+1)...x_n)-triv,xi_t([e]v)=0(r(te))xi_t(v),xi_t([x][y]-inv.v)}
	\begin{align*}
	\xi_{t}([u]^\ve \irr w[u]^{-\ve})&=\xi_{t}([u][u]\m w'[u]\m)=c(t,uu\m,u)\m\xi_{t}([uu\m]w'[u]\m)\\
	&=\0(\br(t u^\ve))\xi_{t}(w'[u]^{-\ve})=\0(\br(t u^\ve))\z_{t}([u]^\ve w[u]^{-\ve}),
	\end{align*}
	so the proof of \cref{z_t^ve(u^dlwu^-dl)eta_t^ve(xi_u^dl(w))-irr-w} remains valid in this case. If $\ve=-1$, then by \cref{xi_t([e]v)=0(r(te))xi_t(v),xi_t([x]-inv.u),xi_t([x][y]v)}
	\begin{align*}
	\xi_{t}([u]^\ve \irr w[u]^{-\ve})&=\xi_{t}([u]\m[u]w'[u])=c(t,u\m,u)\m c(t,u\m,u)\xi_{t}([u\m u]w'[u])\\
	&=\0(\br(t u^\ve))\xi_{t}(w'[u]^{-\ve})=\0(\br(t u^\ve))\z_{t}([u]^\ve w[u]^{-\ve}),
	\end{align*}
	and the proof of \cref{z_t^ve(u^dlwu^-dl)eta_t^ve(xi_u^dl(w))-irr-w} again works.
	
	\textbf{Case 3.} $\irr w=w'[u]^\ve$, where $w'$ is not empty and the first letter of $w'$ is different from $[u]^{-\ve}$. Then $\irr{[u]^\ve w[u]^{-\ve}}=[u]^\ve w'$. By \cref{xi_t-mult-reduction-to-u=[x]}
	\begin{align*}
	\xi_{t}([u]^\ve \irr w[u]^{-\ve})&=\xi_{t}([u]^\ve w'[u]^\ve[u]^{-\ve})=\xi_{t}([u]^\ve w')\xi_{t}([u^\ve \f(w')][u]^\ve[u]^{-\ve})\\
	&=\z_{t}([u]^\ve w[u]^{-\ve})\xi_{t}([u^\ve \f(w')][u]^\ve[u]^{-\ve}).
	\end{align*}
	If $\ve=1$, then by \cref{xi_t([x][y]-inv.v),xi_t([x][y]v),xi_t([x]),c(x_1...x_ie...x_n)}
	\begin{align*}
	\xi_{t}([u^\ve \f(w')][u]^\ve[u]^{-\ve})&=\xi_{t}([u\f(w')][u][u]\m)\\
	&=c(t,u\f(w'),u)c(t,u\f(w')uu\m,u)\m\xi_t([u\f(w')uu\m])\\
	&=\0(\br(t u\f(w')u))=\0(\br(t u^\ve\f(\irr w))),
	\end{align*}
	so the proof of \cref{z_t^ve(u^dlwu^-dl)eta_t^ve(xi_u^dl(w))-irr-w} is applicable. 
	If $\ve=-1$, then by \cref{xi_t([x][y]-inv.v),xi_t([x][y]v),xi_t([x])}
	\begin{align*}
	\xi_{t}([u^\ve \f(w')][u]^\ve[u]^{-\ve})&=\xi_{t}([u\m\f(w')][u]\m[u])\\
	&=c(t,u\m \f(w')u\m,u)\m c(t,u\m \f(w')u\m,u)\xi_{t}([u\m\f(w')u\m u])\\
	&=\0(\br(t u\m\f(w')u\m))=\0(\br(t u^\ve\f(\irr w))),
	\end{align*}
	and the proof of \cref{z_t^ve(u^dlwu^-dl)eta_t^ve(xi_u^dl(w))-irr-w} again works.
	
	\textbf{Case 4.} $\irr w=[u]^{-\ve} w'[u]^\ve$, where $w'$ is not empty. Then $\irr{[u]^\ve w[u]^{-\ve}}=w'$. As in Cases 2 and 3, we have $\xi_{t}([u]^\ve \irr w[u]^{-\ve})=\0(\br(t u^\ve\f(\irr w)))\z_{t}([u]^\ve w[u]^{-\ve})$.

	\textbf{Case 5.} $\irr w=[u]^{-\ve}$. Then $\z_{t}([u]^\ve w[u]^{-\ve})=\0(\br(t u^{-\ve}))$, $\z_{u^\ve}(w)=\0(\br(u^\ve))$ and $\z_{t u^\ve}(w)=\0(\br(t u^\ve))$. Since $e\le u^{-\ve}$ (whence $e\le u^\ve$ too), the result of \cref{z_t^ve(u^dlwu^-dl)eta_t^ve(xi_u^dl(w))-irr-w} follows. 
	
	\textbf{Case 6.} $\irr w=[u]^\ve$. This case is similar to the previous one.
\end{proof}

\begin{lem}\label{0(e)xi_t=xi_et}
	Let $t\in T$ and $w\in F(T\sqcup T\m)$. Then for any $e\in E(T)$
	\begin{align}\label{0(e)xi_e(w)=xi_et(w)}
		\0(e)\xi_t(w)=\xi_{et}(w).
	\end{align}
\end{lem}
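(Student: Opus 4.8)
The plan is to prove \cref{0(e)xi_e(w)=xi_et(w)} by induction on the length $l(w)$, closely following the recursive definition of $\xi_t$. The one ingredient I need beyond the definitions is the order-preserving property of $c$: since $c\in Z^3_\le(T^1,A^1)$ is assumed normalized it is strongly normalized, hence $c\in C^3_\le(T^1,A^1)$, so \cref{c-ord-pres-iff} (the identity \cref{c(x_1...ex_i...x_n)} with $i=1$, where $\br(e)=e$) gives
\[
c(et,x,y)=\0(e)c(t,x,y)\quad\text{for all }t\in T,\ x,y\in T,\ e\in E(T).
\]
Because the idempotents of the semilattice of groups $A$ are central, this also yields $c(et,x,y)^{-1}=\0(e)c(t,x,y)^{-1}$, and more generally $(\0(e)a)^{-1}=\0(e)a^{-1}$ for any $a\in A$; moreover by \cref{xi_t(w)-belongs-to-A_r(tf(w))} both $\0(e)\xi_t(w)$ and $\xi_{et}(w)$ lie in the same group component of $A$, so all the inverse manipulations below take place inside a single group.

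First I would reduce to words beginning with a non-inverse letter. If $w=[x]\m u$ with $x\in T$ and $u\in F(T\sqcup T\m)^1$, then by \cref{xi_t([x]-inv.u)} and the displayed relation $\xi_{et}(w)=\0(e)c(t,x\m,x)^{-1}\xi_{et}([x\m]u)$, so \cref{0(e)xi_e(w)=xi_et(w)} for $w$ follows from the same identity for $[x\m]u$, which begins with the non-inverse letter $[x\m]$. It therefore suffices to prove \cref{0(e)xi_e(w)=xi_et(w)} when $w$ begins with a non-inverse letter, and this I would carry out by induction on $l(w)$. For $w=[x]$, \cref{xi_t([x])} gives $\xi_{et}([x])=\0(\br(etx))$, while $\0(e)\xi_t([x])=\0(e)\0(\br(tx))=\0(e\cdot txx\m t\m)=\0(\br(etx))$ since $e$ and $txx\m t\m$ are commuting idempotents of $T$. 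For the inductive step with $w=[x][y]v$, \cref{xi_t([x][y]v)}, the displayed relation, and the induction hypothesis applied to the shorter word $[xy]v$ give
\[
\xi_{et}(w)=c(et,x,y)\,\xi_{et}([xy]v)=\0(e)c(t,x,y)\cdot\0(e)\xi_t([xy]v)=\0(e)\,c(t,x,y)\,\xi_t([xy]v)=\0(e)\xi_t(w),
\]
the extra $\0(e)$ being absorbed by centrality of idempotents in $A$. The case $w=[x][y]\m v$ is handled identically using \cref{xi_t([x][y]-inv.v)} and the induction hypothesis for $[xy\m]v$.

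There is no genuine obstacle here; the only point needing care is the bookkeeping of group components of $A$ together with the identity $(\0(e)a)^{-1}=\0(e)a^{-1}$, both of which are consequences of the centrality of idempotents in the semilattice of groups $A$. As in \cref{rem-didnt-use-3coc-commut}, this argument uses neither the $3$-cocycle identity for $c$ nor the commutativity of $A$; only the order-preserving property of $c$ is needed.
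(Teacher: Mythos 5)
Your proof is correct and follows essentially the same route as the paper's: reduce a word $[x]\m u$ to $[x\m]u$ using \cref{xi_t([x]-inv.u)} together with the order-preserving identity $c(et,\cdot,\cdot)=\0(e)c(t,\cdot,\cdot)$ from \cref{c-ord-pres-iff}, then induct on $l(w)$ with base case $[x]$ and the two cases $[x][y]v$, $[x][y]\m v$. The component bookkeeping and absorption of the extra $\0(e)$ that you spell out is exactly what the paper leaves implicit.
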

\begin{proof}
	Assume that $w=[x]\m w'$ for some $w'\in F(T\sqcup T\m)^1$. Then by \cref{xi_t([x]-inv.u),c(x_1...ex_i...x_n)}
	\begin{align*}
		\0(e)\xi_t(w)&=\0(e)c(t,x\m,x)\m\xi_t([x\m]w')=c(et,x\m,x)\m\0(e)\xi_t([x\m]w'),\\
		\xi_{et}(w)&=c(et,x\m,x)\m\xi_{et}([x\m]w').
	\end{align*}
	Hence it is enough to consider $w$ of the form $[x]w'$.
	
	If $w=[x]$, then \cref{0(e)xi_e(w)=xi_et(w)} follows from \cref{xi_t([x])}, so let $l(w)>1$.
	
	\textbf{Case 1.} $w=[x][y]w'$ for some $w'\in F(T\sqcup T\m)^1$. Then by \cref{xi_t([x][y]v),c(x_1...ex_i...x_n)}
	\begin{align*}
	\0(e)\xi_t(w)&=\0(e)c(t,x,y)\xi_t([xy]w')=c(et,x,y)\0(e)\xi_t([xy]w'),\\
	\xi_{et}(w)&=c(et,x,y)\xi_{et}([xy]w').
	\end{align*}
	Since $l([xy]w')<l(w)$, we may apply the induction hypothesis.
	
	\textbf{Case 2.} $w=[x][y]\m w'$ for some $w'\in F(T\sqcup T\m)^1$. This case is similar to the previous one.
\end{proof}

\begin{lem}\label{xi_e-trivial}
	Let $e\in E(T)$ and $w\in F(T\sqcup T\m)$. Then 
	\begin{align}\label{xi_e(w)=0(er(f(w)))}
		\xi_e(w)=\0(e\br(\f(w))).
	\end{align}
\end{lem}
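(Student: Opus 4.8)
The plan is to prove the identity $\xi_e(w)=\0(e\,\br(\f(w)))$ by first reducing to words $w$ that begin with a positive letter and then inducting on $l(w)$, in the same style as the proofs of \cref{xi_t(w)-belongs-to-A_r(tf(w)),xi_t-mult-reduction-to-u=[x]}. The essential observation is that, since $c$ is strongly normalized (by \cref{lem:E-cond}), every value of $c$ with an idempotent first argument is itself an idempotent of $A$: by \cref{c(x_1...x_(i-1)_e_x_(i+1)...x_n)-triv} we have $c(e,a,b)=\0(\br(eab))$, so $c(e,a,b)$ is self-inverse, and in each case that occurs it dominates the idempotent $e\,\br(\f(w))$.

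Before the induction I would record two elementary identities in $E(T)$: $\br(st)\le\br(s)$ for all $s,t$ (immediate since $tt\m$ and $s\m s$ commute), and $\br(ex)=e\,\br(x)$ for $e\in E(T)$. Combined with the fact that $\0\colon E(T)\to E(A)$ is a semilattice isomorphism, these yield the only algebraic input needed: whenever $h\le g$ in $E(T)$ one has $\0(g)^{\ve}\,\0(h)=\0(h)$ for $\ve\in\{-1,1\}$. The ambient group components that make the products below well defined are supplied by \cref{xi_t(w)-belongs-to-A_r(tf(w))}, which places $\xi_e(w')$ in $A_{\br(e\f(w'))}=A_{e\,\br(\f(w'))}$.

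Now the reduction: if $w=[x]\m u$ with $u\in F(T\sqcup T\m)^1$, then by \cref{xi_t([x]-inv.u)} $\xi_e(w)=c(e,x\m,x)\m\,\xi_e([x\m]u)=\0(ex\m x)\m\,\xi_e([x\m]u)$, and since $\f([x\m]u)=\f(w)$ with $e\,\br(\f(w))=e\,\br(x\m\f(u))\le ex\m x$, the claim for $w$ follows from the claim for the positive-first word $[x\m]u$. (This step does not shorten $w$, so it must stand outside the induction, exactly as in the earlier lemmas.) It then remains to treat $w=[x]w'$ by induction on $l(w)$. The base case $w=[x]$ is \cref{xi_t([x])}: $\xi_e([x])=\0(\br(ex))=\0(e\,\br(x))$. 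For $l(w)>1$, if $w=[x][y]v$ then by \cref{xi_t([x][y]v)} and the inductive hypothesis $\xi_e(w)=\0(\br(exy))\,\0(e\,\br(\f(w)))$, which equals $\0(e\,\br(\f(w)))$ because $e\,\br(\f(w))=e\,\br(xy\f(v))\le e\,\br(xy)=\br(exy)$; and if $w=[x][y]\m v$ then by \cref{xi_t([x][y]-inv.v)} similarly $\xi_e(w)=\0(\br(exy\m y))\m\,\0(e\,\br(\f(w)))=\0(e\,\br(\f(w)))$, using $\br(exy\m y)=e\,\br(xy\m)$ and $\br(xy\m\f(v))\le\br(xy\m)$.

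There is no substantial obstacle here: this is a routine induction whose only delicate points are the bookkeeping of the idempotent inequalities above and the fact — already handled in the preceding lemmas — that the negative-first reduction clause is length-preserving and hence must be applied once as a preprocessing step rather than inside the induction.
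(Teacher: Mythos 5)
Your proof is correct and follows essentially the same route as the paper's: reduce first to words beginning with a positive letter using the strongly normalized value $c(e,x\m,x)$, then induct on length with the two cases $[x][y]v$ and $[x][y]\m v$, absorbing the idempotent factors $c(e,\cdot,\cdot)$ via the component information from \cref{xi_t(w)-belongs-to-A_r(tf(w))}. The only cosmetic difference is that the paper absorbs each normalized factor before invoking the induction hypothesis (showing $\xi_e(w)=\xi_e([xy]w')$, etc.), while you apply the hypothesis first and absorb afterwards; the content is identical.
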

\begin{proof}
	Assume that $w=[x]\m w'$ for some $w'\in F(T\sqcup T\m)^1$. Then by \cref{xi_t([x]-inv.u),c(x_1...x_(i-1)_e_x_(i+1)...x_n)-triv,xi_t(w)-belongs-to-A_r(tf(w))}
	\begin{align*}
	\xi_e(w)&=c(e,x\m,x)\m\xi_e([x\m]w')=\0(e\bd(x))\xi_e([x\m]w')=\xi_e([x\m]w').
	\end{align*}
	Since $\f([x\m]w')=\f(w)$, it is enough to consider $w$ of the form $[x]w'$.
	
	If $w=[x]$, then \cref{xi_e(w)=0(er(f(w)))} follows from \cref{xi_t([x])}, so let $l(w)>1$.
	
	\textbf{Case 1.} $w=[x][y]w'$ for some $w'\in F(T\sqcup T\m)^1$. Then by \cref{xi_t([x][y]v),c(x_1...x_(i-1)_e_x_(i+1)...x_n)-triv,xi_t(w)-belongs-to-A_r(tf(w))}
	\begin{align*}
	\xi_e(w)&=c(e,x,y)\xi_e([xy]w')=\0(e\br(xy))\xi_e([xy]w')=\xi_e([xy]w').
	\end{align*}
	Since $l([xy]w')<l(w)$ and $\f([xy]w')=\f(w)$, we may apply the induction hypothesis.
	
	\textbf{Case 2.} $w=[x][y]\m w'$ for some $w'\in F(T\sqcup T\m)^1$. This case is similar to the previous one.
\end{proof}

\begin{cor}\label{0(e)z_t-trivial}
	Let $e\in E(T)$ and $w\in FG(T)$. Then we have $\0(e)\z_t(w)=\z_{et}(w)$ and $\z_e(w)=\0(e\br(\nu(w)))$.
\end{cor}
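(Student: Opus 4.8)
The plan is to deduce both identities directly from \cref{0(e)xi_t=xi_et,xi_e-trivial}, using the defining formula \cref{z_t(w)=xi_t(irr(w))} for $\z$, and to dispose of the degenerate case $w=\e$ by a short direct computation. In other words, essentially all the work has already been done in the preceding lemmas, and the corollary is just a matter of translating statements about $\xi_t$ on words into statements about $\z_t$ on group elements.

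First suppose $w\ne\e$. Then $\z_t(w)=\xi_t(\irr w)$ and $\z_{et}(w)=\xi_{et}(\irr w)$ by \cref{z_t(w)=xi_t(irr(w))}, so the identity $\0(e)\z_t(w)=\z_{et}(w)$ is precisely \cref{0(e)xi_t=xi_et} applied to the word $\irr w\in F(T\sqcup T\m)$. For the second identity, \cref{xi_e-trivial} gives $\z_e(w)=\xi_e(\irr w)=\0(e\br(\f(\irr w)))$, and since $\f(\irr w)=\nu(w)$ by \cref{nu(w)=f(irr(w))}, this is exactly $\0(e\br(\nu(w)))$.

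It remains to treat $w=\e$. Here \cref{z_t(w)=xi_t(irr(w))} gives $\z_t(\e)=\0(\br(t))$ and $\z_{et}(\e)=\0(\br(et))$; since the idempotents of $T$ commute, $\br(et)=ett\m=e\br(t)$, and as $\0|_{E(T)}$ is a homomorphism we obtain $\0(e)\z_t(\e)=\0(e)\0(\br(t))=\0(e\br(t))=\0(\br(et))=\z_{et}(\e)$. For the second identity, $\nu(\e)=1$ is the adjoined identity of $T^1$, so $e\br(\nu(\e))=e$, while $\z_e(\e)=\0(\br(e))=\0(e)$ because $\br(e)=e$ for $e\in E(T)$; hence $\z_e(\e)=\0(e\br(\nu(\e)))$. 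There is no genuine obstacle here: the only point that needs a moment's care is correctly unwinding the second branch of \cref{z_t(w)=xi_t(irr(w))} in the case $w=\e$ and remembering that $\nu(\e)$ is the adjoined identity $1$ of $T^1$ rather than an element of $T$, so that $\0$ is in fact only ever evaluated on genuine idempotents of $T$.
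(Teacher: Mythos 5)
Your proof is correct and follows the paper's own (very brief) argument exactly: the case $w\ne\e$ is \cref{0(e)xi_t=xi_et,xi_e-trivial} applied to $\irr w$ together with $\nu(w)=\f(\irr w)$, and the case $w=\e$ is the direct computation from \cref{z_t(w)=xi_t(irr(w))} that you carry out. Nothing further is needed.
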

\begin{proof}
	Indeed, for $w=\e$ this holds by \cref{z_t(w)=xi_t(irr(w)),nu(w)=f(irr(w))}, and for $w\ne\e$ by \cref{xi_e-trivial,nu(w)=f(irr(w)),0(e)xi_t=xi_et}.
\end{proof}

\begin{prop}\label{gm_t-rel-inv}
	For any $t\in T$ the endomorphism $\gm_t\in\End N$ is relatively invertible whose inverse is $\gm\m_t$.
\end{prop}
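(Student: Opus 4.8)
The goal is to verify conditions (i) and (ii) in the definition of relative invertibility for $\gm_t$ with candidate inverse $\gm\m_t$ and idempotent $e_{\gm_t} = \af(\br(t)\cdot,\cdot)$ — more precisely, $e_{\gm_t}=\af(\br(t),\e)$, by analogy with \cref{gm_t(af(e_e))=af(r(te)_e)}. First I would observe that $\gm\m_t\in\End N$ by \cref{gm-inv_t-endo}, and that both $\gm_t$ and $\gm\m_t$ are endomorphisms of the semilattice of groups $N$ whose effect on the $E(T)$-component and on the $FG(T)$-component is conjugation by $t$ resp. by $[t]$; this part is already under control. So the whole matter reduces to the behaviour on the $A$-component, i.e. to an identity about $\z$.

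The key step is to compute $\gm\m_t\circ\gm_t(a,e,w)$ and $\gm_t\circ\gm\m_t(a,e,w)$ explicitly. Applying the definitions \cref{gm_t(a_e_w)=(z_t(w)eta_t(a)_tet-inv_[t]w[t]-inv),gm-inv_t(a_e_w)=} in succession, one gets
\[
\gm\m_t\circ\gm_t(a,e,w)=\bigl(\z_{t\m}([t]w[t]\m)\,\eta_{t\m}(\z_t(w)\eta_t(a)),\;t\m tet\m t,\;[t]\m[t]w[t]\m[t]\bigr).
\]
The last two coordinates are $\bd(t)e\bd(t)$ (which is $\le e$, with equality controlled by $e\le\nu(w)$ and the idempotent bookkeeping) and $w$ itself, as expected for multiplication by the idempotent $\af(\bd(t),\e)$. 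For the $A$-coordinate I would use \cref{gm^ve_t-circ-gm^dl_u} with $u=t$, $\ve=-1$ (and the fact $e\le\nu(w)$): it gives, after multiplying by the appropriate idempotent $\0(\br(\bd(t)\,\cdot))$, that $\z_{t\m}([t]\m\cdot$—wait, rather—the version of \cref{gm^ve_t-circ-gm^dl_u} with the roles arranged so that $\z_{t\m}([t]w[t]\m)\eta_{t\m}(\z_t(w))$ collapses to $\z_{t\m t}(w)$. Combined with $\eta_{t\m}\circ\eta_t=\eta_{t\m t}$ and $\eta_{t\m t}(a)=\0(\bd(t))a$ (from \ref{TM1}/\labelcref{CM1} for the $T$-module $A$), and with \cref{0(e)z_t-trivial} giving $\0(\bd(t))\z_{\cdot}(w)=\z_{\bd(t)\cdot}(w)$, the $A$-coordinate of $\gm\m_t\circ\gm_t(a,e,w)$ becomes $\z_{\bd(t)}(w)\cdot a$-type expression, i.e. exactly $\af(\bd(t),\e)\cdot(a,e,w)$ read in $N$. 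The computation of $\gm_t\circ\gm\m_t$ is symmetric, using \cref{gm^ve_t-circ-gm^dl_u} with $\ve=1$, and yields multiplication by $\af(\br(t),\e)$.

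Finally I would record the two facts demanded by the definition: (i) $\gm\m_t\circ\gm_t(n)=\af(\bd(t),\e)\,n$ and $\gm_t\circ\gm\m_t(n)=\af(\br(t),\e)\,n$ for all $n\in N$; and (ii) $\af(\bd(t),\e)=\gm\m_t$-identity is the identity of $\gm\m_t(N)$ while $\af(\br(t),\e)=\gm_t(\af(\bd(t),\e))$ is the identity of $\gm_t(N)$ — the latter by \cref{gm_t(af(e_e))=af(r(te)_e)} applied to $e=\bd(t)$, since $\br(t\bd(t))=\br(t)$. Invoking \cref{prop:3.4} then shows $\gm_t\in\mend N$ with $\ol{\gm_t}=\gm\m_t$ and $e_{\gm_t}=\af(\br(t),\e)$. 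The main obstacle is purely bookkeeping: making sure the idempotent prefactors $\0(\br(\cdot))$ that appear when invoking \cref{gm^ve_t-circ-gm^dl_u,0(e)z_t-trivial} are exactly the identity elements of the group components in which the terms live, so that they can be absorbed; once the domains $A_{\br(t\cdot)}$ etc. are tracked via \cref{xi_t(w)-belongs-to-A_r(tf(w))} this is routine, and \cref{rem-didnt-use-3coc-commut} is not needed here since commutativity of $A$ is available.
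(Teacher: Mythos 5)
Your proposal is correct and takes essentially the same route as the paper: it computes both composites coordinatewise, using \cref{gm^ve_t-circ-gm^dl_u} together with \cref{0(e)z_t-trivial} and $e\le\nu(w)$ to get $\gm\m_t\circ\gm_t(n)=\af(\bd(t),\e)n$ and $\gm_t\circ\gm\m_t(n)=\af(\br(t),\e)n$, and then checks the identity conditions on the images via \cref{gm_t(af(e_e))=af(r(te)_e)}. The only slip is notational: the distinguished idempotent is $e_{\gm_t}=\af(\bd(t),\e)$, with $\gm_t(e_{\gm_t})=\af(\br(t),\e)$, not $e_{\gm_t}=\af(\br(t),\e)$ as stated in your opening and closing sentences — your displayed conditions (i)--(ii) already use the correct one, so the argument itself is unaffected.
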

\begin{proof}
	Let $n=(a,e,w)\in N$. By \cref{gm^ve_t-circ-gm^dl_u,0(e)z_t-trivial,gm_t(a_e_w)=(z_t(w)eta_t(a)_tet-inv_[t]w[t]-inv),gm-inv_t(a_e_w)=} and $e\le\nu(w)$
	\begin{align}
		(\gm_t\circ\gm\m_t)(n)&=\gm_t(\z_{t\m}(w)\eta_{t\m}(a),t\m et,[t]\m w[t])\notag\\
		&=(\z_t([t]\m w[t])\eta_t(\z_{t\m}(w)\eta_{t\m}(a)),tt\m ett\m,[t][t]\m w[t][t]\m)\notag\\
		&=(\z_{tt\m}(w)\0(\br(t))a,\br(t) e,w)=(\0(\br(\nu(w)))\0(\br(t))a,\br(t) e,w)\notag\\
		&=(\0(\br(t))a,\br(t) e,w)=(\0(\br(t)),\br(t),\e)n=\af(\br(t),\e)n.\label{gm_t-circ-gm-inv_t}
	\end{align}
 Similarly
	\begin{align}\label{gm-inv_t-circ-gm_t}
		(\gm\m_t\circ\gm_t)(n)=\af(\bd(t),\e)n.
	\end{align}
	 
	Finally, $\gm_t(\af(\bd(t),\e))=\af(\br(t),\e)$ by \cref{gm_t(af(e_e))=af(r(te)_e)}. It is clear from \cref{gm_t(a_e_w)=(z_t(w)eta_t(a)_tet-inv_[t]w[t]-inv),gm-inv_t(a_e_w)=} that $\af(\br(t),\e)\gm_t(n)=\gm_t(n)$ and $\af(\bd(t),\e)\gm\m_t(n)=\gm\m_t(n)$ for any $n\in N$, which completes the proof.
\end{proof}

We are ready to define a map $\lb:S\to\mend N$. Let $s=(t,u)\in S$ and $n=(a,e,w)\in N$. We put
\begin{align}\label{lb_(t_u)(a_e_w)=}
	\lb_s(n)=
	\begin{cases}
		\af(\br(s))(\gm_{t_1}^{\ve_1}\circ\dots\circ \gm_{t_k}^{\ve_k})(n), & u\ne\e\mbox{ and }\irr u=[t_1]^{\ve_1}\dots[t_k]^{\ve_k},\\
		\af(s)n, & u=\e.
	\end{cases}
\end{align}
It is easy to see that the definition makes sense when $u = \e$, because in this case $t\in E(T)$, so $s\in E(S)$.

\begin{prop}\label{lb-homo}
	The map $\lb:S\to\mend N$ is a homomorphism.
\end{prop}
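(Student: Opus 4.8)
We must show $\lb\colon S\to\mend N$ is a homomorphism, i.e.\ $\lb_{ss'}=\lb_s\circ\lb_{s'}$ for all $s=(t,u)$, $s'=(t',u')$ in $S$, where $ss'=(tt',uu')$. The first, and most useful, step is to obtain a closed formula for $\lb_{(t,u)}$ that does not refer to the chosen reduced decomposition of $u$. I would prove, by induction on $k=l(p)$, that for every irreducible word $p=[t_1]^{\ve_1}\dots[t_k]^{\ve_k}\in F(T\sqcup T\m)$, writing $\tau=\f(p)$,
\[
(\gm_{t_1}^{\ve_1}\circ\dots\circ\gm_{t_k}^{\ve_k})(a,e,v)=\bigl(\z_\tau(v)\,\eta_\tau(a),\ \tau e\tau\m,\ \psi(p)\,v\,\psi(p)\m\bigr)\qquad\text{for all }(a,e,v)\in N.
\]
The inductive step strips off the last letter $[t_k]^{\ve_k}$, applies \cref{gm_t(a_e_w)=(z_t(w)eta_t(a)_tet-inv_[t]w[t]-inv)} and \cref{gm^ve_t-circ-gm^dl_u} to the first coordinate and the homomorphism property of $\eta$ to the rest, and uses that both sides of the resulting equality already lie in $A_{\br(\tau e)}=A_{\tau e\tau\m}$, so the idempotent factor $\0(\br(\tau e))$ supplied by \cref{gm^ve_t-circ-gm^dl_u} is vacuous. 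Since $\f(\irr u)=\nu(u)$, $\psi(\irr u)=u$ by \cref{psi(irr(u))=u}, $\af(\br(s))=\af(\br(t),\e)$ by \cref{af(e_1)=(0(e)_e_1)}, and $t=\br(t)\nu(u)$, plugging this into \cref{lb_(t_u)(a_e_w)=} and using \cref{0(e)z_t-trivial} (to pass from $\z_{\nu(u)},\eta_{\nu(u)}$ to $\z_t,\eta_t$) yields, for \emph{every} $(t,u)\in S$ — the case $u=\e$ being checked directly against the other branch of \cref{lb_(t_u)(a_e_w)=} via $\0(t)a=\eta_t(a)$ and \cref{0(e)z_t-trivial} — the uniform description
\[
\lb_{(t,u)}(a,e,v)=\bigl(\z_t(v)\,\eta_t(a),\ tet\m,\ uvu\m\bigr).
\]

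With this formula the homomorphism property becomes almost mechanical. For arbitrary $s,s'$ one reads off that the second and third coordinates of $\lb_{ss'}(n)$ and of $\lb_s(\lb_{s'}(n))$ are $(tt')e(tt')\m$ and $(uu')v(uu')\m$ on both sides, while equality of the first coordinates — both of which are first coordinates of elements of $N$ with second coordinate $f:=\br(tt'e)$, and both carrying the common factor $\eta_{tt'}(a)$ — reduces to the single identity
\[
\0(f)\,\z_t(u'vu'\m)\,\eta_t(\z_{t'}(v))=\0(f)\,\z_{tt'}(v),\qquad t'\le\nu(u'),\ e\le\nu(v),
\]
which we denote $(\ast)$. (When $s'=(t',\e)$ or $s=(t,\e)$ one argues directly instead, using that $tt'\le t'$ in the first case and $tt'\le t$ in the second together with \cref{0(e)z_t-trivial} and the $T$-module axioms; when $u=u'=\e$ it is just that $\af$ restricts to a homomorphism on $E(S)$.)

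The identity $(\ast)$ is the crux of the proof, being a strengthening of \cref{gm^ve_t-circ-gm^dl_u} from a single-letter conjugator to an arbitrary $u'\in FG(T)$. I would prove it by induction on $m=l(\irr{u'})$. Using \cref{0(e)z_t-trivial} one first replaces $t'$ by $\nu(u')$ and $tt'$ by $t\nu(u')$, the discrepancy being absorbed into $\0(f)$; the base case $m=1$ is then exactly \cref{gm^ve_t-circ-gm^dl_u} up to the same absorption. For the inductive step write $\irr{u'}=[t_1']^{\ve_1'}\,\irr{u_2'}$ with $l(\irr{u_2'})=m-1$, put $\s_i=\nu(u_i')$ and $w=u_2'vu_2'\m$ (so $u'vu'\m=[t_1']^{\ve_1'}w[t_1']^{-\ve_1'}$), apply \cref{gm^ve_t-circ-gm^dl_u} with conjugator $[t_1']^{\ve_1'}$ to the word $w$ — legitimate since the idempotent $\br(\s_2e)$ is $\le\nu(w)$, because $\nu(w)\ge\s_2\nu(v)\s_2\m$ by \cref{nu(u)nu(v)<=nu(uv)} — and apply the induction hypothesis twice: once to $\z_{t\s_1}(w)$ and once (after pushing $\eta_t$ through an instance for $\z_{\s_1}(w)$) to relate $\eta_t(\z_{\s_1\s_2}(v))$ to $\eta_t(\z_{\s_1}(w))\,\eta_{t\s_1}(\z_{\s_2}(v))$; multiplying all three equalities by $\0(f)$ and chaining them gives $(\ast)$.

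The main obstacle, and the source of essentially all the length, is the bookkeeping of the idempotent ``error'' factors: every application of \cref{gm^ve_t-circ-gm^dl_u} or of the induction hypothesis comes with a factor $\0(\br(\,\cdot\,e_0))$ for some idempotent $e_0\le\nu(v)$, and at each step one must verify that the factor $\0(f)$ already carried lies below it, hence is absorbed. These checks rely on the component memberships of the $\z$'s and $\eta$'s (\cref{xi_t(w)-belongs-to-A_r(tf(w)),0(e)z_t-trivial}), on the identities $t=\br(t)\nu(u)$ and $t'=\br(t')\nu(u')$, and on the centrality of idempotents in the semilattice of abelian groups $A$; once this discipline is fixed, each individual verification is short, and it should also be remarked in passing that each $\lb_s$ does lie in $\mend N$, being a composite of the relatively invertible endomorphisms $\gm_{t_i}^{\ve_i}$ (\cref{gm_t-endo,gm-inv_t-endo,gm_t-rel-inv}) with multiplication by the idempotent $\af(\br(s))$.
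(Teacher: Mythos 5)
Your proof is correct, but it proceeds differently from the paper's. The paper proves \cref{lb-homo} directly from the definition \cref{lb_(t_u)(a_e_w)=}: it writes $\lb_{(t,u)}$ and $\lb_{(p,v)}$ as compositions of the $\gm_{t_i}^{\ve_i}$ prescribed by $\irr u$ and $\irr v$, and then runs a case analysis on how much of $\irr v$ cancels against $\irr u$, collapsing the cancelled pairs via \cref{gm_t-circ-gm-inv_t,gm-inv_t-circ-gm_t} and absorbing the resulting idempotents using \cref{gm_t(af(e_e))=af(r(te)_e)}. You instead first establish the closed formula $\lb_{(t,u)}(a,e,v)=(\z_t(v)\eta_t(a),tet\m,uvu\m)$ — which is exactly the paper's \cref{lb_(t_u)-explicit}, proved there only \emph{after} \cref{lb-homo}, by the same induction on $l(\irr u)$ via \cref{gm^ve_t-circ-gm^dl_u}; there is no circularity in reordering, since that proof does not use \cref{lb-homo}. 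With the closed formula, all free-group cancellation in the second and third coordinates becomes automatic, and the homomorphism property reduces to your identity $(\ast)$, a generalization of \cref{gm^ve_t-circ-gm^dl_u} from one-letter conjugators to arbitrary $u'\in FG(T)$, which you prove by induction on $l(\irr{u'})$; I checked the key points — the reduction to $t'=\nu(u')$ via \cref{0(e)z_t-trivial}, the legitimacy of applying \cref{gm^ve_t-circ-gm^dl_u} with the idempotent $\br(\s_2e)\le\nu(u_2'vu_2'\m)$ (via \cref{nu(u)nu(v)<=nu(uv)}), and the identities $\br(t\s_1\br(\s_2e))=\br(t\s_1\s_2e)=t\br(\s_1\s_2e)t\m$ that make the $\0(f)$-bookkeeping close up — and they all hold, with the degenerate cases $u=\e$ or $u'=\e$ easily handled as you indicate. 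What each route buys: the paper's argument uses only relations already in hand but pays with a multi-case analysis of cancellations between $\irr u$ and $\irr v$; yours delegates all cancellation issues to the already-proved \cref{gm^ve_t-circ-gm^dl_u}, isolates the genuinely new content in the single identity $(\ast)$, and obtains \cref{lb_(t_u)-explicit} (needed by the paper immediately afterwards anyway) as a by-product, at the cost of one extra word-length induction.
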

\begin{proof}
	Let $(t,u),(p,v)\in S$ and $n=(a,e,w)\in N$. We shall prove that
	\begin{align}\label{lb_(tp_uv)=lb_(t_u)lb_(p_v)}
		\lb_{(tp,uv)}(n)=(\lb_{(t,u)}\circ\lb_{(p,v)})(n).
	\end{align}
	If $u=\e$, then $t\in E(T)$ and $\af(\br(tp),\e)=\af(t\br(p),\e)=\af(t,\e)\af(\br(p,v))$, so \cref{lb_(tp_uv)=lb_(t_u)lb_(p_v)} follows directly from \cref{lb_(t_u)(a_e_w)=}. If $v=\e$ and $\irr u=[t_1]^{\ve_1}\dots[t_k]^{\ve_k}$, then using \cref{gm_t(af(e_e))=af(r(te)_e),lb_(t_u)(a_e_w)=} and $t\le\nu(u)=t_1^{\ve_1}\dots t_k^{\ve_k}$ we obtain
	\begin{align*}
		(\lb_{(t,u)}\circ\lb_{(p,v)})(n)&=\af(\br(s))(\gm_{t_1}^{\ve_1}\circ\dots\circ \gm_{t_k}^{\ve_k})(\af(p,\e)n)\\
		&=\af(\br(t),\e)\af(\br(t_1^{\ve_1}\dots t_k^{\ve_k}p),\e)(\gm_{t_1}^{\ve_1}\circ\dots\circ \gm_{t_k}^{\ve_k})(n)\\
		&=\af(\br(t)\br(t_1^{\ve_1}\dots t_k^{\ve_k}p),\e)(\gm_{t_1}^{\ve_1}\circ\dots\circ \gm_{t_k}^{\ve_k})(n)\\
		&=\af(\br(tp),\e)(\gm_{t_1}^{\ve_1}\circ\dots\circ \gm_{t_k}^{\ve_k})(n)\\
		&=\af(\br(tp,uv))(\gm_{t_1}^{\ve_1}\circ\dots\circ \gm_{t_k}^{\ve_k})(n)=\lb_{(tp,uv)}(n).
	\end{align*}
	Now, assume that $\e\not\in\{u,v\}$.
	
	\textbf{Case 1.} $\irr u\irr v$ is irreducible. Then $\irr{uv}=\irr u\irr v$. Let $\irr u=[t_1]^{\ve_1}\dots[t_k]^{\ve_k}$ and $\irr v=[p_1]^{\dl_1}\dots[p_l]^{\dl_l}$, so that $\irr{uv}=[t_1]^{\ve_1}\dots[t_k]^{\ve_k}[p_1]^{\dl_1}\dots[p_l]^{\dl_l}$. We calculate using \cref{gm_t(af(e_e))=af(r(te)_e),lb_(t_u)(a_e_w)=} and $t\le\nu(u)$
	\begin{align*}
		(\lb_{(t,u)}\circ\lb_{(p,v)})(n)&=\af(\br(t,u))(\gm_{t_1}^{\ve_1}\circ\dots\circ \gm_{t_k}^{\ve_k})(\af(\br(p,v))(\gm_{p_1}^{\dl_1}\circ\dots\circ \gm_{p_l}^{\dl_l})(n))\\
		&=\af(\br(t),\e)(\gm_{t_1}^{\ve_1}\circ\dots\circ \gm_{t_k}^{\ve_k})(\af(\br(p),\e)(\gm_{p_1}^{\dl_1}\circ\dots\circ \gm_{p_l}^{\dl_l})(n))\\
		&=\af(\br(t),\e)\af(\br(t_1^{\ve_1}\dots t_k^{\ve_k}p),\e)(\gm_{t_1}^{\ve_1}\circ\dots\circ \gm_{t_k}^{\ve_k}\circ\gm_{p_1}^{\dl_1}\circ\dots\circ \gm_{p_l}^{\dl_l})(n)\\
		&=\af(\br(t)\br(t_1^{\ve_1}\dots t_k^{\ve_k}p),\e)(\gm_{t_1}^{\ve_1}\circ\dots\circ \gm_{t_k}^{\ve_k}\circ\gm_{p_1}^{\dl_1}\circ\dots\circ \gm_{p_l}^{\dl_l})(n)\\
		&=\af(\br(tp),\e)(\gm_{t_1}^{\ve_1}\circ\dots\circ \gm_{t_k}^{\ve_k}\circ\gm_{p_1}^{\dl_1}\circ\dots\circ \gm_{p_l}^{\dl_l})(n)\\
		&=\af(\br(tp,uv))(\gm_{t_1}^{\ve_1}\circ\dots\circ \gm_{t_k}^{\ve_k}\circ\gm_{p_1}^{\dl_1}\circ\dots\circ \gm_{p_l}^{\dl_l})(n)=\lb_{(tp,uv)}(n).
	\end{align*}
	
	\textbf{Case 2.} $\irr u=u'w$ and $\irr v=w\m v'$ for some $u'\in F(T\sqcup T\m)$ and $v'\in F(T\sqcup T\m)$, where $w$ is the maximal suffix of $\irr u$ such that $w\m$ is a prefix of $\irr v$. Then $\irr{uv}=u'v'$. Let $u'=[t_1]^{\ve_1}\dots[t_k]^{\ve_k}$, $v'=[p_1]^{\dl_1}\dots[p_l]^{\dl_l}$ and $w=[q_1]^{\s_1}\dots[q_m]^{\s_m}$, so that $\irr u=[t_1]^{\ve_1}\dots[t_k]^{\ve_k}[q_1]^{\s_1}\dots[q_m]^{\s_m}$, $\irr v=[q_m]^{-\s_m}\dots[q_1]^{-\s_1}[p_1]^{\dl_1}\dots[p_l]^{\dl_l}$ and $\irr{uv}=[t_1]^{\ve_1}\dots[t_k]^{\ve_k}[p_1]^{\dl_1}\dots[p_l]^{\dl_l}$. We have by \cref{gm_t(af(e_e))=af(r(te)_e),lb_(t_u)(a_e_w)=,gm_t-circ-gm-inv_t,gm-inv_t-circ-gm_t} and $t\le\nu(u)$
	\begin{align*}
		(\lb_{(t,u)}\circ\lb_{(p,v)})(n)&=\af(\br(t,u))(\gm_{t_1}^{\ve_1}\circ\dots\circ \gm_{t_k}^{\ve_k}\circ \gm_{q_1}^{\s_1}\circ \dots \circ \gm_{q_m}^{\s_m})\\
		&\quad(\af(\br(p,v))(\gm_{q_m}^{-\s_m}\circ \dots \circ \gm_{q_1}^{-\s_1}\circ\gm_{p_1}^{\dl_1}\circ\dots\circ \gm_{p_l}^{\dl_l})(n))\\
		&=\af(\br(t),\e)(\gm_{t_1}^{\ve_1}\circ\dots\circ \gm_{t_k}^{\ve_k}\circ \gm_{q_1}^{\s_1}\circ \dots \circ \gm_{q_{m-1}}^{\s_{m-1}})\\
		&\quad(\af(\br(q_m^{\s_m}p),\e)(\gm_{q_m}^{\s_m}\circ\gm_{q_m}^{-\s_m}\circ\gm_{q_{m-1}}^{-\s_{m-1}}\circ \dots \circ \gm_{q_1}^{-\s_1}\circ\gm_{p_1}^{\dl_1}\circ\dots\circ \gm_{p_l}^{\dl_l})(n))\\
		&=\af(\br(t),\e)(\gm_{t_1}^{\ve_1}\circ\dots\circ \gm_{t_k}^{\ve_k}\circ \gm_{q_1}^{\s_1}\circ \dots \circ \gm_{q_{m-1}}^{\s_{m-1}})\\
		&\quad(\af(\br(q_m^{\s_m}p)\br(q_m^{\s_m})),\e)(\gm_{q_{m-1}}^{-\s_{m-1}}\circ \dots \circ \gm_{q_1}^{-\s_1}\circ\gm_{p_1}^{\dl_1}\circ\dots\circ \gm_{p_l}^{\dl_l})(n))\\
		&=\af(\br(t),\e)(\gm_{t_1}^{\ve_1}\circ\dots\circ \gm_{t_k}^{\ve_k}\circ \gm_{q_1}^{\s_1}\circ \dots \circ \gm_{q_{m-1}}^{\s_{m-1}})\\
		&\quad(\af(\br(q_m^{\s_m}p),\e)(\gm_{q_{m-1}}^{-\s_{m-1}}\circ \dots \circ \gm_{q_1}^{-\s_1}\circ\gm_{p_1}^{\dl_1}\circ\dots\circ \gm_{p_l}^{\dl_l})(n))\\
		&\dots\\
		&=\af(\br(t),\e)(\gm_{t_1}^{\ve_1}\circ\dots\circ \gm_{t_k}^{\ve_k})(\af(\br(q_1^{\s_1}\dots q_m^{\s_m}p),\e)(\gm_{p_1}^{\dl_1}\circ\dots\circ \gm_{p_l}^{\dl_l})(n))\\
		&=\af(\br(t),\e)(\af(\br(t_1^{\ve_1}\dots t_k^{\ve_k} q_1^{\s_1}\dots q_m^{\s_m}p),\e)(\gm_{t_1}^{\ve_1}\circ\dots\circ \gm_{t_k}^{\ve_k}\circ\gm_{p_1}^{\dl_1}\circ\dots\circ \gm_{p_l}^{\dl_l})(n)\\
		&=\af(\br(tp),\e)(\gm_{t_1}^{\ve_1}\circ\dots\circ \gm_{t_k}^{\ve_k}\circ\gm_{p_1}^{\dl_1}\circ\dots\circ \gm_{p_l}^{\dl_l})(n)=\lb_{(tp,uv)}(n).
	\end{align*}
	
	The remaining $3$ cases $\irr u=u'w$ and $\irr v=w\m$; $\irr u=w$ and $\irr v=w\m v'$; $\irr u=w$ and $\irr v=w\m$ are similar to Case 2.
\end{proof}

We need a more convenient form of the endomorphism $\lb_{(t,w)}$.

\begin{lem}\label{lb_(t_u)-explicit}
	Let $s=(t,u)\in S$ and $n=(a,e,w)\in N$. Assume that $u\ne\e$. Then
	\begin{align}\label{lb_(t_u)(a_e_w)=triple}
	\lb_s(n)=(\z_t(w)\eta_t(a),tet\m,uwu\m).
	\end{align}
\end{lem}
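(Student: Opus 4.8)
The plan is to unwind the definition \cref{lb_(t_u)(a_e_w)=}. Write $\irr u=[t_1]^{\ve_1}\dots[t_k]^{\ve_k}$ with $t_i\in T$ and $\ve_i\in\{-1,1\}$; since $u\ne\e$ we have $\lb_s(n)=\af(\br(s))\,(\gm_{t_1}^{\ve_1}\circ\dots\circ\gm_{t_k}^{\ve_k})(n)$, and $\br(s)=(tt\m,\e)$, so $\af(\br(s))=(\0(tt\m),tt\m,\e)$ by \cref{af(e_1)=(0(e)_e_1)}. Recall that $\psi(\irr u)=u$ by \cref{psi(irr(u))=u}, hence $u=[t_1]^{\ve_1}\dots[t_k]^{\ve_k}$ in $FG(T)$, while $\nu(u)=\f(\irr u)=t_1^{\ve_1}\dots t_k^{\ve_k}$ in $T$ by \cref{nu(w)=f(irr(w))}. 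Since the multiplication in $N$ is coordinatewise, the statement reduces to identifying the three coordinates of $(\gm_{t_1}^{\ve_1}\circ\dots\circ\gm_{t_k}^{\ve_k})(a,e,w)$ and then left-multiplying by $(\0(tt\m),tt\m,\e)$.

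The heart of the argument is the claim that
\[
(\gm_{t_1}^{\ve_1}\circ\dots\circ\gm_{t_k}^{\ve_k})(a,e,w)=\bigl(\z_{\nu(u)}(w)\,\eta_{\nu(u)}(a),\ \nu(u)e\nu(u)\m,\ uwu\m\bigr),
\]
which I would prove by induction on $k$. The case $k=1$ is exactly \cref{gm_t(a_e_w)=(z_t(w)eta_t(a)_tet-inv_[t]w[t]-inv)} (if $\ve_1=1$) or \cref{gm-inv_t(a_e_w)=} (if $\ve_1=-1$). For $k\ge2$ write $\irr u=v[t_k]^{\ve_k}$ with $v=[t_1]^{\ve_1}\dots[t_{k-1}]^{\ve_{k-1}}=\irr{u_1}$ for some $u_1\in FG(T)\setminus\{\e\}$, and put $r=t_k^{\ve_k}$; apply $\gm_{t_k}^{\ve_k}$ first, obtaining $(a_1,e_1,w_1):=(\z_r(w)\eta_r(a),\,rer\m,\,[t_k]^{\ve_k}w[t_k]^{-\ve_k})\in N$ (it lies in $N$ since $\gm_{t_k}^{\ve_k}$ maps $N$ into $N$ by \cref{gm_t-endo,gm-inv_t-endo}), and then apply the induction hypothesis to $v$. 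The third coordinate becomes $u_1w_1u_1\m=uwu\m$ and the second $\nu(u_1)e_1\nu(u_1)\m=\nu(u)e\nu(u)\m$, because $u=u_1[t_k]^{\ve_k}$ and $\nu(u)=\nu(u_1)r$. For the first coordinate one gets, using that $\eta$ is a homomorphism,
\[
\z_{\nu(u_1)}\bigl([t_k]^{\ve_k}w[t_k]^{-\ve_k}\bigr)\,\eta_{\nu(u_1)}\bigl(\z_r(w)\bigr)\,\eta_{\nu(u)}(a);
\]
now \cref{gm^ve_t-circ-gm^dl_u}, applied with $t:=\nu(u_1)$, $u:=t_k$, $\ve:=\ve_k$, $w:=w$ and $e:=e$ (legitimate since $e\le\nu(w)$ because $(a,e,w)\in N$), gives $\0(\br(\nu(u)e))\z_{\nu(u_1)}([t_k]^{\ve_k}w[t_k]^{-\ve_k})\eta_{\nu(u_1)}(\z_r(w))=\0(\br(\nu(u)e))\z_{\nu(u)}(w)$, using $\nu(u_1)r=\nu(u)$. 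Multiplying both sides on the right by $\eta_{\nu(u)}(a)\in A_{\nu(u)e\nu(u)\m}=A_{\br(\nu(u)e)}$, on which $\0(\br(\nu(u)e))$ acts as the identity, one sees that the displayed first coordinate equals $\z_{\nu(u)}(w)\eta_{\nu(u)}(a)$, completing the induction.

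It then remains to left-multiply the triple above by $(\0(tt\m),tt\m,\e)$ and simplify using that $(t,u)\in S$ forces $t\le\nu(u)$, i.e. $t=tt\m\nu(u)$. The third coordinate is unchanged. For the second, $tt\m\cdot\nu(u)e\nu(u)\m=tet\m$ follows from $t=tt\m\nu(u)$ and the commutativity of the idempotents $tt\m$ and $\nu(u)e\nu(u)\m$ of $T$. For the first, $\0(tt\m)\z_{\nu(u)}(w)=\z_{tt\m\nu(u)}(w)=\z_t(w)$ by \cref{0(e)z_t-trivial}; and $\z_t(w)\eta_{\nu(u)}(a)=\z_t(w)\eta_t(a)$, because $\eta_t(a)=\eta_{tt\m}(\eta_{\nu(u)}(a))=\0(tt\m)\eta_{\nu(u)}(a)$ (as $\eta$ is a homomorphism and $\eta_f(x)=\0(f)x$ for $f\in E(T)$), while $\z_t(w)\in A_{\br(t\nu(w))}$ by \cref{xi_t(w)-belongs-to-A_r(tf(w)),z_t(w)=xi_t(irr(w)),nu(w)=f(irr(w))} with $\br(t\nu(w))\le tt\m$, whence $\z_t(w)\0(tt\m)=\z_t(w)$. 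This yields $\lb_s(n)=(\z_t(w)\eta_t(a),tet\m,uwu\m)$, as desired.

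The step I expect to be the main obstacle is the inductive computation of the first coordinate of the $\gm$-composition: reducing the multi-letter conjugation to the one-letter statement \cref{gm^ve_t-circ-gm^dl_u} requires carefully peeling off a single generator, and, since the earlier identities hold only after multiplication by the appropriate idempotent of $E(T)$, one must throughout verify that each such idempotent acts as the identity on the expression at hand — that is, keep precise track of which group component of the semilattice of abelian groups $A$ every factor belongs to.
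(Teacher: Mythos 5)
Your proposal is correct and follows essentially the same route as the paper: reduce to the claim that $(\gm_{t_1}^{\ve_1}\circ\dots\circ\gm_{t_k}^{\ve_k})(n)=(\z_{\nu(u)}(w)\eta_{\nu(u)}(a),\nu(u)e\nu(u)\m,uwu\m)$, prove it by induction on $k$ peeling off the last letter and invoking \cref{gm^ve_t-circ-gm^dl_u} (with the idempotent $\0(\br(\nu(u)e))$ absorbed by $\eta_{\nu(u)}(a)$), and then multiply by $\af(\br(s))$ using $t\le\nu(u)$ and \cref{0(e)z_t-trivial}. The paper's proof performs these same steps, only stating the initial $\af(\br(s))$-reduction first and leaving the idempotent-absorption bookkeeping implicit.
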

\begin{proof}
	Since $t\le\nu(u)$, then $t=\br(t)\nu(u)$ and $\eta_t(a)=\eta_{\br(t)}(\eta_{\nu(u)}(a))=\0(\br(t))\eta_{\nu(u)}(a)$, so that by \cref{0(e)z_t-trivial}
	\begin{align*}
		(\z_t(w)\eta_t(a),tet\m,uwu\m)=\af(\br(s))(\z_{\nu(u)}(w)\eta_{\nu(u)}(a),\nu(u)e\nu(u)\m,uwu\m).
	\end{align*} 
	Let $\irr u=[t_1]^{\ve_1}\dots[t_k]^{\ve_k}$. According to \labelcref{lb_(t_u)(a_e_w)=}, it suffices to prove that
	\begin{align*}
		(\gm_{t_1}^{\ve_1}\circ\dots\circ \gm_{t_k}^{\ve_k})(n)=(\z_{\nu(u)}(w)\eta_{\nu(u)}(a),\nu(u)e\nu(u)\m,uwu\m).
	\end{align*}
	The proof will be by induction on $k$. If $k=1$, this follows directly from \labelcref{gm-inv_t(a_e_w)=,gm_t(a_e_w)=(z_t(w)eta_t(a)_tet-inv_[t]w[t]-inv),nu(w)=f(irr(w))}.
	
	Let $k>1$. Then using \cref{gm-inv_t(a_e_w)=,gm_t(a_e_w)=(z_t(w)eta_t(a)_tet-inv_[t]w[t]-inv),gm^ve_t-circ-gm^dl_u} and the induction hypothesis
	\begin{align*}
		(\gm_{t_1}^{\ve_1}\circ\dots\circ \gm_{t_k}^{\ve_k})(n)&=(\gm_{t_1}^{\ve_1}\circ\dots\circ \gm_{t_{k-1}}^{\ve_{k-1}})(\z_{t_k^{\ve_k}}(w)\eta_{t_k^{\ve_k}}(a),t_k^{\ve_k}et_k^{-\ve_k},[t_k]^{\ve_k}w[t_k]^{-\ve_k})\\
		&=(\z_{t_1^{\ve_1}\dots t_{k-1}^{\ve_{k-1}}}([t_k]^{\ve_k}w[t_k]^{-\ve_k})\eta_{t_1^{\ve_1}\dots t_{k-1}^{\ve_{k-1}}}(\z_{t_k^{\ve_k}}(w))\eta_{t_1^{\ve_1}\dots t_{k-1}^{\ve_{k-1}}}(\eta_{t_k^{\ve_k}}(a)),\\
		&\quad t_1^{\ve_1}\dots t_k^{\ve_k}et_k^{-\ve_k}\dots t_1^{-\ve_1},[t_1]^{\ve_1}\dots[t_k]^{\ve_k}w[t_k]^{-\ve_k}\dots[t_1]^{-\ve_1})\\
		&=(\z_{\nu(u)}(w)\eta_{\nu(u)}(a),\nu(u)e\nu(u)\m,uwu\m).
	\end{align*}
\end{proof}

\begin{prop}\label{(af_lb_bt)-crossed-module}
	The triple $(\af,\lb,\bt)$ is a crossed $S$-module structure on $N$.
\end{prop}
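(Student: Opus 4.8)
The plan is to verify the four axioms \labelcref{CM1,CM2,CM3,CM4} together with the identities $\af\colon E(S)\to E(N)$ an isomorphism, $\lb\colon S\to\mend N$ a homomorphism, $\bt\colon N\to S$ an idempotent-separating homomorphism, and $\bt|_{E(N)}=\af\m$. Most of this infrastructure is already in place: \cref{properties-of-i-af-bt} gives us $\af$, $\bt$ and the relation $\bt|_{E(N)}=\af\m$; \cref{gm_t-rel-inv} tells us each $\gm_t$ lies in $\mend N$ with inverse $\gm\m_t$, hence $\lb_s\in\mend N$ for every $s\in S$ since a composite of relatively invertible endomorphisms is relatively invertible; and \cref{lb-homo} gives that $\lb$ is a homomorphism $S\to\mend N$. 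So the remaining content is exactly the four crossed-module conditions, and for these the explicit description of $\lb_s$ from \cref{lb_(t_u)-explicit} is the key tool.

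\textbf{Verification of \labelcref{CM1} and \labelcref{CM2}.} For \labelcref{CM1}, take $e\in E(S)$; then $e=(\bar e,\e)$ for some $\bar e\in E(T)$, so $u=\e$ in \cref{lb_(t_u)(a_e_w)=} and $\lb_e(n)=\af(e)n$ directly by definition; since $\af(e)\in E(N)$ this is $\af(e)n$, which is \labelcref{CM1}. For \labelcref{CM2}, again using the definition of $\af$ and $\lb$ on idempotents, $\lb_s(\af(e,\e))$: if $\irr u=[t_1]^{\ve_1}\dots[t_k]^{\ve_k}$, iterate \cref{gm_t(af(e_e))=af(r(te)_e)} to get $(\gm_{t_1}^{\ve_1}\circ\dots\circ\gm_{t_k}^{\ve_k})(\af(e,\e))=\af(\nu(u)e\nu(u)\m,\e)$, then multiply by $\af(\br(s))$ to obtain $\af(tet\m,\e)=\af(s(e,\e)s\m)$, giving \labelcref{CM2}; the case $u=\e$ is immediate.

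\textbf{Verification of \labelcref{CM4} and \labelcref{CM3}.} For \labelcref{CM4} we must show $\bt(\lb_s(n))=s\bt(n)s\m$ for $s=(t,u)$, $n=(a,e,w)$. When $u\ne\e$, \cref{lb_(t_u)-explicit} gives $\lb_s(n)=(\z_t(w)\eta_t(a),tet\m,uwu\m)$, so $\bt(\lb_s(n))=(tet\m,uwu\m)$, while $s\bt(n)s\m=(t,u)(e,w)(t\m,u\m)=(tet\m,uwu\m)$ by coordinatewise multiplication in $S$; the case $u=\e$ follows from $\bt\circ\af=\id$ on $E(S)$ and $\bt(\af(s)n)=s\bt(n)$. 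This is the easy axiom. The substantive one is \labelcref{CM3}: $\lb_{\bt(n)}(n')=nn'n\m$ for $n=(a,e,w)$, $n'=(b,f,v)\in N$. Here $\bt(n)=(e,w)$, and we must treat the cases $w=\e$ (so $\bt(n)\in E(S)$ and $\lb_{\bt(n)}(n')=\af(e,\e)n'=(\0(e)b,ef,v)=(a,e,\e)(b,f,v)(a\m,e,\e)=nn'n\m$, using $a\in A_e$ central of order-zero type and commutativity of $A$) and $w\ne\e$ separately. In the latter case \cref{lb_(t_u)-explicit} gives $\lb_{(e,w)}(n')=(\z_e(v)\eta_e(b),efe,wvw\m)$; by \cref{0(e)z_t-trivial} we have $\z_e(v)=\0(e\br(\nu(v)))$ and $\eta_e(b)=\0(e)b$ (as $\eta_e=\0(e)\cdot$), so the $A$-component is $\0(e\br(\nu(v)))\0(e)b=\0(e)b$ since $b\in A_f$ with $f\le\nu(v)$; meanwhile $nn'n\m$ has $A$-component $ab a\m=bb\m aa\m\cdot b=\0(e)b$ by commutativity, $E(T)$-component $efe=ef$ (idempotents commute), and $FG(T)$-component $wvw\m$; these match, giving \labelcref{CM3}.

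\textbf{Main obstacle.} The genuinely delicate point is \labelcref{CM3} in the case $w\ne\e$: one must be confident that $\z_e(v)\eta_e(b)$ collapses correctly, which rests on \cref{0(e)z_t-trivial} (itself built on \cref{xi_e-trivial}), and one must check the $FG(T)$-coordinate $wvw\m$ lands in the right $K$-component, i.e. $efe\le\nu(wvw\m)$ — this follows from $ef\le\nu(w)\wedge\nu(v)$ and \cref{nu(u)nu(v)<=nu(uv)} applied twice. Everything else is a bookkeeping exercise in the coordinatewise structure of $S$ and $N$, and no new $3$-cocycle computation is needed beyond what \cref{lb-homo} and \cref{gm_t-rel-inv} already supply.
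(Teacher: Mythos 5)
Your proposal is correct and takes essentially the same route as the paper's proof: it reduces everything to \labelcref{CM1,CM2,CM3,CM4} via \cref{properties-of-i-af-bt,lb-homo} and verifies them coordinatewise using \cref{lb_(t_u)-explicit,0(e)z_t-trivial,gm_t(af(e_e))=af(r(te)_e)}, splitting into the cases $u=\e$, $w=\e$ and their complements exactly as the paper does. The only cosmetic difference is that you establish \labelcref{CM2} by iterating \cref{gm_t(af(e_e))=af(r(te)_e)} instead of invoking the explicit formula \cref{lb_(t_u)(a_e_w)=triple} with $\z_t(\e)=\0(\br(t))$, which is an equivalent computation.
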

\begin{proof}
	In view of \cref{properties-of-i-af-bt,lb-homo} we only need to check \cref{CM1,CM2,CM3,CM4}.
	
	Let $(e,\e)\in E(S)$ and $n = (a,f,w)\in N$. Then $\lb_{(e,\e)}(n)=\af(e,\e)n$ directly from \cref{lb_(t_u)(a_e_w)=}, which is \cref{CM1}.
	
	Let $(t,u)\in S$ and $(e,\e)\in E(S)$. If $u=\e$, then $t\in E(T)$ and $\lb_{(t,u)}(\af(e,\e))=\af(t,\e)\af(e,\e)=\af(te,\e)=\af(tet\m,\e)$. Otherwise, by \cref{lb_(t_u)(a_e_w)=triple,z_t(w)=xi_t(irr(w))} 
	\begin{align*}
		\lb_{(t,u)}(\af(e,\e))&=(\z_t(\e)\eta_t(\0(e)),tet\m,u\e u\m)=(\0(\br(t))\0(tet\m),tet\m,\e)\\
		&=(\0(tet\m),tet\m,\e)=\af(tet\m,\e)=\af((t,u)(e,\e)(t,u)\m),
	\end{align*}
	so \cref{CM2} is proved.
	
	Take $n=(a,e,w)$ and $n'=(a',e',w')\in N$. Then $\bt(n)=(e,w)$ thanks to \cref{bt(a_e_w)=(e_w)}. If $w=\e$, then 
	\begin{align*}
		\lb_{\bt(n)}(n')=\af(e,\e)n'=(\0(e)a',ee', w')=(aa\m a',ee',w')=nn'n\m.
	\end{align*}
	Otherwise, by \cref{0(e)z_t-trivial,lb_(t_u)(a_e_w)=triple} and $e'\le\nu(w')$
	\begin{align*}
		\lb_{(e,w)}(a',e',w')&=(\z_e(w')\eta_e(a'),ee',ww'w\m)=(\0(e\br(\nu(w')))a',ee',ww'w\m)\\
		&=(\0(e)a',ee',ww'w\m)=(aa\m a',ee',ww'w\m)=nn'n\m,
	\end{align*}
	proving \cref{CM3}.
	
	Let $s=(t,u)\in S$ and $n=(a,e,w)\in N$. If $u=\e$, then $t\in E(T)$, so by \cref{lb_(t_u)(a_e_w)=}
	\begin{align*}
		\bt(\lb_s(n))=(te,w)=(t,u)(e,w)(t,u)\m=s\bt(n)s\m.
	\end{align*}
	Otherwise, we have by \cref{lb_(t_u)(a_e_w)=triple,bt(a_e_w)=(e_w),af(e_1)=(0(e)_e_1)}
	\begin{align*}
		\bt(\lb_s(n))=(tet\m,uwu\m)=(t,u)(e,w)(t,u)\m=s\bt(n)s\m,
	\end{align*}
	which gives \cref{CM4}.
\end{proof}

\begin{prop}\label{from-c-to-crossed-mod-ext}
	The sequence $A \xrightarrow{i} N \xrightarrow{\beta} S \xrightarrow{\pi} T$ is a crossed module extension of the $T$-module $A$ by $T$.
\end{prop}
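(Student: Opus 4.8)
The plan is to check the three defining conditions \labelcref{CME1,CME2,CME3} of \cref{defn-crossed-mod-ext} and then to identify the $T$-module structure induced by the extension with the given one $(\0,\eta)$.

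First, \labelcref{CME1} is exactly \cref{(af_lb_bt)-crossed-module}: $(\af,\lb,\bt)$ is a crossed $S$-module structure on $N$ with corresponding crossed module homomorphism $\bt$. For \labelcref{CME2}, injectivity of $i$ is part of \cref{properties-of-i-af-bt}, while $\pi\colon S\to T$, $(t,w)\mapsto t$, is an idempotent-separating epimorphism because $S$ is the $E$-unitary cover of $T$ through $FG(T)$ with covering epimorphism $\pi$ (in particular $\pi$ is onto, e.g. $(t,[t])\mapsto t$, and is injective on $E(S)=E(T)\times\{\e\}$). Condition \labelcref{CME3}, i.e. $i(A)=\bt\m(E(S))$ and $\bt(N)=\pi\m(E(T))$, is again part of \cref{properties-of-i-af-bt}. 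Hence the sequence is a crossed module extension in the sense of \cref{defn-crossed-mod-ext}.

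It then remains to verify that the induced structure $(\0',\eta')$, defined by \labelcref{0=i^(-1).af.pi^(-1),eta_t(a)=i^(-1)(lb_s(i(a)))}, coincides with $(\0,\eta)$. For $\0'$ one uses $(\pi|_{E(S)})\m(e)=(e,\e)$ together with $\af(e,\e)=(\0(e),e,\e)=i(\0(e))$ (from \labelcref{af(e_1)=(0(e)_e_1),i(a)=(a_0(aa-inv)_1)}, recalling $\0(e)\in E(A)$), which gives $\0'(e)=\0(e)$. For $\eta'$ I would take the transversal value $s=(t,[t])\in S$ of $\pi$ over $t\in T$; this lies in $S$ since $\nu([t])=t$, and $\eta'_t$ does not depend on the chosen preimage of $t$. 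Applying \cref{lb_(t_u)-explicit} to $s=(t,[t])$ and to $i(a)=(a,\0\m(aa\m),\e)$ gives
\begin{align*}
	\lb_{(t,[t])}(i(a))=\bigl(\z_t(\e)\,\eta_t(a),\ t\0\m(aa\m)t\m,\ \e\bigr).
\end{align*}
Since $\z_t(\e)=\0(\br(t))$ by \labelcref{z_t(w)=xi_t(irr(w))} and $\eta_t(aa\m)=\0(t\0\m(aa\m)t\m)\le\0(\br(t))$, the first coordinate collapses to $\eta_t(a)$, while the second equals $t\0\m(aa\m)t\m=\0\m(\eta_t(a)\eta_t(a)\m)$; comparing with $i(\eta_t(a))$ we obtain $\lb_{(t,[t])}(i(a))=i(\eta_t(a))$, whence $\eta'_t=i\m\circ\lb_{(t,[t])}\circ i=\eta_t$.

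Since all the substantial work is contained in the earlier propositions, I do not expect a genuine obstacle. The one point that needs care is the choice of a transversal value over $t$: the naive candidate $(t,\e)$ belongs to $S$ only when $t$ is idempotent, so one must use $(t,[t])$; the rest is routine bookkeeping with the idempotent components of $A$, needed to pass from $\0(\br(t))\eta_t(a)$ to $\eta_t(a)$ and to recognize $\0\m(\eta_t(a)\eta_t(a)\m)=t\0\m(aa\m)t\m$.
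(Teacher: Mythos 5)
Your proposal is correct and takes essentially the same route as the paper: the structural conditions are delegated to \cref{(af_lb_bt)-crossed-module,properties-of-i-af-bt}, and the real content is the identification of the induced pair with $(\0,\eta)$ via \labelcref{af(e_1)=(0(e)_e_1),i(a)=(a_0(aa-inv)_1)} and the explicit form of $\lb$. The only (harmless) difference is that you compute $\eta'_t$ on the particular preimage $(t,[t])$, invoking independence of the choice, whereas the paper checks an arbitrary $s=(t,u)$ by splitting into the cases $u=\e$ and $u\ne\e$.
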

\begin{proof}
	Thanks to \cref{(af_lb_bt)-crossed-module,properties-of-i-af-bt} it only remains to show that the induced $T$-module structure on $A$ coincides with $(\0,\eta)$. Let $s=(t,u)\in S$ and $a\in A$. If $u=\e$, then $t\in E(T)$. Using \cref{i(a)=(a_0(aa-inv)_1),lb_(t_u)(a_e_w)=,z_t(w)=xi_t(irr(w))} we have
	\begin{align*}
	(i\m\circ\lb_s\circ i)(a)&=(i\m\circ\lb_s)(a,\0\m(\br(a)),\e)=i\m(\af(t,\e)(a,\0\m(\br(a)),\e))\\
	&=i\m((\0(t),t,\e)(a,\0\m(\br(a)),\e))=i\m(\0(t)a,t\0\m(\br(a)),\e)\\
	&=\0(t)a=\eta_t(a)=\eta_{\pi(s)}(a).
	\end{align*}
	If $u\ne\e$, then by \cref{i(a)=(a_0(aa-inv)_1),lb_(t_u)(a_e_w)=triple,z_t(w)=xi_t(irr(w))} and $t\le\nu(u)$
	\begin{align*}
	(i\m\circ\lb_s\circ i)(a)&=(i\m\circ\lb_s)(a,\0\m(\br(a)),\e)\\
	&=i\m(\z_t(\e)\eta_t(a),t\0\m(\br(a))t\m,u\e u\m)\\
	&=i\m(\0(\br(t))\eta_t(a),t\0\m(\br(a))t\m,\e)\\
	&=i\m(\eta_t(a),t\0\m(\br(a))t\m,\e)=\eta_t(a)=\eta_{\pi(s)}(a).
	\end{align*}
	
	Now take $e\in E(T)$. Then by \cref{af(e_1)=(0(e)_e_1)}
	\begin{align*}
	(i\m\circ\af\circ(\pi|_{E(S)})^{-1})(e)=(i\m\circ\af)(e,\e)=i\m(\0(e),e,\e)=\0(e).
	\end{align*}
	Thus, the $T$-module structure defined in \cref{eta_t(a)=i^(-1)(lb_s(i(a))),0=i^(-1).af.pi^(-1)} coincides with $(\0,\eta)$.
\end{proof}

\subsection{From cohomologous \texorpdfstring{$c,c'\in Z^3_\le(T^1,A^1)$}{c, c' in Z³<(T¹,A¹)} to equivalent crossed module extensions}

Given two cohomologous $c,c'\in Z^3_\le(T^1,A^1)$, we are going to prove that $c$ and $c'$ induce equivalent crossed module extensions of $A$ by $T$. The crossed module extension $A \xrightarrow{i'} N' \xrightarrow{\beta'} S' \xrightarrow{\pi'} T$ induced by $c'$ coincides with $A \xrightarrow{i} N \xrightarrow{\beta} S \xrightarrow{\pi} T$ induced by $c$ as a sequence of inverse semigroups. It also has $\af'=\af$. The only difference is in the $\lb':S'\to\mend N'$ since its definition involves $c'$. We thus define $\f_2=\id:S\to S'$, and the definition of $\f_1:N\to N'$ will be given below. To this end, we introduce the auxiliary map analogous to $\xi_t$ we have associated with $c\in Z^3_\le(T^1,A^1)$ in \cref{xi_t([x]-inv.u),xi_t([x][y]v),xi_t([x][y]-inv.v),xi_t([x])}. Since we will eventually apply this construction not only here, but also in \cref{ext->c->ext-sec}, we will consider a more general setting.

Let $A$ be a semilattice of (not necessarily abelian) groups and $\0:E(T)\to E(A)$ an isomorphism. Let $d:T^2\to A$ be such that $d(x,y)\in A_{\0(\br(xy))}$ for all $x,y\in T$. We assume that $d$ satisfies \cref{c(x_1...x_(i-1)_e_x_(i+1)...x_n)-triv,c(x_1...ex_i...x_n),c(x_1...x_ie...x_n)}. We now define $\tau=\tau_d:F(T\sqcup T\m)\to A$ as follows.

Given $w\in F(T\sqcup T\m)$, if $w=[x]\m u$ for some $x\in T$ and $u\in F(T\sqcup T\m)^1$, then
\begin{align}\label{tau([x]-inv.u)}
\tau(w):=d(x\m,x)\m\tau([x\m]u).
\end{align}
The rest of the definition will be given by induction on $l(w)$.

\textit{Base of induction.} If $w=[x]$ for some $x\in T$, then
\begin{align}\label{tau([x])}
\tau(w):=\0(\br(x)).
\end{align}

\textit{Inductive step.} $l(w)>1$ and $w=[x]u$ for some $x\in T$ and $u\in F(T\sqcup T\m)$. 

\textbf{Case 1.} If $w=[x][y]v$ for some $x,y\in T$ and $v\in F(T\sqcup T\m)^1$, then 
\begin{align}\label{tau([x][y]v)}
\tau(w):=d(x,y)\tau([xy]v).		
\end{align}
Since $l([xy]v)<l(w)$, we may use the inductive step.  

\textbf{Case 2.} If $w=[x][y]\m v$ for some $x,y\in T$ and $v\in F(T\sqcup T\m)^1$, then 
\begin{align}\label{tau([x][y]-inv.v)}
\tau(w):=d(xy\m,y)\m\tau([xy\m]v).
\end{align}
Since $l([xy\m]v)<l(w)$, we may use the inductive step. 

We list the properties of $\tau$ whose proofs are totally analogous to the proofs of \cref{xi_t(w)-belongs-to-A_r(tf(w)),xi_t-mult-reduction-to-u=[x],xi_t-mult-case-u=[x],xi_t([x]w)xi_t(w-inv)-reduction,xi_t(w)xi_t(w-inv)-reduction,xi_t([x]w)-equals-xi_t([y-inv]w)}, where we do not use the $3$-cocycle identity for $c$ and the commutativity of $A$ (see \cref{rem-didnt-use-3coc-commut}).

\begin{lem}\label{properties-of-tau}
	The following properties hold.
	\begin{enumerate}
		\item For any $w\in F(T\sqcup T\m)$ we have $\tau(w)\in A_{\br(\f(w))}$.
		\item For any $u\in F(T\sqcup T\m)$ and $v\in F(T\sqcup T\m)^1$ we have  $\tau(uv)=\tau(u)\tau([\f(u)]v)$.
		\item Let $x\in T$, $v\in F(T\sqcup T\m)$ and $e\in E(T)$ such that $e\le x$. Then $\0(e)\tau([x] v)=\0(e)\tau(v)$.
		\item Let $x\in T$, $w\in F(T\sqcup T\m)$ and $e\in E(T)$ such that $e\le x\f(w)$. Then $\0(e)\tau([x]w)\tau(w\m)=\0(e)$.
		\item Let $w\in F(T\sqcup T\m)$ and $e\in E(T)$ such that $e\le \f(w)$. Then $\0(e)\tau(w)\tau(w\m)=\0(e)$.
		\item Let $x,y\in T$, $w\in F(T\sqcup T\m)^1$ and $e\in E(T)$ such that $e\le xy$. Then $\0(e)\tau([x]w)=\0(e)\tau([y\m]w)$.
	\end{enumerate}
\end{lem}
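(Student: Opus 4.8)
The statement collects six facts about the map $\tau=\tau_d$ that are the verbatim analogues of \cref{xi_t(w)-belongs-to-A_r(tf(w)),xi_t-mult-reduction-to-u=[x],xi_t-mult-case-u=[x],xi_t([x]w)xi_t(w-inv)-reduction,xi_t(w)xi_t(w-inv)-reduction,xi_t([x]w)-equals-xi_t([y-inv]w)}. By \cref{rem-didnt-use-3coc-commut} those six proofs never invoke the $3$-cocycle identity for $c$ nor the commutativity of $A$; they use only that $c\in C^3(T^1,A^1)$ is strongly normalized, i.e. satisfies \cref{c(x_1...x_(i-1)_e_x_(i+1)...x_n)-triv}, together with its consequences \cref{c(x_1...ex_i...x_n),c(x_1...x_ie...x_n)} for order-preserving cochains, plus the recursive clauses \cref{xi_t([x]-inv.u),xi_t([x])}. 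But the hypotheses of \cref{properties-of-tau} are precisely that $d(x,y)\in A_{\0(\br(xy))}$ and that $d$ satisfies \cref{c(x_1...x_(i-1)_e_x_(i+1)...x_n)-triv,c(x_1...ex_i...x_n),c(x_1...x_ie...x_n)}, and the clauses \cref{tau([x]-inv.u),tau([x]),tau([x][y]v),tau([x][y]-inv.v)} defining $\tau$ are \cref{xi_t([x]-inv.u),xi_t([x]),xi_t([x][y]v),xi_t([x][y]-inv.v)} with $c(t,-,-)$ replaced by $d(-,-)$ throughout. Hence the plan is simply to transport each of the six earlier proofs verbatim, making the substitutions $t\mapsto$ (nothing), $\br(t\f(w))\mapsto\br(\f(w))$, $\xi_t\mapsto\tau$, and $c(t,x,y)\mapsto d(x,y)$.

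Concretely, I would proceed item by item. For (1), repeat the proof of \cref{xi_t(w)-belongs-to-A_r(tf(w))}: reduce the case $w=[x]\m w'$ to $w=[x\m]w'$ using $d(x\m,x)\in A_{\0(\br(x\m))}$ and $\br(\f(w))\le\br(x\m)$, then induct on $l(w)$, the base being \cref{tau([x])} and the inductive step splitting on whether the second letter is $[y]$ or $[y]\m$ and using $d(x,y)\in A_{\0(\br(xy))}$, resp.\ $d(xy\m,y)\in A_{\0(\br(xy\m))}$. For (2), copy the proof of \cref{xi_t-mult-reduction-to-u=[x]}, reducing to non-empty $v$ and $u$ of the form $[x]u'$, inducting on $l(u)$. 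For (3), copy \cref{xi_t-mult-case-u=[x]}: first treat $v=[y]v'$, invoking \cref{c(x_1...x_(i-1)_e_x_(i+1)...x_n)-triv,c(x_1...ex_i...x_n)} for $d$ to rewrite $\0(e)\tau([x]v)=\0(e)d(x,y)\tau([xy]v')=d(e,y)\tau([xy]v')$, then prove the reduced equality $\0(\br(e))\tau([xy]v')=\0(\br(e))\tau([y]v')$ by induction on $l(v')$ with the two letter-cases, and finally handle $v=[y]\m v'$ similarly; the idempotent bookkeeping (the long chain of $\0$-computations in the original) goes through unchanged since it only uses that $A$ is a semilattice of groups with $\0$ an isomorphism. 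For (4) and (5), copy \cref{xi_t([x]w)xi_t(w-inv)-reduction,xi_t(w)xi_t(w-inv)-reduction}: induct on $l(w)$, using (2) and (3) and \cref{c(x_1...ex_i...x_n),c(x_1...x_ie...x_n),c(x_1...x_(i-1)_e_x_(i+1)...x_n)-triv} for $d$. For (6), copy \cref{xi_t([x]w)-equals-xi_t([y-inv]w)}, proving the more general $\0(e)\tau([xz]w)=\0(e)\tau([y\m z]w)$ for $z\in T^1$ by induction on $l(w)$, again splitting on the first letter of $w$ and using \cref{c(x_1...ex_i...x_n)} for $d$ together with $ey\m z=exz$ when $e\le xy$.

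The only real point to verify — and the place where I would be careful — is that every appeal to \cref{c(x_1...ex_i...x_n),c(x_1...x_ie...x_n)} in the six source proofs is legitimate for $d$. In those proofs these identities are used for $c$ with its first argument held fixed at $t$ (e.g.\ $c(t,ex,y)=c(t,e,y)$, $c(t,xe,y)=c(t,x,ey)$), so for $\tau$ they become identities among the values $d(ex,y)=d(e,y)$, $d(xe,y)=d(x,ey)$, $d(ex\m,x)=d(e,x)$, etc., which are exactly instances of \cref{c(x_1...ex_i...x_n),c(x_1...x_ie...x_n),c(x_1...x_ie...x_n)-and-} for $d$ (valid because $d$ is assumed to satisfy \cref{c(x_1...x_(i-1)_e_x_(i+1)...x_n)-triv} and is order-preserving — note that the hypothesis that $d$ satisfies \cref{c(x_1...ex_i...x_n),c(x_1...x_ie...x_n)} already encodes order-preservation via \cref{c-ord-pres-iff}). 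Since no step uses the cocycle identity, the fact that $d$ need not be a cocycle is irrelevant, and since no step uses commutativity of $A$, the relaxation to non-abelian $A$ is harmless: each displayed manipulation in the originals that looked like it reordered factors was in fact a manipulation of central idempotents $\0(\br(\cdot))\in E(A)\subseteq C(A)$, which remain central. Thus each of the six proofs transcribes mutatis mutandis, and I would simply write ``the proof is identical to that of \cref{xi_t(w)-belongs-to-A_r(tf(w))}'' (resp.\ the others), perhaps spelling out the substitution dictionary once at the start.
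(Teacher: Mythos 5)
Your proposal is correct and is exactly the paper's own argument: the paper proves \cref{properties-of-tau} by declaring the proofs ``totally analogous'' to those of \cref{xi_t(w)-belongs-to-A_r(tf(w)),xi_t-mult-reduction-to-u=[x],xi_t-mult-case-u=[x],xi_t([x]w)xi_t(w-inv)-reduction,xi_t(w)xi_t(w-inv)-reduction,xi_t([x]w)-equals-xi_t([y-inv]w)}, citing \cref{rem-didnt-use-3coc-commut} for the fact that those proofs use neither the $3$-cocycle identity nor commutativity of $A$. Your substitution dictionary and the check that every appeal to \cref{c(x_1...ex_i...x_n),c(x_1...x_ie...x_n),c(x_1...x_(i-1)_e_x_(i+1)...x_n)-triv} remains valid for $d$ just make explicit what the paper leaves implicit.
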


We now consider a concrete $d$. Let $d\in C^2_\le(T^1,A^1)$ be such that
\begin{align}\label{c=(dl^2d)c'}
c=(\dl^2d)c'.
\end{align}
We assume that $d$ is strongly normalized (this is allowed by \cref{delta^2d-normalized}). Let $\tau=\tau_d$ be as constructed above. Given $(a,e,w)\in N$, we define $\f_1(a,e,w)\in N'$ as follows
\begin{align}\label{f_1(a_e_w)=(tau(irr(w))_e_w)}
	\f_1(a,e,w)=
	\begin{cases}
	(\tau(\irr w)a,e,w), & w\ne \e,\\
	(a,e,w), & w=\e.
	\end{cases}
\end{align}
Since $\tau(\irr w)\in A_{\br(\nu(w))}$ for $w\ne\e$ and $a\in A_e$, where $e\le\nu(w)$, we have $\tau(\irr w)a\in A_e$, so that $\f_1(a,e,w)$ indeed belongs to $N'$.

\begin{lem}\label{f_1-homo}
	The map $\f_1$ is a homomorphism of semigroups $N\to N'$.
\end{lem}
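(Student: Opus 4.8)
The plan is to verify that $\f_1$ respects the coordinatewise multiplication on $N$ and $N'$. Let $(a,e,u),(b,f,v)\in N$, so that their product is $(ab,ef,uv)$. We must show $\f_1(ab,ef,uv)=\f_1(a,e,u)\f_1(b,f,v)$. Since the second and third coordinates are multiplied coordinatewise in both $N$ and $N'$, and $\f_1$ does not alter them, the only thing to check is the first coordinate, i.e. after discarding the cases where $u$ or $v$ is $\e$ (which are immediate from \cref{f_1(a_e_w)=(tau(irr(w))_e_w)}), that
\begin{align*}
\0(\br(ef))\tau(\irr{uv})ab=\0(\br(ef))\tau(\irr u)a\tau(\irr v)b.
\end{align*}
Because $ab\in A_{\br(ef)}=A_{ef}$ and $A$ is commutative (recall $A$ is a $T$-module), this reduces to the identity $\0(\br(ef))\tau(\irr{uv})=\0(\br(ef))\tau(\irr u)\tau(\irr v)$, which mirrors exactly the statement \cref{z_t(uv)=z_t(u)z_t(v)} established in the proof of \cref{gm_t-endo}, but now for $\tau$ in place of $\z_t$.

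First I would reduce, as in that proof, to the key idempotents: $e\le\nu(u)=\f(\irr u)$ and $f\le\nu(v)=\f(\irr v)$. Then I would split into the same cases according to how much cancellation occurs between $\irr u$ and $\irr v$. In \textbf{Case 1}, when $\irr u\,\irr v$ is irreducible, $\irr{uv}=\irr u\,\irr v$, and the multiplicativity property of $\tau$ (\cref{properties-of-tau}(2)) together with the reduction lemma (\cref{properties-of-tau}(3)) gives $\0(\br(ef))\tau(\irr u\,\irr v)=\0(\br(ef))\tau(\irr u)\tau([\f(\irr u)]\irr v)=\0(\br(ef))\tau(\irr u)\tau(\irr v)$. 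In the cancellation cases, one writes $\irr u=u'w$, $\irr v=w\m v'$ with $w$ the maximal cancelling suffix/prefix, uses \cref{properties-of-tau}(2), (4), (5) to kill the $w$-middle, and then invokes \cref{properties-of-tau}(6) to replace $\tau([\f(u')]v')$ by $\tau([\f(w)\m]v')$, exactly as in Case 2 of the proof of \cref{gm_t-endo}; the degenerate subcases where $u'$ or $v'$ is empty are handled as in Cases 3 and 4 there. In each case the needed compatibility of the various $\0(\br(\cdot))$ factors follows from the semilattice identities for idempotents of $T$, just as in the earlier proof.

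The main obstacle is essentially bookkeeping: one must be careful that the hypotheses $e\le\f(\irr u)$ and $f\le\f(\irr v)$ are enough to justify each application of parts (3)–(6) of \cref{properties-of-tau}, since those lemmas are phrased for a single idempotent bounded above by the relevant element, whereas here the product $\0(\br(ef))$ dominates the computation. In practice one fixes $\0(\br(ef))$ throughout and absorbs it into the idempotent multiplying $\tau$, which is legitimate because $\br(ef)\le e\le\f(\irr u)$ and $\br(ef)\le f\le\f(\irr v)$; with this observation the argument goes through verbatim. Since the proof is, word for word, the one already written for \cref{gm_t-endo} with $\z_t$ replaced by $\tau$ and $\eta_t$ dropped (indeed \cref{rem-didnt-use-3coc-commut} guarantees that no $3$-cocycle identity or commutativity was used in establishing those properties of $\xi_t$, hence of $\tau$), it suffices to remark that \cref{properties-of-tau} supplies precisely the ingredients used there, and the computation is identical.
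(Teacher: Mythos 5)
Your proposal is correct and follows essentially the same route as the paper: reduce to the first coordinate using commutativity of $A$, arrive at $\0(ef)\tau(\irr{uv})=\0(ef)\tau(\irr u)\tau(\irr v)$ for $\e\notin\{u,v\}$, and verify it by the same case analysis on cancellation between $\irr u$ and $\irr v$ using \cref{properties-of-tau}, exactly as in the proof of \cref{z_t(uv)=z_t(u)z_t(v)} in \cref{gm_t-endo}. This is precisely what the paper's (terse) proof does.
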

\begin{proof}
	Let $m=(a,e,u)$ and $n=(b,f,v)$ be elements of $N$. The only nontrivial part of the proof of $\f_1(mn)=\f_1(m)\f_1(n)$ is the equality 
	\begin{align*}
		\0(ef)\tau(\irr{uv})=\0(ef)\tau(\irr u)\tau(\irr v),
	\end{align*}
	where $\e\not\in\{u,v\}$. Its proof mimics the proof of \cref{z_t(uv)=z_t(u)z_t(v)} and is based on \cref{properties-of-tau}.
	
	
\end{proof}

\begin{lem}\label{tau.xi_t=xi'_t.eta_t-circ-tau}
	Let $t\in T$, $\ve\in\{-1,1\}$, $w\in F(T\sqcup T\m)$ and $e\in E(T)$ such that $e\le\f(w)$. Then
	\begin{align}\label{tau(w)xi_t(w)=xi'_t(w)eta_t(tau(w))}
		\0(\br(t^\ve e))\tau([t]^\ve w[t]^{-\ve})\xi_{t^\ve}(w)=\0(\br(t^\ve e))\xi'_{t^\ve}(w)\eta_{t^\ve}(\tau(w)).
	\end{align}
\end{lem}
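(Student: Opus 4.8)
The plan is to follow the scheme of the proof of \cref{gm_t_1.gm_t_2=gm_t_1t_2}: first dispose of the case $w=[x]\m w'$, then induct on $l(w)$, with base case $w=[x]$ and inductive cases $w=[x][y]w'$ and $w=[x][y]\m w'$. The essential new input is the pointwise form of the coboundary relation \labelcref{c=(dl^2d)c'}, namely
\[
c(p,q,r)=\eta_p(d(q,r))\,d(pq,r)\m\,d(p,qr)\,d(p,q)\m\,c'(p,q,r)\qquad(p,q,r\in T),
\]
which I combine with the commutativity of $A$, the strong normalization of $d$ (legitimate by \cref{delta^2d-normalized}), the identity $\eta_{t^\ve}(\0(\br(x)))=\0(\br(t^\ve x))$ (the $T$-module axiom applied to $\br(x)=xx\m$), and the component information of \cref{properties-of-tau}(1) and \cref{xi_t(w)-belongs-to-A_r(tf(w))}. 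As in \cref{rem-didnt-use-3coc-commut}, and in contrast to the proof of \cref{gm_t_1.gm_t_2=gm_t_1t_2}, I do not expect to need the $3$-cocycle identities for $c$ or $c'$ here: the coboundary relation alone should suffice, because $\tau$, $\xi_{t^\ve}$ and $\xi'_{t^\ve}$ all carry the same ``conjugation/index'' structure, so that only the discrepancy between $d$, $c$ and $c'$ has to be controlled.

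For the reduction, suppose $w=[x]\m w'$ with $w'\in F(T\sqcup T\m)^1$. Expanding $\xi_{t^\ve}(w)$, $\xi'_{t^\ve}(w)$ and $\tau(w)$ by \labelcref{xi_t([x]-inv.u),tau([x]-inv.u)} (and the analogue for $\xi'$) and comparing the two ways of peeling $\tau([t]^\ve[x]\m w'[t]^{-\ve})$ and $\tau([t]^\ve[x\m]w'[t]^{-\ve})$ through \labelcref{tau([x]-inv.u),tau([x][y]v),tau([x][y]-inv.v)}, one finds that $\tau([t]^\ve[x]\m w'[t]^{-\ve})=d(t^\ve x\m,x)\m d(t^\ve,x\m)\m\tau([t]^\ve[x\m]w'[t]^{-\ve})$, so that \labelcref{tau(w)xi_t(w)=xi'_t(w)eta_t(tau(w))} for $w$ becomes equivalent to the same identity for $[x\m]w'$ (note $\f([x]\m w')=\f([x\m]w')$), with discrepancy
\[
d(t^\ve x\m,x)\m\,d(t^\ve,x\m)\m\,c(t^\ve,x\m,x)\m\,c'(t^\ve,x\m,x)\,\eta_{t^\ve}(d(x\m,x)).
\]
By the pointwise coboundary relation applied to $(t^\ve,x\m,x)$ together with the strong normalization of $d$ (which makes $d(t^\ve,x\m x)$ a partial identity of $A$), this discrepancy collapses to an idempotent, absorbed by the prefix $\0(\br(t^\ve e))$. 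Hence it suffices to treat $w$ of the form $[x]w'$.

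Now I induct on $l(w)$. The base case $w=[x]$ (so $e\le x$) is a direct verification: expanding $\tau([t]^\ve[x][t]^{-\ve})$ through \labelcref{tau([x][y]v),tau([x][y]-inv.v),tau([x])} and using the defining properties \labelcref{c(x_1...ex_i...x_n),c(x_1...x_ie...x_n)} of $d$ (which give $\0(\br(t^\ve e))d(t^\ve,x)=d(t^\ve,e)=\0(\br(t^\ve e))$ since $xe=e$) together with the strong normalization of $d$, one finds that $\0(\br(t^\ve e))\tau([t]^\ve[x][t]^{-\ve})=\0(\br(t^\ve e))$; since moreover $\xi_{t^\ve}([x])=\xi'_{t^\ve}([x])=\0(\br(t^\ve x))$ and $\eta_{t^\ve}(\tau([x]))=\0(\br(t^\ve x))$ are partial identities $\ge\0(\br(t^\ve e))$, both sides of \labelcref{tau(w)xi_t(w)=xi'_t(w)eta_t(tau(w))} equal $\0(\br(t^\ve e))$. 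For the inductive step, in Case 1 ($w=[x][y]w'$) peel $[x][y]$ off $w$ by \labelcref{xi_t([x][y]v),tau([x][y]v)} and the analogue for $\xi'$, peel $[t]^\ve[x][y]$ off $[t]^\ve w[t]^{-\ve}$ by repeated use of \labelcref{tau([x][y]v),tau([x][y]-inv.v),tau([x]-inv.u)}, and apply the induction hypothesis to $[xy]w'$, which has $\f([xy]w')=\f(w)\ge e$ and smaller length; the residual factor turns out to be exactly
\[
d(t^\ve,x)\,d(t^\ve x,y)\,d(t^\ve,xy)\m\,c(t^\ve,x,y)\,c'(t^\ve,x,y)\m\,\eta_{t^\ve}(d(x,y))\m,
\]
which by the pointwise coboundary relation applied to $(t^\ve,x,y)$ and commutativity of $A$ reduces to the partial identity $\0(\br(t^\ve xy))$, absorbed by $\0(\br(t^\ve e))$ since $e\le\f(w)$ forces $\br(t^\ve e)\le\br(t^\ve xy)$. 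Case 2 ($w=[x][y]\m w'$) is identical with $xy\m$, $y$ in place of $x$, $y$, the induction hypothesis applied to $[xy\m]w'$, and the residual factor collapsing via the coboundary relation applied to $(t^\ve,xy\m,y)$.

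The work here is bookkeeping rather than conceptual, and that is exactly the main obstacle: correctly expanding the conjugated words $[t]^\ve w[t]^{-\ve}$ through the case-based definitions of $\tau$, $\xi_{t^\ve}$ and $\xi'_{t^\ve}$, matching up the $c$-, $c'$- and $d$-factors so the coboundary relation applies, and verifying at each stage that every stray factor lands in an $A$-component below $\0(\br(t^\ve e))$. The two signs $\ve=\pm1$ may be run in parallel (replacing $t$ by $t^\ve$ and $[t]$ by $[t]^\ve$ throughout; when $\ve=-1$ the expansion of $[t]\m w[t]$ produces an extra matched pair of factors $d(t\m,t)\m\cdots d(t\m,t)$, which washes out by commutativity of $A$ because every other factor lies in an $A$-component $\le\0(\bd(t))$), or, following \cref{gm_t_1.gm_t_2=gm_t_1t_2}, one does $\ve=-1$ in detail and declares $\ve=1$ similar.
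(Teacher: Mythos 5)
Your proposal is correct and matches the paper's own proof in all essentials: the same reduction of $w=[x]\m w'$ to $[x\m]w'$ via the pointwise coboundary relation at $(t^\ve,x\m,x)$, the same induction on $l(w)$ with base case $w=[x]$ handled by strong normalization and order-preservation of $d$, and the same collapse of the residual factors through the coboundary relation at $(t^\ve,x,y)$ and $(t^\ve,xy\m,y)$, with no use of the $3$-cocycle identity. The only differences are presentational (running $\ve=\pm1$ in parallel and phrasing the comparison as a discrepancy/residual factor), so nothing further is needed.
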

\begin{proof}
	Assume first that $w=[x]\m w'$ for some $w'\in F(T\sqcup T\m)^1$. Then by \labelcref{xi_t([x]-inv.u),tau([x]-inv.u)} 
	\begin{align*}
		\xi'_{t^\ve}(w)\eta_{t^\ve}(\tau(w))&=c'(t^\ve,x\m,x)\m\eta_{t^\ve}(d(x\m,x)\m)\xi'_{t^\ve}([x\m]w')\eta_{t^\varepsilon}(\tau([x\m]w')).
	\end{align*}
	
	Observe from \cref{c=(dl^2d)c',c(x_1...x_(i-1)e_e_ex_(i+1)...x_n)-triv} that 
	\begin{align}
	c(t^\ve,x\m,x)&=c'(t^\ve,x\m,x)(\dl^2d)(t^\ve,x\m,x)\notag\\
	&=c'(t^\ve,x\m,x)\eta_{t^\ve}(d(x\m,x))d(t^\ve x\m,x)\m d(t^\ve,x\m x) d(t^\ve,x\m)\m\notag\\
	&=c'(t^\ve,x\m,x)\eta_{t^\ve}(d(x\m,x))d(t^\ve x\m,x)\m d(t^\ve,x\m)\m.\label{c(t^ve_x-inv_x)=c'(t^ve_x-inv_x)eta_t-inv(...)}
	\end{align}
	
	Consider the case $\ve=1$. Then by \cref{tau([x][y]-inv.v),tau([x][y]v),xi_t([x]-inv.u)}
	\begin{align*}
		\tau([t]^\ve w[t]^{-\ve})\xi_{t^\ve}(w)&=d(t^\ve x\m,x)\m d(t^\ve,x\m)\m c(t^\ve,x\m,x)\m \tau([t]^\ve[x\m]w'[t]^{-\ve})\xi_{t^\ve}([x\m]w'),
	\end{align*}
	so in view of \cref{c(t^ve_x-inv_x)=c'(t^ve_x-inv_x)eta_t-inv(...)} it suffices to prove
	\begin{align}\label{tau(w)xi_t([x-inv]w')=xi'_t(w)eta_t(tau([x-inv]w'))}
		\tau([t]^\ve[x\m]w'[t]^{-\ve})\xi_{t^\ve}([x\m]w')=\xi'_{t^\ve}([x\m]w')\eta_{t^\ve}(\tau([x\m]w')).
	\end{align}
	
	
	Now let $\ve=-1$. Then by \cref{tau([x][y]-inv.v),tau([x][y]v),xi_t([x]-inv.u),tau([x]-inv.u)}
	\begin{align*}
	\tau([t]^\ve w[t]^{-\ve})\xi_{t^\ve}(w)&=d(t\m,t)\m d(t\m x\m,x)\m d(t\m,x\m)\m c(t\m,x\m,x)\m\\ &\quad\cdot\tau([t\m][x\m]w'[t])\xi_{t\m}([x\m]w')\\
	&=d(t^\ve x\m,x)\m d(t^\ve,x\m)\m c(t^\ve,x\m,x)\m\\ &\quad\cdot\tau([t]^\ve[x\m]w'[t]^{-\ve})\xi_{t^\ve}([x\m]w'),
	\end{align*}
	and again \cref{tau(w)xi_t(w)=xi'_t(w)eta_t(tau(w))} reduces to \cref{tau(w)xi_t([x-inv]w')=xi'_t(w)eta_t(tau([x-inv]w'))} thanks to \cref{c(t^ve_x-inv_x)=c'(t^ve_x-inv_x)eta_t-inv(...)}.
	Since, moreover, $\f(w)=\f([x\m]w')$, it is enough to consider $w=[x]w'$.
	
	We proceed by induction on $l(w)$. Let $w=[x]$. Then $e\le x$ and by \cref{xi_t([x]),tau([x])}
	\begin{align*}
		\0(\br(t^\ve e))\xi'_{t^\ve}(w)\eta_{t^\ve}(\tau(w))&=\0(\br(t^\ve e))\0(\br(t^\ve x))\eta_{t^\ve}(\0(\br(x)))=\0(\br(t^\ve e).
	\end{align*}
	For $\ve=1$ we have by \cref{tau([x][y]-inv.v),tau([x][y]v),tau([x]),xi_t([x]),c(x_1...x_(i-1)_e_x_(i+1)...x_n)-triv,c(x_1...ex_i...x_n)}
	\begin{align*}
		\0(\br(t^\ve e))\tau([t]^\ve w[t]^{-\ve})\xi_{t^\ve}(w)&=\0(\br(te))d(t,x)d(txt\m,t)\m \0(\br(txt^{-1}))\\
		&=d(t,ex)d(text\m,t)\m = d(t,e)d(tet\m,t)\m = \0(\br(t^\ve e)).
	\end{align*}
	
	For $\ve=-1$ the calculation is the following
	\begin{align*}
	\0(\br(t^\ve e))\tau([t]^\ve w[t]^{-\ve})\xi_{t^\ve}(w)&=\0(\br(t\m e))d(t\m,t)\m d(t\m,x)d(t\m x,t) \0(\br(t\m xt))\\
	&=d(t\m,t)\m d(t\m,ex)d(t\m ex,t)=d(t\m,t)\m d(t\m,e)d(t\m e,t)\\
	&=\0(\br(t\m e))d(t\m,t)\m d(t\m,t) =\0(\br(t^\ve e)).
	\end{align*}
	
	Let $l(w)>1$. Then we have two cases.
	
	\textbf{Case 1.} $w=[x][y]w'$. Then by \cref{xi_t([x][y]v),tau([x][y]v)}
	\begin{align*}
		\xi'_{t^\ve}(w)\eta_{t^\ve}(\tau(w))=c'(t^\ve,x,y)\eta_{t^\ve}(d(x,y))\xi'_{t^\ve}([xy]w')\eta_{t^\ve}(\tau([xy]w')).
	\end{align*}
	Let $\ve=1$. Then by \cref{tau([x][y]v),xi_t([x][y]v),tau([x][y]-inv.v)}
	\begin{align*}
	\tau([t]^\ve w[t]^{-\ve})\xi_{t^\ve}(w)&=d(t^\ve,x) d(t^\ve x,y) d(t^\ve,xy)\m c(t^\ve,x,y) \tau([t]^\ve[xy]w'[t]^{-\ve})\xi_{t^\ve}([xy]w').
	\end{align*}
	However,
	\begin{align*}
		c(t^\ve,x,y)&=c'(t^\ve,x,y)(\dl^2d)(t^\ve,x,y)=c'(t^\ve,x,y)\eta_{t^\ve}(d(x,y))d(t^\ve x,y)\m d(t^\ve,xy) d(t^\ve,x)\m,
	\end{align*}
	so it is enough to prove 
	\begin{align}\label{tau([xy]w')xi_t([xy]w')=xi'_t([xy]w')eta_t(tau([xy]w'))}
		\xi'_{t^\ve}([xy]w')\eta_{t^\ve}(\tau([xy]w'))=\tau([t]^\ve[xy]w'[t]^{-\ve})\xi_{t^\ve}([xy]w').
	\end{align} 
	Let $\ve=-1$. Then by \cref{tau([x][y]v),xi_t([x][y]v),tau([x][y]-inv.v),tau([x]-inv.u)}
	\begin{align*}
	\tau([t]^\ve w[t]^{-\ve})\xi_{t^\ve}(w)&=d(t\m, t)\m d(t\m,x) d(t\m x,y) d(t\m,xy)\m c(t\m,x,y)\\
	&\quad\cdot \tau([t\m][xy]w'[t])\xi_{t\m}([xy]w')\\
	&=d(t^\ve,x) d(t^\ve x,y) d(t^\ve,xy)\m c(t^\ve,x,y) \tau([t]^\ve[xy]w'[t]^{-\ve})\xi_{t^\ve}([xy]w'),
	\end{align*}
	and again \cref{tau(w)xi_t(w)=xi'_t(w)eta_t(tau(w))} reduces to \cref{tau([xy]w')xi_t([xy]w')=xi'_t([xy]w')eta_t(tau([xy]w'))}.
	Since, moreover, $\f([xy]w')=\f(w)$ and $l([xy]w')<l(w)$, we may use the induction hypothesis.
	
	\textbf{Case 2.} $w=[x][y]\m w'$. This case is similar to the previous one.
\end{proof}

\begin{lem}\label{f_1-commutes-with-gm}
	For any $t\in T$ and $\ve\in\{-1,1\}$ we have $\f_1\circ\gm^\ve_t = (\gm'_t)^\ve\circ\f_1$.
\end{lem}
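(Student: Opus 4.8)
The plan is to evaluate both sides of $\f_1\circ\gm^\ve_t=(\gm'_t)^\ve\circ\f_1$ on an arbitrary $n=(a,e,w)\in N$ and compare coordinates. By \cref{gm_t(a_e_w)=(z_t(w)eta_t(a)_tet-inv_[t]w[t]-inv),gm-inv_t(a_e_w)=} one may write $\gm^\ve_t(a,e,w)=(\z_{t^\ve}(w)\eta_{t^\ve}(a),\,t^\ve et^{-\ve},\,[t]^\ve w[t]^{-\ve})$, and the primed map $(\gm'_t)^\ve$ on $N'$ is given by the same formula with $\z'_{t^\ve}$ in place of $\z_{t^\ve}$. Since $\f_1$ (by \cref{f_1(a_e_w)=(tau(irr(w))_e_w)}) and $\f_2=\id$ leave the $E(T)$- and $FG(T)$-coordinates untouched, the last two coordinates of $\f_1(\gm^\ve_t(n))$ and of $(\gm'_t)^\ve(\f_1(n))$ coincide automatically, so everything reduces to the $A$-coordinate. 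When $w=\e$ this is immediate: $\z_{t^\ve}(\e)=\z'_{t^\ve}(\e)=\0(\br(t^\ve))$ by \cref{z_t(w)=xi_t(irr(w))}, $\f_1$ fixes components with trivial $FG(T)$-part, and $\eta_{t^\ve}(a)$ already lies in the component $A_{\br(t^\ve e)}\sst A_{\br(t^\ve)}$, so both sides have $A$-coordinate $\eta_{t^\ve}(a)$. When $w\ne\e$, \cref{f_1(a_e_w)=(tau(irr(w))_e_w)} turns the desired equality into $\tau(\irr{[t]^\ve w[t]^{-\ve}})\,\z_{t^\ve}(w)\,\eta_{t^\ve}(a)=\z'_{t^\ve}(w)\,\eta_{t^\ve}(\tau(\irr w))\,\eta_{t^\ve}(a)$; as $\eta_{t^\ve}(a)$ lies in the component $A_{\br(t^\ve e)}$, whose identity is $\0(\br(t^\ve e))$, it suffices to prove
\begin{equation}\tag{$\star$}
\0(\br(t^\ve e))\,\tau(\irr{[t]^\ve w[t]^{-\ve}})\,\z_{t^\ve}(w)=\0(\br(t^\ve e))\,\z'_{t^\ve}(w)\,\eta_{t^\ve}(\tau(\irr w))
\end{equation}
for every $w\in FG(T)\setminus\{\e\}$ and every $e\in E(T)$ with $e\le\nu(w)$.

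To prove $(\star)$, recall from \cref{z_t(w)=xi_t(irr(w))} that $\z_{t^\ve}(w)=\xi_{t^\ve}(\irr w)$ and $\z'_{t^\ve}(w)=\xi'_{t^\ve}(\irr w)$, and that $e\le\nu(w)=\f(\irr w)$; thus $(\star)$ is the \emph{$\z$-version} of \cref{tau.xi_t=xi'_t.eta_t-circ-tau}, bearing the same relation to it as \cref{gm^ve_t-circ-gm^dl_u} bears to \cref{gm_t_1.gm_t_2=gm_t_1t_2}. I would argue exactly along the lines of the proof of \cref{gm^ve_t-circ-gm^dl_u}. If $[t]^\ve\irr w[t]^{-\ve}$ is irreducible, then $\irr{[t]^\ve w[t]^{-\ve}}=[t]^\ve\irr w[t]^{-\ve}$, and $(\star)$ is precisely \cref{tau.xi_t=xi'_t.eta_t-circ-tau} applied to the word $\irr w$. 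Otherwise cancellation occurs at an end of $[t]^\ve\irr w[t]^{-\ve}$, and one distinguishes the cases $\irr w=[t]^{-\ve}w'$, $\irr w=w'[t]^\ve$, $\irr w=[t]^{-\ve}w'[t]^\ve$ (with $w'$ non-empty) and $\irr w\in\{[t]^\ve,[t]^{-\ve}\}$, just as in Cases 2--6 there. In each case the cancelling occurrences of $[t]^{\pm\ve}$ are peeled off the three factors $\tau(\cdot)$, $\xi_{t^\ve}(\cdot)$ and $\xi'_{t^\ve}(\cdot)$ simultaneously, using the recursive clauses \cref{xi_t([x]-inv.u),xi_t([x][y]v),xi_t([x][y]-inv.v),xi_t([x])} for $\xi_{t^\ve}$ (and, verbatim with $c'$, for $\xi'_{t^\ve}$), their analogues \cref{tau([x]-inv.u),tau([x][y]v),tau([x][y]-inv.v),tau([x])} for $\tau=\tau_d$, the strong normalization of $c$, $c'$ and $d$ (\cref{lem:E-cond,delta^2d-normalized}), \cref{xi_t(e[x])}, and the cancellation identities of \cref{properties-of-tau} (most notably $\tau(uv)=\tau(u)\tau([\f(u)]v)$ and $\0(g)\tau(v)\tau(v\m)=\0(g)$ for $g\le\f(v)$). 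This reduces $(\star)$ to the already-settled irreducible case for the reduced word $\irr{[t]^\ve w[t]^{-\ve}}$.

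The only real work lies in the cancellation cases, and it is of the same nature as in \cref{gm^ve_t-circ-gm^dl_u}: one must keep track of which group component of $A$ each partial product belongs to — the values $\xi_{t^\ve}$ and $\tau$ of suffixes and prefixes of $\irr w$, the coboundary factors $d(\cdot,\cdot)$ coming from $c=(\dl^2d)c'$ (\cref{c=(dl^2d)c'}), and the cocycle factors $c'(\cdot,\cdot)$ — so that multiplication by $\0(\br(t^\ve e))$ legitimately trivializes the spurious factors and permits the factors to be commuted (here $A$ is abelian, unlike in the $\xi$-level lemmas noted in \cref{rem-didnt-use-3coc-commut}) and cancelled. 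A secondary subtlety is that, because $[t]^\ve\irr w[t]^{-\ve}$ collapses, the recursion for $\tau$ must be run on $\irr{[t]^\ve w[t]^{-\ve}}$ rather than on the literal word $[t]^\ve\irr w[t]^{-\ve}$; the discrepancy consists of $\tau$-values of words of the form $[t]^\ve[t]^{-\ve}\cdots$ together with $d$-factors in degenerate positions, which vanish modulo $\0(\br(t^\ve e))$ by the normalization of $d$, exactly as in Cases 2--4 of \cref{gm^ve_t-circ-gm^dl_u}. No genuinely new phenomenon arises: \cref{f_1-commutes-with-gm} is the combination of the argument of \cref{gm^ve_t-circ-gm^dl_u} with \cref{tau.xi_t=xi'_t.eta_t-circ-tau}.
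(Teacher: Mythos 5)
Your proposal is correct and follows essentially the same route as the paper: after disposing of $w=\e$ and reducing to the first coordinate, the irreducible case is exactly \cref{tau.xi_t=xi'_t.eta_t-circ-tau} applied to $\irr w$, and the cancellation cases are handled by showing that $\tau$ of the literal word $[t]^\ve\irr w[t]^{-\ve}$ differs from $\tau$ of the reduced word only by an idempotent factor absorbed via the normalization of $d$ and \cref{properties-of-tau}, exactly as in Cases 2--6 of \cref{gm^ve_t-circ-gm^dl_u}. No gap to report.
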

\begin{proof}
	Let $n=(a,e,w)\in N$. If $w=\e$, then $(\f_1\circ\gm_t^\ve)(n)=(\0(\br(t^\ve))\eta_{t^\ve}(a),t^\ve et^{-\ve},\e)=((\gm'_t)^\ve\circ\f_1)(n)$. Let $w\ne\e$. 
	
	\textbf{Case 1.} $[t]^\ve\irr w[t]^{-\ve}$ is irreducible. Then $\irr {[t]^\ve w[t]^{-\ve}}=[t]^\ve\irr w[t]^{-\ve}$.
	By \cref{gm_t(a_e_w)=(z_t(w)eta_t(a)_tet-inv_[t]w[t]-inv),f_1(a_e_w)=(tau(irr(w))_e_w),z_t(w)=xi_t(irr(w)),tau.xi_t=xi'_t.eta_t-circ-tau,gm-inv_t(a_e_w)=}
	\begin{align}
		(\f_1\circ\gm_t^\ve)(n)&=(\tau(\irr {[t]^\ve w[t]^{-\ve}})\xi_{t^\ve}(\irr w)\eta_{t^\ve}(a),t^\ve et^{-\ve},[t]^\ve w[t]^{-\ve})\notag\\
		&=(\tau([t]^\ve\irr w[t]^{-\ve})\xi_{t^\ve}(\irr w)\eta_{t^\ve}(a),t^\ve et^{-\ve},[t]^\ve w[t]^{-\ve})\notag\\
		&=(\xi'_{t^\ve}(\irr w)\eta_{t^\ve}(\tau(\irr w)a),t^\ve et^{-\ve},[t]^\ve w[t]^{-\ve})\notag\\
		&=(\z'_{t^\ve}(w)\eta_{t^\ve}(\tau(\irr w)a),t^\ve et^{-\ve},[t]^\ve w[t]^{-\ve})=((\gm'_t)^\ve\circ\f_1)(n).\label{(f_1-circ-gm_t)(n)=(lb'_t-circ-f_1)(n)}
	\end{align}
	
	\textbf{Case 2.} $\irr w=[t]^{-\ve} w'$, where $w'$ is not empty and the last letter of $w'$ is different from $[t]^\ve$. Then $\irr{[t]^\ve w[t]^{-\ve}}=w'[t]^{-\ve}$ and $\tau([t]^\ve\irr w[t]^{-\ve})=\tau([t]^\ve[t]^{-\ve} w'[t]^{-\ve})$. If $\ve=1$, then by \cref{properties-of-tau,tau([x][y]-inv.v),c(x_1...x_(i-1)_e_x_(i+1)...x_n)-triv}
	\begin{align*}
		\tau([t]^\ve[t]^{-\ve} w'[t]^{-\ve})&=d(tt\m,t)\m\tau([tt\m]w'[t]\m)=\0(\br(t))\tau(w'[t]\m)\\
		&=\0(\br(t^\ve))\tau(\irr{[t]^\ve w[t]^{-\ve}}),
	\end{align*}
	so the proof of \cref{(f_1-circ-gm_t)(n)=(lb'_t-circ-f_1)(n)} works. If $\ve=-1$, then by \cref{properties-of-tau,tau([x][y]-inv.v),tau([x]-inv.u)}
	\begin{align*}
	\tau([t]^\ve[t]^{-\ve} w'[t]^{-\ve})&=d(t\m, t)\m d(t\m,t)\tau([t\m t]w'[t]\m)=\0(\bd(t))\tau(w'[t]\m)\\
	&=\0(\br(t^\ve))\tau(\irr{[t]^\ve w[t]^{-\ve}}),
	\end{align*}
	and the proof of \cref{(f_1-circ-gm_t)(n)=(lb'_t-circ-f_1)(n)} again works.
	
	\textbf{Case 3.} $\irr w=w'[t]^\ve$, where $w'$ is not empty and the first letter of $w'$ is different from $[t]^{-\ve}$. Then $\irr{[t]^\ve w[t]^{-\ve}}=[t]^\ve w'$ and $\tau([t]^\ve\irr w[t]^{-\ve})=\tau([t]^\ve w'[t]^\ve[t]^{-\ve})$. If $\ve=1$, then by \cref{properties-of-tau,tau([x][y]v),c(x_1...x_ie...x_n)}
	\begin{align*}
	\tau([t]^\ve w'[t]^\ve[t]^{-\ve})&=\tau([t]w')\tau([t\f(w')][t][t]\m)\\
	&=\tau(\irr{[t]w[t]\m})d(t\f(w'),t)d(t\f(w')tt\m,t)\m\\
	&=\0(\br(t^\ve\f(\irr w)))\tau(\irr{[t]^\ve w[t]^{-\ve}}),
	\end{align*}
	so the proof of \cref{(f_1-circ-gm_t)(n)=(lb'_t-circ-f_1)(n)} remains valid. If $\ve=-1$, then by \cref{properties-of-tau,tau([x][y]v),tau([x][y]-inv.v)}
	\begin{align*}
	\tau([t]^\ve w'[t]^\ve[t]^{-\ve})&=\tau([t]\m w')\tau([t\m\f(w')][t]\m [t])\\
	&=\tau(\irr{[t]\m w[t]})d(t^{-1}\f(w')t\m,t)\m d(t^{-1}\f(w')t\m,t)\\
	&=\0(\br(t^\ve\f(\irr w)))\tau(\irr{[t]^\ve w[t]^{-\ve}}),
	\end{align*}
	and the proof of \cref{(f_1-circ-gm_t)(n)=(lb'_t-circ-f_1)(n)} still remains valid.
	The rest of the cases are similar (for more details see Cases 4--6 of \cref{gm^ve_t-circ-gm^dl_u}).
\end{proof}

\begin{lem}\label{f_1-commutes-with-lb}
	For any $s\in S$ we have $\f_1\circ\lb_s = \lb'_{\f_2(s)}\circ\f_1$.
\end{lem}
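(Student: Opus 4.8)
The plan is to unwind the case-based definition \eqref{lb_(t_u)(a_e_w)=} of $\lb_s$ and deduce everything from \cref{f_1-homo,f_1-commutes-with-gm}. The key preliminary observation is that $\f_1$ fixes the idempotents coming from $\af$: for $h=(e,\e)\in E(S)$ we have, by \eqref{af(e_1)=(0(e)_e_1)} and \eqref{f_1(a_e_w)=(tau(irr(w))_e_w)} (the third coordinate is $\e$) together with $\af'=\af$,
\[
\f_1(\af(h))=\f_1(\0(e),e,\e)=(\0(e),e,\e)=\af(h)=\af'(h).
\]
Since $\f_1$ is a homomorphism by \cref{f_1-homo}, this yields $\f_1(\af(h)\cdot m)=\af(h)\cdot\f_1(m)$ for every $h\in E(S)$ and every $m\in N$.

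I would then split according to whether the second coordinate $u$ of $s=(t,u)$ equals $\e$. If $u=\e$, then $t\in E(T)$, so $s\in E(S)$, and by \eqref{lb_(t_u)(a_e_w)=} both $\lb_s$ and $\lb'_{\f_2(s)}=\lb'_s$ act as left multiplication by $\af(s)$; hence for $n\in N$,
\[
\f_1(\lb_s(n))=\f_1(\af(s)\cdot n)=\af'(s)\cdot\f_1(n)=\lb'_{\f_2(s)}(\f_1(n))
\]
by the preliminary observation. If $u\ne\e$, write $\irr u=[t_1]^{\ve_1}\cdots[t_k]^{\ve_k}$. By \eqref{lb_(t_u)(a_e_w)=} we have $\lb_s(n)=\af(\br(s))\cdot(\gm_{t_1}^{\ve_1}\circ\cdots\circ\gm_{t_k}^{\ve_k})(n)$, and applying \cref{f_1-commutes-with-gm} $k$ times gives
\[
\f_1\circ\gm_{t_1}^{\ve_1}\circ\cdots\circ\gm_{t_k}^{\ve_k}=(\gm'_{t_1})^{\ve_1}\circ\cdots\circ(\gm'_{t_k})^{\ve_k}\circ\f_1 .
\]
Combining this with the preliminary observation applied to $h=\br(s)$, and using $\af'=\af$ together with $\f_2=\id$ (so $\br(\f_2(s))=\br(s)$ and the reduced word of the second coordinate of $\f_2(s)$ is again $\irr u$), one obtains
\[
\f_1(\lb_s(n))=\af(\br(s))\cdot\bigl((\gm'_{t_1})^{\ve_1}\circ\cdots\circ(\gm'_{t_k})^{\ve_k}\bigr)(\f_1(n))=\lb'_{\f_2(s)}(\f_1(n)),
\]
which is precisely \eqref{eq:actcomp} for the pair $(\f_1,\f_2)$.

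The substance of the argument has already been discharged in \cref{f_1-homo,f_1-commutes-with-gm}, so no genuine obstacle remains at this point; the only things requiring attention are treating the branch $u=\e$ separately — there $\lb_s$ is not a composition of the $\gm$'s — and keeping track of the idempotent prefactor $\af(\br(s))$, which is absorbed cleanly using that $\f_1$ fixes $\af$ on idempotents.
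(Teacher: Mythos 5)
Your proposal is correct and follows essentially the same route as the paper: split on $u=\e$ versus $u\ne\e$, reduce the latter to \cref{f_1-commutes-with-gm} applied letter by letter to $\irr u$, and absorb the idempotent factor $\af(\br(s))$ using that $\f_1$ acts trivially on $\af(E(S))$ (the paper checks this by direct computation with triples, you via \cref{f_1-homo}, which is an equivalent repackaging).
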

\begin{proof}
	Let $s=(t,u)\in S$ and $n=(a,e,w)\in N$. Consider first the case $u=\e$. Then $t\in E(T)$.
	If $w\ne\e$, then $(\f_1\circ\lb_s)(n)=\f_1(\0(t)a,te,w)=(\tau(\irr w)\0(t)a,te,w)=\lb'_s(\tau(\irr w)a,e,w)=(\lb'_{\f_2(s)}\circ\f_1)(n)$ by \cref{lb_(t_u)(a_e_w)=,f_1(a_e_w)=(tau(irr(w))_e_w)}. Otherwise, $(\f_1\circ\lb_s)(n)=\f_1(\0(t)a,te,w)=(\0(t)a,te,w)=\lb'_s(a,e,w)=(\lb'_{\f_2(s)}\circ\f_1)(n)$.
	
	Now assume $u\ne\e$. Let $\irr u=[t_1]^{\ve_1}\dots[t_k]^{\ve_k}$. By \cref{lb_(t_u)(a_e_w)=,f_1-commutes-with-gm,f_1(a_e_w)=(tau(irr(w))_e_w)}
	\begin{align*}
		(\f_1\circ\lb_s)(n)&=\f_1(\af(\br(s))(\gm_{t_1}^{\ve_1}\circ\dots\circ \gm_{t_k}^{\ve_k})(n))=\af(\br(s))(\f_1\circ\gm_{t_1}^{\ve_1}\circ\dots\circ \gm_{t_k}^{\ve_k})(n)\\
		&=\af(\br(s))((\gm'_{t_1})^{\ve_1}\circ\dots\circ (\gm'_{t_k})^{\ve_k})(\f_1(n))=(\lb'_{\f_2(s)}\circ\f_1)(n).
	\end{align*}
\end{proof}

\begin{prop}\label{cohom-cocycles-induce-equiv-ext}
	The crossed module extension $A \xrightarrow{i'} N' \xrightarrow{\beta'} S' \xrightarrow{\pi'} T$ induced by $c'$ is equivalent to the crossed module extension $A \xrightarrow{i} N \xrightarrow{\beta} S \xrightarrow{\pi} T$ induced by $c$.
\end{prop}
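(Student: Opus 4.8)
The plan is to verify that the pair $(\varphi_1,\varphi_2)$, with $\varphi_2=\id_S$ and $\varphi_1$ as defined in \cref{f_1(a_e_w)=(tau(irr(w))_e_w)}, satisfies conditions \ref{CMEE1} and \ref{CMEE2} of \cref{equiv-ext-defn}. By the construction preceding the statement the two sequences coincide as sequences of inverse semigroups, with $\af'=\af$, $i'=i$, $\bt'=\bt$ and $\pi'=\pi$, so only these two conditions remain. First I would record the trivial facts: $\varphi_2=\id$ is a homomorphism and makes the rightmost square of \cref{eq:equivseqs4} commute, since $\pi'=\pi$; and by \cref{f_1-homo}, $\varphi_1$ is a homomorphism $N\to N'$.

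Next I would check the two remaining squares of \cref{eq:equivseqs4}. Since $i(a)=(a,\0\m(aa\m),\e)$ has trivial $FG(T)$-coordinate, \cref{f_1(a_e_w)=(tau(irr(w))_e_w)} gives $\varphi_1(i(a))=(a,\0\m(aa\m),\e)=i'(a)$, so $\varphi_1\circ i=i'$, which is the leftmost square. For the middle square, both $\bt$ and $\bt'$ act by $(a,e,w)\mapsto(e,w)$ according to \cref{bt(a_e_w)=(e_w)}, whereas $\varphi_1$ leaves the last two coordinates of every element of $N$ unchanged; hence $\bt'(\varphi_1(a,e,w))=(e,w)=\varphi_2(\bt(a,e,w))$. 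Together with the previous observation this establishes \ref{CMEE1}. Finally, condition \ref{CMEE2}, namely $\varphi_1\circ\lambda_s=\lambda'_{\varphi_2(s)}\circ\varphi_1$ for all $s\in S$, is exactly the content of \cref{f_1-commutes-with-lb}, and assembling these facts completes the proof.

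I do not anticipate a genuine obstacle at this stage: all the substantive work — that $\varphi_1$ is multiplicative, and that it intertwines the two $S$-module structures on $N$ and $N'$ coming from $c$ and $c'$ — has already been carried out in \cref{f_1-homo,f_1-commutes-with-lb}, which in turn rest on the properties of $\tau$ collected in \cref{properties-of-tau}, on \cref{tau.xi_t=xi'_t.eta_t-circ-tau,f_1-commutes-with-gm}, and on the coboundary identity $c=(\dl^2d)c'$ with the strongly normalized $d$ provided by \cref{delta^2d-normalized}. The only point one must not overlook is that $\varphi_1$ restricts to the identity on $i(A)$ and on $E(N)$; this is immediate from the defining formula, since all such elements carry the trivial $FG(T)$-coordinate $\e$.
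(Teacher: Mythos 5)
Your proposal is correct and follows essentially the same route as the paper: both verify \labelcref{CMEE1} by direct inspection of the explicit formulas (right square trivial since $\f_2=\id$ and $\pi=\pi'$, middle square because $\f_1$ alters only the $A$-coordinate, left square because $\f_1$ is the identity on triples with trivial $FG(T)$-coordinate), and both reduce \labelcref{CMEE2} to \cref{f_1-commutes-with-lb}, with \cref{f_1-homo} supplying that $\f_1$ is a homomorphism.
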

\begin{proof}
	The commutativity of the right square of \cref{eq:equivseqs4} is obvious, as $S=S'$, $\pi=\pi'$ and $\f_2=\id$. Since $\f_1$ changes only the first coordinate of the triple $(a,e,w)\in N$, we have $\bt=\bt'\circ\f_1$, which is the commutativity of the middle square of \cref{eq:equivseqs4}. Now, $\f_1\circ i=i'$ is also trivial, because $\f_1$ acts as the identity map on the triples $(a,e,w)\in N$ with $w=\e$. So, the left square of \cref{eq:equivseqs4} is commutative too, and \cref{CMEE1} of \cref{equiv-ext-defn} is established. Item \cref{CMEE2} of \cref{equiv-ext-defn} is \cref{f_1-commutes-with-lb}.
\end{proof}

\begin{prop}\label{from-H^3_le-to-E(T_A)}
	There is a map from $H^3_\le(T^1,A^1)$ to $\mathcal{E}(T,A)$. 
\end{prop}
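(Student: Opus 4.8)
The plan is to assemble the three preceding results: \cref{3-cocycle-cohom-to-normalized} (every order-preserving $3$-cocycle is cohomologous to a normalized one), \cref{from-c-to-crossed-mod-ext} (a normalized $c\in Z^3_\le(T^1,A^1)$ yields a crossed module extension of the $T$-module $A$ by $T$), and \cref{cohom-cocycles-induce-equiv-ext} (cohomologous normalized order-preserving $3$-cocycles induce equivalent extensions). Concretely, given a class $h\in H^3_\le(T^1,A^1)$, I would choose any representative $c\in Z^3_\le(T^1,A^1)$; by \cref{3-cocycle-cohom-to-normalized} there is a normalized order-preserving $3$-cocycle $\tl c$ cohomologous to $c$, which is strongly normalized by \cref{lem:E-cond}. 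Applying the construction of \cref{from-c-to-crossed-mod-ext} to $\tl c$ produces a crossed module extension $A \xrightarrow{i} N \xrightarrow{\beta} S \xrightarrow{\pi} T$ inducing the given $T$-module structure on $A$, and we declare its equivalence class in $\mathcal{E}(T,A)$ to be the image of $h$.

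The remaining point is to check that this assignment is independent of all the choices. Suppose $c_1,c_2\in Z^3_\le(T^1,A^1)$ both represent $h$, and let $\tl c_1,\tl c_2$ be normalized order-preserving cocycles cohomologous to $c_1$ and $c_2$ respectively. Since each $\tl c_j$ is cohomologous to $c_j$, and $c_1$ is cohomologous to $c_2$, the cocycles $\tl c_1$ and $\tl c_2$ are cohomologous to one another in $Z^3_\le(T^1,A^1)$. By \cref{cohom-cocycles-induce-equiv-ext} the crossed module extensions built from $\tl c_1$ and $\tl c_2$ are equivalent, hence determine the same element of $\mathcal{E}(T,A)$. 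This shows that $h\mapsto[A \xrightarrow{i} N \xrightarrow{\beta} S \xrightarrow{\pi} T]$ is a well-defined map $H^3_\le(T^1,A^1)\to\mathcal{E}(T,A)$.

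Since essentially all of the technical content has already been established in \cref{3-cocycle-cohom-to-normalized,from-c-to-crossed-mod-ext,cohom-cocycles-induce-equiv-ext}, no substantial difficulty remains here; the only point that needs attention is the elementary bookkeeping of cohomology classes in the well-definedness argument, together with the observation that \cref{cohom-cocycles-induce-equiv-ext} applies precisely because any two normalized order-preserving cocycles produced in this way are cohomologous in $Z^3_\le(T^1,A^1)$.
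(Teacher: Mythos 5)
Your proposal is correct and follows essentially the same route as the paper: the paper's own proof simply combines \cref{from-c-to-crossed-mod-ext} and \cref{cohom-cocycles-induce-equiv-ext}. You are in fact slightly more explicit than the paper in invoking \cref{3-cocycle-cohom-to-normalized,lem:E-cond} to reduce to a normalized representative (which the construction of \cref{sec:H3toE} assumes) and in checking independence of that choice, which is a welcome clarification rather than a deviation.
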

\begin{proof}
	Let $A$ be a $T$-module. By \cref{from-c-to-crossed-mod-ext} each $c\in Z^3_\le(T^1,A^1)$ induces a crossed module extension of $A$ by $T$, and in view of \cref{cohom-cocycles-induce-equiv-ext} cohomologous $c,c'\in Z^3_\le(T^1,A^1)$ induce equivalent extensions.
\end{proof}

\section{The correspondence between \texorpdfstring{$H^3_\le(T^1,A^1)$}{H³<(T¹,A¹)} and \texorpdfstring{$\mathcal{E}_\le(T,A)$}{E<(T,A)}}\label{sec-H^3_le<->E_le}

\subsection{From \texorpdfstring{$c\in Z^3_\le(T^1,A^1)$}{c in Z³<(T¹,A¹)} to a crossed module extension and back again}

As in \cref{c->ext-sec} we begin with a $T$-module $A$ and $c\in Z^3_\le(T^1,A^1)$. Let $A \xrightarrow{i} N \xrightarrow{\beta} S \xrightarrow{\pi} T$ be the corresponding crossed module extension of $A$ by $T$ constructed in \cref{from-c-to-crossed-mod-ext}. Our aim is to prove that this crossed module extension is admissible and induces $c'\in Z^3_\le(T^1,A^1)$ cohomologous to $c$, provided that $T$ is an $F$-inverse monoid.

\begin{lem}\label{rho-and-sigma-giving-c}
	There are transversals $\rho:T\to S$ and $\s:\bt(N)\to N$ of $\pi$ and $\bt$ inducing the original cocycle $c\in Z^3_\le(T^1,A^1)$.
\end{lem}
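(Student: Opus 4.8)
The goal is to exhibit explicit transversals $\rho$ of $\pi$ and $\sigma$ of $\beta$ so that, when we run the recipe of \cref{from-crossed-mod-ext-to-C^3} on the extension $A \xrightarrow{i} N \xrightarrow{\beta} S \xrightarrow{\pi} T$ constructed in \cref{from-c-to-crossed-mod-ext}, we recover the very cocycle $c$ we started from. Recall from the construction that $S=\{(t,w)\in T\times FG(T)\mid t\le\nu(w)\}$, that $\pi(t,w)=t$, that $\beta(a,e,w)=(e,w)$, and that $N=\{(a,e,w)\mid a\in A_e,\ e\le\nu(w)\}$. The natural choice is to use the $F$-inverse structure of $T$: for each $t\in T$ let $\widehat t\in FG(T)$ be the element with $\nu(\widehat t)=\mathsf{max}(t)$ the maximum element of the $\sigma$-class of $t$ (here I would invoke that $T$ is $F$-inverse, so each $\rho_{\B}$-class, i.e.\ each fibre of $\sigma\colon T\to\mathcal G(T)$, has a greatest element), normalized so that $\widehat e=\e$ for $e\in E(T)$ and $\widehat{t^{-1}}=\widehat t^{-1}$. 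Then set $\rho(t)=(t,\widehat t)$; this lies in $S$ because $t\le \mathrm{max}(t)=\nu(\widehat t)$, and $\pi\circ\rho=\id_T$. For $\beta$, given $(e,w)\in\beta(N)=K=\pi^{-1}(E(T))$ set $\sigma(e,w)=(\theta(e),e,w)$, which lies in $N$ since $\theta(e)\in A_e$ and $e\le\nu(w)$, and clearly $\beta\circ\sigma=\id$.

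**Key steps.** First I would check that both transversals are order-preserving and respect idempotents: $\rho$ respects idempotents because $\rho(e)=(e,\e)\in E(S)$, and order-preservation reduces to the fact (from the $F$-inverse hypothesis) that $s\le t$ in $T$ implies $\mathrm{max}(s)\le\mathrm{max}(t)$ in $\mathcal G(T)$, hence $\widehat s=\widehat{s}$ is an "initial segment" of $\widehat t$ in the appropriate sense and $\rho(s)\le\rho(t)$; likewise $\sigma$ is order-preserving since $\theta$ is. This already shows the extension is admissible, which is part of what \cref{c->ext-sec} ultimately needs. Second, compute the cocycle: with $\rho(x)\rho(y)=(x,\widehat x)(y,\widehat y)=(xy,\widehat x\widehat y)$ and $\rho(xy)=(xy,\widehat{xy})$, the function $f$ of \cref{rho(x)rho(y)=f(x_y)rho(xy)} is $f(x,y)=(\br(xy),\widehat x\widehat y\,\widehat{xy}^{-1})\in\beta(N)$; lift it via $\sigma$ to $F(x,y)=\sigma(f(x,y))=(\theta(\br(xy)),\br(xy),\widehat x\widehat y\,\widehat{xy}^{-1})\in N$. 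Third, plug $F$ into \cref{lb_rho(x)(F(y_z))F(x_yz)=i(c(x_y_z))F(x_y)F(xy_z)}: the left side is $\lambda_{\rho(x)}(F(y,z))F(x,yz)$, and one computes $\lambda_{(x,\widehat x)}(F(y,z))$ using the explicit formula \cref{lb_(t_u)(a_e_w)=triple}, which produces a first coordinate involving $\zeta_x$ and $\eta_x$ evaluated on the data of $F(y,z)$; the $\xi$/$\zeta$-machinery of \cref{xi_t([x][y]v),xi_t([x][y]-inv.v),xi_t([x])} was designed precisely so that the accumulated $c$-factors telescope to give exactly $c(x,y,z)$ in the comparison with $i(c(x,y,z))F(x,y)F(xy,z)$. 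So the induced cocycle $c'$ equals $c$ on the nose — no coboundary correction is needed.

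**Main obstacle.** The hard part is the bookkeeping in the third step: one must show that $\zeta_x(\widehat y\widehat z\,\widehat{yz}^{-1})$, after multiplying by the appropriate idempotent and by $\eta_x(\theta(\cdots))$, contributes precisely the factor $c(x,y,z)$ when matched against $F(x,y)F(xy,z)$ in the $N$-coordinate. This is where one genuinely uses that $\zeta_t(w)=\xi_t(\irr w)$ is built by iterating the defining relations \cref{xi_t([x][y]v)} etc., and that these were chosen so that $\xi$ "integrates" $c$ along a word. I expect this to require a careful but essentially mechanical induction on the length of $\widehat x\widehat y\,\widehat{xy}^{-1}$ (or on how the three words $\widehat x,\widehat y,\widehat{xy}$ overlap and cancel), using \cref{xi_t-mult-reduction-to-u=[x],xi_t([x]w)xi_t(w-inv)-reduction,gm^ve_t-circ-gm^dl_u,0(e)z_t-trivial} to reduce everything to the base case; the $3$-cocycle identity for $c$ enters exactly once, to handle the overlap/cancellation case. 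Apart from that, everything is a direct substitution into formulas already established, so the remaining lemmas of this subsection (verifying admissibility and that the construction "round-trips") will follow by routine verification from the transversals displayed here.
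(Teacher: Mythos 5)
Your $\sigma$ is the paper's transversal, but your $\rho$ is not: you take the max-based transversal (the paper's \labelcref{rho-for-F-inverse}, introduced there only in \cref{T-F-inverse=>admissible} to prove admissibility) and assert that running \cref{from-crossed-mod-ext-to-C^3} with it returns the original cocycle ``on the nose''. That assertion \emph{is} the content of the lemma, and your proposal defers it to ``a careful but essentially mechanical induction'', so the core of the proof is missing; moreover the sketch of how it would go is off in two respects. First, with $\rho(t)=(t,[\max t])$ the recursion \labelcref{xi_t([x][y]v),xi_t([x][y]-inv.v),xi_t([x])} applied to the third coordinate $\widehat y\,\widehat z\,\widehat{yz}^{-1}$ of $F(y,z)$ telescopes (in the generic case) to $c(x,\max y,\max z)\,c(x,\br(\max y\max z),\max(yz))^{-1}$ times an idempotent, \emph{not} to $c(x,y,z)$: the second factor dies by strong normalization \labelcref{c(x_1...x_(i-1)_e_x_(i+1)...x_n)-triv}, and to identify $\0(\br(xyz))\,c(x,\max y,\max z)$ with $c(x,y,z)$ one must invoke the order-preservation of $c$ together with $y\le\max y$, $z\le\max z$ (cf.\ \cref{c-ord-pres-iff}) — a step your outline never mentions. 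Second, the $3$-cocycle identity is not what is needed here (not even once): the overlap/cancellation and idempotent cases are disposed of by normalization and by the fact that, in an $F$-inverse monoid, $\max z=\max(yz)$ forces $\s(y)=1$, hence $y\in E(T)$. With those ingredients your route can indeed be completed (and note that $\widehat t$ should be pinned down, e.g.\ as $[\max t]$, rather than ``an element of $FG(T)$ with $\nu(\widehat t)=\max t$'', and that order-preservation of your $\rho$ comes from $\max s=\max t$ when $s\le t$ are non-idempotent, not from any ``initial segment'' phenomenon), but as submitted the decisive computation is only promised, not performed.

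Two structural comparisons with the paper. The paper proves this lemma with the simpler, non-order-preserving transversal $\rho(t)=(t,[t])$ and the same $\s(e,w)=(\0(e),e,w)$; there the telescoping gives $c(x,y,z)$ directly, the verification is a short two-case computation, and no $F$-inverse hypothesis is used — the lemma holds for an arbitrary inverse semigroup $T$, with \labelcref{rho-for-F-inverse} reserved for admissibility and the two choices reconciled afterwards via \cref{another-choice-of-F,another-choice-of-rho,c->ext->cohom-c'}. Your version needs $T$ to be an $F$-inverse monoid from the start (otherwise $\max t$ does not exist), so it proves a less general statement; the compensating payoff, if you actually carry out the computation sketched above, is that admissibility and the exact recovery of $c$ are achieved by one and the same transversal, which would render the coboundary correction in \cref{c->ext->cohom-c'} unnecessary.
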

\begin{proof}
	For any $t\in T$ we put
	\begin{align}\label{rho-for-E-unitary-cover}
		\rho(t)=(t,[t]).
	\end{align}
	Since $\nu([t])=t$, we have a well-defined map $\rho:T\to S$. Clearly, $\pi\circ\rho=\id_T$, so $\rho$ is a transversal of $\pi$. We now proceed as in \cref{from-crossed-mod-ext-to-C^3}:
	\begin{align*}
		\rho(x)\rho(y)=(xy,[x][y])=(\br(xy),[x][y][xy]\m)(xy,[xy])=f(x,y)\rho(xy),
	\end{align*}
	where
	\begin{align}\label{f-for-E-unitary-cover}
		f(x,y)=(\br(xy),[x][y][xy]\m).
	\end{align}
	We thus have $f:T^2\to\bt(N)$, such that $f(x,y)\in\bt(N)_{\br(\rho(xy))}$. Furthermore, define $\s:\bt(N)\to N$ as follows:
	\begin{align}\label{sigma-for-E-unitary-cover}
		\s(e,w)=(\0(e),e,w),
	\end{align}
	where $(e,w)\in\bt(N)$. Obviously, $\s$ is a transversal of $\bt$. We thereafter put
	\begin{align}\label{F-for-E-unitary-cover}
		F(x,y)=\s(f(x,y))=(\0(\br(xy)),\br(xy),[x][y][xy]\m),
	\end{align}
	and we obtain $F:T^2\to N$ with $F(x,y) \in N_{\af(\br(\rho(xy)))}$. Let us first calculate
	\begin{align*}
		F(x,y)F(xy,z)&=(\0(\br(xy)),\br(xy),[x][y][xy]\m)(\0(\br(xyz)),\br(xyz),[xy][z][xyz]\m)\\
		&=(\0(\br(xyz)),\br(xyz),[x][y][z][xyz]\m).
	\end{align*}
	Now, by \cref{lb_(t_u)(a_e_w)=,lb_(t_u)(a_e_w)=triple,z_t(w)=xi_t(irr(w))}
	\begin{align*}
		\lb_{\rho(x)}(F(y,z))F(x,yz)&=(\z_x([y][z][yz]\m)\eta_x(\0(\br(yz))),x\br(yz)x\m,[x][y][z][yz]\m[x]\m)\\
		&\quad\cdot(\0(\br(xyz)),\br(xyz),[x][yz][xyz]\m)\\
		&=(\z_x([y][z][yz]\m)\0(\br(xyz)),\br(xyz),[x][y][z][xyz]\m).
	\end{align*}
	
	\textbf{Case 1.} $[y][z][yz]\m$ is irreducible. Then by \cref{z_t(w)=xi_t(irr(w)),xi_t([x][y]v),xi_t([x][y]-inv.v),xi_t([x]),c(x_1...x_(i-1)e_e_ex_(i+1)...x_n)-triv}
	\begin{align*}
		\z_x([y][z][yz]\m)=\xi_x([y][z][yz]\m)=c(x,y,z)c(x,yzz\m y\m,yz)\m=c(x,y,z),
	\end{align*}
	whence \cref{lb_rho(x)(F(y_z))F(x_yz)=i(c(x_y_z))F(x_y)F(xy_z)}.
	
	\textbf{Case 2.} $z=yz$. Then $\irr{[y][z][yz]\m}=[y]$ and
	\begin{align*}
		\z_x([y][z][yz]\m)\0(\br(xyz))=\xi_x([y])\0(\br(xyz))=\0(\br(xy))\0(\br(xyz))=\0(\br(xyz)).
	\end{align*}
	However, $\br(z) =y\br(z)$, so by \cref{c(x_1...x_ie...x_n)-and-,c(x_1...x_(i-1)_e_x_(i+1)...x_n)-triv}
	\begin{align*}
		c(x,y,z)=c(x,y,\br(z)z)=c(x,y\br(z),z)=c(x,\br(z),z)=\0(\br(xz))=\0(\br(xyz)).
	\end{align*}
	Thus, \cref{lb_rho(x)(F(y_z))F(x_yz)=i(c(x_y_z))F(x_y)F(xy_z)} holds again.
\end{proof}

\begin{lem}\label{T-F-inverse=>admissible}
	Let $T$ be an $F$-inverse monoid. Then the crossed module extension $A \xrightarrow{i} N \xrightarrow{\beta} S \xrightarrow{\pi} T$ is admissible.
\end{lem}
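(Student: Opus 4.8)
The plan is to produce order-preserving transversals respecting idempotents for both $\bt$ and $\pi$. For $\bt$ no hypothesis on $T$ is needed: the map $\bt(N)\to N$, $(e,w)\mt(\0(e),e,w)$, is a transversal of $\bt$ (this is the transversal constructed in \cref{rho-and-sigma-giving-c}); it respects idempotents because the idempotents of $\bt(N)=K$ are the pairs $(e,\e)$ with $e\in E(T)$ and it sends $(e,\e)$ to $(\0(e),e,\e)=\af(e,\e)\in E(N)$; and it is order-preserving because $(e,w)\le(f,v)$ in $K$ amounts to $e\le f$ and $w=v$, which forces $(\0(e),e,w)\le(\0(f),f,v)$ in $N$ since $\0(e)=\0(e)\0(f)$. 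Both facts are immediate from the explicit descriptions of the natural partial orders on $N$ and on $K$.

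The substantive point is the transversal of $\pi$. The transversal $\rho(t)=(t,[t])$ from \cref{rho-and-sigma-giving-c} is \emph{not} idempotent-respecting, since $[e]\ne\e$ in $FG(T)$ for every $e\in E(T)$, so a new transversal must be built, and it is precisely here that $F$-inverseness is used. Let $\sigma$ be the least group congruence on $T$ and $\sigma^\natural:T\to T/\sigma$ the quotient map. Since $T$ is $F$-inverse it is $E$-unitary, hence $(\sigma^\natural)\m(1)=E(T)$, and every $\sigma$-class has a greatest element. Let $m:T\to T$ send $t$ to the greatest element of its $\sigma$-class, so that $t\le m(t)$, $m(t)=1\iff t\in E(T)$, and $s\le t\implies m(s)=m(t)$ (comparable elements being $\sigma$-related). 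Define $\rho':T\to S$ by
\[
\rho'(t)=
\begin{cases}
(t,\e), & t\in E(T),\\
(t,[m(t)]), & t\notin E(T).
\end{cases}
\]

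It remains to verify that $\rho'$ works. First, $\rho'(t)\in S$: if $t\in E(T)$ then $t\le 1=\nu(\e)$; if $t\notin E(T)$ then $[m(t)]$ is a single-letter, hence irreducible, word, so $\nu([m(t)])=\f([m(t)])=m(t)\ge t$. Clearly $\pi\circ\rho'=\id_T$, and $\rho'(E(T))\sst E(S)$ by construction, so $\rho'$ respects idempotents. Finally, $\rho'$ is order-preserving: if $s\le t$ then $s$ and $t$ are $\sigma$-related, hence, by $E$-unitarity, either both idempotent or both non-idempotent; in the first case $\rho'(s)=(s,\e)\le(t,\e)=\rho'(t)$, and in the second $m(s)=m(t)$ gives $\rho'(s)=(s,[m(t)])\le(t,[m(t)])=\rho'(t)$, using that $(s,w)\le(t,v)$ in $S$ iff $s\le t$ and $w=v$. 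Hence $\pi$ and $\bt$ both admit order-preserving transversals respecting idempotents, so the extension $A \xrightarrow{i} N \xrightarrow{\beta} S \xrightarrow{\pi} T$ is admissible. The only genuine obstacle is recognizing that the transversal of \cref{rho-and-sigma-giving-c} is inadequate and that the $F$-inverse hypothesis is exactly what supplies a coherent system of ``maximal words'' $[m(t)]$; the remaining verifications are routine, against the explicit forms of $S$, $N$, $\bt(N)$ and their natural orders.
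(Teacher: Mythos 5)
Your proof is correct and follows essentially the same route as the paper: the same transversal $\s(e,w)=(\0(e),e,w)$ of $\bt$, and the same transversal of $\pi$ sending a non-idempotent $t$ to $(t,[\max t])$ (your $m(t)$ is the paper's $\max t$) and an idempotent $e$ to $(e,\e)$. The only difference is that you explicitly rule out the mixed case $s\le t$ with exactly one of $s,t$ idempotent via $E$-unitarity, a point the paper leaves implicit.
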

\begin{proof}
	According to \cref{admissible-ext-defn} we need to show that $\pi$ and $\bt$ have order-preserving transversals which respect idempotents. Observe that the transversal $\s$ of $\bt$ defined in \cref{sigma-for-E-unitary-cover} respects idempotents and is order-preserving for arbitrary $T$. Now, given $t\in T$, we define
	\begin{align}\label{rho-for-F-inverse}
	\rho(t)=
	\begin{cases}
	(t,[\max t]), & t\not\in E(T),\\
	(t,\e), & t\in E(T).
	\end{cases}
	\end{align}
	Since $t\le\max t=\nu([\max t])$, then $\rho(t)\in S$, and $\rho$ is a transversal of $\pi$. It respects idempotents by its definition. Clearly, for $e,f\in E(T)$, $e\le f$ implies $\rho(e)\le\rho(f)$. Moreover, if $t\le u$ and $t,u\not\in E(T)$, then $\max t=\max u$ and hence $\rho(t)=(t,[\max t])\le (u,[\max u])=\rho(u)$.  So, $\rho$ is order-preserving.
\end{proof}

\begin{cor}\label{H^3_le(T^1_A^1)->E_le(T_A)}
	Let $T$ be an $F$-inverse monoid and $A$ a $T$-module. Then there is a map from $H^3_\le(T^1,A^1)$ to $\mathcal{E}_\le(T,A)$. 
\end{cor}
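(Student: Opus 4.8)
The plan is to obtain the desired map as a corestriction of the map $H^3_\le(T^1,A^1)\to\mathcal{E}(T,A)$ already constructed in \cref{from-H^3_le-to-E(T_A)}. First I would recall how that map is built: given a class in $H^3_\le(T^1,A^1)$, one applies \cref{3-cocycle-cohom-to-normalized} to choose a normalized order-preserving representative $c\in Z^3_\le(T^1,A^1)$, which by \cref{lem:E-cond} is automatically strongly normalized; the construction of \cref{sec:H3toE} then applies to $c$, and \cref{from-c-to-crossed-mod-ext} shows that the resulting $4$-term sequence $A\xrightarrow{i}N\xrightarrow{\beta}S\xrightarrow{\pi}T$, with $S$ the $E$-unitary cover of $T$ through $FG(T)$, is a crossed module extension of the $T$-module $A$ by $T$. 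Denote this extension by $\mathcal{X}_c$. By \cref{cohom-cocycles-induce-equiv-ext}, two normalized representatives of the same cohomology class yield equivalent extensions, so $[c]\mapsto[\mathcal{X}_c]\in\mathcal{E}(T,A)$ is well defined.

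The only additional point is to check that, when $T$ is $F$-inverse, the extension $\mathcal{X}_c$ is admissible, so that its class lies in the subset $\mathcal{E}_\le(T,A)$ of $\mathcal{E}(T,A)$. This is exactly \cref{T-F-inverse=>admissible}: the transversal $\sigma:\beta(N)\to N$ of $\beta$ given by $\sigma(e,w)=(\0(e),e,w)$ (see \cref{sigma-for-E-unitary-cover}) is order-preserving and respects idempotents for every $T$, whereas for an $F$-inverse monoid $T$ the transversal $\rho:T\to S$ of $\pi$ defined by $\rho(t)=(t,[\max t])$ for $t\notin E(T)$ and $\rho(e)=(e,\e)$ for $e\in E(T)$ (see \cref{rho-for-F-inverse}) is order-preserving and respects idempotents, using that $t\le u$ with $t,u\notin E(T)$ forces $\max t=\max u$. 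Hence $\mathcal{X}_c$ meets the requirements of \cref{admissible-ext-defn}.

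Combining these observations, the assignment $[c]\mapsto[\mathcal{X}_c]$ lands in $\mathcal{E}_\le(T,A)$, and this is the sought map $H^3_\le(T^1,A^1)\to\mathcal{E}_\le(T,A)$. I do not expect any genuine obstacle here: all the substantive work has already been carried out in \cref{sec:H3toE} and in \cref{T-F-inverse=>admissible}. The one point deserving a sentence of care is well-definedness on cohomology classes, which is inherited verbatim from \cref{from-H^3_le-to-E(T_A)}, since neither the normalization step \cref{3-cocycle-cohom-to-normalized} nor the equivalence statement \cref{cohom-cocycles-induce-equiv-ext} involves the $F$-inverse hypothesis; that hypothesis enters only --- and unconditionally --- to upgrade ``crossed module extension'' to ``admissible crossed module extension''.
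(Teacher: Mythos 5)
Your proposal is correct and matches the paper's own argument: the paper proves this corollary exactly by combining \cref{from-H^3_le-to-E(T_A)} with \cref{T-F-inverse=>admissible}, noting that the $F$-inverse hypothesis serves only to make the constructed extension admissible. The extra detail you supply (normalization via \cref{3-cocycle-cohom-to-normalized}, strong normalization via \cref{lem:E-cond}, and well-definedness via \cref{cohom-cocycles-induce-equiv-ext}) is precisely what underlies \cref{from-H^3_le-to-E(T_A)}, so nothing is missing.
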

\begin{proof}
	This follows from \cref{from-H^3_le-to-E(T_A),T-F-inverse=>admissible}.
\end{proof}

\begin{cor}\label{c->ext->cohom-c'}
	The transversals \cref{sigma-for-E-unitary-cover,rho-for-F-inverse} induce a normalized $c'\in Z^3_\le(T^1,A^1)$ cohomologous to the original $c\in Z^3_\le(T^1,A^1)$ in $Z^3_\le(T^1,A^1)$.
\end{cor}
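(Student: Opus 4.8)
The plan is to combine the two threads already built in the excerpt: \cref{rho-and-sigma-giving-c} produces transversals $\rho,\s$ of $\pi,\bt$ which induce exactly the original cocycle $c$, while \cref{T-F-inverse=>admissible} (together with \cref{rho-for-F-inverse,sigma-for-E-unitary-cover}) produces \emph{order-preserving} transversals which respect idempotents. These are not the same $\rho$: the first uses $\rho(t)=(t,[t])$, the second uses $\rho(t)=(t,[\max t])$ for $t\notin E(T)$. So the cocycle $c'$ attached to the admissible transversals is \emph{a priori} different from $c$. The claim is that $c'$ is cohomologous to $c$ and can be chosen normalized.

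First I would invoke \cref{adm-ext-induces-str-norm-c}: since $T$ is $F$-inverse, the extension $A\xrightarrow{i}N\xrightarrow{\beta}S\xrightarrow{\pi}T$ is admissible by \cref{T-F-inverse=>admissible}, and the transversals $\rho$ of \cref{rho-for-F-inverse} and $\s$ of \cref{sigma-for-E-unitary-cover} are order-preserving and respect idempotents; hence the induced cocycle $c'\in Z^3(T^1,A^1)$ is strongly normalized. By \cref{lem:E-cond} a strongly normalized cocycle is normalized and order-preserving, so $c'\in Z^3_\le(T^1,A^1)$ and $c'$ is normalized. This handles the ``normalized'' and ``$Z^3_\le$'' parts of the statement essentially for free. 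The remaining point is that $c'$ and $c$ are cohomologous \emph{within} $Z^3_\le(T^1,A^1)$, i.e. via an order-preserving $2$-cochain.

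For that I would appeal to \cref{another-choice-of-rho} (and, if the lifting $F$ also changes, \cref{another-choice-of-F}): the cocycle produced by \cref{from-crossed-mod-ext-to-C^3} from a crossed module extension is independent, up to coboundary, of the choice of transversal $\rho$ of $\pi$ and lifting $F$. Since $c$ arises (via \cref{rho-and-sigma-giving-c}) from the transversal $\rho(t)=(t,[t])$ with $F$ as in \cref{F-for-E-unitary-cover}, while $c'$ arises from $\rho(t)=(t,[\max t])$ with $F=\s\circ f$, both are cocycles of the \emph{same} extension, so $c'=c\cdot\delta^2w$ for some $w\in C^2(T^1,A^1)$. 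To upgrade this to an equality in $Z^3_\le(T^1,A^1)$ I would re-examine the proof of \cref{another-choice-of-rho}: the $2$-cochain $w$ there is built from the function $v:T\to\bt(N)$ with $\rho'(x)=v(x)\rho(x)$ and from the liftings $V$ and $F,F'$; when \emph{both} transversals of $\pi$ are order-preserving and respect idempotents, $v$ is order-preserving (indeed $v(x)=\rho'(x)\rho(x)\m$, and both factors are order-preserving in $x$), and choosing $V$, $F$, $F'$ via the order-preserving transversal $\s$ of $\bt$ keeps them order-preserving; this is precisely the observation already made in the proof of \cref{E_le(T_A)->H3_le(T^1_A^1)}. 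Hence $w\in C^2_\le(T^1,A^1)$ and $c'=c\cdot\delta^2w$ in $Z^3_\le(T^1,A^1)$.

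The main obstacle I anticipate is the bookkeeping in the last step: one must verify that with the specific transversals \cref{rho-for-F-inverse,sigma-for-E-unitary-cover}, and with the \emph{specific} liftings dictated by $\s$ (namely $F(x,y)=\s(f(x,y))=(\0(\br(xy)),\br(xy),\dots)$ and the analogous $V(x)=\s(v(x))$), every intermediate $2$-cochain in the chain-homotopy computation — the $v$ of \labelcref{rho'(x)=v(x)rho(x)}, the $w$ of \labelcref{eq:FV}, and any $u$ coming from \cref{another-choice-of-F} — is order-preserving. This is not conceptually hard, since $\s$ and the map $t\mapsto[\max t]$ are monotone and $T$ being $F$-inverse makes $\max$ well-behaved under the partial order, but it requires tracking the order-preservation through each of \cref{another-choice-of-F,another-choice-of-rho}. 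Once that is in place the corollary follows, completing the verification that the map $H^3_\le(T^1,A^1)\to\mathcal{E}_\le(T,A)$ of \cref{H^3_le(T^1_A^1)->E_le(T_A)}, followed by the map $\mathcal{E}_\le(T,A)\to H^3_\le(T^1,A^1)$ of \cref{E_le(T_A)->H3_le(T^1_A^1)}, is the identity on cohomology classes.
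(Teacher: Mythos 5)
Your first two steps coincide with the paper's: $c'$ is strongly normalized by \cref{adm-ext-induces-str-norm-c} (the extension being admissible by \cref{T-F-inverse=>admissible}), hence normalized and lies in $Z^3_\le(T^1,A^1)$ by \cref{lem:E-cond}, and it is cohomologous to $c$ in $Z^3(T^1,A^1)$ by \cref{rho-and-sigma-giving-c,another-choice-of-F,another-choice-of-rho}. The gap is in your last step. You propose to track order-preservation through the proofs of \cref{another-choice-of-F,another-choice-of-rho}, asserting that the intermediate cochains $v$, $V$, $w$, $u$ are order-preserving because ``both transversals of $\pi$ are order-preserving and respect idempotents''. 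But the transversal that induces the original $c$ on the nose, namely $\rho(t)=(t,[t])$ from \cref{rho-for-E-unitary-cover}, is neither: $\rho(e)=(e,[e])\notin E(S)=E(T)\times\{\e\}$, and for $t<u$ one never has $(t,[t])\le(u,[u])$, since this would force $[t]=[u]$ in $FG(T)$. Concretely, comparing it with the admissible transversal $\rho'(t)=(t,[\max t])$ of \cref{rho-for-F-inverse}, the cochain $v(x)=\rho'(x)\rho(x)\m=(xx\m,[\max x][x]\m)$ is \emph{not} order-preserving: for $x<y$ outside $E(T)$ one has $\max x=\max y$ but $[\max x][x]\m\ne[\max y][y]\m$, so $v(x)\not\le v(y)$. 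Consequently the $2$-cochain $w$ produced via \labelcref{eq:FV} cannot be seen to be order-preserving by this bookkeeping, and the observation in the proof of \cref{E_le(T_A)->H3_le(T^1_A^1)} does not apply, since it compares two choices both of which are order-preserving. So the step you dismiss as routine is exactly the point where the direct route breaks down.

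The paper closes this gap by a different argument: writing $c=(\dl^2 d)c'$ with an \emph{arbitrary} $d\in C^2(T^1,A^1)$, it observes that $\dl^2 d=c(c')\m$ is itself normalized and order-preserving, being a quotient of two normalized order-preserving cocycles, and then invokes the second assertion of \cref{3-cocycle-cohom-to-normalized} (i.e.\ \cref{delta^2d-normalized}) to replace $d$ by a (strongly normalized) $d'\in C^2_\le(T^1,A^1)$ with $\dl^2 d'=\dl^2 d$; then $c=(\dl^2 d')c'$, which is cohomology in $Z^3_\le(T^1,A^1)$. Your outline becomes correct if you substitute this normalization step for the attempted order-preservation bookkeeping of the connecting cochains.
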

\begin{proof}
	Indeed, and by \cref{adm-ext-induces-str-norm-c,lem:E-cond} the cocycle $c'$ induced by the transversals \cref{sigma-for-E-unitary-cover,rho-for-F-inverse} belongs to $Z^3_\le(T^1,A^1)$, and thanks to \cref{another-choice-of-F,another-choice-of-rho,rho-and-sigma-giving-c} it is cohomologous to $c$ in $Z^3(T^1,A^1)$. Let $d\in C^2(T^1,A^1)$ satisfying \cref{c=(dl^2d)c'}. Since both $c$ and $c'$ are normalized and order-preserving, the coboundary $\dl^2d$ is also normalized and order-preserving, so in view of \cref{3-cocycle-cohom-to-normalized} there is $d'\in C^2_\le(T^1,A^1)$, such that $\dl^2d=\dl^2d'$. But then $c=(\dl^2d')c'$, and thus $c$ and $c'$ are cohomologous in $Z^3_\le(T^1,A^1)$.
\end{proof}

\subsection{From a crossed module extension to \texorpdfstring{$c\in Z^3_\le(T^1,A^1)$}{c in Z³<(T¹,A¹)} and back again}\label{ext->c->ext-sec}

Now we are given a $T$-module $A$ and an admissible crossed module extension $A \xrightarrow{i} N \xrightarrow{\beta} S \xrightarrow{\pi} T$ of $A$ by $T$. We fix order-preserving transversals $\rho$ and $\s$ of $\pi$ and $\bt$, respectively, which respect idempotents. Denote by $c$ the induced (strongly normalized) $3$-cocycle from $Z^3_\le(T^1,A^1)$ and consider the crossed module extension $A \xrightarrow{i'} N' \xrightarrow{\beta'} S' \xrightarrow{\pi'} T$ determined by $c$ as in \cref{c->ext-sec}. We are going to prove that the two extensions are equivalent.

Given $w\in F(T\sqcup T\m)$, $w=[t_1]^{\ve_1}\dots[t_n]^{\ve_n}$, we put
\begin{align}\label{chi(w)=rho(t_1)^ve_1...rho(t_n)^ve_n}
	\chi(w)=\rho(t_1)^{\ve_1}\dots\rho(t_n)^{\ve_n}.
\end{align}
Define $\f_2:S'\to S$ as follows:
\begin{align}\label{f_2(t_w)=rho(r(t))rho(t_1)^ve_1...rho(t_n)^ve_n}
	\f_2(t,w)=
	\begin{cases}
		\rho(\br(t))\chi(\irr w), & w\ne\e,\\
		\rho(t), & w=\e.
	\end{cases}
\end{align}

\begin{lem}\label{f_2-homo}
	The map $\f_2$ is a homomorphism $S'\to S$ such that $\pi\circ\f_2=\pi'$.
\end{lem}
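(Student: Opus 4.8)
The plan is to verify the two assertions separately: first that $\f_2$ is well defined and a homomorphism $S'\to S$, and then that $\pi\circ\f_2=\pi'$. The second assertion is immediate once we unwind the definitions, so the bulk of the work is the first one.

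\emph{Well-definedness.} Recall that $S'=\{(t,w)\in T\times FG(T)\mid t\le\nu(w)\}$, so the data $(t,w)$ satisfies $t\le\nu(w)$. I would first check that $\chi(\irr w)\in S$ when $w\ne\e$: writing $\irr w=[t_1]^{\ve_1}\dots[t_n]^{\ve_n}$, each $\rho(t_i)^{\ve_i}\in S$, hence their product lies in $S$, and then $\rho(\br(t))\chi(\irr w)\in S$ as well. The point that needs care is that $\f_2(t,w)$ really has image $(t,w)$-compatible data under $\pi$; but this is subsumed in the $\pi\circ\f_2=\pi'$ computation below, which simultaneously shows $\f_2$ lands in the correct part of $S$. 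Since $\irr w$ is literally the unique irreducible word representing $w$, there is no ambiguity in $\chi(\irr w)$, so $\f_2$ is a genuine function.

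\emph{Homomorphism property.} This is the main obstacle. Given $(p,u),(q,v)\in S'$ with product $(pq,uv)$, I need $\f_2(pq,uv)=\f_2(p,u)\f_2(q,v)$. The subtlety is exactly the one flagged in the paper's introduction: $S$ and $K$ are not free, so one cannot simply declare the value on generators; one must check compatibility across the reduction $\irr u\cdot\irr v\rightsquigarrow\irr{uv}$. The strategy is to handle the cancellation in $\irr u\irr v$ the same way as in the proofs of \cref{gm_t-endo} and \cref{lb-homo}: split into the cases where $\irr u\irr v$ is already irreducible, where $\irr u=u'w$ and $\irr v=w\m v'$ with $w$ the maximal cancelling block, and the degenerate subcases $\irr v=w\m$, $\irr u=w$, $\irr u=w$ and $\irr v=w\m$. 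In the irreducible case one has $\chi(\irr{uv})=\chi(\irr u)\chi(\irr v)$ by definition; in the cancelling cases one uses that for an order-preserving transversal respecting idempotents $\rho(x)^\ve\rho(x)^{-\ve}=\rho(x^\ve x^{-\ve})\in E(S)$ (by \cref{rho(t)rho(t)-inv=rho(tt-inv),rho(x-inv)=rho(x)-inv}) and that idempotents of $S$ are central in $\bt(N)$, together with \cref{rho(x)^ve-commutes-with-rho(e)}, to collapse $\rho(q_1)^{\s_1}\dots\rho(q_m)^{\s_m}\rho(q_m)^{-\s_m}\dots\rho(q_1)^{-\s_1}$ down to a single idempotent $\rho(e)$ and push it through the remaining factors. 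The front factors $\rho(\br(p))$ and $\rho(\br(q))$ must be reconciled with $\rho(\br(pq))$ using \cref{rho(t)rho(e)rho(t)-inv=rho(tet-inv),rho(t)rho(t)-inv=rho(tt-inv)} and the identity $\nu(u)\nu(u)^{-1}=\f(\irr u)\f(\irr u)^{-1}$, much as $\br(te)$ was manipulated in \cref{lb-homo}; the condition $p\le\nu(u)$ is what makes $\br(p)$ absorb correctly into the head of $\chi(\irr u)$. When $u=\e$ or $v=\e$ the formula reduces to $\rho(\br(p))\rho(q)$ or $\rho(p)\rho(\br(q))$ and one uses $p\in E(T)$ (respectively the analogue) together with \cref{rho-ord-pres-resp-idemp}.

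\emph{The equation $\pi\circ\f_2=\pi'$.} Since $\pi'(t,w)=t$, I must show $\pi(\f_2(t,w))=t$. For $w=\e$ this is $\pi(\rho(t))=t$, immediate. For $w\ne\e$ with $\irr w=[t_1]^{\ve_1}\dots[t_n]^{\ve_n}$, applying $\pi$ and using $\pi\circ\rho=\id$ gives $\pi(\f_2(t,w))=\br(t)\,t_1^{\ve_1}\dots t_n^{\ve_n}=\br(t)\,\nu(w)$, and since $t\le\nu(w)$ we have $t=\br(t)\,t=\br(t)\,\nu(w)$; here one uses $\nu(w)=\f(\irr w)=t_1^{\ve_1}\dots t_n^{\ve_n}$ as in the definition of $\nu$. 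This also confirms after the fact that $\f_2(t,w)$ has first coordinate (under $\pi$) equal to $t$, completing the verification that $\f_2$ is well defined into $S$. I expect the homomorphism case analysis to be the only real labour, and it parallels the earlier lemmas closely enough that the routine computations can be compressed.
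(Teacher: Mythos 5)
Your proposal follows essentially the same route as the paper's proof: the same case split ($u=\e$ or $v=\e$, $\irr u\irr v$ irreducible, and the cancelling cases with maximal block $w$), the same use of \cref{rho(x)^ve-commutes-with-rho(e),rho(t)rho(t)-inv=rho(tt-inv),rho(t)rho(e)rho(t)-inv=rho(tet-inv)} to collapse the cancelling block together with the interposed head idempotent and push the resulting $\rho(\cdot)$ leftwards, the absorption of heads via $t\le\nu(u)$, and the identical computation for $\pi\circ\f_2=\pi'$. The only small imprecision is the description of the $v=\e$, $u\ne\e$ subcase, where one still has to commute $\rho(q)$ past $\chi(\irr u)$ via \cref{rho(x)^ve-commutes-with-rho(e)} rather than rely on \cref{rho-ord-pres-resp-idemp} alone, but this is covered by the mechanism you already describe.
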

\begin{proof}
	Let $(t,u),(p,v)\in S'$. We are going to prove that $\f_2(tp,uv)=\f_2(t,u)\f_2(p,v)$. This is trivial when $u=\e$, because $t\in E(T)$ in this case, so $\rho(\br(tp))=\rho(t\br(p))=\rho(t)\rho(\br(p))$ by \cref{rho-ord-pres-resp-idemp}. If $v=\e$ and $u\ne\e$, $\irr u=[t_1]^{\ve_1}\dots[t_n]^{\ve_n}$, then applying \cref{rho(x)^ve-commutes-with-rho(e)} several times and using $t\le\nu(u)$, we obtain
	\begin{align*}
		\f_2(t,u)\f_2(p,v)&=\rho(\br(t))\rho(t_1)^{\ve_1}\dots\rho(t_n)^{\ve_n}\rho(p)\\
		&=\rho(\br(t))\rho(t_1)^{\ve_1}\dots\rho(t_{n-1})^{\ve_{n-1}}\rho(\br(t_n^{\ve_n}p))\rho(t_n)^{\ve_n}\\
		&=\dots\\
		&=\rho(\br(t))\rho(\br(t_1^{\ve_1}\dots t_n^{\ve_n}p))\rho(t_1)^{\ve_1}\dots\rho(t_n)^{\ve_n}\\
		&=\rho(\br(t)\br(\nu(u)p))\chi(\irr {uv})=\rho(\br(tp))\chi(\irr {uv})=\f_2(tp,uv).
	\end{align*}
	
	Let now $\e\not\in\{u,v\}$.
	
	\textbf{Case 1.} $\irr u\irr v$ is irreducible. Then $\irr{uv}=\irr u\irr v$. Let $\irr u=[t_1]^{\ve_1}\dots[t_k]^{\ve_k}$ and $\irr v=[p_1]^{\dl_1}\dots[p_l]^{\dl_l}$, so that $\irr{uv}=[t_1]^{\ve_1}\dots[t_k]^{\ve_k}[p_1]^{\dl_1}\dots[p_l]^{\dl_l}$. As in the previous case, we use \cref{rho(x)^ve-commutes-with-rho(e)} and $t\le\nu(u)$:
	\begin{align*}
		\f_2(t,u)\f_2(p,v)&=\rho(\br(t))\rho(t_1)^{\ve_1}\dots\rho(t_k)^{\ve_k}\rho(\br(p))\rho(p_1)^{\dl_1}\dots\rho(p_l)^{\dl_l}\\
		&=\rho(\br(t))\rho(\br(\nu(u)p))\rho(t_1)^{\ve_1}\dots\rho(t_k)^{\ve_k}\rho(p_1)^{\dl_1}\dots\rho(p_l)^{\dl_l}\\
		&=\rho(\br(tp))\chi(\irr {uv})=\f_2(tp,uv).
	\end{align*}
	
	\textbf{Case 2.} $\irr u=u'w$ and $\irr v=w\m v'$ for some non-empty $u'$ and $v'$, where $w$ is the maximal suffix of $\irr u$ such that $w\m$ is a prefix of $\irr v$. Then $\irr{uv}=u'v'$. Let $u'=[t_1]^{\ve_1}\dots[t_k]^{\ve_k}$, $v'=[p_1]^{\dl_1}\dots[p_l]^{\dl_l}$ and $w=[q_1]^{\s_1}\dots[q_m]^{\s_m}$, so that $\irr u=[t_1]^{\ve_1}\dots[t_k]^{\ve_k}[q_1]^{\s_1}\dots[q_m]^{\s_m}$, $\irr v=[q_m]^{-\s_m}\dots[q_1]^{-\s_1}[p_1]^{\dl_1}\dots[p_l]^{\dl_l}$ and $\irr{uv}=[t_1]^{\ve_1}\dots[t_k]^{\ve_k}[p_1]^{\dl_1}\dots[p_l]^{\dl_l}$. Using  \cref{rho(x)^ve-commutes-with-rho(e),rho(t)rho(t)-inv=rho(tt-inv),rho(t)rho(e)rho(t)-inv=rho(tet-inv),nu(u)nu(v)<=nu(uv)} and $t\le\nu(u)$:
	\begin{align*}
	\f_2(t,u)\f_2(p,v)&=\rho(\br(t))\rho(t_1)^{\ve_1}\dots\rho(t_k)^{\ve_k}\rho(q_1)^{\s_1}\dots\rho(q_m)^{\s_m}\\
	&\quad\cdot\rho(\br(p))\rho(q_m)^{-\s_m}\dots\rho(q_1)^{-\s_1}\rho(p_1)^{\dl_1}\dots\rho(p_l)^{\dl_l}\\
	&=\rho(\br(t))\rho(t_1)^{\ve_1}\dots\rho(t_k)^{\ve_k}\rho(\br(\nu(w)p))\rho(p_1)^{\dl_1}\dots\rho(p_l)^{\dl_l}\\
	&=\rho(\br(t))\rho(\br(\nu(u')\nu(w)p))\rho(t_1)^{\ve_1}\dots\rho(t_k)^{\ve_k}\rho(p_1)^{\dl_1}\dots\rho(p_l)^{\dl_l}\\
	&=\rho(\br(t))\rho(\br(\nu(u)p))\chi(\irr {uv})=\rho(\br(tp))\chi(\irr {uv})=\f_2(tp,uv).
	\end{align*}
	
	The remaining $3$ cases $\irr u=u'w$ and $\irr v=w\m$; $\irr u=w$ and $\irr v=w\m v'$; $\irr u=w$ and $\irr v=w\m$ are similar to Case 2.
	
	Let $s=(t,w)\in S$. If $w=\e$, then $(\pi\circ\f_2)(s)=\pi(\rho(t))=t=\pi'(s)$. Otherwise, let $\irr w=[t_1]^{\ve_1}\dots[t_n]^{\ve_n}$, so using $t\le\nu(w)$ we obtain
	\begin{align*}
		(\pi\circ\f_2)(s)=\pi(\rho(\br(t))\rho(t_1)^{\ve_1}\dots\rho(t_n)^{\ve_n})=\br(t)t_1^{\ve_1}\dots t_n^{\ve_n}
		=\br(t)\nu(w)=t=\pi'(s).
	\end{align*}
\end{proof}

Let $f:T^2\to\bt(N)$ be the map determined by \cref{rho(x)rho(y)=f(x_y)rho(xy)} and $F=\s\circ f$. Then $F:T^2\to N$, $F(x,y)\in N_{(i\circ\0)(\br(xy))}$ and \cref{bt(F(x_y))=f(x_y)} holds. Since $\rho$ and $\s$ are order-preserving and respect idempotents, it follows from \cref{rho-ord-pres-resp-idemp,rho(x)rho(y)=f(x_y)rho(xy)} that both $f$ and $F$ satisfy \cref{c(x_1...ex_i...x_n),c(x_1...x_ie...x_n),c(x_1...x_(i-1)_e_x_(i+1)...x_n)-triv} with $\0$ replaced by $\rho$ and $i\circ\0=\af\circ\rho$, respectively. Moreover, by \cref{rho(x)rho(y)=f(x_y)rho(xy),rho(t)rho(t)-inv=rho(tt-inv)} we have $f(x,x\m)=\rho(\br(x))$, so 
\begin{align}\label{F(t-inv_t)=af(rho(t-inv.t))}
F(x,x\m)=\af(\rho(\br(x))).
\end{align}
Let now $\tau=\tau_F:F(T\sqcup T\m)\to N$ as defined in \cref{tau([x][y]-inv.v),tau([x][y]v),tau([x]),tau([x]-inv.u)}. We introduce $\f_1:N'\to N$ as follows
\begin{align}\label{f_1(a_e_w)=tau(irr(w))i(a)}
	\f_1(a,e,w)=
	\begin{cases}
		\tau(\irr w)i(a), & w\ne\e,\\
		i(a), & w=\e.
	\end{cases}
\end{align}

\begin{lem}\label{f_1-homo-N'->N}
	The map $\f_1$ is a homomorphism $N'\to N$ such that $\f_1\circ i'=i$.
\end{lem}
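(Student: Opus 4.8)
The plan is to check three things: that $\f_1$ maps into $N$, that it is multiplicative, and that $\f_1\circ i'=i$. The first and third are quick; the second reduces, after stripping off the $i(A)$-factors, to an identity about $\tau$ that is proved exactly like the multiplicativity of $\z_t$ in \cref{gm_t-endo}.

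First I would record the group component of $\f_1(a,e,w)$, since it is used repeatedly. Because $\rho$ respects idempotents we have $\rho|_{E(T)}=(\pi|_{E(S)})\m$, so \cref{0=i^(-1).af.pi^(-1)} gives $i\circ\0=\af\circ\rho$ on $E(T)$; hence $i(a)\in N_{\af(\rho(e))}$, as $aa\m=\0(e)$. By \cref{properties-of-tau}(1), applied with the isomorphism $\af\circ\rho$ in the role of $\0$ (legitimate since $F$ satisfies the normalization identities noted before the statement), together with $\f(\irr w)=\nu(w)$, we get $\tau(\irr w)\in N_{\af(\rho(\br(\nu(w))))}$. Now $e\le\nu(w)$ with $e$ idempotent forces $e=ee\m\le\nu(w)\nu(w)\m=\br(\nu(w))$, and $\rho$ is order-preserving, so $\af(\rho(e))\le\af(\rho(\br(\nu(w))))$ and therefore $\f_1(a,e,w)=\tau(\irr w)i(a)\in N_{\af(\rho(e))}\sst N$ when $w\ne\e$; for $w=\e$ there is nothing to prove.

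For multiplicativity, take $m=(a,e,u)$, $n=(b,f,v)$ in $N'$, so $mn=(ab,ef,uv)$. The cases $u=\e$ and $v=\e$ are immediate from $i(A)\sst C(N)$ (\cref{i(A)-sst-C(N)}) and the definition of $\f_1$, so assume $\e\notin\{u,v\}$. Using centrality of $i(A)$ and that $i$ is a homomorphism, $\f_1(m)\f_1(n)=\tau(\irr u)i(a)\tau(\irr v)i(b)=\tau(\irr u)\tau(\irr v)\,i(ab)$. Since $i(ab)\in N_{\af(\rho(ef))}$ by the previous paragraph, and $ef\le\br(\nu(u))$, $ef\le\br(\nu(v))$, and $ef\le\br(\nu(uv))$ (the last because $ef\le\nu(u)\nu(v)\le\nu(uv)$ by \cref{nu(u)nu(v)<=nu(uv)}, whence $ef\le\br(\nu(uv))$), multiplying any element of these three components on the right by $i(ab)$ is the same as first multiplying by $\af(\rho(ef))$; so $\f_1(mn)=\f_1(m)\f_1(n)$ follows once I establish
\[
\af(\rho(ef))\,\tau(\irr u)\tau(\irr v)=\af(\rho(ef))\,\tau(\irr{uv})
\]
when $uv\ne\e$, and $\af(\rho(ef))\,\tau(\irr u)\tau(\irr v)=\af(\rho(ef))$ when $uv=\e$ (here $\irr v=(\irr u)\m$). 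I would prove this exactly as \cref{z_t(uv)=z_t(u)z_t(v)} in the proof of \cref{gm_t-endo}: split into ``$\irr u\irr v$ irreducible'' and ``$\irr u=u'w$, $\irr v=w\m v'$ with $w$ the maximal cancelling factor'' (the latter further split according to which of $u'$, $v'$ is empty), using \cref{properties-of-tau}(2),(3),(4),(6) to absorb the cancellation and \cref{properties-of-tau}(5), with $ef\le\f(\irr u)$, in the totally cancelling case $uv=\e$.

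Finally, $\f_1\circ i'=i$ is immediate: by \cref{i(a)=(a_0(aa-inv)_1)} the triple $i'(a)=(a,\0\m(aa\m),\e)$ has empty third coordinate, so $\f_1(i'(a))=i(a)$ straight from \cref{f_1(a_e_w)=tau(irr(w))i(a)}. The only genuine work is the displayed equality above, i.e.\ replaying the proof of \cref{gm_t-endo} with the $N$-valued map $\tau=\tau_F$ in place of the $A$-valued $\z_t$; this transfers verbatim precisely because \cref{properties-of-tau} was set up in the general, not-necessarily-abelian framework, so none of the identities used there relied on commutativity of the coefficients or on the $3$-cocycle identity — this is exactly the point flagged in \cref{rem-didnt-use-3coc-commut}.
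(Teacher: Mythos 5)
Your proposal is correct and follows the paper's own route: reduce multiplicativity (after using $i(ab)=i(a)i(b)$ and $i(A)\sst C(N)$) to the identity $\af(\rho(ef))\,\tau(\irr u)\tau(\irr v)=\af(\rho(ef))\,\tau(\irr{uv})$ (with the degenerate case $uv=\e$ handled by \cref{properties-of-tau}), which is exactly the paper's appeal to \cref{properties-of-tau} via the case analysis of \cref{gm_t-endo}, and the equality $\f_1\circ i'=i$ is immediate from \cref{i(a)=(a_0(aa-inv)_1),f_1(a_e_w)=tau(irr(w))i(a)}. The extra verification of group components via $i\circ\0=\af\circ\rho$ is harmless but not needed for the statement.
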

\begin{proof}
	Let $m=(a,e,u)$ and $n=(b,f,v)\in N'$. We first prove $\f_1(mn)=\f_1(m)\f_1(n)$. The case $\e\in\{u,v\}$ is trivial, so let $\e\not\in\{u,v\}$. Since $i(ab)=i(a)i(b)$, it suffices to prove $i(\0(e))\tau(\irr{uv})=i(\0(e))\tau(\irr u)\tau(\irr v)$, which follows from \cref{properties-of-tau}. 
	
	Given $a\in A$, thanks to \cref{i(a)=(a_0(aa-inv)_1),f_1(a_e_w)=tau(irr(w))i(a)} we have $(\f_1\circ i')(a)=\f_1(a,\0\m(aa\m),\e)=i(a)$.
\end{proof}

\begin{lem}\label{bt-circ-tau=chi}
	Let $w\in F(T\sqcup T\m)$ and $e\in E(T)$ be such that $e\le\f(w)$. Then
	\begin{align}\label{rho(e)bt(tau(w))=rho(e)chi(w)}
	\rho(e)\bt(\tau(w))=\rho(e)\chi(w).
	\end{align}
\end{lem}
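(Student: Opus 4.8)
The plan is to prove \labelcref{rho(e)bt(tau(w))=rho(e)chi(w)} by induction on $l(w)$, following the pattern of the reduction lemmas in \cref{c->ext-sec} (see the remark after \cref{xi_t([x]w)-equals-xi_t([y-inv]w)}), the new point being that the word $\chi(w)$ telescopes under $\bt$ in exactly the same way that $\tau(w)$ does. Indeed, from $\rho(x)\rho(y)=f(x,y)\rho(xy)$ one gets $\chi([x][y]v)=f(x,y)\chi([xy]v)$, while applying $\bt$ to \labelcref{tau([x][y]v)} and using $\bt(F(x,y))=f(x,y)$ (see \cref{bt(F(x_y))=f(x_y)}) gives $\bt(\tau([x][y]v))=f(x,y)\,\bt(\tau([xy]v))$; thus for $w=[x][y]v$ both sides of \labelcref{rho(e)bt(tau(w))=rho(e)chi(w)} are obtained from the corresponding sides for $[xy]v$ by left multiplication by the same factor $f(x,y)\in\bt(N)$. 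Since $\rho(e)\in E(S)\sst C(\bt(N))$, I would move $\rho(e)$ past $f(x,y)$, apply the induction hypothesis to $[xy]v$ — legitimate because $\f([x][y]v)=\f([xy]v)$, so the constraint $e\le\f(\,\cdot\,)$ is preserved — and then move $\rho(e)$ back.

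For the remaining recursive clauses one must first deal with the leading letter. Applying $\bt$ to \labelcref{tau([x]-inv.u)} and using $f(x\m,x)=\rho(x\m x)\in E(S)$ (which follows from \labelcref{F(t-inv_t)=af(rho(t-inv.t))} with $x$ replaced by $x\m$, or directly from \cref{rho(t)rho(t)-inv=rho(tt-inv)}), together with $\chi([x]\m u)=\rho(x)\m\chi(u)=\rho(x\m)\chi(u)=\chi([x\m]u)$ (by \cref{rho(x-inv)=rho(x)-inv}) and the fact that $e\le\f([x]\m u)=\f([x\m]u)$ forces $e\le x\m x$, one reduces a word beginning with a negative letter to one beginning with a positive letter of the same length; so one may assume throughout that $w$ begins with a positive letter. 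For the clause \labelcref{tau([x][y]-inv.v)} the telescoping identity needed is $\rho(x)\rho(y)\m=f(xy\m,y)\m\rho(xy\m)$; to obtain it I would start from $\rho(xy\m)\rho(y)=f(xy\m,y)\rho(xy\m y)$, rewrite $\rho(xy\m y)=\rho(x)\rho(y\m y)$ via \cref{rho-ord-pres-resp-idemp}, multiply on the right by $\rho(y)\m=\rho(y\m)$ and simplify using \cref{rho(t)rho(t)-inv=rho(tt-inv),rho-ord-pres-resp-idemp} to get $\rho(xy\m)=f(xy\m,y)\rho(x)\rho(y\m)$, and then cancel $f(xy\m,y)$ on the left after checking that $\br(\rho(x)\rho(y\m))=\rho(\br(xy\m))=f(xy\m,y)\m f(xy\m,y)$ (another short computation of the same flavour). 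With this in hand the case $w=[x][y]\m v$ runs exactly like the case $w=[x][y]v$. The base case $w=[x]$ is immediate: $\bt(\tau([x]))=\bt(\af(\rho(\br(x))))=\rho(\br(x))=\rho(x)\rho(x)\m$, and $e\le x$ gives $\rho(e)\rho(x)=\rho(e)$ as well as $\rho(e)\rho(x)\rho(x)\m=\rho(e)$.

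Throughout, the bookkeeping with idempotents is trivialized by the observation that, for an idempotent $e$, the relation $e\le s$ is equivalent to $e=es=se$; in particular $e\le\f(w)$ yields $e\le\br(\f(w))$ and all the absorptions $\rho(e)\rho(\text{idempotent})=\rho(e)$ used above. The step I expect to be the real obstacle is the verification of the ``negative'' telescoping identity $\rho(x)\rho(y)\m=f(xy\m,y)\m\rho(xy\m)$: it is elementary but somewhat delicate, since it forces one to handle products such as $\rho(xy\m y)\rho(y\m)$ in which neither factor is idempotent, and the clean route is the $\br$-idempotent computation indicated above, after which everything collapses using only the stated properties of order-preserving, idempotent-respecting transversals and the fact that $E(S)$ centralizes $\bt(N)$.
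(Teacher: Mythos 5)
Your proposal is correct and follows essentially the same route as the paper's proof: reduce a leading negative letter using $\rho(x)\m=\rho(x\m)$ and $F(x\m,x)=\af(\rho(\bd(x)))$, then induct on $l(w)$ for words starting with a positive letter, telescoping both $\bt(\tau(w))$ and $\chi(w)$ by $f(x,y)$ (resp.\ $f(xy\m,y)\m$) and commuting $\rho(e)$ past these factors. Your standalone identity $\rho(x)\rho(y)\m=f(xy\m,y)\m\rho(xy\m)$ is exactly the computation the paper carries out inline in its Case~2 (by inserting $\rho(y)\rho(y)\m$ and using \cref{rho(t)rho(t)-inv=rho(tt-inv),rho-ord-pres-resp-idemp}), so the two arguments coincide.
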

\begin{proof}
	We first observe that it suffices to consider $w$ of the form $w=[x]w'$. For if $w=[x]\m w'$, then 
    \begin{align*}
	\rho(e)\beta(\tau(w)) & = \rho(e)\beta(\tau([x]^{-1}w')) && \\
	& = \rho(e)\beta(F(x^{-1},x)^{-1})\beta(\tau([x^{-1}]w')) && \text{by } \labelcref{tau([x]-inv.u)} \\
	& = \rho(e)\beta(\alpha(\rho(\br(x^{-1}))))\beta(\tau([x^{-1}]w')) && \text{by } \labelcref{F(t-inv_t)=af(rho(t-inv.t))} \\
	& = \rho(e) \rho(\br(x^{-1})) \beta(\tau([x^{-1}]w')), && \beta \text{ and } \alpha \text{ are inverse on idempotents} \\
	\rho(e)\chi(w) & = \rho(e)\chi([x]^{-1}w') && \\
	& = \rho(e)\rho(x)^{-1} \chi(w') && \text{by } \labelcref{chi(w)=rho(t_1)^ve_1...rho(t_n)^ve_n} \\
	& = \rho(e)\rho(x)^{-1} \rho(x) \rho(x)^{-1} \chi(w') \\
	& = \rho(e)\rho(x^{-1}) \rho(x) \rho(x^{-1}) \chi(w') && \text{by  \Cref{rho(x-inv)=rho(x)-inv}} \\
	& = \rho(e)\rho(x^{-1}) \rho(x) \chi([x^{-1}]w') && \text{by } \labelcref{chi(w)=rho(t_1)^ve_1...rho(t_n)^ve_n} \\
	& = \rho(e) \rho(x^{-1}x) \chi([x^{-1}]w'). && \text{by } \labelcref{rho(t)rho(t)-inv=rho(tt-inv)}
	\end{align*}
	
	We will prove \cref{rho(e)bt(tau(w))=rho(e)chi(w)} by induction on $l(w)$ for all $w$ of the form $[x]w'$. Base of induction: let $w=[x]$. Then by \cref{tau([x]),rho-ord-pres-resp-idemp}
	\begin{align*}
	\rho(e)\bt(\tau(w))&=\rho(e)(\bt\circ \af\circ\rho)(\br(x))=\rho(e)\rho(\br(x))\\
	&=\rho(e\br(x))=\rho(e)=\rho(e)\rho(x)=\rho(e)\chi(w).
	\end{align*}
	
	Let $l(w)>1$. 
	
	\textbf{Case 1.} $w=[x][y]w'$. Then by \cref{tau([x][y]v),rho(x)rho(y)=f(x_y)rho(xy)}
	\begin{align*}
		\bt(\tau(w))&=\bt(F(x,y))\bt(\tau([xy]w'))=f(x,y)\bt(\tau([xy]w')),\\
		\chi(w)&=\rho(x)\rho(y)\chi(w')=f(x,y)\rho(xy)\chi(w')=f(x,y)\chi([xy]w').
	\end{align*}
	Since $l([xy]w')<l(w)$ and $\f([xy]w')=\f(w)$, we may use the induction hypothesis.
	
	\textbf{Case 2.} $w=[x][y]\m w'$. Then by \cref{tau([x][y]-inv.v),rho(x)rho(y)=f(x_y)rho(xy),rho(t)rho(t)-inv=rho(tt-inv),rho-ord-pres-resp-idemp}
	\begin{align*}
	\bt(\tau(w))&=\bt(F(xy\m,y)\m)\bt(\tau([xy\m]w'))=f(xy\m,y)\m\bt(\tau([xy\m]w')),\\
	\chi(w)&=\rho(x)\rho(y)\m\chi(w')=\rho(x)\rho(y)\m \rho(y)\rho(y)\m\chi(w')\\
	&=\rho(x)\rho(y\m y)\rho(y)\m\chi(w')=\rho(xy\m y)\rho(y)\m\chi(w')\\
	&=f(xy\m,y)\m\rho(xy\m)\rho(y)\rho(y)\m\chi(w')=f(xy\m,y)\m\rho(xy\m)\rho(yy\m)\chi(w')\\
	&=f(xy\m,y)\m\rho(xy\m)\chi(w')=f(xy\m,y)\m\chi([xy\m]w').
	\end{align*}
	Since $l([xy\m]w')<l(w)$ and $\f([xy\m]w')=\f(w)$, we may use the induction hypothesis.
\end{proof}

\begin{lem}\label{middle-square-comm}
	We have $\bt\circ\f_1=\f_2\circ\bt'$.
\end{lem}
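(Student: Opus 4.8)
The plan is to prove $\bt\circ\f_1=\f_2\circ\bt'$ by evaluating both sides pointwise on $N'$, splitting into the two cases appearing in the definitions of $\f_1$ and $\f_2$. Fix $(a,e,w)\in N'$, so that $a\in A_e$ and $e\le\nu(w)$. By construction $\bt'(a,e,w)=(e,w)$ [cf. \cref{bt(a_e_w)=(e_w)}], hence the right-hand side is $\f_2(e,w)$, which by \cref{f_2(t_w)=rho(r(t))rho(t_1)^ve_1...rho(t_n)^ve_n} equals $\rho(e)$ when $w=\e$ and $\rho(\br(e))\chi(\irr w)=\rho(e)\chi(\irr w)$ when $w\ne\e$ (since $\br(e)=e$). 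On the left, using \cref{f_1(a_e_w)=tau(irr(w))i(a)} and that $\bt$ is a homomorphism, we get $\bt(i(a))$ when $w=\e$ and $\bt(\tau(\irr w))\,\bt(i(a))$ when $w\ne\e$.

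The argument then rests on two auxiliary observations. First, $\bt(i(a))=\rho(e)$ for every $a\in A_e$: indeed $i(a)i(a)\m=i(aa\m)=i(\0(e))=\af(\rho(e))$, where we used $\0=i\m\circ\af\circ(\pi|_{E(S)})\m$ [\cref{0=i^(-1).af.pi^(-1)}] and the fact that $\rho$ respects idempotents, so $(\pi|_{E(S)})\m(e)=\rho(e)$; applying $\bt$ and $\bt|_{E(N)}=\af\m$ gives $\bt(i(a))\bt(i(a))\m=\rho(e)$, and since $i(A)=\bt\m(E(S))$ the element $\bt(i(a))$ is an idempotent, hence equals $\bt(i(a))\bt(i(a))\m=\rho(e)$. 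Second, for $w\ne\e$ the elements $\bt(\tau(\irr w))$ and $\rho(e)$ commute: $\rho(e)\in E(S)=E(\bt(N))$ is a central idempotent of the semilattice of groups $\bt(N)$, while $\bt(\tau(\irr w))\in\bt(N)$.

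Combining these yields the claim. For $w=\e$ both sides equal $\rho(e)$. For $w\ne\e$,
\[ (\bt\circ\f_1)(a,e,w)=\bt(\tau(\irr w))\,\rho(e)=\rho(e)\,\bt(\tau(\irr w))=\rho(e)\,\chi(\irr w)=\f_2(e,w), \]
where the third equality is \cref{bt-circ-tau=chi} applied to the word $\irr w$, which is legitimate because $e\le\nu(w)=\f(\irr w)$ [\cref{nu(w)=f(irr(w))}]. I do not anticipate a genuine obstacle here: all the real work was already carried out in \cref{bt-circ-tau=chi}, and what remains is only the bookkeeping above. The two points that require a little care are the evaluation of $\bt\circ i$ and the commutativity of $\rho(e)$ with $\bt(\tau(\irr w))$, but both are immediate from properties recorded earlier in the paper.
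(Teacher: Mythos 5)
Your proof is correct and follows essentially the same route as the paper's: split on $w=\e$ versus $w\ne\e$, establish $\bt(i(a))=\rho(e)$ from exactness and $\0=i\m\circ\af\circ(\pi|_{E(S)})\m$, and reduce the nontrivial case to \cref{bt-circ-tau=chi} using centrality of idempotents in $\bt(N)$. No issues to report.
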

\begin{proof}
	Let $n=(a,e,w)$. If $w=\e$, then using \cref{CME3,f_1(a_e_w)=tau(irr(w))i(a),0=i^(-1).af.pi^(-1),rho|_E(T)=pi-inv|_E(T),f_2(t_w)=rho(r(t))rho(t_1)^ve_1...rho(t_n)^ve_n} we have
	\begin{align*}
		(\bt\circ\f_1)(n)&=\bt(i(a))=\bt(i(a))\bt(i(a))\m=\bt(i(aa\m))=(\af\m\circ i\circ\0)(e)\\
		&=\pi\m(e)=\rho(e)=\f_2(e,w)=(\f_2\circ\bt')(n).
	\end{align*} 
	
	Let $w\ne\e$. Then in view of $\bt(i(a))=\rho(e)$, \cref{f_1(a_e_w)=tau(irr(w))i(a),bt-circ-tau=chi,f_2(t_w)=rho(r(t))rho(t_1)^ve_1...rho(t_n)^ve_n} we have
	\begin{align*}
		(\bt\circ\f_1)(n)&=\bt(\tau(\irr w)i(a))=\rho(e)\bt(\tau(\irr w))=\rho(e)\chi(\irr w)=(\f_2\circ\bt')(n).
	\end{align*}
\end{proof}

\begin{lem}\label{tau.i-circ-xi_t=lb_rho(t)-circ-tau}
	Let $t\in T$, $\ve\in\{-1,1\}$, $w\in F(T\sqcup T\m)$ and $e\in E(T)$ such that $e\le\f(w)$. Then
	\begin{align}\label{tau([t]w[t]-inv)i(xi_t(w))=lb_rho(t)(tau(w))}
	\af(\rho(\br(t^\ve e)))\tau([t]^\ve w[t]^{-\ve})i(\xi_{t^\ve}(w))=\af(\rho(\br(t^\ve e)))\lb_{\rho(t)^\ve}(\tau(w)).
	\end{align}
\end{lem}
\begin{proof}
	Consider first the case $w=[x]\m w'$ for some $w'\in F(T\sqcup T\m)^1$. Then by \cref{tau([x]-inv.u)}
	\begin{align*}
		\lb_{\rho(t)^\ve}(\tau(w))&=\lb_{\rho(t)^\ve}(F(x\m,x)\m)\lb_{\rho(t)^\ve}(\tau([x\m]w')).
	\end{align*}
	If $\ve=1$, then by \cref{tau([x][y]-inv.v),tau([x][y]v),xi_t([x]-inv.u),i(A)-sst-C(N)}
	\begin{align*}
		\tau([t]^\ve w[t]^{-\ve})i(\xi_{t^\ve}(w))&=F(tx\m,x)\m F(t,x\m)\m i(c(t,x\m,x)\m)\\
		&\quad\cdot\tau([t]^\ve [x\m]w'[t]^{-\ve})i(\xi_{t^\ve}([x\m]w')).
	\end{align*}
	However, by \cref{lb_rho(x)(F(y_z))F(x_yz)=i(c(x_y_z))F(x_y)F(xy_z)}
	\begin{align*}
		\lb_{\rho(t)}(F(x\m,x))F(t,x\m x)=i(c(t,x\m,x))F(t,x\m)F(tx\m,x),
	\end{align*}
	where $F(t,x\m x)=\af(\rho(\br(tx\m)))$ by \cref{c(x_1...x_(i-1)_e_x_(i+1)...x_n)-triv}. If $\ve=-1$, then by \cref{tau([x][y]-inv.v),tau([x][y]v),xi_t([x]-inv.u),i(A)-sst-C(N),tau([x]-inv.u),F(t-inv_t)=af(rho(t-inv.t))}
	\begin{align*}
	\tau([t]^\ve w[t]^{-\ve})i(\xi_{t^\ve}(w))&=F(t\m x\m,x)\m F(t\m,x\m)\m i(c(t\m,x\m,x)\m)\\
	&\quad\cdot\tau([t\m][x\m]w'[t])i(\xi_{t\m}([x\m]w'))\\
	&=F(t\m x\m,x)\m F(t\m,x\m)\m i(c(t\m,x\m,x)\m)\\
	&\quad\cdot\tau([t]^\ve [x\m]w'[t]^{-\ve})i(\xi_{t^\ve}([x\m]w')).
	\end{align*}
	But by \cref{lb_rho(x)(F(y_z))F(x_yz)=i(c(x_y_z))F(x_y)F(xy_z),rho(x-inv)=rho(x)-inv}
	\begin{align*}
	\lb_{\rho(t)\m}(F(x\m,x))F(t\m,x\m x)=i(c(t\m,x\m,x))F(t\m,x\m)F(t\m x\m,x).
	\end{align*}
	Since $\f(w)=\f([x\m]w')$, we see that it suffices to prove \cref{tau([t]w[t]-inv)i(xi_t(w))=lb_rho(t)(tau(w))} for $w$ of the form $[x]w'$.
	
	The proof will be by induction on $l(w)$. Let $w=[x]$. Then by \cref{tau([x]),rho(t)rho(e)rho(t)-inv=rho(tet-inv),TM2,rho(x-inv)=rho(x)-inv}
	\begin{align*}
		\af(\rho(\br(t^\ve e)))\lb_{\rho(t)^\ve}(\tau(w))&=\af(\rho(\br(t^\ve e)))\lb_{\rho(t)^\ve}(\af(\rho(\br(x))))=\af(\rho(\br(t^\ve e)))\af(\rho(\br(t^\ve x)))\\
		&=\af(\rho(\br(t^\ve e))).
	\end{align*}
	If $\ve=1$, then by \cref{tau([x]),tau([x][y]v),tau([x][y]-inv.v),0=i^(-1).af.pi^(-1),c(x_1...ex_i...x_n),c(x_1...x_(i-1)_e_x_(i+1)...x_n)-triv} and $e\le x$
	\begin{align*}
		\af(\rho(\br(t^\ve e)))\tau([t]^\ve w[t]^{-\ve})i(\xi_{t^\ve}(w))&=\af(\rho(\br(te)))F(t,x) F(txt\m,t)\m i(\0(\br(tx)))\\
		&=F(t,ex) F(text\m,t)\m \af(\rho(\br(tx)))\\
		&=F(t,e) F(tet\m,t)\m \af(\rho(\br(tx)))\\
		&=\af(\rho(\br(te)))\af(\rho(\br(tx)))=\af(\rho(\br(te)))=\af(\rho(\br(t^\ve e))).
	\end{align*}
	If $\ve=-1$, then by \cref{tau([x]),tau([x][y]v),tau([x][y]-inv.v),0=i^(-1).af.pi^(-1),tau([x]-inv.u),F(t-inv_t)=af(rho(t-inv.t)),c(x_1...ex_i...x_n)} and $e\le x$
	\begin{align*}
	\af(\rho(\br(t^\ve e)))\tau([t]^\ve w[t]^{-\ve})i(\xi_{t^\ve}(w))&=\af(\rho(\br(t\m e))) F(t\m,x) F(t\m x,t) i(\0(\br(t\m x)))\\
	&=F(t\m,ex) F(t\m ex,t) \af(\rho(\br(t\m x)))\\
	&=F(t\m,e) F(t\m e,t) \af(\rho(\br(t\m x)))\\
	&=\af(\rho(\br(t\m e))) \af(\rho(\br(t\m x)))\\
	&=\af(\rho(\br(t\m e)))=\af(\rho(\br(t^\ve e))).
	\end{align*}
	
	Now let $l(w)>1$.
	
	\textbf{Case 1.} $w=[x][y]w'$. Then by \cref{tau([x][y]v)}
	\begin{align*}
	\lb_{\rho(t)^\ve}(\tau(w))&=\lb_{\rho(t)^\ve}(F(x,y))\lb_{\rho(t)^\ve}(\tau([xy]w')).
	\end{align*}
	If $\ve=1$, then by \cref{tau([x][y]v),xi_t([x][y]v),i(A)-sst-C(N)}
	\begin{align*}
	\tau([t]^\ve w[t]^{-\ve})i(\xi_{t^\ve}(w))&=i(c(t,x,y))F(t,x) F(tx,y) F(t,xy)\m 
	\tau([t]^\ve [xy]w'[t]^{-\ve})i(\xi_{t^\ve}([xy]w')).
	\end{align*}
	But by \cref{lb_rho(x)(F(y_z))F(x_yz)=i(c(x_y_z))F(x_y)F(xy_z)}
	\begin{align*}
	\lb_{\rho(t)}(F(x,y))F(t,xy)=i(c(t,x,y))F(t,x)F(tx,y).
	\end{align*}
	If $\ve=-1$, then by \cref{tau([x][y]v),xi_t([x]-inv.u),xi_t([x][y]v),i(A)-sst-C(N),tau([x]-inv.u),F(t-inv_t)=af(rho(t-inv.t))}
	\begin{align*}
	\tau([t]^\ve w[t]^{-\ve})i(\xi_{t^\ve}(w))&=i(c(t\m,x,y))F(t\m,x) F(t\m x,y) F(t\m,xy)\m\\
	&\quad\cdot\tau([t\m][xy]w'[t])i(\xi_{t\m}([xy]w'))\\
	&=F(t\m,x) F(t\m x,y) F(t\m,xy)\m i(c(t\m,x,y))\\
	&\cdot\tau([t]^\varepsilon[xy]w'[t]^{-\varepsilon})i(\xi_{t^\varepsilon}([xy]w'))
	\end{align*}
    
	But by \cref{lb_rho(x)(F(y_z))F(x_yz)=i(c(x_y_z))F(x_y)F(xy_z),rho(x-inv)=rho(x)-inv}
	\begin{align*}
	\lb_{\rho(t)\m}(F(x,y))F(t\m,xy)=i(c(t\m,x,y))F(t\m,x)F(t\m x,y).
	\end{align*}
	Since $\f(w)=\f([xy]w')$ and $l([xy]w')<l(w)$, we may use the induction hypothesis.
	
	\textbf{Case 2.} $w=[x][y]\m w'$. This case is similar to the previous one.
\end{proof}

\begin{lem}\label{f_1-commutes-with-gm-via-rho}
	For any $t\in T$ and $\e\in\{-1,1\}$ we have $\f_1\circ\gm^\ve_t = \lb_{\rho(t)^\ve}\circ\f_1$.
\end{lem}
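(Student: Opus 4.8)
The plan is to follow the template of \cref{f_1-commutes-with-gm}, with \cref{tau.i-circ-xi_t=lb_rho(t)-circ-tau} taking the place of \cref{tau.xi_t=xi'_t.eta_t-circ-tau} and the crossed module homomorphism $\lb:S\to\mend N$ taking the place of $\lb':S'\to\mend{N'}$. Before the case analysis I would record the facts used throughout: $\f_1$ is a homomorphism (\cref{f_1-homo-N'->N}) and $i(A)\sst C(N)$ (\cref{i(A)-sst-C(N)}); since the extension is admissible and $\rho$ respects idempotents, $\rho(t)\m=\rho(t\m)$ by \cref{rho(x-inv)=rho(x)-inv}; and since $\pi(\rho(t)^\ve)=t^\ve$, the defining equation \cref{eta_t(a)=i^(-1)(lb_s(i(a)))} of the induced $T$-module structure gives $\lb_{\rho(t)^\ve}(i(a))=i(\eta_{t^\ve}(a))$ for all $a\in A$. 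I would also note that, by \cref{0=i^(-1).af.pi^(-1),rho|_E(T)=pi-inv|_E(T)}, $i\circ\0=\af\circ\rho$ on $E(T)$, so $\f_1$ sends the component of $N'$ indexed by $e'\in E(T)$ into the group component of $N$ with identity $\af(\rho(e'))$.

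Fix $t\in T$, $\ve\in\{-1,1\}$ and $n=(a,e,w)\in N'$. If $w=\e$, then $\gm^\ve_t(n)=(\0(\br(t^\ve))\eta_{t^\ve}(a),t^\ve et^{-\ve},\e)$ by \cref{gm_t(a_e_w)=(z_t(w)eta_t(a)_tet-inv_[t]w[t]-inv),gm-inv_t(a_e_w)=,z_t(w)=xi_t(irr(w))}; since $\eta_{t^\ve}(a)$ lies in the component $A_{\0(t^\ve et^{-\ve})}$ and $t^\ve et^{-\ve}\le\br(t^\ve)$, applying $\f_1$ gives $(\f_1\circ\gm^\ve_t)(n)=i(\eta_{t^\ve}(a))=\lb_{\rho(t)^\ve}(i(a))=(\lb_{\rho(t)^\ve}\circ\f_1)(n)$. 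Now assume $w\ne\e$ and first treat the subcase where $[t]^\ve\irr w[t]^{-\ve}$ is irreducible, so $\irr{[t]^\ve w[t]^{-\ve}}=[t]^\ve\irr w[t]^{-\ve}$. Expanding via \cref{gm_t(a_e_w)=(z_t(w)eta_t(a)_tet-inv_[t]w[t]-inv),gm-inv_t(a_e_w)=,z_t(w)=xi_t(irr(w)),f_1(a_e_w)=tau(irr(w))i(a)} and $i(A)\sst C(N)$,
\begin{align*}
(\f_1\circ\gm^\ve_t)(n)&=\tau([t]^\ve\irr w[t]^{-\ve})\,i(\xi_{t^\ve}(\irr w))\,i(\eta_{t^\ve}(a)),\\
(\lb_{\rho(t)^\ve}\circ\f_1)(n)&=\lb_{\rho(t)^\ve}(\tau(\irr w))\,i(\eta_{t^\ve}(a)).
\end{align*}
As $e\le\nu(w)=\f(\irr w)$, \cref{tau.i-circ-xi_t=lb_rho(t)-circ-tau} (with $w$ there replaced by $\irr w$) yields $\af(\rho(\br(t^\ve e)))\tau([t]^\ve\irr w[t]^{-\ve})i(\xi_{t^\ve}(\irr w))=\af(\rho(\br(t^\ve e)))\lb_{\rho(t)^\ve}(\tau(\irr w))$. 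It then remains to observe that both products in the display lie in the group component of $N$ with identity $\af(\rho(t^\ve et^{-\ve}))=\af(\rho(\br(t^\ve e)))$ — on the left because $\f_1$ respects the grading and $\gm^\ve_t(n)$ has idempotent part $t^\ve et^{-\ve}$, on the right by \cref{CM2} and \cref{rho(t)rho(e)rho(t)-inv=rho(tet-inv)} — so that the idempotent prefactor is redundant, giving $(\f_1\circ\gm^\ve_t)(n)=(\lb_{\rho(t)^\ve}\circ\f_1)(n)$.

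When $[t]^\ve\irr w[t]^{-\ve}$ is not irreducible, $\irr w$ starts with $[t]^{-\ve}$, or ends with $[t]^\ve$, or both, or equals $[t]^{-\ve}$, or equals $[t]^\ve$. In each of these five subcases I would use the recursions \cref{tau([x][y]v),tau([x][y]-inv.v),tau([x]-inv.u),tau([x])} together with \cref{properties-of-tau,F(t-inv_t)=af(rho(t-inv.t))} to rewrite $\tau([t]^\ve\irr w[t]^{-\ve})$, up to an idempotent of the form $\af(\rho(\br(t^\ve e')))$, as $\tau(\irr{[t]^\ve w[t]^{-\ve}})$, thereby reducing each subcase to the computation already done in the irreducible subcase — exactly as in Cases 2--6 of \cref{gm^ve_t-circ-gm^dl_u} and Cases 2--3 of \cref{f_1-commutes-with-gm}. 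The only new ingredient over those earlier lemmas is \cref{tau.i-circ-xi_t=lb_rho(t)-circ-tau}; the genuinely delicate point, and the one I expect to be the main obstacle, is the bookkeeping of the idempotent prefactors $\af(\rho(\br(t^\ve e)))$ — checking in each subcase that the two sides land in the same group component of $N$ so that these factors may be cancelled — but this is routine given the grading properties of $\f_1$, $i$ and $\lb$ recorded above and in \cref{properties-of-i-af-bt}.
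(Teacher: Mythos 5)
Your proposal matches the paper's proof: the same split into $w=\e$, the irreducible case handled by expanding $\f_1\circ\gm^\ve_t$ and $\lb_{\rho(t)^\ve}\circ\f_1$ and invoking \cref{tau.i-circ-xi_t=lb_rho(t)-circ-tau} together with $\lb_{\rho(t)^\ve}(i(a))=i(\eta_{t^\ve}(a))$ (via \cref{eta_t(a)=i^(-1)(lb_s(i(a)))} and \cref{rho(x-inv)=rho(x)-inv}), and the reducible cases treated by rewriting $\tau([t]^\ve\irr w[t]^{-\ve})$ up to an absorbable idempotent exactly as in the earlier Cases 2--3 of \cref{f_1-commutes-with-gm} and Cases 4--6 of \cref{gm^ve_t-circ-gm^dl_u}. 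Your explicit bookkeeping of the prefactor $\af(\rho(\br(t^\ve e)))$ is the same absorption argument the paper leaves implicit, so the proof is correct and essentially identical.
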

\begin{proof}
	Take $n=(a,e,w)\in N'$. If $w=\e$, then by \cref{gm_t(a_e_w)=(z_t(w)eta_t(a)_tet-inv_[t]w[t]-inv),gm-inv_t(a_e_w)=,f_1(a_e_w)=tau(irr(w))i(a),eta_t(a)=i^(-1)(lb_s(i(a))),rho(x-inv)=rho(x)-inv} we have $(\f_1\circ\gm^\ve_t)(n)=\f_1(\eta_{t^\ve}(a),t^\ve et^{-\ve},\e)=i(\eta_{t^\ve}(a))=\lb_{\rho(t^\ve)}(i(a))=\lb_{\rho(t)^\ve}(i(a))=(\lb_{\rho(t)^\ve}\circ\f_1)(n)$. Now let $w\ne\e$.
	
	\textbf{Case 1.} $[t]^\ve\irr w[t]^{-\ve}$ is irreducible. Then $\irr {[t]^\ve w[t]^{-\ve}}=[t]^\ve\irr w[t]^{-\ve}$. Then by \cref{gm_t(a_e_w)=(z_t(w)eta_t(a)_tet-inv_[t]w[t]-inv),gm-inv_t(a_e_w)=,f_1(a_e_w)=tau(irr(w))i(a),tau.i-circ-xi_t=lb_rho(t)-circ-tau,z_t(w)=xi_t(irr(w)),eta_t(a)=i^(-1)(lb_s(i(a))),rho(x-inv)=rho(x)-inv}
	\begin{align}
		(\f_1\circ\gm^\ve_t)(n)&=\f_1(\z_{t^\ve}(w)\eta_{t^\ve}(a),t^\ve et^{-\ve},[t]^\ve w[t]^{-\ve})\notag\\
		&=\tau([t]^\ve\irr w[t]^{-\ve})i(\xi_{t^\ve}(\irr w))i(\eta_{t^\ve}(a))\notag\\
		&=\lb_{\rho(t)^\ve}(\tau(\irr w))\lb_{\rho(t^\ve)}(i(a))\notag\\
		&=\lb_{\rho(t)^\ve}(\tau(\irr w)i(a))=(\lb_{\rho(t)^\ve}\circ\f_1)(n).\label{(f_1-circ-gm^ve_t)(n)=(lb_rho(t)^ve-circ-f_1)(n)}
	\end{align}
	
	\textbf{Case 2.} $\irr w=[t]^{-\ve} w'$, where $w'$ is not empty and the last letter of $w'$ is different from $[t]^\ve$. Then $\irr{[t]^\ve w[t]^{-\ve}}=w'[t]^{-\ve}$ and $\tau([t]^\ve\irr w[t]^{-\ve})=\tau([t]^\ve[t]^{-\ve} w'[t]^{-\ve})$. As in Case 2 of \cref{f_1-commutes-with-gm} we prove that $
	\tau([t]^\ve[t]^{-\ve} w'[t]^{-\ve})=i(\0(\br(t^\ve)))\tau(\irr{[t]^\ve w[t]^{-\ve}})$,
	so the proof of \cref{(f_1-circ-gm^ve_t)(n)=(lb_rho(t)^ve-circ-f_1)(n)} works.
	
	\textbf{Case 3.} $\irr w=w'[t]^\ve$, where $w'$ is not empty and the first letter of $w'$ is different from $[t]^{-\ve}$. Then $\irr{[t]^\ve w[t]^{-\ve}}=[t]^\ve w'$ and $\tau([t]^\ve\irr w[t]^{-\ve})=\tau([t]^\ve w'[t]^\ve[t]^{-\ve})$. As in Case 3 of \cref{f_1-commutes-with-gm} we prove that $\tau([t]^\ve w'[t]^\ve[t]^{-\ve})=i(\0(\br(t^\ve\f(\irr w)t^\ve t^{-\ve})))\tau(\irr{[t]^\ve w[t]^{-\ve}})$, so the proof of \cref{(f_1-circ-gm^ve_t)(n)=(lb_rho(t)^ve-circ-f_1)(n)} again works.
	
	The remaining cases are analogous to Cases 4--6 of \cref{gm^ve_t-circ-gm^dl_u}.
\end{proof}

\begin{lem}\label{f_1-commutes-with-lb'}
	For any $s\in S$ we have $\f_1\circ\lb'_s = \lb_{\f_2(s)}\circ\f_1$.
\end{lem}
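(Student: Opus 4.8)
The plan is to follow the pattern of the proof of \cref{f_1-commutes-with-lb}, with \cref{f_1-commutes-with-gm-via-rho} supplying all the real content. Fix $s=(t,u)\in S'$ and $n=(a,e,w)\in N'$; the goal is to verify $(\f_1\circ\lb'_s)(n)=(\lb_{\f_2(s)}\circ\f_1)(n)$, and I would split the argument into the cases $u=\e$ and $u\ne\e$. The only auxiliary fact needed beyond \cref{f_1-commutes-with-gm-via-rho} is that $\f_1(\af'(e,\e))=i(\0(e))=\af(\rho(e))$ for every $e\in E(T)$, which is immediate from \cref{f_1(a_e_w)=tau(irr(w))i(a),0=i^(-1).af.pi^(-1),rho|_E(T)=pi-inv|_E(T)} (recall $\rho(e)=\pi\m(e)$ for idempotent $e$).

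Suppose first $u=\e$. Then $t\in E(T)$, so $s\in E(S')$ and $\f_2(s)=\rho(t)\in E(S)$ since $\rho$ respects idempotents. By \cref{lb_(t_u)(a_e_w)=} we have $\lb'_s(n)=\af'(t,\e)\,n$, so applying the homomorphism $\f_1$ (\cref{f_1-homo-N'->N}) and the identity above yields $(\f_1\circ\lb'_s)(n)=\af(\rho(t))\,\f_1(n)$. On the other hand $(\lb_{\f_2(s)}\circ\f_1)(n)=\lb_{\rho(t)}(\f_1(n))=\af(\rho(t))\,\f_1(n)$ by \cref{CM1}, since $\rho(t)\in E(S)$. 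The two agree.

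Now suppose $u\ne\e$ and write $\irr u=[t_1]^{\ve_1}\dots[t_k]^{\ve_k}$, so that $\f_2(s)=\rho(\br(t))\,\rho(t_1)^{\ve_1}\dots\rho(t_k)^{\ve_k}$ by \cref{f_2(t_w)=rho(r(t))rho(t_1)^ve_1...rho(t_n)^ve_n}. Using \cref{lb_(t_u)(a_e_w)=} for $\lb'$, the identity $\br(s)=(\br(t),\e)$, the homomorphism property of $\f_1$ together with $\f_1(\af'(\br(t),\e))=\af(\rho(\br(t)))$, and then applying \cref{f_1-commutes-with-gm-via-rho} $k$ times and the fact that $\lb\colon S\to\mend N$ is a homomorphism,
\[
(\f_1\circ\lb'_s)(n)=\af(\rho(\br(t)))\,\bigl(\lb_{\rho(t_1)^{\ve_1}}\circ\dots\circ\lb_{\rho(t_k)^{\ve_k}}\circ\f_1\bigr)(n)=\af(\rho(\br(t)))\,\lb_{\rho(t_1)^{\ve_1}\dots\rho(t_k)^{\ve_k}}(\f_1(n)).
\]
On the other hand, by the homomorphism property of $\lb$ and \cref{CM1} applied to the idempotent $\rho(\br(t))\in E(S)$,
\[
(\lb_{\f_2(s)}\circ\f_1)(n)=\lb_{\rho(\br(t))}\bigl(\lb_{\rho(t_1)^{\ve_1}\dots\rho(t_k)^{\ve_k}}(\f_1(n))\bigr)=\af(\rho(\br(t)))\,\lb_{\rho(t_1)^{\ve_1}\dots\rho(t_k)^{\ve_k}}(\f_1(n)),
\]
which is exactly the previous expression.

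As with \cref{f_1-commutes-with-lb}, I do not anticipate a genuine obstacle: \cref{f_1-commutes-with-gm-via-rho} already absorbs all the combinatorics with $\tau$ and the free group, so what remains is bookkeeping. The only two points requiring care are that the leading idempotent $\af(\rho(\br(t)))$ produced on the left (coming from $\af'(\br(s))$ via $\f_1$) matches the one produced on the right (coming from $\lb_{\rho(\br(t))}$ via \cref{CM1}), and that the degenerate case $u=\e$ is correctly reduced to \cref{CM1}.
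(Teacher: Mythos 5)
Your proposal is correct and follows essentially the same route as the paper: split on $u=\e$ versus $u\ne\e$, identify $\f_1(\af'(\br(t),\e))=i(\0(\br(t)))=\af(\rho(\br(t)))$, apply \cref{f_1-commutes-with-gm-via-rho} letterwise to $\irr u$, and recombine via the homomorphism property of $\lb$ and \labelcref{CM1}. The only (harmless) difference is that in the $u=\e$ case you invoke the multiplicativity of $\f_1$ instead of the paper's direct computation split on $w=\e$ versus $w\ne\e$.
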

\begin{proof}
	Take arbitrary $s=(t,u)\in S$ and $n=(a,e,w)\in N$. Suppose first that $u=\e$, so that $t\in E(T)$. If $w\ne\e$, then thanks to \cref{lb_(t_u)(a_e_w)=,f_2(t_w)=rho(r(t))rho(t_1)^ve_1...rho(t_n)^ve_n,f_1(a_e_w)=tau(irr(w))i(a),eta_t(a)=i^(-1)(lb_s(i(a))),0=i^(-1).af.pi^(-1),TM1} we have $(\f_1\circ\lb'_s)(n)=\f_1(\af(s)n)=\f_1(\0(t)a,te,w)=\tau(\irr w)i(\0(t))i(a)=\af(\rho(t))\tau(\irr w)i(a)=\lb_{\rho(t)}(\tau(\irr w)i(a))=(\lb_{\f_2(s)}\circ\f_1)(n)$. Otherwise, $(\f_1\circ\lb'_s)(n)=i(\0(t))i(a)=\af(\rho(t))i(a)=\lb_{\rho(t)}(i(a))=(\lb_{\f_2(s)}\circ\f_1)(n)$.
	
	Let $u\ne\e$ and $\irr u=[t_1]^{\ve_1}\dots[t_k]^{\ve_k}$. By \cref{lb_(t_u)(a_e_w)=,f_1-commutes-with-gm-via-rho,f_1(a_e_w)=tau(irr(w))i(a),chi(w)=rho(t_1)^ve_1...rho(t_n)^ve_n,f_2(t_w)=rho(r(t))rho(t_1)^ve_1...rho(t_n)^ve_n,0=i^(-1).af.pi^(-1)}
	\begin{align*}
	(\f_1\circ\lb'_s)(n)&=\f_1(\af(\br(s))(\gm_{t_1}^{\ve_1}\circ\dots\circ \gm_{t_k}^{\ve_k})(n))=i(\0(\br(t)))(\f_1\circ\gm_{t_1}^{\ve_1}\circ\dots\circ \gm_{t_k}^{\ve_k})(n)\\
	&=i(\0(\br(t)))(\lb_{\rho(t_1)^{\ve_1}}\circ\dots\circ\lb_{\rho(t_k)^{\ve_k}}\circ\f_1)(n)\\
	&=\af(\rho(\br(t)))(\lb_{\rho(t_1)^{\ve_1}\dots\rho(t_k)^{\ve_k}}\circ\f_1)(n)=(\lb_{\rho(\br(t))}\circ\lb_{\chi(\irr u)}\circ\f_1)(n)\\
	&=(\lb_{\rho(\br(t))\chi(\irr u)}\circ\f_1)(n)=(\lb_{\f_2(s)}\circ\f_1)(n).
	\end{align*}
\end{proof}

\begin{prop}\label{ext->c->equiv-ext}
	For arbitrary inverse semigroup $T$ and $T$-module $A$ the crossed module extensions $A \xrightarrow{i} N \xrightarrow{\beta} S \xrightarrow{\pi} T$ and $A \xrightarrow{i'} N' \xrightarrow{\beta'} S' \xrightarrow{\pi'} T$ are equivalent.
\end{prop}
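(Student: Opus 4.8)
The plan is to assemble the auxiliary results \cref{f_2-homo,f_1-homo-N'->N,middle-square-comm,f_1-commutes-with-lb'} into a verification that the pair $(\f_1,\f_2)$ built from the fixed order-preserving transversals $\rho$ and $\s$ realizes the equivalence of the two extensions in the sense of \cref{equiv-ext-defn}. Recall that equivalence of crossed module extensions is, by definition, the smallest equivalence relation containing every pair of extensions joined by a morphism satisfying \labelcref{CMEE1} and \labelcref{CMEE2}; hence it suffices to exhibit a single such morphism between $A \xrightarrow{i} N \xrightarrow{\beta} S \xrightarrow{\pi} T$ and $A \xrightarrow{i'} N' \xrightarrow{\beta'} S' \xrightarrow{\pi'} T$, and the natural one points from the constructed extension to the given one, via $\f_1\colon N'\to N$ and $\f_2\colon S'\to S$ of \cref{f_1(a_e_w)=tau(irr(w))i(a),f_2(t_w)=rho(r(t))rho(t_1)^ve_1...rho(t_n)^ve_n}.

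First I would note that $\f_1$ and $\f_2$ are semigroup homomorphisms, which is recorded in \cref{f_1-homo-N'->N,f_2-homo}. Next I would check the commutativity of the diagram \labelcref{eq:equivseqs4}, read with the constructed extension on top, the given extension on the bottom, and the two outer vertical maps equal to $\id_A$ and $\id_T$: the left square is $\f_1\circ i'=i$, which is part of \cref{f_1-homo-N'->N}; the middle square is $\bt\circ\f_1=\f_2\circ\bt'$, which is \cref{middle-square-comm}; and the right square is $\pi\circ\f_2=\pi'$, which is part of \cref{f_2-homo}. This gives \labelcref{CMEE1}. Finally, condition \labelcref{CMEE2}, i.e. the action-compatibility identity \labelcref{eq:actcomp} stating that $\f_1\circ\lb'_s=\lb_{\f_2(s)}\circ\f_1$ for every $s\in S'$, is precisely \cref{f_1-commutes-with-lb'}. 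Hence $(\f_1,\f_2)$ is a morphism of the required kind, and the two extensions lie in the same equivalence class, which is the assertion of \cref{ext->c->equiv-ext}.

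The one point requiring a word of care is orientation: \cref{equiv-ext-defn} writes the generating relation in terms of maps $N\to N'$, $S\to S'$, whereas our $\f_1$, $\f_2$ run in the opposite direction; this is harmless since the relation being generated is symmetric, so it is immaterial which of the two extensions plays the role of the source. I do not expect a genuine obstacle in the proof of \cref{ext->c->equiv-ext} itself — the entire difficulty is already concentrated in the lemmas feeding \cref{f_1-commutes-with-lb'}, above all \cref{bt-circ-tau=chi,tau.i-circ-xi_t=lb_rho(t)-circ-tau,f_1-commutes-with-gm-via-rho}, where the interaction between the transversals, the auxiliary maps $\tau$ and $\xi_t$, and the endomorphisms $\gm_t^{\pm 1}$ is controlled.
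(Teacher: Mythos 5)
Your proposal is correct and is essentially the paper's own proof: the paper likewise just cites \cref{f_2-homo,f_1-homo-N'->N,middle-square-comm,f_1-commutes-with-lb'} and concludes that the pair $(\f_1,\f_2)$ realizes the equivalence, with your remark on orientation (the generated relation being an equivalence, a morphism in either direction suffices) being exactly the right justification for the direction of the maps.
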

\begin{proof}
	By \cref{f_2-homo,f_1-homo-N'->N,middle-square-comm,f_1-commutes-with-lb'} the pair $(\f_1,\f_2)$ gives the equivalence of the crossed module extensions.
\end{proof}

\begin{thrm}\label{H^3_le(T_A)<->E(T_A)}
	Let $T$ be an $F$-inverse monoid and $A$ a $T$-module. There is a bijective correspondence between $H^3_\le(T^1,A^1)$ and $\mathcal{E}_\le(T,A)$.
\end{thrm}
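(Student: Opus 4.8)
The plan is to combine the two maps already constructed and check that they are mutually inverse. By \cref{E_le(T_A)->H3_le(T^1_A^1)} there is a well-defined map $\Psi\colon\mathcal{E}_\le(T,A)\to H^3_\le(T^1,A^1)$ sending the class of an admissible crossed module extension to the cohomology class of the order-preserving $3$-cocycle it induces, computed via any pair of order-preserving transversals of $\bt$ and $\pi$ respecting idempotents, the independence of this choice being part of \cref{prop:Egenc,E_le(T_A)->H3_le(T^1_A^1)}. Conversely, \cref{H^3_le(T^1_A^1)->E_le(T_A)}, which uses that $T$ is $F$-inverse, gives a map $\Phi\colon H^3_\le(T^1,A^1)\to\mathcal{E}_\le(T,A)$ sending the class of $c\in Z^3_\le(T^1,A^1)$ to the class of the extension $A\xrightarrow{i}N\xrightarrow{\bt}S\xrightarrow{\pi}T$ built in \cref{from-c-to-crossed-mod-ext} from a normalized representative of $[c]$ (legitimate by \cref{3-cocycle-cohom-to-normalized,cohom-cocycles-induce-equiv-ext}), an extension that is admissible by \cref{T-F-inverse=>admissible}. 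It then remains only to prove $\Psi\circ\Phi=\id$ and $\Phi\circ\Psi=\id$.

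For $\Psi\circ\Phi=\id$, I would take $c\in Z^3_\le(T^1,A^1)$, assume it normalized, and pass to the extension $A\xrightarrow{i}N\xrightarrow{\bt}S\xrightarrow{\pi}T$ of \cref{from-c-to-crossed-mod-ext}; evaluating $\Psi$ on it via the order-preserving transversals \cref{rho-for-F-inverse,sigma-for-E-unitary-cover} (the point at which $\max t$, hence $F$-inverseness of $T$, is used), \cref{c->ext->cohom-c'} shows the resulting cocycle is cohomologous to $c$ inside $Z^3_\le(T^1,A^1)$, so $\Psi(\Phi([c]))=[c]$. For $\Phi\circ\Psi=\id$, I would take an admissible extension $E\colon A\xrightarrow{i}N\xrightarrow{\bt}S\xrightarrow{\pi}T$, fix order-preserving transversals $\rho,\s$ of $\pi,\bt$ respecting idempotents, and let $c\in Z^3_\le(T^1,A^1)$ be the strongly normalized cocycle it induces (\cref{adm-ext-induces-str-norm-c}); applying $\Phi$ to $[c]$ produces the extension $A\xrightarrow{i'}N'\xrightarrow{\bt'}S'\xrightarrow{\pi'}T$ of \cref{c->ext-sec}, and \cref{ext->c->equiv-ext}, valid for arbitrary $T$, exhibits a pair $(\f_1,\f_2)$ of homomorphisms realizing the equivalence $E'\sim E$, whence $\Phi(\Psi([E]))=[E]$.

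All the substance has already been deposited in the preceding sections: building the action $\lb\colon S\to\mend N$ on the $E$-unitary cover from a normalized order-preserving cocycle (\cref{lb-homo,(af_lb_bt)-crossed-module}) and constructing the comparison homomorphisms in \cref{cohom-cocycles-induce-equiv-ext,ext->c->equiv-ext}. Relative to that, the theorem is pure bookkeeping, and I do not anticipate any essential obstacle; the only points requiring attention are that $\Phi$ must be checked to descend to cohomology classes (handled by \cref{3-cocycle-cohom-to-normalized,cohom-cocycles-induce-equiv-ext}) and that order-preservingness is not lost at any passage between $\mathcal{E}_\le$, $Z^3_\le$ and back (handled by \cref{E_le(T_A)->H3_le(T^1_A^1),c->ext->cohom-c'}).
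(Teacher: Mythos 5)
Your proposal is correct and follows essentially the same route as the paper: the theorem's proof is exactly the combination of \cref{E_le(T_A)->H3_le(T^1_A^1)} and \cref{H^3_le(T^1_A^1)->E_le(T_A)} with mutual inverseness supplied by \cref{c->ext->cohom-c',ext->c->equiv-ext}. Your extra remarks on normalization, admissibility and independence of transversals are just the ingredients already packaged inside those cited results.
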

\begin{proof}
	There is a map from $\mathcal{E}_\le(T,A)$ to $H^3_\le(T^1,A^1)$ by \cref{E_le(T_A)->H3_le(T^1_A^1)}, and by \cref{H^3_le(T^1_A^1)->E_le(T_A)} there is a map in the opposite direction. The maps are inverse to each other thanks to \cref{c->ext->cohom-c',ext->c->equiv-ext}.
\end{proof}
	
\section*{Acknowledgments}
The first author was  partially supported by FAPESP of Brazil (Proc. 2015/09162-9) and by CNPq of Brazil (Proc. 307873/2017-0). The second author was partially supported by  CNPq of Brazil (Proc. 404649/2018-1) and by Funda\c{c}\~ao para a Ci\^{e}ncia e a Tecnologia (Portuguese Foundation for Science and Technology) through the project PTDC/MAT-PUR/31174/2017. The third author was financed by Coordenação de Aperfeiçoamento de Pessoal de Nível Superior - Brasil (CAPES) - Finance Code 001. We thank the referee for reading the paper and pointing out various misprints.

	\bibliography{bibl-pact}{}
	\bibliographystyle{acm}
	
\end{document}